\definecolor{marronfonce}{cmyk}{0,0.06,0.20,0.6}
\colorlet{orangeclair}{DarkOrange3!85}
\colorlet{orangefonce}{DarkOrange3}
\colorlet{bleuclair}{DeepSkyBlue4!50}
\colorlet{bleufonce}{DeepSkyBlue4}
\tikzstyle{rdc}=[rectangle,draw,thick, rounded corners=4pt,inner sep=2mm] 
\tikzstyle{rdc}=[rectangle,draw,thick, rounded corners=4pt,inner sep=2mm] 
\newcommand{\CC}{{\mathbf C}}
\newcommand{\RR}{{\mathbf R}}
\newcommand{\ZZ}{{\mathbf Z}}
\newcommand{\HH}{{\mathbb H}}
\newcommand{\del}{{\partial}}
\def\ker{{\rm{ker}}}
\def\dim{{\rm{dim}}}
\def\sgn{{\rm{sgn}}}
\def\Z{{\mathbf{Z}}}
\def\nnn{{\underline{n}}}
\def\Area{{\mathrm{Area}}}
\newcommand{\mmm}{{\underline{m}}}
\newcommand{\ggreater}{>}
\newtheorem{thm}{Theorem} 
\newtheorem{cor}[thm]{Corollary} 
\theoremstyle{plain}
\newtheorem{theorem}{Theorem}[section]
\newtheorem{claim}[theorem]{Claim}
\newtheorem{conjecture}[theorem]{Conjecture}
\newtheorem{corollary}[theorem]{Corollary}
\newtheorem{lemma}[theorem]{Lemma}
\newtheorem{proposition}[theorem]{Proposition}
\newtheorem{question}[theorem]{Question}
\newtheorem*{question*}{Question}
\newtheorem{addendum}[theorem]{Addendum}
\theoremstyle{definition}
\newtheorem{remark}[theorem]{Remark}
\newtheorem*{acknowledgements*}{Acknowledgements}
\newtheorem{example}[theorem]{Example}
\newtheorem{definition}[theorem]{Definition}
\newtheorem*{notation*}{Notation}
\newtheorem*{convention*}{Convention}
\newtheorem*{initial*}{Initial step of the induction}
\numberwithin{equation}{section}
\title{Cone-equivalent nilpotent groups with different Dehn functions}
\author{Claudio Llosa Isenrich} 
\address{Faculty of Mathematics, Karlsruhe Institute of Technology, Englerstra\ss e 2, 76131 Karlsruhe, Germany}
\email{claudio.llosa@kit.edu}
\author{Gabriel Pallier} 
\address{Sorbonne Universit\'e, IMJ-PRG, 75252 Paris Cedex 05, France.}
\email{gabriel@pallier.org}
\author{Romain Tessera}
\address{Institut de Math\'ematiques de Jussieu-PRG, Universit\'e Paris-Diderot, CNRS, Case 7012, 75205 Paris Cedex 13, France}
\email{romatessera@gmail.com}
\thanks{ C.L.I. was supported by a public grant as part of the FMJH, by the Max Planck Institute for Mathematics and by the Lise Meitner fellowship M2811-N of the Austrian Science Fund (FWF).\\
\indent G.P. was supported by the European Research Council (ERC Starting Grant 713998 GeoMeG `\emph{Geometry of Metric Groups}').}
\keywords{Dehn functions, filling invariants, asymptotic cones, nilpotent groups, Lie groups and Lie algebras, central extensions, quasiisometries, Carnot gradings,  group cohomology, sublinear bilipschitz equivalence}
\subjclass[2020]{Primary: 20F69, 20F18. Secondary: 20F65, 20F05, 51F30, 22E25,
57T10.}
\begin{document}

\begin{abstract}
For every $k\geqslant 3$, we exhibit a simply connected $k$-nilpotent Lie group $N_k$ whose Dehn function behaves like  $n^k$, while the Dehn function of its associated Carnot graded group $\mathsf{gr}(N_k)$ behaves like $n^{k+1}$. This property and its consequences allow us to reveal three new phenomena. First, since those groups have uniform lattices, this provides the first examples of pairs of finitely presented groups with bilipschitz asymptotic cones but with different Dehn functions. 
The second surprising feature of these groups is that for every even integer $k \geqslant 4$ the centralized Dehn function of $N_k$ behaves like $n^{k-1}$ and so has a different exponent than the Dehn function. 
This answers a question of Young. Finally, we turn our attention to sublinear bilipschitz equivalences (SBE). Introduced by Cornulier, these are maps between metric spaces inducing bi-Lipschitz homeomorphisms between their asymptotic cones. These can be seen as weakenings of quasiisometries where the additive error is replaced by a sublinearly growing function $v$. We show that a $v$-SBE between  $N_k$  and $\mathsf{gr}(N_k)$ must satisfy $v(n)\succcurlyeq  n^{1/(2k + 2)}$, strengthening the fact that those two groups are not quasiisometric. This is the first instance where an explicit lower bound is provided for a pair of SBE groups. 
\end{abstract}

\maketitle

\section{Introduction}
The goal of this work is to improve our understanding of the large scale geometry of simply connected nilpotent Lie groups and, more specifically, of an asymptotic invariant called the Dehn function, which encodes fundamental geometric and algebraic information on the group.
Given a simply connected Lie group $G$ equipped with a left-invariant Riemannian metric, the Dehn function $\delta_G(r)$ is the smallest real number such that every rectifiable loop $\gamma$ of length $\leqslant r$ in $G$ admits a filling by a Lipschitz disc of area $\leqslant \delta_G(r)$. An important feature of the Dehn function is the invariance of its asymptotics under quasi-isometry  (see \S \ref{sec:dehnvarious}). The study of filling functions of nilpotent Lie groups is a very difficult subject that has been deeply explored by Gromov, who initiated it \cite{AsInv,GroNilp}, and other authors (e.g.\  \cite{Allcock,GerstenRileyHolt, PittetIsopNil, young2006scaled,YoungFillingNil,Wenger}). 
The main result of this paper should be seen as a contribution to this important subject. However, one of its key applications and the choice of groups studied can be better appreciated in the wider context of the study of the large scale geometry of simply connected nilpotent groups. We will thus start by recalling known facts and central open problems in this area. A reader who directly wants to proceed to our results can go straight to \S \ref{subsec:Intro-central-products}.

\subsection{Background on the large scale geometry of nilpotent groups}

A motivation for focussing on simply connected Lie groups rather than discrete groups is that every finitely generated nilpotent group maps with finite kernel onto a lattice in a unique simply connected nilpotent Lie group (called its real Malcev completion) \cite{MalcevNilvarietes}. 
It follows that the quasi-isometry classification of finitely generated nilpotent groups reduces to that of simply connected nilpotent Lie groups, which is conjectured to have the following very neat formulation (see \cite[Conjecture 19.114]{CornulierOber}). 
\begin{conjecture}\label{conj:Nilpotent} 
Two simply connected nilpotent Lie groups are quasi-isometric if and only if they are isomorphic.  
\end{conjecture}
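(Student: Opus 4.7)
The plan is to approach the conjecture in two stages, mirroring the structure already visible in this paper. First I would invoke Pansu's differentiation theorem to reduce the question to cone-equivalent groups: any quasi-isometry between simply connected nilpotent Lie groups $G$ and $H$ induces a bi-Lipschitz homeomorphism between their asymptotic cones, which Pansu identifies with the associated Carnot graded groups $\mathsf{gr}(G)$ and $\mathsf{gr}(H)$ equipped with a Carnot--Carath\'eodory metric. Pansu's differentiability theorem for Carnot groups then forces $\mathsf{gr}(G)$ and $\mathsf{gr}(H)$ to be isomorphic as graded Lie groups. So the problem reduces to: if $G$ and $H$ are non-isomorphic but satisfy $\mathsf{gr}(G)\cong \mathsf{gr}(H)$, show that they are not quasi-isometric.

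Second, within a fixed cone-equivalence class $\{G : \mathsf{gr}(G)\cong N\}$, I would try to produce enough quasi-isometry invariants to separate any two non-isomorphic members. The most promising candidates are filling invariants (the Dehn function $\delta_G$ and higher-dimensional filling volumes), their centralized variants, and $\ell^p$-cohomology in the spirit of Pansu. The main theorem of the present paper is a model instance: it shows that the ordinary Dehn function already distinguishes certain $N_k$ from $\mathsf{gr}(N_k)$. I would systematize this by attaching to each $G$ the full tower $\bigl(\delta_G^{(k)}\bigr)_{k\leqslant\dim G}$ of higher filling invariants together with their centralized versions, and try to show that, restricted to one cone-equivalence class, they separate isomorphism classes. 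A parallel track is to attach to each $G$ the class in $H^2\bigl(\mathsf{gr}(G);\mathfrak{z}(\mathsf{gr}(G))\bigr)$ of the central extension deforming $\mathsf{gr}(G)$ to $G$, and to interpret filling invariants as pairings against this cocycle class.

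The main obstacle, in my view, is completeness of any such system of invariants. Even for Dehn functions there is no a priori reason why two non-isomorphic cone-equivalent nilpotent Lie groups should yield different polynomial exponents: cocycle-level deformations can cancel leading terms and leave only subtler corrections behind, precisely the phenomenon the centralized Dehn function of the paper is designed to detect. Overcoming this would require a refined cohomological bookkeeping of the central extensions connecting $G$ to $\mathsf{gr}(G)$, together with an integration-by-parts argument relating filling functions to cocycle representatives, in the spirit of Gromov's original work on asymptotic invariants. A second, analytic obstacle is the explicit construction of efficient fillings matching the lower bounds; here the scaled-filling technology of Young and the coarse-differentiation methods developed later in this paper represent the current state of the art. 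Realistically, this strategy will not settle the conjecture in full generality, but it can be expected to enlarge the class of cases in which it is known beyond the abelian case and the handful of examples currently within reach.
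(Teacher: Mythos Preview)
The statement you are attempting to prove is \emph{Conjecture~\ref{conj:Nilpotent}}, and the paper does not prove it: it is stated explicitly as an open problem, and the paper's contribution is to produce evidence and refined invariants around it (in particular, pairs of cone-equivalent groups with different Dehn functions), not a proof. So there is no ``paper's own proof'' to compare your proposal against.

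Your write-up is not a proof but a research programme, and you seem aware of this in your final sentence. The first stage (reduction to cone-equivalent groups via Pansu's theorem) is standard and correct. The second stage, however, identifies exactly the obstruction that keeps the conjecture open: there is no known complete system of quasi-isometry invariants separating non-isomorphic groups within a single cone-equivalence class. Shalom's Betti numbers, Sauer's cohomology algebra, Dehn functions, higher filling functions, and $\ell^p$-cohomology all fail to be complete in general, and the paper itself exhibits the limits of the Dehn-function approach (e.g.\ the centralized Dehn function can miss the correct exponent). Your suggestion to encode the deformation from $\mathsf{gr}(G)$ to $G$ via a class in $H^2$ and to pair filling invariants against it is a reasonable heuristic, but no one knows how to make such a pairing a complete invariant, and the paper does not claim to. In short: your proposal correctly locates the difficulty but does not overcome it, and neither does the paper---because the statement is a conjecture, not a theorem.
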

 
Conjecture \ref{conj:Nilpotent} is more commonly stated in the discrete case: two finitely generated torsion-free nilpotent groups are quasi-isometric if and only if they have isomorphic real Malcev completions (this is mentioned as an open question in \cite{FM}). It is tempting to ask whether a quasi-isometry between two such groups implies that they are commensurable. This turns out to be false. Indeed, for any ring $R$, let $\HH_d(R)$ denote the $d$-dimensional Heisenberg group over that ring. Then $\HH_d(\mathbf Z[\sqrt{2}])$ and $\HH_d(\mathbf Z)^2$ are both (uniform) lattices in $\HH_d(\mathbf R)^2$, therefore they are quasi-isometric. However, their rational Malcev completions are not isomorphic, which is equivalent to saying that the groups are not commensurable \cite{MalcevNilvarietes}.

The lowest-dimensional example of a pair of simply connected nilpotent Lie groups for which Conjecture \ref{conj:Nilpotent} is still open occurs in dimension $5$ (for a complete overview of the state of the art in dimension $\leqslant 6$ we refer to \cite{CornulierOber}). This shows that we are still far from having a complete proof even in low dimensions. On the other hand there is ample evidence pointing towards the veracity of Conjecture \ref{conj:Nilpotent}, with one of the first striking results being Pansu's Theorem. In order to state it precisely we need to recall the notions of a Carnot graded Lie algebra (resp.\ a Carnot graded nilpotent Lie group).

 We denote $\gamma_1 \mathfrak{g} = \mathfrak{g}$, $\gamma_{i+1} \mathfrak{g}=[\mathfrak{g},\gamma_{i}\mathfrak{g}]$ the lower central series of the Lie algebra $\mathfrak{g}$ (resp. $\gamma_i G$ the lower central series of the group $G$). A Lie algebra $\mathfrak{g}$ (resp. group $G$) has step\footnote{Various terminologies exist in the literature: $s$-step nilpotent, $s$-nilpotent, or nilpotent of class $s$.} $s$ if $s$ is the smallest integer such that  $\gamma_{s+1} \mathfrak{g}=\{0\}$ (resp. $\gamma_{s+1} G=\left\{1\right\}$). 
 The lower central series gives rise to a filtration of $\mathfrak{g}$ in the sense that $[\gamma_{i} \mathfrak{g},\gamma_{j} \mathfrak{g}]\subset \gamma_{i+j} \mathfrak{g}$. 
 
 A Lie algebra is called \emph{Carnot gradable} if this filtration comes from a grading, i.e.\ a decomposition $\mathfrak{g}=\bigoplus_i m_i$ satisfying $\gamma_{j} \mathfrak{g}=\bigoplus_{i\geqslant j} m_i$ and $[m_i,m_j]\subset m_{i+j}$; such a grading is called a Carnot grading.
It is always possible to associate a Carnot graded Lie algebra $\mathsf{gr}(\mathfrak{g})$ to any nilpotent Lie algebra $\mathfrak{g}$ by letting 
$\mathsf{gr}(\mathfrak{g})=\bigoplus_{i\geqslant 1} m_i$ for $m_i=\gamma_{i} \mathfrak{g}/\gamma_{i+1} \mathfrak{g}$ and defining the Lie bracket in the obvious way to make $m_i$ a grading (see \S \ref{subsec:Carnot-gradings} for more details).
We denote $\mathsf{gr}(G)$ the simply connected nilpotent Lie group whose Lie algebra is $\mathsf{gr}(\mathfrak{g})$. The pair $(\mathsf{gr}(G), m_1)$ is then called a Carnot-graded group (some authors say stratified group). 
Observe that $\mathsf{gr}(\mathfrak{g})$ has the same dimension and step as $\mathfrak{g}$.

We say for convenience that two groups are cone equivalent if their asymptotic cones with respect to any given non-principal ultrafilter are bilipschitz\footnote{Note that our notion of cone equivalence differs from Cornulier's notion of cone equivalence between maps in \cite{CornulierCones11}.}. It is easy to see that two groups that are quasi-isometric are cone equivalent. 
Pansu's fundamental Theorem provides a complete classification of simply connected nilpotent groups up to cone equivalence.

\begin{theorem}[{\cite{PanCBN,Breuillard}} and {\cite{PansuCCqi}}] 
Let $G$ be a simply connected nilpotent Lie group, equipped with a left-invariant word metric $d$ associated to some compact generating subset. 
Then $(G,d/n)$ converges in the Gromov-Hausdorff topology to $\mathsf{gr}(G)$ equipped with a left-invariant sub-Finsler metric $d_c$ as $n\to \infty$.
Moreover, if two simply connected nilpotent Lie groups $G$ and $G'$ have bilipschitz asymptotic cones (e.g.\ if they are quasi-isometric), then $\mathsf{gr}(G)$ and $\mathsf{gr}(G')$ are isomorphic.
\label{thm:Pansu}
\end{theorem}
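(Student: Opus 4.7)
The plan is to prove the two assertions in sequence, using essentially different tools. First I would establish the Gromov-Hausdorff convergence by a direct rescaling argument in exponential coordinates. Then, for the rigidity statement, I would appeal to Pansu's differentiation theorem for Lipschitz maps between sub-Finsler Carnot groups.

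For the convergence, I would fix a basis of $\mathfrak{g}$ adapted to the lower central series, so that its image in $\mathsf{gr}(\mathfrak{g})=\bigoplus_{i\geqslant 1} m_i$ is homogeneous for the Carnot grading. Via the exponential map, this identifies $G$ and $\mathsf{gr}(G)$ with the same $\RR^N$ as manifolds (but not as groups). The Carnot dilations $\delta_t$ acting by $t^i$ on $m_i$ are the natural rescaling. The key computation is that the Baker--Campbell--Hausdorff series for $G$ and for $\mathsf{gr}(G)$, written in this basis, have the same leading terms at each filtration degree and differ only by terms of strictly smaller filtration weight; conjugating the multiplication of $G$ by $\delta_{1/n}$ therefore makes those discrepancies tend to zero uniformly on compact sets, so the group law on $G$ converges to that on $\mathsf{gr}(G)$. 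A compact generating subset $S \subset G$ pushed down to $m_1$ then serves as the unit ball for a left-invariant sub-Finsler metric $d_c$ on $\mathsf{gr}(G)$, and a comparison between word-length paths in $G$ and horizontal curves in $\mathsf{gr}(G)$, together with Guivarc'h's polynomial growth estimates to ensure compactness of rescaled balls, yields the pointed Gromov--Hausdorff convergence $(G,d/n)\to (\mathsf{gr}(G),d_c)$.

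For the rigidity, suppose one is given a bilipschitz homeomorphism $\varphi$ between the asymptotic cones of $G$ and $G'$, which by the first part are $\mathsf{gr}(G)$ and $\mathsf{gr}(G')$ equipped with left-invariant sub-Finsler metrics. I would invoke Pansu's differentiation theorem: such a Lipschitz map admits, at almost every $x$ with respect to Haar measure, a Pansu derivative $D\varphi(x)$, obtained as the limit of $\delta_{1/t} \circ L_{\varphi(x)^{-1}} \circ \varphi \circ L_x \circ \delta_t$ as $t \to 0$. Crucially, $D\varphi(x)$ is a graded Lie group homomorphism $\mathsf{gr}(G) \to \mathsf{gr}(G')$ that commutes with the Carnot dilations. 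Applying the same theorem to $\varphi^{-1}$ at a point of common differentiability produces a bilipschitz graded homomorphism, which is automatically a Lie group isomorphism; this provides the desired isomorphism $\mathsf{gr}(G) \cong \mathsf{gr}(G')$.

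The main obstacle is Pansu's differentiation theorem itself: its proof requires a Rademacher-type statement adapted to the nonabelian dilation structure, using the Vitali covering lemma on the doubling metric measure space $(\mathsf{gr}(G),d_c,\text{Haar})$ together with a careful control of vertical oscillations of Lipschitz maps along horizontal curves. By contrast, the first half of the theorem is essentially a bookkeeping exercise in the BCH formula combined with standard volume and path-length estimates, and can be adapted from Pansu's and Breuillard's arguments without new ideas.
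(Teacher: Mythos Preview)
The paper does not contain a proof of this theorem: it is stated in the introduction as a known result and attributed to the cited references (Pansu and Breuillard for the convergence, Pansu for the rigidity). There is therefore no proof in the paper to compare your proposal against.

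That said, your outline is faithful to the standard arguments in those references. The first half, via exponential coordinates adapted to the lower central series and the observation that conjugating the Baker--Campbell--Hausdorff product by the Carnot dilations $\delta_{1/n}$ kills the lower-order terms, is exactly the mechanism in Pansu's and Breuillard's work (and also underlies Goodman's and Cornulier's quantitative SBE statements mentioned later in the paper). The second half, producing a graded Lie group isomorphism from a bilipschitz map via Pansu differentiation, is precisely the content of \cite{PansuCCqi}. Your identification of Pansu's Rademacher-type theorem as the substantial ingredient is accurate; the rest is, as you say, bookkeeping.
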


In particular, Theorem \ref{thm:Pansu} shows that two simply connected nilpotent Lie groups $G$ and $G'$ are cone equivalent if and only if $\mathsf{gr}(G)$ and $\mathsf{gr}(G')$ are isomorphic. Another beautiful piece of work on this subject is due to Shalom. He shows that Betti numbers are invariant under quasi-isometry among finitely generated nilpotent groups  \cite[Theorem 1.2]{ShalomHarmonic}. This enabled Shalom to produce the first examples of cone equivalent nilpotent groups that are not quasi-isometric. To close this quick survey we mention that Sauer \cite{SauerHom} strengthened Shalom's Theorem by proving the quasi-isometry invariance of the real cohomology algebra of such groups, thereby extending the class of cone equivalent pairs that can be distinguished up to quasi-isometry. 

These results show that for nilpotent groups, being cone equivalent is indeed weaker than being quasi-isometric, thereby giving credit to Conjecture \ref{conj:Nilpotent}. Recently, Cornulier introduced the following generalization of quasiisometries, which provides a quantitative version of cone equivalence for nilpotent groups \cite{cornulier2017sublinear}.

\begin{definition}[Cornulier]
A map between two metric spaces $F:(X,d_X)\to (Y,d_Y)$ is called a sublinear bilipschitz equivalence (SBE) if there exists a non-decreasing map $v:\RR_+\to \RR_+$ that is sublinear (i.e.\ $\lim_{t\to \infty} v(t)/t=0$) and $x_0\in X$, $y_0\in Y$, and $M\geqslant 1$, such that for all $r\geqslant 0$ and $x,x'\in B(x_0,r)$
\[M^{-1}d_X(x,x')-v(r)\leqslant d_Y(F(x),F(x')) \leqslant Md_X(x,x')+v(r),\]
and for all $y\in B(y_0,r)$ there exists $x\in X$ such that 
$d(F(x),y)\leqslant v(r).$
\label{def:SBE}
\end{definition}

SBEs are designed to induce bilipschitz homeomorphisms between asymptotic cones \cite[Proposition 2.13.]{CornulierCones11}\footnote{Note that they were called ``cone bilipschitz equivalences'' in \cite{CornulierCones11}.}. 
In \cite{CornulierCones11} Cornulier observes that Pansu's Theorem can be reformulated in terms of the existence of a SBE between $G$ and $\mathsf{gr}(G)$ (see Corollary \ref{cor:weak-Pansu}). 
On the other hand quasi-isometries correspond to the special case of $v$ being bounded. Hence the study of simply connected nilpotent Lie groups up to sublinear bilipschitz equivalence is a way to interpolate between the conjectural quasi-isometric classification and Pansu's Theorem.

In this paper we shall focus on a certain family of pairs of cone equivalent nilpotent groups. We know by Shalom that these groups are not quasi-isometric. But proving that they have different Dehn functions allows us to derive a much stronger statement: we obtain an explicit  asymptotic lower bound on the possible functions $v$ such that these groups are $v$-SBE (see \S \ref{secIntro:SBE} for precise statements).

We now proceed to a detailed description of our results.

\subsection{Central products and non-Carnot gradable nilpotent groups}
\label{subsec:Intro-central-products}

Most examples of simply connected nilpotent Lie groups that one might readily think of are Carnot graded. In particular this is the case for all groups of dimension at most $5$, with two exceptions, and for all $2$-nilpotent groups.  
However, this observation is rather misleading, as the predominance of Carnot gradable groups turns out to be a low-dimensional phenomenon. Indeed, in high dimensions being Carnot gradable is a rather rare phenomenon and it even seems reasonable to go as far as to say that a generic nilpotent Lie group will not be Carnot gradable. This emphasizes the importance of understanding nilpotent Lie groups that are not Carnot gradable, even if the tools at hand are much more limited.

One way of obtaining interesting examples of nilpotent Lie groups that are not Carnot gradable is a general construction called a {\em central product}. 
Given two Lie algebras $ \mathfrak{k}$ and $ \mathfrak{l}$, central subspaces $ \mathfrak{z}\subset  \mathfrak{k}$ and $ \mathfrak{z'}\subset  \mathfrak{l}$, and an isomorphism $\theta:  \mathfrak{z}\to  \mathfrak{z}'$, we define the central product $\mathfrak{g}= \mathfrak{k}\times_\theta  \mathfrak{l}$ to be the quotient of the direct product $\mathfrak{k}\times \mathfrak{l}$ by the central ideal $\{(z,-\theta(z)); z\in  \mathfrak{z}\}$.

Let $k$ (resp.\ $l$) be the maximal integer such that $ \mathfrak{z}$ (resp.\ $ \mathfrak{z}'$) is contained in the $k$-th (resp. $l$-th) term of the lower central series of $ \mathfrak{k}$ (resp.\ $ \mathfrak{l}$). If $k>l\geqslant 2$ and $\mathfrak{k}$ and $\mathfrak{l}$ are Carnot graded with 1-dimensional centres, it is easy to check that the Lie algebra $ \mathfrak{g}$ is not Carnot gradable and that $\mathsf{gr}(\mathfrak{g})$ is isomorphic to the direct product $\mathfrak{k}\times (\mathfrak{l}/ \mathfrak{z}')$. 

To introduce the explicit family of groups that will form our main object of study, we start by recalling a classical class of Carnot graded Lie algebras.

\begin{definition}
The standard filiform $p$-nilpotent Lie algebra $\mathfrak l_p$ is the step $(p-1)$ nilpotent Lie algebra of dimension $p$ with basis 
$\left\{ x_1,x_2,\ldots, x_p\right\}$ satisfying $[x_1,x_i]=x_{i+1}$ for $2\leqslant i\leqslant p-1$ and $[x_i,x_j]=0$ for $1<i\leqslant j\leqslant p$ or if $(i,j)=(1,p)$. 
\end{definition}

We denote by $L_p$ the corresponding simply connected Lie group. The semi-direct product $\Lambda_p\cong \ZZ^{p-1} \rtimes_{\phi} \ZZ$, with $\phi(x_1)(x_i)= x_{i+1}$, $2\leqslant i \leqslant p-1$, and $\phi(x_1)(x_p)=0$, defines a lattice in $L_p$, where we denote by $x_1$ the generator of $\ZZ$ and by $x_2,\dots, x_p$ the generators of $\ZZ^{p-1}$. This provides us with a natural presentation $\mathcal{P}(\Lambda_p)$ of $\Lambda_p$ which we will use later.\footnote{Note that using the same notation for the generators of the lattice $\Lambda_p$ and the generators of the Lie algebra $\mathfrak{l}_p$ will not cause any confusion, as it will always be clear from context which one of the two we are working with.}

If $p\geqslant 3$, the center of $ \mathfrak{l}_p$ is the one dimensional subalgebra spanned by $z:=x_p$. 
For $3\leqslant q\leqslant p$ we define the Lie algebra $ \mathfrak{g}_{p,q}$ to be the central product (defined unambiguously) of $ \mathfrak{l}_p$ and $ \mathfrak{l}_q$.
 We let $G_{p,q}$ be the corresponding simply connected Lie group.
$G_{p,q}$ admits a uniform lattice $\Gamma_{p,q}$ which is simply the central product of $\Lambda_p$ and $\Lambda_q.$ As a concrete example, observe that $G_{3,3}$ and $\Gamma_{3,3}$ are the $5$-dimensional Heisenberg group $\HH_5(\mathbf R)$ and its integer lattice $\HH_5(\mathbf Z)$ respectively.  

The groups $G_{p,q}$ and their corresponding Lie algebras $\mathfrak{g}_{p,q}$ will form our main object of study in this paper; in particular the cases when $q=p-1$ or $q=p$. A key motivation for this is that the Lie algebras $\mathfrak{g}_{p,q}$ for $q,p\geqslant 3$ are Carnot gradable if and only if $q=p$ and thus that the corresponding groups $G_{p,q}$ are not isomorphic to their asymptotic cones if $q\neq p$. Indeed, for $2< q <p$, the associated Carnot-graded Lie algebra $\mathsf{gr}(\mathfrak{g}_{p,q})$ is isomorphic to the direct product $ \mathfrak{l}_p\times  \mathfrak{l}_{q-1}$ (note that $ \mathfrak{l}_2=\mathbf R^2$) and thus $\mathsf{gr}(G_{p,q})\cong L_p \times L_{q-1}$. Moreover, we observe that $G_{p,q}$ and thus $\mathsf{gr}(G_{p,q})$ are $\max(p-1,q-1)$-step nilpotent. We will now proceed to exploit the difference between $G_{p,q}$ and $L_p\times L_{q-1}$ to reveal the first phenomenon from the abstract.

\subsection{A family of pairs of cone equivalent groups with different Dehn functions}
We shall use the following notation\footnote{We emphasize that in contrast to a common convention in the setting of Dehn functions we do not allow for a linear term in the definition of $\preccurlyeq$. This has two reasons: {\emph{(i)}} we do not consider any Dehn functions of hyperbolic groups, and {\emph{(ii)}} we require this stronger form of equivalence in the context of sublinear bilipschitz equivalence below.}: if $f,g$ are functions defined on $\mathbf{Z}_{\geqslant 0}$ we write $f(n) \preccurlyeq g(n)$ if $ \vert f(n) \vert \leqslant A \vert g(An + A) \vert +A$ for some $A \geqslant 0$, and $f(n) \asymp g(n)$ if $f(n) \preccurlyeq g(n) \preccurlyeq f(n)$.
Finally, $f(n) \prec g(n)$ means that $f(n) \preccurlyeq g(n)$ holds but $f(n) \asymp g(n)$ does not.

\vspace{.2cm}

Our main result is a computation of the Dehn functions of the groups $G_{p,p}$ and $G_{p,p-1}$:

\begin{thm}
\label{thmIntro:Main}
For all $p\geqslant 4$, $\delta_{G_{p,p}}(n) \asymp n^{p-1}$ and $\delta_{G_{p,p-1}}(n) \asymp n^{p-1}$.
\end{thm}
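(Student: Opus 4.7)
The proof naturally splits into four cases: upper and lower bounds for each of $G_{p,p}$ and $G_{p,p-1}$. The plan is to treat the two upper bounds in parallel using a central-extension argument, and the two lower bounds in parallel using the closed $2$-form method.

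\textbf{Upper bounds.} For both values $q\in\{p-1,p\}$ one has a central extension
\[ 1\to Z\to G_{p,q}\to L_{p-1}\times L_{q-1}\to 1 \]
whose quotient $B:=L_{p-1}\times L_{q-1}$ is Carnot-graded of step $p-2$. Pittet's theorem therefore gives $\delta_B(n)\preccurlyeq n^{p-1}$. Given a null-homotopic loop $\gamma$ of length $n$ in $G_{p,q}$, I would (i) project to $\bar\gamma\subset B$ and fill it by a disk $\bar D$ of area $\preccurlyeq n^{p-1}$, (ii) lift $\bar D$ horizontally to a disk $D\subset G_{p,q}$ whose boundary differs from $\gamma$ by a central word $z^k$ with $|k|$ bounded by the integral over $\bar D$ of the bounded left-invariant extension $2$-cocycle, giving $|k|\preccurlyeq \Area(\bar D)\preccurlyeq n^{p-1}$, and (iii) fill $z^k$ by expressing $z$ as an iterated commutator of length $p-1$, incurring area $\preccurlyeq k\preccurlyeq n^{p-1}$. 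Summing these contributions yields $\delta_{G_{p,q}}(n)\preccurlyeq n^{p-1}$.

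\textbf{Lower bounds.} To obtain $\delta_{G_{p,q}}(n)\succcurlyeq n^{p-1}$, I would apply the closed-$2$-form technique of Gromov and Young. Choose a bounded left-invariant $2$-form $\omega$ on $G_{p,q}$ detecting the central direction $z$, together with a left-invariant primitive $\alpha$ built from the structure constants of the filiform subalgebra $\mathfrak{l}_p$. Take test loops $\gamma_n$ of length $\asymp n$ obtained from the scaled iterated commutator $[x_1^n,[x_1^n,\dots,[x_1^n,x_2^n]]]$ with $p-1$ nestings, which produces a central displacement of size $\asymp n^{p-1}$. By Stokes' theorem any filling disk $D$ of $\gamma_n$ satisfies $\int_D\omega=\int_{\gamma_n}\alpha\asymp n^{p-1}$; combined with $\|\omega\|_\infty = O(1)$, this forces $\Area(D)\succcurlyeq n^{p-1}$.

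\textbf{Main obstacle.} The genuinely hard case is the upper bound for $G_{p,p-1}$, because this group is not Carnot-gradable, so the general Gromov--Pittet filling machinery does not apply directly to it. The asymmetric weights of the two filiform factors $\mathfrak{l}_p$ and $\mathfrak{l}_{p-1}$ in the central product must be delicately balanced in step (ii) above so that the integration of the central cocycle, which a priori could pick up a polynomial factor in $n$ coming from the unbalanced position in $B$, does not produce an $n^p$ term. Engineering this non-gradable, asymmetric cancellation is the technical core of the theorem and is precisely what accounts for the strict inequality $\delta_{G_{p,p-1}}(n)\prec\delta_{\mathsf{gr}(G_{p,p-1})}(n)\asymp n^p$ announced in the abstract.
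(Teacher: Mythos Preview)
Your upper-bound argument contains a fatal gap, and it is not where you locate it. Run steps (i)--(iii) verbatim on the central extension $1\to Z\to L_p\to L_{p-1}\to 1$: the quotient has step $p-2$, so $\delta_{L_{p-1}}\asymp n^{p-1}$, and $z$ is a $(p-1)$-fold iterated commutator in $L_p$ exactly as in $G_{p,q}$. Your argument would give $\delta_{L_p}(n)\preccurlyeq n^{p-1}$, but $\delta_{L_p}\asymp n^p$. The failure is in step (ii): a horizontal lift of a $2$-disk does not exist, because the horizontal distribution on a non-abelian nilpotent group is bracket-generating, hence nowhere integrable (in $H_3$ it is the standard contact structure). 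If you lift combinatorially instead, each $2$-cell of $\bar D$ using the relator ``$z=1$'' becomes a hole in $G$; since $\gamma$ is null-homotopic the signed count $k$ is in fact zero, and the real cost is a transport term of order $(\text{number of holes})\times(\text{diameter})\asymp n^{p-1}\cdot n=n^p$. Nothing in your outline distinguishes $G_{p,q}$ from $L_p$ at this point.

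The paper's upper bound is obtained by a completely different route: induction on $p$ via the embedding $G_{p-1,p-1}\hookrightarrow G_{p,p-1}$ and the inductive hypothesis $\delta_{G_{p-1,p-1}}\preccurlyeq n^{p-2}$. The mechanism absent in $L_p$ is that a central word of length $n$ in the $x$-generators can be rewritten as a word in the $y$-generators at cost $\lesssim n^{p-2}$, after which it commutes freely with $x$-words. Making this work requires the long chain of commuting and cancelling lemmas of Sections~5--6; the warm-up case $p=4$ in Section~4 already shows why a one-line central-extension argument cannot suffice.

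Your lower-bound sketch is closer but still incomplete. First, a left-invariant primitive $\alpha$ integrates to zero over any null-homotopic word in $x_1,x_2$ (each step contributes $\alpha_0$ evaluated on a first-layer generator, and exponent sums vanish); you need $d\alpha$ invariant with $\alpha$ itself non-invariant. More substantively, for $G_{p,p-1}$ with $p$ odd the paper proves that \emph{no} left-invariant closed $2$-form yields $n^{p-1}$: there is no $(p-1)$-central extension, and the centralized Dehn function is only $n^{p-2}$ (Proposition~\ref{lem:no-central-extension}, Theorem~\ref{propIntro:centralDehn}). The paper obtains the sharp lower bound by constructing a non-invariant $1$-form $\beta_{-1}$ on $L_{p-1}$ whose differential is merely bounded (it switches sign across a hyperplane) and computing $\int_{\Omega_p(n)}\beta_{-1}\asymp n^{p-1}$ explicitly (Section~\ref{sec:lower-bounds-forms}).
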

 
On the other hand it follows from classical arguments that $\mathsf{gr}(G_{p,p-1})\cong L_p\times L_{p-2}$ has Dehn function $\asymp n^{p}$. Hence we deduce from Pansu's Theorem the following corollary.

\begin{cor}
\label{cor:cone-equivalent-groups-with-different-Dehn}
For every $r\geqslant 3$ there is a pair of finitely generated (or simply connected Lie) $r$-nilpotent groups with bilipschitz asymptotic cones but whose Dehn functions have different growth types.
\end{cor}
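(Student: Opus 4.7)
My plan is to take $p = r+1 \geqslant 4$ and use the pair $(G_{p,p-1}, L_p \times L_{p-2})$ in the Lie case, together with the corresponding uniform lattices $(\Gamma_{p,p-1}, \Lambda_p \times \Lambda_{p-2})$ in the finitely generated case. Both members of each pair are $(p-1)$-step and hence $r$-step nilpotent, and the identification $\mathsf{gr}(G_{p,p-1}) \cong L_p \times L_{p-2}$ noted just before the Corollary is the key input.

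To see that the two groups have bi-Lipschitz asymptotic cones I would appeal to the first half of Theorem~\ref{thm:Pansu}: the asymptotic cone of a simply connected nilpotent Lie group $G$ (with any left-invariant word metric associated to a compact generating subset) is $\mathsf{gr}(G)$ equipped with some left-invariant sub-Finsler metric. Applied to $G_{p,p-1}$ this yields $L_p \times L_{p-2}$ with some sub-Finsler metric, and applied to $L_p \times L_{p-2}$ (which is its own associated graded) it yields $L_p \times L_{p-2}$ with possibly a different sub-Finsler metric. Any two left-invariant sub-Finsler metrics on a fixed simply connected stratified Lie group are bi-Lipschitz equivalent, since each is determined by a norm on the first stratum and all norms on a finite-dimensional vector space are equivalent; this gives cone equivalence.

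It remains to distinguish the Dehn functions. Theorem~\ref{thmIntro:Main} gives $\delta_{G_{p,p-1}}(n) \asymp n^{p-1}$. For the Carnot target I would combine the classical equivalence $\delta_{L_p}(n) \asymp n^p$ (see e.g.\ \cite{PittetIsopNil, GerstenRileyHolt}) with the general bounds $\max\{\delta_A(n), \delta_B(n)\} \preccurlyeq \delta_{A\times B}(n) \preccurlyeq \max\{\delta_A(n), \delta_B(n), n^2\}$ for direct products, which together yield $\delta_{L_p \times L_{p-2}}(n) \asymp n^p$. Since $p \geqslant 4$ the inequality $n^{p-1} \prec n^p$ is strict, so the two Dehn functions have different growth types. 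For the finitely generated version one invokes the quasi-isometry invariance of the Dehn function together with the fact that $\Gamma_{p,p-1}$ and $\Lambda_p \times \Lambda_{p-2}$ are uniform lattices in their respective ambient Lie groups, so both cone equivalence and the strict Dehn function inequality transfer. The only real content lies in Theorem~\ref{thmIntro:Main}; the passage from it to the Corollary is essentially bookkeeping, the one mild technicality being the uniqueness-up-to-bi-Lipschitz of sub-Finsler metrics mentioned above.
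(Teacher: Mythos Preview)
Your proposal is correct and follows essentially the same route as the paper: take the pair $(G_{p,p-1},\mathsf{gr}(G_{p,p-1})=L_p\times L_{p-2})$, invoke Pansu's Theorem for cone equivalence, and combine Theorem~\ref{thmIntro:Main} with the classical computation $\delta_{L_p\times L_{p-2}}(n)\asymp n^p$. Your extra detail on sub-Finsler metrics and on the direct-product Dehn bounds just spells out what the paper calls ``classical arguments'' and ``Pansu's Theorem''.
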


Note that for $p=3$ Theorem \ref{thmIntro:Main} does not hold in the $p-1$ case: while the Dehn function of $G_{3,3}  = \HH_5(\mathbf{R})$ is known to be quadratic \cite{Allcock, OlsSapCombDehn}, the Dehn function of $G_{3,2}\cong \HH_3(\mathbf{R})\times \RR$ is cubic. We also emphasize that the fact that $G_{3,3}$ does have quadratic Dehn function will later form the basis for our induction argument in the proof of the upper bounds in Theorem \ref{thmIntro:Main}.

It has been known since Gromov \cite{AsInv} that topological properties of asymptotic cones impose restrictions on Dehn functions (e.g. if the asymptotic cone is a real tree, resp. simply connected, then the Dehn function is linear, resp. polynomially bounded). A consequence of a theorem of Papasoglu is that if $\delta_{\mathsf{gr}(G)} \lesssim n^d$, then $\delta_G(n) \lesssim n^{d+\varepsilon}$ for every $\varepsilon>0$ (\cite[2.7]{DrutuRemplissage}, \cite{Papasoglu}). Corollary \ref{cor:cone-equivalent-groups-with-different-Dehn} shows that there is no converse to this theorem, proving that the fine behaviour of the Dehn function is not always captured by the asymptotic cone.
The fact that central products can have a lower Dehn function than their factors has been noticed by Olshanskii and Sapir \cite{OlsSapCombDehn}, and  by Young \cite{YoungFillingNil} for a large class of examples. However, in all situations studied by these authors, the groups in question are actually step $2$ nilpotent and therefore Carnot gradable. 

The lowest-dimensional occurence of the phenomenon described by Corollary \ref{cor:cone-equivalent-groups-with-different-Dehn} is in dimension $6$. 
Indeed, the group $G_{4,3}$ shares its asymptotic cone with two other $6$-dimensional step $3$ groups and the Dehn function of $G_{4,3}$ is cubic whereas for the two others it is quartic. We refer to \S \ref{sec:low-dimension} for a detailed discussion of all $6$-dimensional nilpotent Lie algebras and the Dehn functions of their associated simply connected nilpotent Lie groups.

\subsection{Sublinear bilipschitz equivalence}\label{secIntro:SBE}
Considering the Dehn function of the group $G_{4,3}$ was suggested by Cornulier and triggered our work \cite[Question 6.20]{cornulier2017sublinear}. Cornulier's motivation is coming from the study of sublinear bilipschitz equivalences between nilpotent groups. 
He proves that for every pair $(G,\mathsf{gr}(G))$ where $G$ has step $c$ one can choose $v$ of the form $v(t)\asymp t^e$ with $e=1-1/c$ \cite[Theorem 1.21]{cornulier2017sublinear}.

The Dehn function is well-known to be invariant under quasiisometry and Cornulier observed a weaker stability result for the Dehn function under SBE: having Dehn functions with different exponents implies an asymptotic lower bound $v\succcurlyeq t^e$ for some $e>0$ on the possible functions $v$ such that there can exist an $O(v)$-SBE.  With this in mind he suggested the pair $(G_{4,3},L_4\times \Z^2)$ as a possible example satisfying such a lower bound \cite[Example 6.19]{cornulier2017sublinear}.  
We confirm Cornulier's intuition and, more generally, prove the following result.

\begin{thm}\label{thmIntro:SublinearBIlip}
Let $p \geqslant 4$. If $0 \leqslant e \leqslant \frac{1}{2p}$ then there is no sublinear bilipschitz equivalence between $G_{p,p-1}$ and $L_p\times L_{p-2}$ with $v(t) = O(t^e)$.
\end{thm}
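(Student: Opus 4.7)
I propose to argue by contradiction. Suppose that a $v$-SBE $F\colon G_{p,p-1}\to L_p\times L_{p-2}$ exists with $v(t)=O(t^e)$ for some $e<1/(2p-1)$. Set $X:=G_{p,p-1}$ and $Y:=L_p\times L_{p-2}$. The two key inputs are the Dehn-function asymptotics $\delta_X(n)\asymp n^{p-1}$ from Theorem~\ref{thmIntro:Main} together with $\delta_Y(n)\asymp n^p$, which follows from classical computations on products of filiform Lie groups. The plan is to transport, via $F$, an extremal combinatorial filling of a loop from $X$ to $Y$, producing a filling in $Y$ of a Dehn-function-extremal loop of length $n$ with area strictly less than $c\,n^p$ for $n$ large, which contradicts the lower bound on $\delta_Y$.

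The engine is a filling-transport inequality along a SBE, in the spirit of Cornulier's observations on the quasi-isometric stability of Dehn functions. Given a loop $\gamma\subset Y$ of length $n$, I transport it via a quasi-inverse of $F$ to a loop $\gamma'\subset X$ of length $\leqslant Mn+v(n)=O(n)$; fill $\gamma'$ in $X$ by a combinatorial disc $\Sigma$ of area $\lesssim n^{p-1}$ (Theorem~\ref{thmIntro:Main} applied to the uniform lattice $\Gamma_{p,p-1}$ with a presentation built from $\mathcal{P}(\Lambda_p)$); then push $\Sigma$ cell by cell to $Y$ via $F$. Because $F$ is only coarsely $M$-Lipschitz with additive error $v$, two corrections appear: a \emph{collar} of area $\lesssim n\,v(n)$ joining $\gamma$ to $F(\gamma')$, and for each cell transported, a gap loop whose length is controlled by $v$ evaluated at the distance of the cell to the basepoint. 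After carrying out the construction at an intermediate coarsening scale $\rho\in[1,n]$ and filling each gap loop using $\delta_Y$, one obtains a master inequality of the form
\[
\delta_Y(n)\;\lesssim\;n\,v(n)\;+\;\tfrac{\delta_X(Cn)}{\rho^{2}}\,\delta_Y\bigl(C(\rho+v(n))\bigr).
\]
Substituting the two Dehn-function asymptotics and optimising $\rho$ converts this into the asymptotic statement $v(n)^{2p-1}\succcurlyeq n$, equivalent to $v(n)\succcurlyeq n^{1/(2p-1)}$; this is incompatible with $v(t)=O(t^e)$ for $e<1/(2p-1)$.

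The main obstacle is to execute the filling transport rigorously when $F$ is only coarsely defined. I expect to first descend to the uniform lattices $\Gamma_{p,p-1}$ and $\Lambda_p\times\Lambda_{p-2}$, build a Van Kampen diagram there realising the Dehn function bound of Theorem~\ref{thmIntro:Main}, transport its $1$-skeleton through $F$, and then verify that the transported skeleton together with the gap fillings genuinely assemble into a Van Kampen diagram for the image loop. A subsidiary difficulty is the precise dependence of the bound on the distance of a cell to the basepoint --- cells near the boundary of $\Sigma$ are only $v$-distorted at smaller scales than cells in its interior --- and tracking this carefully is what enables the optimisation to produce the balanced exponent $1/(2p-1)=1/(d_X+d_Y)$ with $d_X=p-1$ and $d_Y=p$. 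The fact that we require the sharp asymptotic $\delta_X(n)\asymp n^{p-1}$, and not merely the weaker upper bound $\preccurlyeq n^{p-1}$ coming from nilpotency, is what makes Theorem~\ref{thmIntro:SublinearBIlip} a genuine consequence of Theorem~\ref{thmIntro:Main} rather than of general principles alone.
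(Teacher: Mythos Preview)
Your overall strategy --- transport a filling through the SBE and derive a contradiction from the two Dehn functions --- is the same as the paper's, but there is a genuine gap. The exponent $1/(2p-1)$ cannot be obtained from $\delta_{G_{p,p-1}}(n)\asymp n^{p-1}$ alone; one needs the stronger input that $G_{p,p-1}$ admits $(n^{p-1},n)$ as a \emph{filling pair}, i.e.\ that loops of length $n$ can be filled with area $\lesssim n^{p-1}$ \emph{and filling diameter $\lesssim n$}. This is the content of the paper's Main Theorem~\ref{thm:Upperbound}, and the paper explicitly remarks (in the introduction, just before the statement of Theorem~\ref{thmIntro:SublinearBIlip}) that using only the Dehn function yields the much weaker bound $e\geqslant 1/(p^2+p-1)$. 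Your master inequality writes $v(n)$ for the distortion suffered by each transported cell, which is only justified if every cell of $\Sigma$ lies at distance $\lesssim n$ from the basepoint --- precisely the linear-diameter statement you have not invoked. Without it, interior cells of $\Sigma$ can sit at distance up to $\sim n^{p-1}$ from the basepoint, so $v$ evaluated there is of order $n^{e(p-1)}$ rather than $n^e$, and the ``careful tracking'' of cell positions you allude to cannot compensate: there is simply no a priori control on how far interior cells wander.

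Two smaller slips. First, the transported loop $\gamma'$ has length $\lesssim n(M+v(n))\sim n^{1+e}$, not $Mn+v(n)=O(n)$, since each of the $\sim n$ unit edges of $\gamma$ is sent to a segment of length $\lesssim M+v(n)$. Second, your stated master inequality, even if granted, does not optimise to $v(n)^{2p-1}\succcurlyeq n$; substituting $\rho=v(n)$ gives a different exponent. The paper's actual argument avoids the coarsening parameter $\rho$ altogether: Lemma~\ref{lem:filling-transfer-SBE} shows that an $O(r^e)$-SBE between groups with filling pairs $(n^d,n^s)$ and $(n^{d'},n^{s'})$ forces $(n^{d'(1+e)+ds'e},\ldots)$ to be a filling pair for the first; with $G=L_p\times L_{p-2}$, $G'=G_{p,p-1}$, $d=p$, $d'=p-1$, $s'=1$ this yields $p\leqslant(p-1)(1+e)+pe$, hence $e\geqslant 1/(2p-1)$. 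The crucial term $pe=d\cdot s'\cdot e$ is exactly where the linear filling diameter $s'=1$ enters.
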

Actually the precise exponent in Theorem \ref{thmIntro:SublinearBIlip} is deduced from a slightly stronger version of Theorem \ref{thmIntro:Main}, saying that the filling occurs in a ball of radius comparable to the length of the loop\footnote{Only using the Dehn function would provide us with the much weaker lower bound of $\frac{1}{p^2}$ on the exponent.}.
This lower bound should be compared with the asymptotic upper bound of $t^{\frac{p-2}{p-1}}$ following from Cornulier's general estimates. It would be interesting to understand the precise asymptotics of the exponent as a function of $p$ as $p  \to + \infty$: in particular, does it tend to zero?  

\begin{remark}\label{rem:Shalom}
As already mentioned, the fact that the groups considered in Theorem \ref{thmIntro:SublinearBIlip} (or their lattices) are not quasiisometric is also a consequence of  \cite[Theorem 1.2]{ShalomHarmonic}. 
Indeed we shall see that $b_2(\Lambda_{p}\times  \Lambda_{p-2}) = b_2( \Gamma_{p,p-1}) +\epsilon_p$, where $\epsilon_p$ is $1$ or $2$ according to whether $p$ is even or odd (see Lemma \ref{lem:H^2g_p,q}). 
\end{remark}

\subsection{Centralized and regular Dehn functions  differ for nilpotent groups}
We now recall the algebraic definition of the Dehn function. Given a presentation (not necessarily finite) $\langle S \mid R\rangle$ of a group $G$, one can define its Dehn function as follows: we call an element $w$ of the free group $F_S$ generated by $S$ {\emph{null-homotopic} (in $G$)} if it represents the trivial element in $G$. For every null-homotopic word $w\in F_S$ we define $\Area(w)$ to be the minimal integer $k$ such that
$$w=\prod_{i=1}^k u_i^{-1}r_iu_i,$$
where $u_i\in F_S$ and $r_i\in R^{\pm 1}$.
The Dehn function $\delta_{G,S,R}(n)$ is the (possibly infinite) infimum of $\Area(w)$ over all null-homotopic words $w\in F_S$ of length at most $n$. 
If the group is finitely presented, then the Dehn function takes finite values and its asymptotic behavior  does not depend on the choice of finite presentation. A similar statement holds for compactly presented groups (see \S \ref{sec:eqn-defs-Dehn-fct}).

In \cite{BaumslagMillerShort} Baumslag, Miller and Short introduce the closely related notion of centralized Dehn function of a presentation $\langle S \mid R\rangle$ of a group $G$, which they define as follows:
\begin{definition}
Denote $\mathfrak R$ the normal subgroup of $F_S $ generated by $R$. Given a null-homotopic word $w\in F_S $, we define its central area $\Area^{\mathrm{cent}}(w)$   to be the minimal integer $k$ such that $$w=\prod_{i=1}^k r_i$$ in $\mathfrak R /[F_S ,\mathfrak R]$, with $r_i\in R^{\pm 1}$. 
The centralized Dehn function $\delta^{\mathrm{cent}}_{G,S,R}(n)$ is the (possibly infinite) infimum of $\Area^{\mathrm{cent}}(w)$ over all null-homotopic words $w$ of length at most $n$. 
\end{definition}
As for the Dehn function, one can show that the asymptotic behavior of the centralized Dehn function of a finitely presented group does not depend on a specific choice of finite presentation, so we simply denote it by $\delta_G^{\mathrm{cent}}$.
Note that we have $\delta_G^{\mathrm{cent}}\leqslant \delta_G$ by definition. It turns out that $\delta^{\mathrm{cent}}_G$ is in general easier to estimate as it is closely related to the second cohomology group of $G$, or, equivalently, to the central extensions of $G$. 
 In particular we have the following useful characterization of the centralized Dehn function for torsion-free nilpotent groups.

\begin{proposition}[see Proposition \ref{prop:Centralrcentralextensions}]
Let $\Gamma$ be a torsion-free nilpotent group and let $\mathfrak g$ be the Lie algebra of its Malcev completion. Then $\delta^{\mathrm{cent}}_\Gamma(n)\asymp n^r$, where $r$ is the largest integer such that $\mathfrak g$ admits a central extension $\RR\to \tilde{\mathfrak g}\to \mathfrak g$ whose kernel belongs to $\gamma_{r}  \tilde{\mathfrak g}$.
\end{proposition}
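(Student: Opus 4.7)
The plan is to realize $\Area^{\mathrm{cent}}$ as a word length in a finitely generated abelian group, to identify the cohomological integer $r$ with the depth of that abelian group inside a universal central extension, and then to apply the classical Malcev--Guivarc'h--Pansu growth estimates in both directions. Since $\Gamma$ is finitely generated, torsion-free and nilpotent, it is polycyclic and hence finitely presented; fix a finite presentation $\langle S \mid R\rangle$, write $\mathfrak R\triangleleft F_S$ for the normal closure of $R$, and form the relation module $M:=\mathfrak R/[F_S,\mathfrak R]$ (a finitely generated abelian group, since $R$ is finite) together with $\tilde F:=F_S/[F_S,\mathfrak R]$. By construction $M$ is central in $\tilde F$ and $\tilde F/M=\Gamma$, so $\tilde F$ is finitely generated nilpotent of step at most $s+1$, where $s$ is the step of $\Gamma$; let $\tilde{\mathfrak f}$ denote its real Malcev Lie algebra. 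By definition $\Area^{\mathrm{cent}}(w)$ equals, up to a bounded multiplicative constant, the word length of the image $[w]\in M$ relative to the image of $R$; and if $w\in F_S$ has length $\leqslant n$ and is null-homotopic in $\Gamma$, then its image $\tilde w$ lies in $M$ with $|\tilde w|_{\tilde F}\leqslant n$.

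Every central extension $\RR\to\tilde{\mathfrak g}\to\mathfrak g$ is obtained (up to isomorphism) from $\tilde{\mathfrak f}$ by pushing forward along a nonzero linear form $\ell\colon M_\RR:=M\otimes\RR\to\RR$: lifting the generators of $\mathfrak g$ to $\tilde{\mathfrak g}$ sends the relators of $\Gamma$ to elements of the central $\RR$, and this defines $\ell$ on the image of $R$, hence on all of $M_\RR$. A direct computation in $\tilde{\mathfrak g}_\ell:=\tilde{\mathfrak f}/\ker\ell$ shows that its central kernel lies in $\gamma_i\tilde{\mathfrak g}_\ell$ exactly when $\ell$ does not vanish on $M_\RR\cap\gamma_i\tilde{\mathfrak f}$, so the integer $r$ in the statement equals
\[
r=\max\bigl\{\,i\colon M_\RR\cap\gamma_i\tilde{\mathfrak f}\neq 0\,\bigr\},
\]
which is finite, bounded above by $s+1$.

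For the upper bound, the Malcev--Guivarc'h--Pansu growth estimate applied to the finitely generated nilpotent group $\tilde F$ asserts that any $g\in\gamma_i\tilde F$ with $|g|_{\tilde F}\leqslant n$ has image in $\gamma_i\tilde F/\gamma_{i+1}\tilde F$ of abelian norm $\leqslant C_i n^i$. Choosing a basis of $M$ modulo torsion adapted to the filtration $M\cap\gamma_i\tilde F$ and applying this coordinate-wise to $[w]$ yields $\|[w]\|_M\leqslant Cn^r$, hence $\delta^{\mathrm{cent}}_\Gamma(n)\preccurlyeq n^r$. For the matching lower bound, pick $m_0\in M\cap\gamma_r\tilde F$ of infinite order (which exists by the identification of $r$); by Pansu's estimate in the reverse direction $|m_0^N|_{\tilde F}\asymp N^{1/r}$, so for each $n$ one can choose $N_n\asymp n^r$ and a word $w_n\in F_S$ of length $\leqslant n$ whose image in $\tilde F$ is $m_0^{N_n}$. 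Then $w_n$ is null-homotopic in $\Gamma$, and picking a rational functional $\ell\colon M\to\ZZ$ with $\ell(m_0)\neq 0$, every expression of $[w_n]$ as a sum of $k$ elements of the form $\pm[\rho]$ with $\rho\in R$ gives $N_n|\ell(m_0)|=|\ell([w_n])|\leqslant k\max_{\rho\in R}|\ell([\rho])|$, so $\Area^{\mathrm{cent}}(w_n)\succcurlyeq N_n\asymp n^r$.

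The conceptual core is the identification of $r$ with the depth of $M_\RR$ in the universal central extension $\tilde{\mathfrak f}$ of $\mathfrak g$; the technical core is the classical Malcev--Guivarc'h--Pansu estimate for word norms of deep central elements in a finitely generated nilpotent group, applied to $\tilde F$ in both directions. The subtlest point is ensuring that the depth filtration on the integer lattice $M$ is compatible with the one on the real vector space $M_\RR$, which uses that $\tilde F$ is finitely generated nilpotent so that passing to the Malcev completion preserves the lower central filtration restricted to the central subgroup $M$.
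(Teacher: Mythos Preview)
Your proof is correct and follows essentially the same route as the paper's: form the universal central extension $\widetilde\Gamma=F_S/[F_S,\mathfrak R]$ with central subgroup $Z=\mathfrak R/[F_S,\mathfrak R]$, identify $\delta^{\mathrm{cent}}_\Gamma$ with the distortion of $Z$ in $\widetilde\Gamma$, and then use that every one-dimensional central extension of $\mathfrak g$ is a pushforward of this universal one to identify $r$ with the depth of $Z$ (equivalently $M_{\mathbf R}$) in the lower central series. The paper compresses your two growth-estimate paragraphs into a citation of Osin's computation of subgroup distortion in nilpotent groups \cite{Osindistort}, whereas you unpack this via the Bass--Guivarc'h coordinate bounds in both directions; these are equivalent formulations of the same fact. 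Your explicit handling of the linear-form pushforward and the depth identification $r=\max\{i:M_{\mathbf R}\cap\gamma_i\tilde{\mathfrak f}\neq 0\}$ makes the universality step more transparent than the paper's terse reference to \cite[Lemma~5]{YoungAverage}, and you correctly flag the one genuinely delicate point, namely the compatibility of the integral and real filtrations on $M$.
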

Such a central extension will be called $r$-central in the sequel. 
The centralized Dehn function was used in \cite{BaumslagMillerShort}  to obtain sharp lower bounds on the Dehn functions of certain nilpotent groups. In \cite{young2006scaled} Young mentions that for nilpotent groups it is unknown whether $\delta^{\mathrm{cent}}_\Gamma(n) \asymp \delta_\Gamma(n)$. Later Wenger exhibited a 2 step nilpotent group whose Dehn function strictly lies between quadratic and $n^2\log n$ \cite{Wenger}, therefore answering Young's question negatively. Here we show that even the growth exponents of the two functions can be different. 

\begin{thm}\label{propIntro:centralDehn}
Let $k$ be an integer $\geqslant 2$. We have $$\delta^{\mathrm{cent}}_{\Gamma_{2k,2k-1}}(n)\asymp \delta^{\mathrm{cent}}_{\Gamma_{2k+1,2k}}(n) \asymp n^{2k-1}.$$
Hence the Dehn function and the centralized Dehn function have different exponents for $\Gamma_{2k+1,2k}$.
\end{thm}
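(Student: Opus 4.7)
The plan is to reduce the computation to counting depths of central extensions via Proposition~\ref{prop:Centralrcentralextensions}: setting $\mathfrak{g} := \mathfrak{g}_{p,p-1}$ for $p \in \{2k, 2k+1\}$, we have $\delta^{\mathrm{cent}}_{\Gamma_{p,p-1}}(n) \asymp n^{r(p)}$, where $r(p)$ is the largest integer such that $\mathfrak{g}$ admits a central extension $\RR \to \tilde{\mathfrak{g}} \to \mathfrak{g}$ with kernel lying in $\gamma_{r(p)} \tilde{\mathfrak{g}}$. Since $\delta^{\mathrm{cent}} \leqslant \delta$ and $\delta_{\Gamma_{p,p-1}}(n) \asymp n^{p-1}$ by Theorem~\ref{thmIntro:Main}, we always have $r(p) \leqslant p-1$. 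The task is therefore to pin down $r(p) = 2k-1$ in each parity.

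For the general lower bound, we invoke the tautological surjection $\mathfrak{l}_p \times \mathfrak{l}_{p-1} \twoheadrightarrow \mathfrak{g}_{p,p-1}$ that identifies the two one-dimensional centers. Its one-dimensional kernel $\RR(a_p - b_{p-1})$ lies at depth $p-2$, since $b_{p-1} = \mathrm{ad}(b_1)^{p-3}(b_2) \in \gamma_{p-2}(\mathfrak{l}_{p-1})$. This yields $r(p) \geqslant p - 2$, which is already the correct value in the odd case $p = 2k+1$.

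For $p = 2k$ even we construct a genuinely deeper extension via an explicit 2-cocycle. Set
\[
\omega(a_i, a_{p+1-i}) := (-1)^i \quad (2 \leqslant i \leqslant p/2),
\]
with $\omega = 0$ on every other pair of basis elements (extended antisymmetrically). Evenness of $p$ is essential: it guarantees $i \neq p+1-i$ throughout, so antisymmetry is not violated. Jacobi-type cyclic sums vanish on basis triples: the only nontrivial triples are $(a_1, a_i, a_{p-i})$, where the two nonzero terms $\omega(a_{i+1}, a_{p-i})$ and $-\omega(a_{p-i+1}, a_i)$ cancel thanks to the sign rule. The associated central extension $\tilde{\mathfrak{g}} = \mathfrak{g} \oplus_\omega \RR \tilde{z}$ satisfies $\tilde{z} = [a_2, a_{p-1}]_{\tilde{\mathfrak{g}}} \in \gamma_{p-1} \tilde{\mathfrak{g}}$, since $a_2 \in \gamma_1$ and $a_{p-1} \in \gamma_{p-2}$. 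Hence $r(2k) \geqslant p - 1 = 2k - 1$.

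The hardest step, which I expect to be the main technical obstacle, is the matching upper bound $r(p) \leqslant p-2$ in the odd case $p = 2k+1$. Dually this amounts to showing that every 2-cycle in $F^{p-1} \wedge^2 \mathfrak{g} := \sum_{a+b \geqslant p-1} \gamma_a \wedge \gamma_b$ is a 2-boundary in the Chevalley--Eilenberg complex. The natural candidate $c_1 := \sum_{i=2}^{(p-1)/2} (-1)^i a_i \wedge a_{p+1-i}$ --- the odd analogue of the cocycle above --- turns out itself to be a boundary: it equals a telescoping combination of the images $d(a_1 \wedge a_j \wedge a_{p-j})$. The remaining potential depth-$(p-1)$ cycles involve the $b$-generators, the central element $z$, or mixed $a$--$b$ wedges, and should be dispatched either by direct construction of bounding 3-chains from the defining relations $[a_1, a_i] = a_{i+1}$, $[b_1, b_j] = b_{j+1}$, or, more structurally, via the Hochschild--Serre spectral sequence for the extension $\RR z \to \mathfrak{g} \to \mathfrak{l}_{p-1} \times \mathfrak{l}_{p-2}$ combined with Lemma~\ref{lem:H^2g_p,q}, which should show that the filtration $F^{p-1} H^2(\mathfrak{g})$ is zero precisely when $p$ is odd and is one-dimensional (spanned by the class of $\omega$) when $p$ is even.
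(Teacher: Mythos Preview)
Your approach is the same as the paper's: use Proposition~\ref{prop:Centralrcentralextensions} to reduce to computing the maximal depth $r(p)$ of a central extension of $\mathfrak{g}_{p,p-1}$, then pin this down via the structure of $H^2$. Steps 2--4 are correct and match the paper closely: the tautological extension $\mathfrak{l}_p\times\mathfrak{l}_{p-1}\to\mathfrak{g}_{p,p-1}$ is exactly Lemma~\ref{lem:direct-product-central-extension}, and your explicit cocycle $\omega$ for even $p$ is precisely the form $\nu_p$ appearing in the proof of Proposition~\ref{lem:no-central-extension}. (Your use of Theorem~\ref{thmIntro:Main} for the bound $r(p)\leqslant p-1$ is valid but heavier than needed: the paper obtains it from Lemma~\ref{prop:no-central-extension-max-degree}, a short direct argument on brackets in the central product.)

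The genuine gap is your step 5, the odd case $p=2k+1$. Your dualization to showing that every $2$-cycle in $F^{p-1}\bigwedge^2\mathfrak{g}$ is a boundary is correct in principle, but you only treat the single candidate $c_1$ and then defer the rest either to an unspecified direct computation or to Hochschild--Serre plus Lemma~\ref{lem:H^2g_p,q}. The latter will not work as stated: Betti numbers alone do not determine the depth filtration on $H^2$. The paper instead carries out a complete computation of $Z^2(\mathfrak{g}_{p,q},\mathbf{R})$ (equation~\eqref{eqn:Z2-and-B2-forgpq} in the proof of Proposition~\ref{lem:no-central-extension}) and then checks via the criterion of Proposition~\ref{lem:first-criterion-cohomology-to-central-extension} that no cocycle can witness a $(p-1)$-central extension when $p$ is odd. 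The decisive point, which your sketch misses, is not your $c_1$ but rather the element $x_1\wedge x_{p-1}$: it is the \emph{only} depth-$(p-1)$ wedge that pairs nontrivially with some basis element of $H^2$ (namely $[\xi_{1,p-1}]$), yet it fails to be a cycle because $[x_1,x_{p-1}]=z\neq 0$, and nothing else in $F^{p-1}$ can cancel this $z$, since the only other bracket producing $z$ is $[y_1,y_{q-1}]$, whose wedge $y_1\wedge y_{q-1}$ lies only in $F^{p-2}$. That is the heart of the odd-case obstruction.
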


\subsection{Structure of the paper} In \S \ref{sec:overview} we give an overview of the proof of our main results. In \S \ref{sec:eqn-defs-Dehn-fct} we introduce basic notions and results regarding compact presentations, Dehn functions and filling diameters. In \S \ref{sec:warmup} we prove the upper bound in Theorem \ref{thmIntro:Main} for $p=4$ as a warm-up for the general case. In \S \ref{sec:preliminaries} we set the stage for the proof of the upper bound in Theorem \ref{thmIntro:Main} for general $p$, by deriving an explicit compact presentation for $G_{p,k}$ and then proving several preliminary results satisfied by words in its generators. \S \ref{sec:upper-bound-Dehn} contains the proof of the upper bound in Theorem \ref{thmIntro:Main}. In \S \ref{sec:lower-bound-dehn} we explore the existence of central extensions of central products. In \S \ref{sec:lower-bounds-forms} we derive the lower bounds in Theorem \ref{thmIntro:Main} for all $p$, showing that for odd $p$ the lower bounds on the Dehn function of $G_{p,p-1}$ provided by the centralized Dehn function are not optimal, thus also completing the proof of Theorem \ref{propIntro:centralDehn}. \S \ref{sec:SBE} is concerned with applying our results in the theory of SBE's, leading to a proof of Theorem \ref{thmIntro:SublinearBIlip}. In \S \ref{sec:low-dimension} we give an overview of the Dehn functions of nilpotent groups of dimension less or equal to six. Finally we list some open questions and speculations arising from our work in \S \ref{sec:questions}.

\subsection{Conventions and notations}

\subsubsection*{Groups and Lie algebras}
When working with words $w(X)$ in the generators of a group $G$ with presentation $\mathcal{P}=\left\langle X\mid R\right\rangle$ we will be careful to distinguish equalities of words and equalities of their corresponding elements in the group. To do so, for words $w_1(X)$ and $w_2(X)$ we will write $w_1(X)=w_2(X)$ if they are equal as words and $w_1(X)\equiv w_2(X)$ (with respect to $\mathcal{P}$ or $G$) if they represent the same element of the group. Whenever this is not clear from context we will make sure to mention the presentation (or group) explicitly when using $\equiv$. 
	
We will write $[w]$ for the group element represented by a word $w$ if we want to explicitly distinguish it from the word. We will denote by $\ell(w)$ the word length of a word $w(X)$ and for a group element $g\in G$ by $|g|_{X}:=\mathrm{Cay}_{G,X}(1,g)$ the distance of $g$ from the origin in the Cayley graph.
	
We call a word $w(X)$ central if $[w]$ is a central element of the group $G$.

\subsubsection*{Asymptotic comparisons}
We shall use the notation $A\lesssim_a B$ to mean that there exists some $C<\infty$ only depending on $a$ such that $A\leqslant CB$. Similarly we denote $A\simeq_a B$ if $A\lesssim_a B$ and $B\lesssim_a A$. Sometimes we will also say $A$ is in $O_a(B)$ if $A\lesssim_a B$ and $A=O_a(B)$ if $A\simeq_a B$.

\begin{acknowledgements*}
We are grateful to Yves Cornulier and Christophe Pittet for helpful comments on a previous version of this paper. We also thank Francesca Tripaldi for helpful discussions.
We thank the anonymous referee whose comments led to significant improvements in the exposition of our work and results.  
\end{acknowledgements*}

\newpage

\tableofcontents

\section{Overview of the proof}\label{sec:overview}
\label{sec:Intro-moral-proof}
To provide the reader with an intuition for the proofs in this paper we now briefly explain the moral ideas behind why the groups $G_{p,p-1}$ satisfy the conclusions of Theorem \ref{thmIntro:Main} and Theorem \ref{propIntro:centralDehn}. 

The proof of Theorem \ref{thmIntro:Main} and Theorem \ref{propIntro:centralDehn} has three fundamental parts, which make up most of this paper: 
\begin{enumerate}
 \item the proof of the upper bound of $n^{p-1}$ on the Dehn functions of $G_{p,p-1}$ and $G_{p,p}$. This will make up by far the biggest part of this work and will be contained in \S \ref{sec:warmup} -- \S \ref{sec:upper-bound-Dehn}; 
 \item the proof that $G_{p,p-1}$ admits no $(p-1)$-central extension when $p$ is odd, which will be contained in \S \ref{sec:lower-bound-dehn};
 \item the proof that the Dehn function of $G_{p,p-1}$ is nevertheless bounded below by $n^{p-1}$, irrespectively of the parity of $p$, which will be contained in \S \ref{sec:lower-bounds-forms}.\vspace{.3cm}
\end{enumerate}

Parts (2) and (3) turn out to be easier to explain in the setting of Lie algebras, while we postpone most of the explanation of Part (1) to \S \ref{sec:Intro-sketch-MainThm}. So we will adopt the Lie algebra point of view here. 

We recall the notation $x_1, \dots, x_p=z$ for the standard generators of the Lie algebra $\mathfrak{l}_p$ of $L_p$ and we will denote by $x_1, \dots,x_{p-1}, x_p=z, y_1, \dots ,y_{q-1}, y_q=z$ the standard generators of the Lie algebra $\mathfrak{g}_{p,q}$\footnote{To avoid confusion, let us mention that when we work with the Lie group $G_{p,q}$ we will denote the generators of the $L_q$-factor by $y_1,y_{p-q+2}, \dots ,y_{p-1},y_p=z$, as this turns out to be more convenient, while for the Lie algebra setting the indices chosen here turn out to be easier to work with. The Lie algebra approach and thus this choice of indices will only appear in \S \ref{sec:lower-bound-dehn}.} of $G_{p,q}$ for $p\geqslant q \geqslant 3$. We denote its dual basis by $\xi_1,\dots,\xi_{p-1},\xi_p=\zeta,\eta_1,\dots,\eta_{q-1},\eta_q=\zeta$. We will restrict to the case $q=p-1$ for simplicity, even though parts of our subsequent arguments extend directly to general $q\in \left\{3,\dots, p-1\right\}$.\vspace{.3cm}

\subsection{The fundamental reason for why everything works}
At the base of all three parts is the existence of the central element $z$ which connects the two factors of the central product via the identification $z=\theta(x_p)=y_q$ in $\mathfrak{g}_{p,q}=\mathfrak{l}_p\times_\theta \mathfrak{l}_q$, respectively its group theoretic analogue.

From the Lie algebra point of view this comes into play as follows: the differential of $\zeta$ is $d\zeta = -\xi_1\wedge \xi_{p-1}$ in $\mathfrak{l}_{p}$, and thus in $\mathfrak{l}_p \times \mathfrak{l}_{p-2}=\mathsf{gr}(\mathfrak{g}_{p,p-1})$, but $d\zeta= -\xi_1 \wedge \xi_{p-1} - \eta_1\wedge \eta_{q-1}$ in $\mathfrak{g}_{p,q}$.

The computational consequence is that it will be more difficult for a form $\omega \in \bigwedge ^2 \mathfrak{g}_{p,q}^{\ast}$ to have vanishing exterior derivative if it has terms with a non-trivial $\zeta$ contribution than is the case in $\bigwedge ^2 \mathfrak{l}_{p}^{\ast}$. Indeed, $d\zeta$ being a linear combination of two basis elements means that its differential ``interacts non-trivially'' with more other basis elements than if it only had one summand.

From the group theory point of view the relation $z=x_p=y_q$ will enable us to move central words in the $x_i$ between factors, allowing us to commute them more easily with other words in the $x_i$. \vspace{.3cm}

We briefly expand on how these observations come into play in Parts (1)--(3), thereby providing the moral idea of why and how our proof works. \vspace{.3cm}

{\noindent \bf Part (1):} 
Let us just mention at this point that the argument is by induction on $p$ and ultimately boils down to the idea that we can commute central words of length $n$ in the $x_i$ with other words of length $n$ in the $x_i$ at cost $n^{p-1}$ rather than $n^p$ (as one might naively expect). We achieve this by passing through the second factor of the central product via the subgroup $G_{p-1,p-1}\leqslant G_{p,p-1}$, which has Dehn function $n^{p-2}$ by induction. 
Actually proving this for general $p$ will require a chain of combinatorial results. However, a good intuition for the general ideas should be attainable from the case $p=4$, which we will sketch in \S \ref{sec:Intro-sketch-MainThm} and prove in detail in \S \ref{sec:warmup}.\vspace{.3cm}

{\noindent \bf Part (2):} 
In the $\mathfrak{l}_p$-factor of the Carnot Lie algebra $\mathfrak{l}_p \times \mathfrak{l}_{p-1}=\mathsf{gr}(\mathfrak{g}_{p,p-1})$ associated to $\mathfrak{g}_{p,p-1}$ there is a 2-form $\nu_{2p'}$ with $p'=\lceil p/2 \rceil$, which defines a $(2p'-1)$-central extension of $\mathfrak{l}_p$ and thus of $\mathfrak{l}_p \times \mathfrak{l}_{p-2}$. A precise definition of this form will be given in \S \ref{subsec:central-extensions-central-products}. Note that if $p$ is even $(2p'-1) = p-1$, whereas if $p$ is odd $2p'-1 = p$.
Interestingly, the form $\nu_{2p'}$ only defines a cocycle in $Z^2(\mathfrak{g}_{p,p-1},\RR)$ if $p$ is even, that is, its exterior derivative does not vanish when $p$ is odd. 
In terms of linear algebra the non-vanishing of its exterior derivative precisely boils down to the fact that $d\zeta$ has one summand more in $\mathfrak{g}_{p,q}$ than in $\mathfrak{l}_p \times \mathfrak{l}_{p-2}$ due to the central product structure.

Irrespectively of the parity of $p$ there are no other forms defining $r$-central extensions for $r\geqslant p-1$ in $Z^2(\mathfrak{g}_{p,p-1},\RR)$ and we deduce that $\mathfrak{g}_{p,p-1}$ admits a $(p-1)$-central extension if and only if $p$ is even. In combination with Theorem \ref{thmIntro:Main} this proves Theorem \ref{propIntro:centralDehn}.\vspace{.3cm}
 
{\noindent \bf Part (3):}
On first sight there is one more candidate for a cocycle defining a $(p-1)$-central extension of $\mathfrak{g}_{p,p-1}$, namely $\xi_1 \wedge \xi_{p-1}$. But of course it is a non-candidate, because it is the 2-form defining the ``obvious'' $(p-2)$-central extension $\mathfrak{l}_p \times \mathfrak{l}_{p-1} \to \mathfrak{g}_{p,p-1}$.

However, this ``false candidate'' for a $(p-1)$-central extension is precisely the reason why the Dehn function of $G_{p,p-1}$ for odd $p$ is bigger than one might expect from the centralized Dehn function.
 
Indeed, as we already mentioned, $\xi_1 \wedge \xi_{p-1}$ defines a cocycle in  $Z^2(\mathfrak{g}_{p,p-1},\RR)$ and in a sense the only problem is that the commutator $\left[x_1,x_{p-1}\right]\in \gamma_{p-1}\mathfrak{g}_{p,p-1}$ on which it is non-trivial is equal to $z$ and in particular does not vanish in $\mathfrak{g}_{p,p-1}$. 
 
We found a solution to overcome this issue and confirm our intuition that the Dehn function of $G_{p,p-1}$ is bounded below by $n^{p-1}$ also when $p$ is odd. The idea is to exploit a ``perturbation''  of the 2-form $\xi_1 \wedge \xi_{p-1}$ in order to show that the null-homotopic loops $\left[x_1^n,\left[x_1^n,\dots,\left[x_1^n,x_2^n\right]\dots\right]\right]$ have area bounded below by $n^{p-1}$. We will explain the technique used for this approach in the first half of \S \ref{sec:Intro-sketch-MainThm}. \vspace{.5cm}

\subsection{Sketch of proof of Theorem \ref{thmIntro:Main}}
\label{sec:Intro-sketch-MainThm}
The proof of Theorem \ref{thmIntro:Main} will cover the largest part of this paper. It splits into two independent parts: the proof of the lower bound, and the proof of the upper bound. The former will be contained in \S \ref{sec:lower-bounds-forms}, while the latter will span \S \ref{sec:warmup} -- \S \ref{sec:upper-bound-Dehn}. To make it more accessible we will provide a brief summary of the main ideas involved.

We start by discussing the proof of the {\bf lower bound}. When $p$ is even, then the lower bound is simply given by Theorem \ref{propIntro:centralDehn}. 
The case when $p$ is odd is much more involved and requires new ideas. Our method is inspired by Thurston's proof of the exponential lower bound on the Dehn function of the real 3-dimensional SOL group \cite{ECHLPT92}. 
Thurston proceeds as follows: he exhibits a $1$-form $\alpha$ on $G$ such that $d\alpha$ is left-invariant, and a sequence of loops $\gamma_n$ of length $n$ such that the integral of $\alpha$ along $\gamma_n$ is $\geqslant \lambda^n$ for some $\lambda>1$. 
A direct application of Stokes' theorem then implies that the area of any smooth embedded surface bounded by $\gamma_n$ must be bounded below by $c\lambda^n$ for some constant $c>0$ only depending on $\alpha$ and on a choice of left-invariant Riemannian metric on $G$.

The first step in our argument consists of the observation that
Thurston's assumption that $d\alpha$ is invariant can be relaxed to the
weaker assumption that it is ``bounded''. To that purpose, we define the
space of {\emph{bounded $k$-forms}} on $G$ to be the space of forms
$\alpha$ such that $\sup_{g\in G} \|(g_\ast \alpha)_{1_G}\|<\infty$,
where $\|\cdot\|$ is a norm on $\bigwedge^k \mathfrak{g}^*$ (note that
the boundedness condition does not depend on a choice of such a norm).
It is quite immediate to see that Thurston's approach works verbatim
replacing the condition that $d\alpha$ is invariant by the condition
that $d\alpha$ is bounded. We note that a related approach was developed by Gersten, who explains  how $\ell^{\infty}$-cocycles can be used to obtain lower bounds on the Dehn function of a finitely presented group $G$ \cite[2.7]{GerstenCohomLowerBounds}.

The second step and main innovation in our argument is the construction
of a suitable bounded 2-form by ``deforming'' a well-chosen invariant
form. For this we exploit the central product structure of our groups.
We start by observing that $G_{p,p-1}$ maps surjectively to $L_{p-1}$.
We shall consider a $2$-cocycle of $L_{p-1}$ associated to its central
extension $L_{p}$ and consider an invariant $2$-form $\beta$
representing it in de Rham cohomology. We will  then consider a relation
$r$ of length $n$ in $L_{p-1}$ and a primitive $\alpha$ of $\beta$ whose
integral along (a continuous path associated to) $r$ has size $\asymp
n^{p-1}$.
Although the word corresponding to $r$ won't define a relation in
$G_{p,p-1}$, its commutator $[y,r]$ for a suitable word $y$ will.  The
problem at this point is that the integral of $\alpha$ along $[y,r]$
will be zero.
So we shall perform a suitable  ``local perturbation" of $\alpha$,
obtaining a $1$-form $\alpha'$ whose integral along $[y,r]$ is $\asymp
n^{p-1}$, and such that $d\alpha'$ while not being invariant anymore
will remain bounded. This will show that the area of $[y,r]$ in
$L_{p-1}$ (and a fortiori in $G_{p,p-1}$) is at least $n^{p-1}$.

Actually when trying to implement the previous argument, we run into a regularity problem: we have to deal with forms that are not smooth, preventing us from using Stokes' theorem. A solution would be to smoothen our forms so that the previous argument could be applied directly. However, this would make our computations more cumbersome. We chose instead to privilege an alternative approach, which better suits the study of Dehn functions associated to compact presentations. 
The idea is to replace the condition that $d\alpha$ is bounded by the fact that the integral of $\alpha$ along any loop of bounded length is bounded. This condition is easy to work with and has the nice advantage of making sense for continuous $1$-forms. Moreover it satisfies a discrete version of Stokes' Theorem, inspired by \cite[Section 12.A]{CoTesDehn}. 

We now turn to the proof of the {\bf upper bound} in Theorem \ref{thmIntro:Main} that occupies the largest part of the paper and is our main contribution to the subject. In \S \ref{sec:warmup} we start by proving the upper bound $\delta_{G_{4,3}}(n) \lesssim n^{3}$.
Indeed, while containing the main idea, this bound turns out to be considerably easier to obtain than the more general bound $\delta_{G_{p,p-1}}(n) \lesssim n^{p-1}$. At the end of \S \ref{sec:warmup}, we shall explain the difficulties arising in the general case, and our strategy to overcome them.  For now, we shall focus on the special case $p=4$ and further restrict to the discrete group $\Gamma_{4,3}$.

The key idea in the proof is to exploit the fact that there is a canonical embedding $\Gamma_{3,3}\cong \HH_5(\ZZ)\hookrightarrow \Gamma_{4,3}$ of the 5-dimensional Heisenberg group which, as we mentioned before, has Dehn function $n^2$.  We will explain the main steps of the proof and, in particular, where we use the embedding of $\HH_5(\ZZ)$:

In a first step we reduce to considering null-homotopic words $w=w(x_1,x_2)$ in the generators of the first factor $\Lambda_4\leqslant \Gamma_{4,3}$ of the central product. The core of the argument, which we will explain now, consists of transforming $w(x_1,x_2)$ into a word that closely resembles the normal form $x_3^{a_3} x_1^{a_1}  x_2^{a_2} x_4^{a_4}$. Since for a null-homotopic word we must have $a_3=a_1=a_2=a_4=0$ we can then conclude from there.

Given a word $w(x_1,x_2)$ of length $\ell(w)=n$ the idea is to push all $x_1$'s to the left one-by-one, starting with the leftmost one. Modulo $\gamma_2(\Lambda_4)$ this will eventually yield the word $x_1^{a_1} x_2^{a_2}$. However, whenever we commute a $x_1$ with a $x_2^{O(n)}$ we produce an error term $x_3^{O(n)}$ which we then need to move out of the way. We do this by pushing it to the very left of the word, at the cost of producing a central word of the form $\left[x_1^{O(n)},x_3^{O(n)}\right]$. All steps up to this point require $O(n^2)$ relations and repeating this $O(n)$ times, once for each instance of $x_1$, would provide us with the desired area bound of $O(n^3)$. 

However, the problem is that this is only true modulo $\gamma_3(\Lambda_4)$. Instead we also need to move the word of the form $\left[x_1^{O(n)},x_3^{O(n)}\right]$ which we produced out of the way in every step. We want to do this by moving it to the very right of the word. This involves commuting it with words of the form $x_1^{O(n)}$, which in the 3-Heisenberg group $\HH_3(\ZZ) \cong \Lambda_3=\langle x_1,x_3\rangle$ requires $O(n^3)$ relations. After $O(n)$ repetitions we would thus end up with an upper area bound of $O(n^4)$ rather than $O(n^3)$. {\emph{This is the point at which we make fundamental use of the fact that the group $\langle x_1,x_3\rangle$ is the left factor of an embedded 5-dimensional Heisenberg group}} obtained by taking the central product of $\Lambda_3$ with itself. Indeed, this allows us to replace the central word $\left[x_1^{O(n)},x_3^{O(n)}\right]$  in the left factor by a central word $v$ of the same length in the right factor of the central product $\Gamma_{3,3}$ using $O(n^2)$ relations. We can then commute $v$ with $x_1^{O(n)}$ using only $O(n^2)$ relations. After $O(n)$ repetitions of the total process, each of which has a total cost of $O(n^2)$ relations, we thus reach a word that closely resembles the normal form $x_3^{a_3} x_1^{a_1}  x_2^{a_2} x_4^{a_4}$. For this we required only $O(n^3)$ relations, rather than the expected $O(n^4)$ relations, and we can conclude from there. Note that in fact in this last step we use that $\HH_5(\ZZ)$ has Dehn function $n^2$ once more to simplify a product of $O(n)$ copies of central words of the  form $\left[x_1^{O(n)},x_3^{O(n)}\right]$ into the trivial word. 

We will use various analogues of both of the kinds of above transformations coming from the embedded copy of $\HH_5(\ZZ)\leqslant \Gamma_{4,3}$ for general $p$, by exploiting the embedded subgroup $G_{p-1,p-1}\leqslant G_{p,p-1}$. They will appear at many points of the proof and ultimately lead to two key technical results: the Main commuting Lemma (Lemma \ref{lem:MainLemma}) and the Cancelling Lemma (Lemma \ref{lem:Strong-k-Lemma}), which in essence can be seen as our most general versions of the first and second application of $\HH_5(\ZZ)$ above. There will be various challenges to overcome for general $p$ in comparison to $p=4$. The most obvious one is that the central series of $\Lambda_p$ has more than three non-trivial terms. This means that there is not enough space to mimic the trick we used for $p=4$, where we conveniently left terms in $\gamma_1(\Lambda_4)$ in the middle, moved terms in $\gamma_2(\Lambda_4)$ to the left and finally moved terms in $\gamma_3(\Lambda_4)$ to the right, which provided us with a suitable normal form.

When computing the upper bounds for the Dehn functions of the $G_{p,p-1}$ we will use Dehn functions of compact presentations rather than either geometric methods or Dehn functions of discrete groups. Indeed, while we do use a more geometric approach in our proof of the lower bounds, we were not able to find an obvious geometric model for our groups that allows for the ``easy'' computation of upper bounds on Dehn functions. On the other hand they are too complicated to pursue a discrete combinatorial approach. It is thus really the hybrid approach between the two points of view provided by compact presentations of Lie groups that allows us to prove our results. Indeed, it provides us with the ``geometric'' flexibility of writing our words in a relatively simple and thus manageable form on the combinatorial side, while at the same time allowing us to use all of the classical tools and manipulations from discrete combinatorial group theory, thereby not requiring the use of an intricate geometric model. We thus believe that this kind of approach really merits attention, as it might also be instrumental in other problems in this area. We emphasize that this has also been suggested in \cite{CornulierTesseraDehnBaums}.

\section{Dehn functions, filling diameters and filling pairs}
\label{sec:eqn-defs-Dehn-fct}

In this section we will introduce basic notions on Dehn functions, filling diameters and filling pairs and collect some important well-known results on them.

\subsection{Dehn functions of compactly presented groups}\label{sec:dehnvarious}
Let $G$ be a compactly generated locally compact group. For any compact generating set $S$ let $K(G,S)$ be the kernel of the epimorphism $F_S \twoheadrightarrow G$ where $F_S$ denotes the free group over $S$.
Recall that $G$ is compactly presented with compact presentation $\mathcal P = \langle S \mid R \rangle$ if $K(G,S)$ is the normal closure of $ R \subset K(G,S)$ such that $ R$ is bounded with respect to the word metric on $F_S$. 
Simply connected Lie groups are known to be compactly presented (see for instance  \cite[Th 2.6]{TesseraMetricLC}). For simply connected nilpotent Lie groups such presentations can theoretically be obtained over an arbitrary compact generating set from the knowledge of a Lie algebra presentation using the Baker-Campbell-Hausdorff series (of which only finitely many terms actually appear). These presentations are however unpractical to work with and in \S \ref{subsec:compact-presentations} we shall thus provide explicit constructions of compact presentations for the groups $L_p$ and $G_{p,q}$.

Let $\mathcal P = \langle S \mid R \rangle$ be a compact presentation of a locally compact group $G$. Recall that a freely reduced word $w$ over $S$ represents the identity in $G$ if and only if it belongs to the normal closure of $R$. Further recall that we call such a word null-homotopic, that we define ${\rm Area}(w)$ as the minimal number of conjugates of relations $r \in R^{\pm 1}$ whose product is freely equal to $w$ and that the Dehn function $\delta_\mathcal P$ of a compact presentation $\mathcal{P}$ is defined by
\[
\delta_{\mathcal P}(n) = \sup \left\{ \operatorname{Area}(w): w \text{ null-homotopic and freely reduced of length } \leqslant n \right\}.
\]

\begin{remark}
Two remarks are in order here. First, it is easy to check that provided that it is finite, the asymptotic behaviour of $\delta_{\mathcal P}(n) $ does not depend on a choice of compact presentation. Second, by definition any compactly presented locally compact group admits a  presentation of the form $\langle S \mid R \rangle$ where $R=R_k$ consists of all null homotopic words in $S$ of length at most $k$ and for any such presentation $\delta_{\mathcal P}$ is finite (\cite[Proposition 11.3]{CornulierDehn}). 
\end{remark}

It turns out that the Riemannian definition of the Dehn function that we gave in the introduction and the combinatorial definition have the same asymptotic behaviour. More generally, given a Riemannian manifold $M$ define $F(r)$ to be the supremum of areas needed to fill loops of length at most $r$ in $M$.
The following result is due to Bridson when $G$ is discrete \cite[Section 5]{Bri02}. 

\begin{proposition}[{\cite[Proposition 2.C.1]{CoTesDehn}}] Let $G$ be a locally compact group with a proper cocompact isometric action on a simply connected Riemannian manifold $X$. Then $G$ is compactly presented and the Dehn function of $G$ satisfies
$$\delta(r)\asymp \max \left\{F(r),r\right\}.$$
\end{proposition}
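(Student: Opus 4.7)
The plan is to adapt the classical Milnor--Schwarz argument to the Riemannian setting, passing between the combinatorial and continuous pictures in both directions. First I would establish compact presentability and produce a convenient presentation: pick a basepoint $x_0 \in X$ and a compact set $K \subset X$ whose $G$-translates cover $X$ (which exists by cocompactness); then $S = \{g \in G : gK \cap gK \neq \emptyset \text{ for some neighborhood}\}$, more concretely $S = \{g \in G : gK' \cap K' \neq \emptyset\}$ for a suitable enlargement $K'$ of $K$, is compact by properness and generates $G$. Taking $R$ to be all null-homotopic $S$-words of length at most some fixed constant $k$ (chosen large enough that every short null-homotopic loop in $X$ can be contracted through bounded-diameter moves, using the simple connectedness of $X$) produces a compact presentation $\mathcal{P} = \langle S \mid R \rangle$.

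For the upper bound $\delta(r) \preccurlyeq \max\{F(r), r\}$, given a null-homotopic reduced word $w = s_1 \cdots s_n$, I would realise it as a piecewise-geodesic loop $\gamma_w$ in $X$ passing through the orbit points $x_0, s_1 x_0, s_1 s_2 x_0, \ldots$; this loop has length at most $Cn$ for some constant $C$ depending only on the diameter of $K'$. By definition of $F$ it bounds a Lipschitz disc $D: \mathbb{D}^2 \to X$ of area at most $F(Cn)$. Subdividing $\mathbb{D}^2$ into cells whose $D$-images have uniformly bounded diameter uses at most $O(F(Cn) + n)$ cells, with the linear term coming from the boundary subdivision. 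Each cell maps to a short null-homotopic loop in $X$; approximating its vertices by orbit points yields an $S$-word of bounded length, hence a product of boundedly many relators from $R$. Concatenating these fillings gives $\operatorname{Area}(w) \lesssim F(Cn) + n$.

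For the reverse bound $F(r) \preccurlyeq \delta(r) + r$, given a rectifiable loop $\gamma$ of length at most $r$ in $X$, I would discretise it into $k \lesssim r$ points $\gamma(t_0), \gamma(t_1), \ldots, \gamma(t_k) = \gamma(t_0)$ at bounded spacing, choose $g_i \in G$ with $g_i x_0$ close to $\gamma(t_i)$, and form the word $w = (g_0^{-1} g_1)(g_1^{-1} g_2) \cdots (g_{k-1}^{-1} g_k)$, whose letters lie in a fixed compact subset of $G$ and whose length is $\lesssim r$. A combinatorial filling of $w$ of area at most $\delta(Cr)$ then gets transported geometrically: each conjugated relator corresponds to a short null-homotopic loop in $X$, which bounds a Lipschitz disc of uniformly bounded area; assembling these discs (and filling between $\gamma$ and the piecewise-geodesic loop through the $g_i x_0$ at linear cost) produces a Lipschitz filling of $\gamma$ of area $\lesssim \delta(Cr) + r$. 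The trivial observation that $\delta(r) \succcurlyeq r$ for a non-free compactly presented group then absorbs the linear terms into the $\max\{F(r), r\}$.

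The main technical obstacle is the bookkeeping of the two translations at the boundary between the continuous and discrete settings. In the upper-bound direction, one must bound the number of cells in a subdivision of a Lipschitz disc in terms of its area plus a boundary contribution; in the lower-bound direction, one must ensure the orbital discretisation of a rectifiable curve produces a word of linearly comparable length and that the Lipschitz discs filling individual relators can be glued without loss of control on the total area. Both of these rely crucially on the uniform cover of $X$ by translates of $K'$ guaranteed by cocompactness, which provides a definite scale at which continuous and combinatorial geometry agree up to bounded distortion.
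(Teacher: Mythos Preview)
The paper does not supply a proof of this proposition: it is quoted verbatim from \cite[Proposition 2.C.1]{CoTesDehn}, with the remark that the discrete case is due to Bridson \cite[Section 5]{Bri02}. So there is no ``paper's own proof'' to compare against. Your outline is exactly the standard argument one finds in those references: Milnor--Schwarz to produce a compact generating set, a bounded-length presentation from simple connectedness, and then the two-way translation between combinatorial and Riemannian fillings.

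One point deserves a sharper treatment than you give it. In the direction $\delta(r) \preccurlyeq \max\{F(r),r\}$ you assert that a Lipschitz disc of area $A$ can be subdivided into $O(A+n)$ cells each of bounded image-diameter. This is not automatic: a bound on area does not by itself bound the number of cells in a bounded-diameter cellulation (a long thin finger has small area but needs many cells). What makes it work is a deformation step --- either a Federer--Fleming pushforward onto the $2$-skeleton of a $G$-invariant triangulation of $X$, or the cellulation argument in Bridson's proof --- which replaces the Lipschitz disc by a cellular one whose $2$-cells are in bijection with pieces of uniformly bounded area. You correctly flag this as ``the main technical obstacle'', but it is the crux of this direction, not mere bookkeeping; without it the argument has a genuine gap. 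The reverse direction (combinatorial filling $\Rightarrow$ Riemannian filling) is, as you indicate, the easier one.
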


To complete the picture we mention that the asymptotic behavior of the Dehn function is invariant under quasi-isometry; this was proved for finitely presented groups in \cite{Alo} and the proof adapts without changes to compactly presented groups.

\subsection{Fillings in balls of controlled radius}
We will be interested in constructing fillings where we simultaneously control the number of relations and the diameter of the image of the corresponding van Kampen diagram. Geometrically this amounts to filling a word in a ball of controlled radius. As in the previous section let $\mathcal P = \langle S \mid R \rangle$ be a compact presentation of a locally compact group $G$.
We will say that a word $w=w(S)$ has \emph{(word) diameter} $\leqslant d$ in $G$ if the associated path in the Cayley graph of $G$ stays at distance $\leqslant d$ from the identity $1\in G$. Equivalently $w$ has diameter $\leqslant d$ if for any decomposition $w=w_1\cdot w_2$ into two subwords we have ${\rm dist}_{{\rm Cay} (G,S)} \left(1,\left[w_1\right]\right)\leqslant d$.

\begin{definition}
 Given a null-homotopic word $w(S)$, we say that a filling 
 \[
  w(S) = \prod_{i=1}^k u_i^{-1} r_i u_i
 \]
 of area $k$ has \emph{ (filling) diameter $d$} if $u_i$ has word diameter $\leqslant d$ for $1\leqslant i \leqslant k$.
\end{definition}
We will often drop the specification ``word'' and ``filling'' diameter when it is clear from the context which one we mean.


We will say that two words $w(S)$ and $w'(S)$ are equivalent with area (or at cost) $k$ and diameter $d$ if $w'\cdot w^{-1}$ is null-homotopic and admits a filling with area $k$ and diameter $d$. In this case we will also say that the identity $w\equiv w'$ holds with area $k$ and diameter $d$ in $G$. 
\begin{remark}\label{rem:diameter}
 We emphasize that the definition of the diameter of the equivalence $w\equiv w'$ involved a choice: we chose to estimate the diameter of a filling of $w'\cdot w^{-1}$ rather than $w'^{-1}\cdot w$. While both words have the same filling areas they differ by a conjugation by $w'$ and thus their filling diameters can differ by $\ell(w')$. We shall stick to this choice throughout the paper. 
\end{remark}

We will frequently use the following simple observation:
\begin{lemma}\label{lem:diameter}
  Let $w=w(S)$ be a word that decomposes as $w(S)=w_1(S)\cdot w_2(S)\cdot w_3(S)$ and let $w_2'=w_2'(S)$ be equivalent to $w_2$ mod $\langle\langle R\rangle\rangle$ via a transformation with area $k$ and diameter $d$. 
 
 Then the identity $w\equiv w'\mbox{ mod } \langle \langle R\rangle \rangle$ for $w'=w_1w_2'w_3$ holds with area $k$ and diameter $d'\leqslant d+  r$ in $G$, where $r$ is the word diameter of $w_1$. In particular, if $d\leqslant n$ and $r\leqslant n$ then $d'\leqslant 2n$.
\end{lemma}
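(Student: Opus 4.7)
The plan is to construct the desired filling by conjugating the given filling of $w_2' w_2^{-1}$ by $w_1$. First I observe that, as words in the free group $F_S$, $w' \cdot w^{-1} = w_1 w_2' w_3 \cdot w_3^{-1} w_2^{-1} w_1^{-1}$ freely reduces, without invoking any relation of $R$, to $w_1 (w_2' w_2^{-1}) w_1^{-1}$, because the central $w_3 w_3^{-1}$ subword cancels freely. So it suffices to build a filling of this shortened word with the claimed area and diameter.

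Next, I take the given filling $w_2' \cdot w_2^{-1} = \prod_{i=1}^k u_i^{-1} r_i u_i$ of area $k$ and diameter $\leqslant d$. Conjugating both sides by $w_1$ yields
\[
w_1 w_2' w_2^{-1} w_1^{-1} = \prod_{i=1}^k \bigl(u_i w_1^{-1}\bigr)^{-1} r_i \bigl(u_i w_1^{-1}\bigr),
\]
which is a filling of $w' w^{-1}$ of the same area $k$ whose new conjugators are $\tilde u_i := u_i w_1^{-1}$.

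To control the diameter I will use the van Kampen diagram picture afforded by Lemma \ref{lem:equivalences-diameter}: the new diagram is obtained from the original one filling $w_2' w_2^{-1}$ by attaching an arm labeled $w_1$ from the new basepoint to the vertex $[w_1]$ (where the old basepoint now sits) and an arm labeled $w_1^{-1}$ returning. Vertices on the two arms lie within distance $r$ of the new basepoint by the word-diameter hypothesis on $w_1$, while vertices of the translated inner diagram lie within distance $d$ of $[w_1]$, hence within distance $d + r$ of the new basepoint by the triangle inequality. Invoking Lemma \ref{lem:equivalences-diameter} to pass between the filling-diameter, word-diameter, and van Kampen formulations then yields the bound $d' \leqslant d + r$. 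The ``in particular'' clause is an immediate substitution.

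I do not foresee a substantial obstacle; the only subtle point is the bookkeeping among the three equivalent notions of diameter used throughout this section. It will be important to have first dealt with the free cancellation of $w_3 w_3^{-1}$, since otherwise the length of $w_3$ would appear in the diameter estimate and the bound $d + r$ (which is independent of $w_3$) would fail.
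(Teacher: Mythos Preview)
Your proposal is correct and follows the same approach the paper has in mind: the paper's proof is the single sentence ``This follows easily from the definitions and Lemma~\ref{lem:equivalences-diameter},'' and you have simply unpacked that sentence by exhibiting the conjugated filling and then passing to the van Kampen picture to get the diameter bound $d+r$. One small remark: your direct conjugation gives conjugators $\tilde u_i = u_i w_1^{-1}$ of \emph{length} $\leqslant d+\ell(w_1)$, which is why the detour through the van Kampen characterisation (2) of Lemma~\ref{lem:equivalences-diameter} is genuinely needed to replace $\ell(w_1)$ by the word diameter $r$; your argument handles this correctly, since the vertices on both the outgoing $w_1$-arm and the returning $w_1^{-1}$-arm map to prefixes of $w_1$ in $G$.
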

\begin{proof}
 This follows easily from the definitions.
\end{proof}
We call a word $w_1$ (resp. $w_3$) as in Lemma \ref{lem:diameter} a \emph{prefix} (resp. \emph{suffix}) word for the transformation of $w$ into $w'$. 
\begin{remark}
 The fact that only the prefix word $w_1$ plays a role in the estimate in Lemma \ref{lem:diameter} comes from the choice we discussed in Remark \ref{rem:diameter}.
\end{remark}

\subsection{Filling pairs}

\begin{definition}
Given two increasing unbounded functions $f,g:\mathbf R_+\to \mathbf R_+$, we say that a compactly presented group admits a $(f,g)$-filling pair if every null-homotopic word $w=w(S)$ of length $n$ has a filling of area in $O(f(n))$ and filling diameter in $O(g(n))$.
\end{definition}

Filling pairs are quasi-isometry invariants of compactly presented groups up to equivalence $\asymp$ (where for hyperbolic groups we allow for a linear term in the first entry). The proof is the same as for Dehn functions and we refer to Lemma \ref{lem:filling-transfer-SBE} for details, where we prove a more general result for SBEs.

If $G$ is a topological group, recall that $H < G$ is a retract of $G$ if it is a closed subgroup and there is a surjective homomorphism $\rho: G \to H$ which restricts to the identity on $H$. The following are well-known in the context of Dehn functions of finitely presented groups (see \cite[Lemma 1]{BaumslagMillerShort}, resp. \cite[Proposition 2.1]{Brick}) and their proofs adapt easily to filling pairs of compactly presented groups.

\begin{lemma}
\label{lem:Dehn-functions-of-retracts}
Let $G$ be a compactly presented locally compact group. If $H$ is a retract of $G$, then $H$ is compactly presented and any filling pair for $G$ is a filling pair for $H$.  
\end{lemma}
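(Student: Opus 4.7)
The plan is to mimic the classical finite-presentation argument, paying attention only to the fact that generating sets are compact rather than finite, and to keep track of how word length of conjugators transforms so that diameter bounds survive.

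First, I would fix a convenient compact presentation of $G$ adapted to $H$. Let $S_H$ be a compact generating set of $H$, and extend it to a compact generating set $S$ of $G$ (this is possible because $H$ is a closed subgroup of the compactly generated group $G$ and itself compactly generated as a continuous image $\rho(G)$). By a result quoted earlier (\cite[Proposition 11.3]{CornulierDehn}), for sufficiently large $k$ the set $R$ of all null-homotopic words over $S$ of length $\leqslant k$ yields a compact presentation $\mathcal{P} = \langle S\mid R\rangle$ of $G$ whose Dehn function realizes the asymptotic class of $\delta_G$. For every $s\in S$ the element $\rho(s)\in H$ lies in the compact set $\rho(S)\subset H$, so there exists a constant $C$ (depending only on $S$, $S_H$, $\rho$) such that each $\rho(s)$ is represented by a word $\bar s\in F_{S_H}$ of length $\leqslant C$; since $\rho$ fixes $H$ pointwise, we may and do choose $\bar s=s$ whenever $s\in S_H$. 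Extend $s\mapsto\bar s$ to a monoid map $w\mapsto \bar w\colon F_S\to F_{S_H}$ by concatenation; then $\ell(\bar w)\leqslant C\ell(w)$ and $\bar w$ represents $\rho([w])\in H$. In particular, if $w\in F_{S_H}$ then $\bar w=w$.

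Next I would show that $\mathcal{P}_H := \langle S_H \mid \bar R\rangle$, where $\bar R=\{\bar r : r\in R\}$, is a compact presentation of $H$. Compactness of $\bar R$ in $F_{S_H}$ follows from the uniform length bound $\ell(\bar r)\leqslant C\ell(r)\leqslant Ck$. To see $\mathcal{P}_H$ actually presents $H$: every $\bar r$ is null-homotopic in $H$ because $r$ is null-homotopic in $G$ and $\rho$ is a group homomorphism; conversely, if $w\in F_{S_H}$ represents the identity in $H$, then it represents the identity in $G$, hence admits a filling $w=\prod u_i^{-1}r_iu_i$ in $F_S$ modulo free reduction, and applying the map $\bar{\cdot}$ yields $w=\bar w=\prod \bar u_i^{-1}\bar r_i\bar u_i$ freely, exhibiting $w$ as a consequence of $\bar R$. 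This simultaneously gives the area/diameter transfer: if the original filling had area $\leqslant k_0$ and diameter $\leqslant d$, then the new one has area $\leqslant k_0$ and diameter $\leqslant Cd$, because $\ell(\bar u_i)\leqslant C\ell(u_i)\leqslant Cd$.

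Finally I would deduce the filling pair statement. Given a null-homotopic word $w$ over $S_H$ of length $n$, view it as a null-homotopic word over $S$ in $G$. By hypothesis it admits, with respect to $\mathcal{P}$, a filling of area $O(f(n))$ and diameter $O(g(n))$; applying $\bar{\cdot}$ produces, with respect to $\mathcal{P}_H$, a filling of area $O(f(n))$ and diameter $O(g(n))$. Since the asymptotic class of a filling pair is independent of the compact presentation (proved exactly as for Dehn functions, cf. the forward reference to Lemma~\ref{lem:filling-transfer-SBE}), this is a filling pair for $H$.

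The only place that is not entirely mechanical is the setup step: one must simultaneously arrange that $S_H\subseteq S$, that $\rho$ fixes $S_H$ as a subset of generators, that the $\bar s$ are chosen with uniformly bounded length, and that the chosen relations are preserved under $\bar{\cdot}$. Once these compatibility choices are made, the diameter bound $\ell(\bar u)\leqslant C\ell(u)$ does all the work, and the argument is purely combinatorial. This compatibility juggling is the main obstacle, but it is genuinely just bookkeeping because compactness of $S$ and $\rho(S)$ guarantees the uniform constant $C$.
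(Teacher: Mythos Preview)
Your proof is correct and is exactly the adaptation the paper alludes to: the paper does not write out a proof but simply says the classical argument for finitely presented groups (Baumslag--Miller--Short) ``adapts easily to filling pairs of compactly presented groups,'' and what you have written is precisely that adaptation, with the diameter bookkeeping made explicit via the uniform bound $\ell(\bar u)\leqslant C\ell(u)$. One cosmetic point: the map $w\mapsto\bar w$ is a group homomorphism $F_S\to F_{S_H}$ (not merely a monoid map), which is what you actually use when transporting the free equality $w=\prod u_i^{-1}r_iu_i$.
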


\begin{lemma}
\label{lem:Dehn-functions-of-direct-products}
Let $H_1$ and $H_2$ be noncompact compactly presented locally compact groups. Let $H = H_1 \times H_2$ and let $(f_1,g_1)$ (resp. $(f_2,g_2)$) be filling pairs for $H_1$ (resp. $H_2$).
Then $$\left(n^2+f_1(n)+f_2(n) ,n + g_1(n) + g_2(n)   \right)$$ is a filling pair for $H$.
\end{lemma}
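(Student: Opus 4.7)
The plan is to build a natural compact presentation of $H = H_1 \times H_2$ from compact presentations $\mathcal{P}_i = \langle S_i \mid R_i\rangle$ of the factors that realize the given filling pairs $(f_i,g_i)$, and then reduce an arbitrary null-homotopic word in $H$ to the concatenation of a null-homotopic word in each factor by a short sorting procedure. Concretely, take $\mathcal{P} = \langle S_1 \sqcup S_2 \mid R_1 \cup R_2 \cup C\rangle$, where $C = \{[s_1,s_2] : s_1 \in S_1, s_2 \in S_2\}$. This is a compact presentation of $H$, since the relators are bounded in length and $R_1 \cup R_2 \cup C$ kills exactly the kernel of $F_{S_1 \sqcup S_2} \twoheadrightarrow H$.

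Let $w = w(S_1 \sqcup S_2)$ be null-homotopic in $H$ with $\ell(w) \leqslant n$. First I would sort all $S_1$-letters of $w$ to the left and all $S_2$-letters to the right using relations from $C$, keeping their relative order within each alphabet. This requires at most $\binom{n}{2}$ commutator relations from $C$, each of whose conjugator is a subword of the partially sorted word, hence has length at most $n$. Thus this sorting step contributes area $O(n^2)$ and filling diameter $O(n)$, and produces a word $w_1 w_2$ with $w_i \in F_{S_i}$ and $\ell(w_i) \leqslant n$. Applying the two projections $\pi_i : H \twoheadrightarrow H_i$ (which kill $S_{3-i}$) shows that $w_i$ is null-homotopic in $H_i$, since $\pi_i(w_1 w_2) = w_i$ as a word in $F_{S_i}$ and $\pi_i(w) = 1$.

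Next I would use the filling pair hypothesis in each factor: $w_i$ admits a filling over $\mathcal{P}_i$ with area $\leqslant f_i(n)$ and filling diameter $\leqslant g_i(n)$. These are also fillings over $\mathcal{P}$ since $R_i \subset R_1 \cup R_2 \cup C$. To assemble them into a filling of $w_1 w_2$, view $w_1 w_2 = w_1 \cdot w_2 \cdot \emptyset$ and apply Lemma~\ref{lem:diameter} twice: transforming the first block $w_1$ into the empty word costs area $f_1(n)$ and diameter $g_1(n)$ (no prefix), and then transforming $w_2$ in the word $w_2 = \emptyset \cdot w_2 \cdot \emptyset$ into the empty word costs area $f_2(n)$ and diameter $g_2(n)$. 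Concatenating this with the sorting step, which had prefix $\emptyset$, gives a filling of $w$ of total area at most $O(n^2) + f_1(n) + f_2(n)$ and filling diameter at most $\max\bigl(O(n),\, g_1(n),\, g_2(n) + \ell(w_1)\bigr) = O(n + g_1(n) + g_2(n))$, where the extra $\ell(w_1) \leqslant n$ accounts for the prefix carried when inserting the filling of $w_2$ after $w_1$ during the intermediate step of the assembly.

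The only subtlety (and the point where one must be a little careful) is the diameter bookkeeping: filling pairs control conjugator lengths rather than just areas, so one has to keep track of the prefix lengths at each stage using Lemma~\ref{lem:diameter}, and observe that a ``suffix shift'' does not increase diameter in our convention. Once this is handled, the bounds above match $\bigl(n^2 + f_1(n) + f_2(n),\, n + g_1(n) + g_2(n)\bigr)$ up to multiplicative constants, yielding the claimed filling pair for $H$.
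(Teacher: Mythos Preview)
Your proof is correct and follows exactly the standard approach the paper has in mind: the paper does not give an explicit argument but simply cites \cite[Proposition 2.1]{Brick} and remarks that the proof adapts to filling pairs of compactly presented groups. Your sort-then-fill strategy with diameter bookkeeping via Lemma~\ref{lem:diameter} is precisely that adaptation. (One small simplification: once $w_1$ is filled you are left with $w_2$ alone, so no $\ell(w_1)$ prefix term is actually needed; the diameter of the $w_1w_2$ step is just $\max(g_1(n),g_2(n))$, which of course still sits inside your stated bound.)
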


\section{Warm up -- an upper bound for the Dehn function of \texorpdfstring{$G_{4,3}$}{G43}}
\label{sec:warmup}

As a warm up for the general proof of the upper bound of $n^{p-1}$ on the Dehn function of $G_{p,p}$ and $G_{p,p-1}$ we will discuss the special case when $p=4$. This case will serve as base case for our induction argument in \S \ref{sec:upper-bound-Dehn}. The case of general $p$ is very subtle, requiring a careful chain of technical lemmas. In contrast the case $p=4$ captures much of the essence of how our general proof works, while avoiding almost all of the technical difficulties. In particular, we can work hands on with the finitely presented lattice $\Gamma_{4,3}$. We will conclude this section by explaining the difficulties we will face when dealing with general values of $p$ and how we will resolve them.

\subsection{Deriving a cubical upper bound for \texorpdfstring{$\Gamma_{4,3}$}{Gamma43}}
\label{subsec:Proof-upper-bound-p-4}
As recalled in the previous section, the Dehn functions of $\Gamma_{4,3}$ and of $G_{4,3}$ are equivalent, and it will be easier here to deal with $\Gamma_{4,3}$. 
Some of the techniques and notation we will use in this section are inspired by Olshanskii and Sapir's combinatorial proof that the Dehn function of the $5$-dimensional Heisenberg group is quadratic \cite{OlsSapCombDehn}. However, our line of argument is rather different from theirs. Indeed we will start by assuming that $\delta_{H_5}(n)\asymp n^2$, which is the main result of their work, and deduce from it that $\delta_{\Gamma_{4,3}}(n)\asymp n^3$.

We recall that we work with the presentation
\[
 \mathcal{P}(\Gamma_{4,3}) =  \left\langle \begin{array}{cccc} x_1,& x_2,& x_3,& x_4,\\ y_1,& & y_3,& y_4,\\ &&& z \end{array} \left\mid  \begin{array}{l} \left[x_1,x_i\right] = x_{i+1}, 2\leqslant i \leqslant 3 ,\\ \left[y_1,y_3\right]=y_4, \left[x_i,y_j\right]=1, \\ x_4=y_4=z \mbox{ is central } \end{array}  \right. \right\rangle
\]
for $\Gamma_{4,3}$. Observe that it naturally contains the presentation $\mathcal{P}(\Gamma_{3,3})$ of the $5$-dimensional Heisenberg group $\HH_5(\ZZ)=\Gamma_{3,3}$ given by
\[
   \mathcal{P}(\Gamma_{3,3})  =  \left\langle \begin{array}{ccc} x_1,& x_3,& x_4,\\ y_1,& y_3,& y_4,\\ && z \end{array} \left\mid  \begin{array}{l} \left[x_1,x_3\right] = x_4 ,\\ \left[y_1,y_3\right]=y_4, \left[x_i,y_j\right]=1, \\ x_4=y_4=z \mbox{ is central } \end{array}  \right. \right\rangle.
\]
We state the following result:
\begin{theorem}[{\cite{Allcock, OlsSapCombDehn}}]\label{thm:5-dim-Heisenberg}
$\Gamma_{3,3}$ admits $(n^2,n)$ as a filling pair.
\end{theorem}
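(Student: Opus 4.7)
The area bound $\delta_{\Gamma_{3,3}}(n)\asymp n^2$ is provided by \cite{Allcock, OlsSapCombDehn}; the additional content to establish is the linear diameter. The plan is to transform a given null-homotopic word $w$ of length $n$ into the empty word through three successive stages, tracking both the area spent and the word diameter of the prefix conjugators, as allowed by Lemma~\ref{lem:equivalences-diameter}(3).

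First I would shuffle $w$ using $[x_i,y_j]=1$ together with the centrality of $z$. These swaps preserve the total word length, so after $O(n^2)$ elementary transpositions $w$ takes the form $z^{c_0}\cdot u\cdot v$, with $u$ a word in $x_1^{\pm 1},x_3^{\pm 1}$ and $v$ a word in $y_1^{\pm 1},y_3^{\pm 1}$, while every intermediate prefix stays inside the ball of radius $n$ around the origin. Projecting to the abelianization $\Gamma_{3,3}\to\ZZ^4=\ZZ^2\oplus\ZZ^2$ and using that $w$ is null-homotopic forces $u$ and $v$ to have vanishing abelianization in their respective $\ZZ^2$-factor, so each traces a closed planar loop.

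Next I would reduce $u$ to the central element $z^{c_u}$ inside the Heisenberg subgroup $\HH_3(\ZZ)=\langle x_1,x_3,z\rangle$. The closed loop $\bar u\subset\ZZ^2$ has length $\leqslant n$ and admits a planar disc-map filling by $O(n^2)$ unit squares, each of which lifts to one application of the relation $[x_1,x_3]=z$; the $z^{\pm 1}$-contributions sum to $c_u$. An identical procedure in the other Heisenberg subgroup $\langle y_1,y_3,z\rangle$ reduces $v$ to $z^{c_v}$. The word now equals $z^{c_0+c_u+c_v}$, which must represent the identity by hypothesis, so by torsion-freeness $c_0+c_u+c_v=0$ and the remaining word cancels freely. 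The total area spent is $O(n^2)$.

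The main obstacle is the control of the filling diameter during the planar fills, since each square-filling introduces an extra factor $z^{\pm 1}$ and the intermediate word length may reach $O(n^2)$; a naive estimate would then yield filling diameter $O(n^2)$. The decisive point is the identity $[x_1^a,x_3^a]=z^{a^2}$, which gives $d_{\Gamma_{3,3}}(1,z^c)\leqslant 4\sqrt{|c|}$; thus every intermediate prefix of the form $x_1^{a}x_3^{b}z^{c}$ with $|a|,|b|\leqslant n$ and $|c|\leqslant O(n^2)$ lies within distance $O(n)$ of the origin in the Cayley graph. Combined with the analogous estimate on the $\langle y_1,y_3,z\rangle$ side and Lemma~\ref{lem:equivalences-diameter}(3), this shows that a filling of diameter $O(n)$ can be chosen, completing the proof that $(n^2,n)$ is a filling pair.
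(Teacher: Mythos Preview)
There is a genuine gap in the second stage of your argument. You claim that the word $u$ in $x_1,x_3$ can be reduced to $z^{c_u}$ inside the subgroup $\HH_3(\ZZ)=\langle x_1,x_3,z\rangle$ using only $O(n^2)$ relations, by lifting a planar square filling of $\bar u$ in $\ZZ^2$. This lift does not work: each unit square in $\ZZ^2$ becomes a pentagon $[x_1,x_3]z^{-1}$ in $\HH_3(\ZZ)$, and the extra $z$-edges do not glue up. Concretely, every $z^{\pm 1}$ created by a swap must be moved past the remaining $x$-letters before the next swap can be performed, and these moves are further applications of the relations $[x_1,z]=[x_3,z]=1$ that you have not counted. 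In the worst case this costs cubically many relations, not quadratically: take $u=[x_1^m,[x_1^m,x_3^m]]$ with $m\asymp n$. This $u$ has trivial abelianization, length $\asymp n$, and $c_u=0$; but its area in $\HH_3(\ZZ)$ is $\asymp m^3\asymp n^3$, as one sees from the $3$-central extension $L_4\to \HH_3$ in which this word represents $x_4^{m^3}$. So your scheme yields at best an $(n^3,n)$ filling pair, not $(n^2,n)$.

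The underlying issue is structural: by separating $w$ into an $x$-word and a $y$-word and filling each inside its own copy of $\HH_3(\ZZ)$, you are discarding precisely the mechanism that makes $\delta_{\Gamma_{3,3}}$ quadratic rather than cubic. The quadratic bound for $\HH_5$ relies essentially on the identity $[x_1^a,x_3^b]\equiv[y_1^a,y_3^b]$, which lets one trade a central $x$-word for a $y$-word and thereby commute it cheaply past $x$-letters; this is exactly what Olshanskii--Sapir and the paper itself (Lemma~\ref{lem:ChangingFactors-p-4}) exploit. The paper obtains Theorem~\ref{thm:5-dim-Heisenberg} differently: it invokes Allcock's result that any $L$-Lipschitz horizontal loop in $\HH_5(\RR)$ bounds an $O(L)$-Lipschitz disc, which immediately gives the $(n^2,n)$ filling pair for $G_{3,3}$, and then transfers it to the lattice $\Gamma_{3,3}$ by quasi-isometry invariance of filling pairs.
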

The linear bound on the diameter is not stated in these references. However, it is easy to deduce it from Allcock's proof. Since he works with the Riemannian version of the Dehn function in the real Heisenberg group, we postpone the presentation of his argument to \S \ref{subsec:PfMainThm}.

The key observation that makes our proof work is that the natural embedding of $\HH_5(\ZZ)$ in $\Gamma_{4,3}$ combined with Theorem \ref{thm:5-dim-Heisenberg} allows us to manipulate words of length $n$ in the letters $\left\{x_1,x_3,y_1,y_3\right\}$ at cost $\lesssim n^2$ and in a ball of diameter $\lesssim n$. The following is a particularly important immediate consequence, as it enables us to ``change between factors'' and thus exploit the central product structure of $\Gamma_{4,3}$.
\begin{lemma}\label{lem:ChangingFactors-p-4}
 There is a constant $C_0>0$ such that every word $w(x_1,x_3)$ of length $n$ representing an element of $\gamma_3(\Gamma_{4,3})$ is equivalent to the word $w(y_1,y_3)$ with area $\leqslant C_0 n^2$ and diameter $\leqslant C_0 n$ in $\Gamma_{4,3}$.
\end{lemma}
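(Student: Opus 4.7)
The plan is to leverage the inclusion $\Gamma_{3,3} \leqslant \Gamma_{4,3}$, which is compatible with the given presentations in the sense that every generator and every relation appearing in $\mathcal{P}(\Gamma_{3,3})$ also appears in $\mathcal{P}(\Gamma_{4,3})$. As a consequence, any filling produced inside $\Gamma_{3,3}$ can be read off verbatim as a filling inside $\Gamma_{4,3}$, with identical area and diameter. Thus it suffices to fill $w(x_1,x_3) \cdot w(y_1,y_3)^{-1}$ over $\mathcal{P}(\Gamma_{3,3})$ and then invoke Theorem~\ref{thm:5-dim-Heisenberg}.

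The first step is to verify that $w(x_1,x_3) \cdot w(y_1,y_3)^{-1}$ is null-homotopic in $\Gamma_{3,3}$. The subgroup $\langle x_1, x_3 \rangle \leqslant \Gamma_{4,3}$ is a copy of the discrete $3$-dimensional Heisenberg group since $[x_1,x_3] = x_4 = z$ is central; in particular each of its elements has a unique normal form $x_1^a x_3^b z^c$. The hypothesis $w(x_1,x_3) \in \gamma_3(\Gamma_{4,3}) = \langle z \rangle$ forces $a = b = 0$, and hence $w(x_1,x_3) \equiv z^{c(w)}$ for an integer $c(w)$ that depends only on the formal word $w$ (it is the value produced by any fixed algorithm pushing $x_3$'s past $x_1$'s using $[x_1,x_3]=z$). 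Performing the identical symbolic computation in the other Heisenberg copy $\langle y_1,y_3 \rangle$, using $[y_1,y_3]=y_4=z$ in place of $[x_1,x_3]=x_4=z$, yields $w(y_1,y_3) \equiv z^{c(w)}$ with the \emph{same} exponent $c(w)$. Therefore $w(x_1,x_3) \cdot w(y_1,y_3)^{-1}$ represents the identity in $\Gamma_{3,3}$.

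Next I would apply Theorem~\ref{thm:5-dim-Heisenberg} to this null-homotopic word of length at most $2n$, obtaining a filling over $\mathcal{P}(\Gamma_{3,3})$ of area at most $Cn^2$ and filling diameter at most $Cn$ for some universal constant $C$. Reading the very same sequence of conjugates of relators inside $\mathcal{P}(\Gamma_{4,3})$ gives the desired filling with the bounds claimed, upon setting $C_0 := 2C$.

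There is no real conceptual obstacle here; the only thing one must check carefully is that each relation of $\mathcal{P}(\Gamma_{3,3})$ --- namely $[x_1,x_3]=x_4$, $[y_1,y_3]=y_4$, the commutations $[x_i,y_j]=1$, and the centrality of $x_4=y_4=z$ --- literally appears among the relations of $\mathcal{P}(\Gamma_{4,3})$, so that filling pairs transfer at no extra cost. This bookkeeping, together with the normal-form step above, is the entire content of the proof.
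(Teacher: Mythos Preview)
Your proposal is correct and follows essentially the same approach as the paper, which presents the lemma as an immediate consequence of the embedding of presentations $\mathcal{P}(\Gamma_{3,3})\hookrightarrow\mathcal{P}(\Gamma_{4,3})$ together with Theorem~\ref{thm:5-dim-Heisenberg}. You have simply spelled out the details of why $w(x_1,x_3)\cdot w(y_1,y_3)^{-1}$ is null-homotopic in $\Gamma_{3,3}$, which the paper leaves implicit.
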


The most important class of central words $w(x_1,x_3)\in \gamma_3(\Gamma_{4,3})$ will be words of the form 
\[
T=T(m,n,l):= \left[x_1^m,x_3^n\right] \left[x_1^l,x_3\right],
\]
where $m$ and $n$ are integers and $l$ is an integer satisfying $0\leqslant |l| < |m|$.
In a sense they are the discrete prototype for the words $\Omega_k^j$ that we will introduce in \S \ref{sec:Omega} and then use throughout the remainder of the paper. The following observation is straight-forward
\begin{lemma}\label{lem:Obs-central-words-p-4}
 The equality $T(m,n,l)\equiv z^{mn+l}$ holds in $\Gamma_{3,3}$. Conversely, for every integer $k$ there are integers $m$, $n$, $l$ satisfying $T(m,n,l)\equiv z^k$, $|n|\leqslant |m|\leqslant 3|n|$, $0\leqslant |l| < |m|$ and ${\rm sgn}(mn)= {\rm sgn}(l)$.
\end{lemma}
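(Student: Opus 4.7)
The plan is to split the proof into its two halves, the computational first statement and the arithmetic converse, and handle each independently.

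For the first statement I would exploit that in $\Gamma_{3,3}$ the generators $x_1$ and $x_3$ span a copy of the discrete 3-dimensional Heisenberg group $\Lambda_3$, with the only non-trivial bracket being $[x_1,x_3]=z$, and that $z$ is central in $\Gamma_{3,3}$. In such a group a straightforward induction (or the Hall–Witt identity applied with the central element $z$) gives the standard identity $[x_1^a,x_3^b]\equiv z^{ab}$ for all integers $a,b$. Applied to $T(m,n,l)=[x_1^m,x_3^n][x_1^l,x_3]$ this yields $T(m,n,l)\equiv z^{mn}\cdot z^l=z^{mn+l}$, where we used once more that $z$ is central to multiply the two contributions.

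For the converse I would first reduce to the case $k\geqslant 1$, the case $k\leqslant -1$ being obtained by replacing $(m,n,l)$ with $(m,-n,-l)$, which preserves all size inequalities and flips both $\mathrm{sgn}(mn)$ and $\mathrm{sgn}(l)$. For $k\geqslant 1$, I would set $n:=\lfloor\sqrt{k}\rfloor\geqslant 1$ and perform the Euclidean division $k=mn+l$ with $0\leqslant l<n$. By construction $mn\leqslant k<(n+1)^2$, so $m\leqslant(n+1)^2/n\leqslant n+3\leqslant 3n$ (using $n\geqslant 1$), and $mn\geqslant k-n+1\geqslant n^2-n+1>n(n-1)$ forces $m\geqslant n$. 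Hence $|n|\leqslant|m|\leqslant 3|n|$, and $|l|=l<n\leqslant m=|m|$, while all three numbers are non-negative so the sign condition is trivially satisfied.

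The main (minor) obstacle is the edge case $k=0$, where the inequalities $|m|\leqslant 3|n|$ and $|l|<|m|$ together are inconsistent with $mn+l=0$; however, in this case $z^0=1$ is represented by the empty word and no auxiliary $T(m,n,l)$ is needed, so the statement of the lemma is meant (and used in the sequel) only for $k\neq 0$. Beyond this the argument is essentially the elementary number-theoretic fact that every non-zero integer admits a quasi-square-root decomposition with a small remainder, combined with the well-known commutator calculus inside a Heisenberg group, so no further technical difficulties arise.
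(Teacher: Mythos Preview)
Your proposal is essentially correct and matches what the paper calls ``straight-forward'' (the paper gives no proof). Two minor points are worth tightening. First, in your chain $m\leqslant (n+1)^2/n\leqslant n+3\leqslant 3n$ the last inequality fails for $n=1$; however, since $k$ is an integer one actually has $mn\leqslant k\leqslant (n+1)^2-1=n^2+2n$, whence $m\leqslant n+2$, and $n+2\leqslant 3n$ does hold for all $n\geqslant 1$, so your conclusion is correct. Second, your observation about $k=0$ is valid: the constraints $|m|\leqslant 3|n|$ and $|l|<|m|$ together with $mn+l=0$ are indeed inconsistent, so the converse should be read for $k\neq 0$ (and in the applications in the paper a vanishing $z$-power is simply dropped). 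The sign condition for $l=0$ is a harmless convention issue, since the purpose of $\mathrm{sgn}(mn)=\mathrm{sgn}(l)$ is only to ensure $|mn+l|=|mn|+|l|$.
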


We record the following simple consequence of Lemmas \ref{lem:ChangingFactors-p-4} and \ref{lem:Obs-central-words-p-4}:
\begin{lemma}\label{lem:Obs-central-words-p-4-conseq}
 There is a constant $C_1>0$ such that for every two words $T_1=T(m_1,n_1,l_1)$ and $T_2=T(m_2,n_2,l_2)$, their product $T_1 \cdot T_2$ can be transformed into a word $T_3=T(m_3,n_3,l_3)$ with
 \begin{enumerate}
  \item $m_3\cdot n_3 + l_3 = m_1 \cdot n_1 + l_1 + m_2\cdot n_2 + l_2$;
  \item $|m_3|, |n_3| \leqslant 3 \sqrt{|m_3\cdot n_3 +l_3|}$; and
  \item the identity $T_1 \cdot T_2 \equiv T_3$ holds with area $\leqslant C_1 \left(|m_1|+|n_1|+|m_2|+|n_2|\right)^2$\\ and diameter $\leqslant C_1 \left(|m_1|+|n_1|+|m_2|+|n_2|\right)$ in $\Gamma_{3,3}$ (and thus in $\Gamma_{4,3}$).
 \end{enumerate}
\end{lemma}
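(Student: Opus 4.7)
The plan is to reduce the statement to an application of the quadratic filling in the $5$-dimensional Heisenberg subgroup $\Gamma_{3,3} \leqslant \Gamma_{4,3}$ furnished by Theorem~\ref{thm:5-dim-Heisenberg}. First, I would set $k := m_1 n_1 + l_1 + m_2 n_2 + l_2$, and apply Lemma~\ref{lem:Obs-central-words-p-4} to this integer to produce a triple $(m_3, n_3, l_3)$ with $T(m_3, n_3, l_3) \equiv z^k$ in $\Gamma_{3,3}$, such that $|n_3| \leqslant |m_3| \leqslant 3|n_3|$, $0 \leqslant |l_3| < |m_3|$, and $\mathrm{sgn}(m_3 n_3) = \mathrm{sgn}(l_3)$. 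Condition (1) holds by construction since $m_3 n_3 + l_3 = k$. For condition (2), the sign condition gives $|k| = |m_3 n_3| + |l_3| \geqslant |m_3||n_3| \geqslant |n_3|^2$, so that $|n_3| \leqslant \sqrt{|k|}$ and $|m_3| \leqslant 3\sqrt{|k|}$, as required.

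For condition (3), observe that both $T_1 T_2$ and $T_3$ represent $z^k$ in $\Gamma_{3,3}$, so the word $W := T_1 \cdot T_2 \cdot T_3^{-1}$ is null-homotopic in $\Gamma_{3,3}$. Let $N := |m_1|+|n_1|+|m_2|+|n_2|$. Since $|l_i| < |m_i|$, we have $\ell(T_i) = O(|m_i|+|n_i|)$ for $i = 1,2$. Also, $|k| \leqslant |m_1||n_1|+|m_1|+|m_2||n_2|+|m_2| \lesssim N^2$, so by condition (2), $\ell(T_3) = O(\sqrt{|k|}) = O(N)$. Therefore $\ell(W) = O(N)$.

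Since every relation of $\mathcal{P}(\Gamma_{3,3})$ is a relation of $\mathcal{P}(\Gamma_{4,3})$, any filling of $W$ realized in $\Gamma_{3,3}$ is automatically a filling in $\Gamma_{4,3}$ with the same area and filling diameter. Theorem~\ref{thm:5-dim-Heisenberg} provides a filling of $W$ in $\Gamma_{3,3}$ of area $\lesssim N^2$ and filling diameter $\lesssim N$; transferring this to $\Gamma_{4,3}$ yields the equivalence $T_1 \cdot T_2 \equiv T_3$ at area $\leqslant C_1 N^2$ and diameter $\leqslant C_1 N$ for a suitable absolute constant $C_1$, proving (3).

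The main step to verify carefully is the length bound $\ell(W) \lesssim N$, which crucially relies on the quadratic estimate $|k| \lesssim N^2$ together with the sharp control on $|m_3|, |n_3|, |l_3|$ coming from Lemma~\ref{lem:Obs-central-words-p-4}; otherwise the filling pair $(n^2, n)$ from Theorem~\ref{thm:5-dim-Heisenberg} would not deliver the claimed bounds. No further difficulty is expected, since the proof is essentially a transfer argument through the embedded Heisenberg subgroup: this is the first concrete manifestation of the principle, highlighted in the overview, that central products let us reroute central words of length $n$ at quadratic rather than cubic cost.
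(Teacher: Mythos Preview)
Your proof is correct and follows essentially the same route the paper indicates: choose $(m_3,n_3,l_3)$ via Lemma~\ref{lem:Obs-central-words-p-4}, then fill the null-homotopic word $T_1T_2T_3^{-1}$ of length $O(N)$ in $\Gamma_{3,3}$ using its $(n^2,n)$ filling pair. The paper cites Lemma~\ref{lem:ChangingFactors-p-4} rather than Theorem~\ref{thm:5-dim-Heisenberg} directly, but since the former is an immediate consequence of the latter this is the same argument.
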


From this innocuous observation we deduce the subsequent lemma, which is the second key tool for our proof. We will use it in the case when $I= N$, in which it shows that a central null-homotopic word $w$ of the form $\prod_{i=1}^I T_i$ has area bounded by $C_2 N^3$. In particular, up to constants, its area is bounded by the function $n\mapsto n^{\frac{3}{2}}$ in $n=\ell (w)$, rather than by $n\mapsto n^2$, as one might a priori expect.
\begin{lemma}\label{lem:merging-central-words-p-4}
 Let $N,I>0$ and let $T_i=T(m_i,n_i,l_i)$, $1\leqslant i \leqslant I$ be words with $|m_i\cdot n_i + l_i| \leqslant N^2$ and $|m_i|,|n_i|\leqslant 3 N$. Assume that $\prod_{i=1}^I T_i$ is null-homotopic. There is a constant $C_2>0$ such that the identity
 \[
\prod_{i=1}^I T_i \equiv 1 
 \]
holds in $\Gamma_{4,3}$ with area $\leqslant C_2 \cdot I \cdot N^2$ and diameter $\leqslant C_2 \left( \cdot (I N^2)^{\frac{1}{3}}+N\right)$.
\end{lemma}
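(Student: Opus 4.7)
The plan is to contract the null-homotopic loop $\prod_{i=1}^I T_i$ by recursively splitting it using Lemma \ref{lem:Obs-central-words-p-4-conseq} as the elementary combining tool. The key combinatorial input is that the partial sums $s_j := \sum_{i\leqslant j}(m_i n_i + l_i)$ satisfy $s_0 = s_I = 0$ and $|s_j - s_{j-1}| \leqslant N^2$, so a discrete intermediate-value argument produces some index $j$ with $|s_j| \leqslant N^2$. The key geometric input is that $z \in \gamma_3(\Gamma_{4,3})$, so $z^s$ admits a word of length $O(|s|^{1/3})$ in $\Gamma_{4,3}$; in particular each partial product $P_j \equiv z^{s_j}$ sits at Cayley-graph distance $O((IN^2)^{1/3})$ from the identity.

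First I would pick a balanced index $j$ and use Lemma \ref{lem:Obs-central-words-p-4-conseq}(2) to produce an auxiliary $T$-word $T^{\ast}$ of length $O(N)$ representing $z^{-s_j}$. Inserting the trivial identity $T^{\ast} (T^{\ast})^{-1}$ between positions $j$ and $j+1$ splits the loop into two null-homotopic sub-loops, $\bigl(\prod_{i\leqslant j} T_i\bigr) T^{\ast}$ and $(T^{\ast})^{-1}\bigl(\prod_{i>j} T_i\bigr)$, to which I would recurse. The base case (a single null-homotopic central word of length $O(N)$) is handled in the $5$-dimensional Heisenberg subgroup via Theorem \ref{thm:5-dim-Heisenberg} at cost $O(N^2)$. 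The area estimate would then follow from the recursion $A(I) \leqslant A(j) + A(I-j) + O(N^2)$, which resolves to $A(I) = O(IN^2)$, provided that the subproblems maintain the hypothesis of the lemma.

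For the diameter: each merging operation in the recursion takes place in a ball of radius $O(N)$ around one of the partial products $P_j \equiv z^{s_j}$ (by the diameter part of Lemma \ref{lem:Obs-central-words-p-4-conseq}(3)), and all such partial products lie in a ball of radius $O((IN^2)^{1/3})$ about the identity by the geometric input above. Combined, the whole filling would sit in a ball of radius $O((IN^2)^{1/3} + N)$, as asserted.

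The main obstacle is that a naive split would enlarge the total word count by $2$ at each level and, more seriously, allow the merge parameters (the sizes $|m|, |n|$ and $|k|$ of the intermediate $T$-words) to drift upwards through the recursion, which would add a spurious $\log I$ factor (or worse) to the area. To obtain the clean $O(IN^2)$ area bound, the splitting rule must be sharpened---always picking $j$ to \emph{minimize} $|s_j|$, and absorbing the auxiliary $T^{\ast}$ into the adjacent $T_j$ via one application of Lemma \ref{lem:Obs-central-words-p-4-conseq}(3)---so that both the word count and the parameter bounds are passed to the subproblems with uniform constants. The delicate part is to reformulate the inductive statement with $|k_i|\leqslant C_0 N^2$ for a fixed $C_0$ and verify that this relaxed hypothesis is reproduced at each level of the recursion, keeping the merge cost uniformly $O(N^2)$.
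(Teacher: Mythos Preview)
Your intuition is right---the proof rests on Lemma \ref{lem:Obs-central-words-p-4-conseq} for the merge step and on the $n^{1/3}$-distortion of $\langle z\rangle$ for the diameter---but the divide-and-conquer packaging creates the very difficulty you flag at the end, and your proposed fix does not actually close it. If you pick $j$ minimizing $|s_j|$ and absorb $T^\ast$ into $T_j$ (and $(T^\ast)^{-1}$ into $T_{j+1}$), the absorbed terms carry the values $-s_{j-1}$ and $s_{j+1}$. Minimality of $|s_j|$ gives you no control on $|s_{j\pm 1}|$ beyond $|s_{j\pm1}|\leqslant |s_j|+|q_{j\text{ or }j+1}|$, so under the relaxed hypothesis $|q_i|\leqslant C_0N^2$ the new bound becomes $2C_0N^2$: the constant doubles at every recursion level and no fixed $C_0$ is reproduced. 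This is exactly the drift you were worried about, and it is not cured by minimizing $|s_j|$.

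The paper avoids this entirely by running a straight induction on $I$ with a single merge per step rather than a split. Since $\sum_i q_i=0$ (where $q_i=m_in_i+l_i$), two \emph{adjacent} terms $T_{i_0},T_{i_0+1}$ must have values of opposite sign (or one is zero); for such a pair
\[
|q_{i_0}+q_{i_0+1}|\leqslant \max\{|q_{i_0}|,|q_{i_0+1}|\}\leqslant N^2,
\]
so one application of Lemma \ref{lem:Obs-central-words-p-4-conseq} replaces $T_{i_0}T_{i_0+1}$ by a single $T'_{i_0}$ still satisfying the original hypothesis, at cost $O(N^2)$ and diameter $O(N)$. This reduces $I+1$ to $I$ with no parameter drift and no auxiliary $T^\ast$, and the area bound $O(IN^2)$ follows immediately. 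Your diameter argument via the distortion of $\langle z\rangle$ is exactly what the paper uses as well.
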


\begin{proof}
 The proof is by induction on $I$, with the result for $I=1$ being trivial. Assume that the result holds for $I\geqslant 1$ and let $\prod_{i=1}^{I+1} T_i$ be null-homotopic. Since $T_i\equiv z^{m_in_i +l_i}$ for $1\leqslant i \leqslant I+1$ is in the center of $\Gamma_{3,3}$ it follows that $\sum _{i=1}^{I+1} m_in_i+l_i=0$. In particular, there is some $i_0$ such that $T_{i_0}\cdot T_{i_0+1}\equiv z^k$ with
 \[
 |k|\leqslant {\rm{max}}\left\{|m_{i_0}\cdot n_{i_0}+l_{i_0}|,~ |m_{i_0+1}\cdot n_{i_0+1}+l_{i_0+1}|\right\} \leqslant N^2.
 \]

 By Lemma \ref{lem:Obs-central-words-p-4-conseq} there is a word $T'_{i_0}=T(m'_{i_0},n'_{i_0},l'_{i_0})$ which satisfies the identity $T'_{i_0}\equiv T_{i_0}\cdot T_{i_0+1}$ with area $\leqslant C_1 \cdot 12^2 N^2$, diameter $\leqslant C_1 \cdot 12\cdot N$ and such that, moreover, the word
 \begin{equation}\label{eqn:prefix-estimate-p-4}
  T_1\cdot \dots \cdot T_{i_0-1} \cdot T'_{i_0}\cdot T_{i_0+2}\cdot \dots \cdot T_{I}
 \end{equation}
satisfies the induction hypothesis for $I$. Choosing $C_2 \geqslant 12^2\cdot  C_1$ thus completes the assertion on the area. 

By Lemma \ref{lem:diameter} it suffices to show that the word diameter of the prefix word $T_1\cdot \dots \cdot T_{i_0-1}$ is $\lesssim \left( \left( I\cdot N^2\right)^{\frac{1}{3}}+N\right)$  in $\Gamma_{4,3}$ to obtain the desired diameter bound. However, this follows by observing that by assumption $\prod_{i=1}^{i_0-1}T_i \equiv z^t$ with $|t|\leqslant (i_0-1) N^2\leqslant I \cdot N^2$ and that the subgroup $\langle z \rangle \leqslant \Gamma_{4,3}$ is $n^{\frac{1}{3}}$-distorted \cite{Osindistort} (also see Lemma \ref{lem:sec-efficient-tech-1} below).
\end{proof}

We will now explain how to use Lemmas \ref{lem:ChangingFactors-p-4} and \ref{lem:merging-central-words-p-4} to show
\begin{theorem}\label{thmDehnL4ZH3}
$\Gamma_{4,3}$ admits $(n^3,n)$ as a filling pair. 
\end{theorem}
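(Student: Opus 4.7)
My plan is to combine three ingredients: the separation of $x$- and $y$-letters afforded by the central-product structure, an iterative bubble-sort on $x_1$-letters that creates central error terms in the $\langle x_1,x_3\rangle$-direction, and the Heisenberg trick of relocating those error terms to the $y$-factor via Lemma \ref{lem:ChangingFactors-p-4}, so that they can ultimately be cancelled via Lemma \ref{lem:merging-central-words-p-4}.

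First I would reduce to the case of a null-homotopic word in $x_1,x_2$ only. Starting from a null-homotopic word $w$ of length $n$, the relations $[x_i,y_j]=1$ together with the centrality of $z$ and the identification $x_4=y_4=z$ let me split $w$ as $w_x(x_1,x_2,x_3)\cdot w_y(y_1,y_3)\cdot z^m$ at cost $O(n^2)$ and diameter $O(n)$. Since $y_1,y_3$ project independently onto the abelianization of $\Gamma_{4,3}$, $w_y$ is null-homotopic modulo $\langle z\rangle$, hence can be filled inside $\Lambda_3\cong\HH_3(\ZZ)$ at cost $O(n^3)$ and diameter $O(n)$. Rewriting $x_3=[x_1,x_2]$ in $w_x$, collecting the $z$-content and absorbing it using the cubic distortion of $\langle z\rangle$ (so that the $z^M$ tail with $|M|=O(n^2)$ is represented by a word of length $O(n)$), we reduce to filling an arbitrary null-homotopic word $w'(x_1,x_2)$ of length $O(n)$ in $\Lambda_4$.

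Next I would iteratively push $x_1$-letters of $w'$ to the left, one at a time. The invariant after stage $i$ is that the word reads
\[
x_3^{L_i}\cdot x_1^{i}\cdot x_2^{K_i}\cdot R_i\cdot V_i,
\]
where $R_i$ is the unprocessed suffix and $V_i$ is a product of central correction words already transferred to the $y$-factor. To advance, commute the next $x_1$ past its adjacent $x_2$-block via $x_2x_1\equiv x_1x_2x_3^{-1}\pmod{\gamma_3 \Lambda_4}$ at area cost $O(n)$, producing a new $x_3$-block of length $\leqslant n$, then merge it with the existing $x_3$-prefix and commute across $x_1^{i}$ using $[x_1,x_3]=z$: this costs $O(n^2)$ and yields a central correction $T_i=T(i,k_i,l_i)$ in the sense of Lemma \ref{lem:Obs-central-words-p-4}. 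Before continuing, I invoke Lemma \ref{lem:ChangingFactors-p-4} to rewrite $T_i$ as its counterpart $T_i^{(y)}$ in $y_1,y_3$ at cost $O(n^2)$ and diameter $O(n)$, then move it past $R_i$ for free using $[x_i,y_j]=1$. Since every intermediate prefix has word diameter $O(n)$ in the Cayley graph, Lemma \ref{lem:diameter} guarantees that each of these steps has overall diameter $O(n)$; summing over $I\leqslant n$ iterations gives total area $O(n^3)$ while maintaining diameter $O(n)$.

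Finally I would cancel the tail. After the last iteration the word reads $x_3^L x_1^a x_2^b\cdot\prod_{i=1}^{I}T_i^{(y)}$ with $I\leqslant n$ and $|L|,|a|,|b|,|m_i|,|n_i|\leqslant O(n)$; null-homotopy of $w$ together with abelianization forces $a=b=L=0$, so $\prod_i T_i^{(y)}$ is itself null-homotopic in $\Gamma_{3,3}$, and Lemma \ref{lem:merging-central-words-p-4} with $I\leqslant n$ and $N=O(n)$ fills it at cost $O(I\cdot N^2)=O(n^3)$ and diameter $O((IN^2)^{1/3}+N)=O(n)$, completing the $(n^3,n)$ filling pair. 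The main obstacle will be propagating the linear diameter bound: each transfer of $T_i$ to the $y$-factor must be performed immediately upon its production, otherwise central terms would pile up on the $x$-side, inflating both the commutation cost and the word diameter; and even in the final cancellation step, what saves the diameter bound is the cubic distortion of $\langle z\rangle$ in $\Gamma_{4,3}$, since the accumulated product $\prod T_i^{(y)}$ carries $z$-exponent as large as $n^3$ yet occupies a ball of radius $O(n)$.
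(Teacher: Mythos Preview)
Your approach is essentially identical to the paper's: reduce to words in $x_1, x_2$, bubble-sort the $x_1$-letters to the left while creating $x_3$-blocks, produce central correction terms $T_i$ when these blocks cross the accumulated $x_1$-power, relocate the $T_i$ to the $y$-factor via Lemma~\ref{lem:ChangingFactors-p-4}, and finally cancel the tail via Lemma~\ref{lem:merging-central-words-p-4}.

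Two small issues to fix. First, the reduction step as written does not work: $w_y$ being null-homotopic \emph{modulo} $\langle z\rangle$ does not mean it can be filled inside $\Lambda_3$; if $w_y\equiv z^k$ with $k\neq 0$ there is nothing to fill there. The paper's remedy is cleaner: since $w_y(y_1,y_3)$ represents a central element of $\Gamma_{4,3}$, Lemma~\ref{lem:ChangingFactors-p-4} (applied in the reverse direction) replaces it by $w_y(x_1,x_3)$ at cost $O(n^2)$, yielding a word entirely in the $x_i$. Second, for the diameter bound you assert that all intermediate prefixes have word diameter $O(n)$, but the accumulated $x_3$-exponent $L_i$ can grow to $i\cdot n = O(n^2)$; the missing ingredient (which the paper invokes explicitly) is that $\langle x_3\rangle$ is $n^{1/2}$-distorted in $\Gamma_{4,3}$, so $x_3^{L_i}$ still lies in a ball of radius $O(\sqrt{L_i})=O(n)$. (Also, the cost $O(n)$ you quote for moving one $x_1$ past a full $x_2$-block and collecting the resulting $x_3$'s should be $O(n^2)$ by Claim~\ref{claim:creating-x3s-p-4}, but this does not affect the final $O(n^3)$ total.)
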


\begin{claim}
It suffices to prove that there is a constant $C>0$ such that all null-homotopic words $w=w(x_1,x_2)$ of length $\ell(w)\leqslant n$ admit a filling of area $\leqslant C n^3$ and diameter $\leqslant C n$ in $\Gamma_{4,3}$. 
\end{claim}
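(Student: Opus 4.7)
My plan is to convert any null-homotopic word $w$ of length $n$ over $\mathcal{P}(\Gamma_{4,3})$ into a null-homotopic word $w''$ in $x_1,x_2$ alone of length $O(n)$, at a total area of $O(n^3)$ and diameter of $O(n)$, and then to invoke the hypothesis of the claim on $w''$. The key resource is the subgroup $\Gamma_{3,3}\cong \HH_5(\ZZ)$ generated by $x_1,x_3,y_1,y_3,z$, whose presentation is a sub-presentation of $\mathcal{P}(\Gamma_{4,3})$ and which admits the quadratic filling pair $(n^2,n)$ by Theorem \ref{thm:5-dim-Heisenberg}.

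Using the relations $x_4\equiv z\equiv [x_1,x_3]$ and $y_4\equiv z$, I first rewrite $w$ as a word $w'$ of length $O(n)$ over $\{x_1,x_2,x_3,y_1,y_3\}$ at cost $O(n)$ and diameter $O(n)$. The commuting relations $[x_i,y_j]=1$ then let me shuffle all $y$-letters past all $x$-letters through $O(n^2)$ unit-cost swaps of diameter $O(n)$, producing $w'\equiv u(x_1,x_2,x_3)\cdot v(y_1,y_3)$ with $|u|,|v|=O(n)$. Since $w\equiv 1$, $u\in \Lambda_4$, $v\in \Lambda_3$ and $\Lambda_4\cap \Lambda_3=\langle z\rangle$, we conclude that $u\equiv z^a$ and $v\equiv z^{-a}$ for some integer $a$; in particular the abelianized $y_1$- and $y_3$-exponents of $v$ both vanish.

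The core step transports $v$ from the $y$-factor to the $x$-factor by formal letter substitution. Let $v'(x_1,x_3)$ denote the word obtained from $v(y_1,y_3)$ by replacing $y_1\mapsto x_1$ and $y_3\mapsto x_3$. Both $\langle y_1,y_3\rangle$ and $\langle x_1,x_3\rangle$ are copies of the discrete $3$-Heisenberg $\HH_3(\ZZ)$ with centres $y_4$ and $x_4$ respectively, and the identifications $y_4=z=x_4$ together with the vanishing of the abelianized $y$-exponents of $v$ imply $v\equiv v'\equiv z^{-a}$ in $\Gamma_{4,3}$. Hence $v\cdot v'^{-1}$ is null-homotopic in the embedded $\Gamma_{3,3}$ and has length $O(n)$, so Theorem \ref{thm:5-dim-Heisenberg} fills it with area $O(n^2)$ and diameter $O(n)$. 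Applying Lemma \ref{lem:diameter} with the prefix $u$ of word-diameter $O(n)$, this replaces $v$ by $v'$ in $w'$ at the same asymptotic cost, leaving a word $u(x_1,x_2,x_3)\cdot v'(x_1,x_3)$ of length $O(n)$ over $\{x_1,x_2,x_3\}$. Finally, expanding each $x_3$ via $x_3\equiv [x_1,x_2]$ at unit cost yields a null-homotopic $w''(x_1,x_2)$ of length $O(n)$; the hypothesis of the claim produces a fill of area $O(n^3)$ and diameter $O(n)$, and summing the contributions across all four steps gives the desired filling pair for $w$.

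The main subtle point is this letter substitution: the map $y_i\mapsto x_i$ is \emph{not} an endomorphism of $\Gamma_{4,3}$, and $v$ and $v'$ represent the same element only when the abelianized $y_1$- and $y_3$-exponents of $v$ vanish. Verifying that this linear-algebraic condition is forced by $uv\equiv 1$ is precisely what enables the cheap transport of $v$ through the quadratic filling of $\Gamma_{3,3}$, instead of paying the much larger cost of simplifying $v$ directly to the long central word $z^{-a}$.
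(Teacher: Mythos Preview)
Your proof is correct and follows essentially the same approach as the paper: shuffle the $y$-letters past the $x$-letters using $[x_i,y_j]=1$, transport the resulting $y$-word to an $x$-word via the embedded $\Gamma_{3,3}\cong\HH_5(\ZZ)$ and its quadratic filling pair, then expand $x_3=[x_1,x_2]$. Your explicit discussion of why the letter substitution $y_i\mapsto x_i$ is legitimate (the vanishing of the abelianized $y_1$- and $y_3$-exponents forced by $uv\equiv 1$ and $\Lambda_4\cap\Lambda_3=\langle z\rangle$) is precisely the content the paper packages as Lemma~\ref{lem:ChangingFactors-p-4}, so you have effectively inlined that lemma.
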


\begin{proof}
The subgroup generated by the $x_i$ intersects the subgroup generated by the $y_i$ in the central subgroup $\langle z \rangle$. Thus, given a null-homotopic word $u$ of length at most $n$ in the generators $x_i$ and $y_i$ of $\Gamma_{4,3}$, we can use the commutation relations $\left[x_i,y_j\right]=1$ and Lemma \ref{lem:ChangingFactors-p-4} to replace it by a word $v$ in the $x_i$ of the same length at cost $\leqslant K_1\cdot n^2$ and in a ball of diameter $\leqslant K_1 n$ for a suitable constant $K_1>0$. Using $O(n)$ relations of the form $\left[x_1,x_i\right]=x_{i+1}$ we can now replace $v$ by a null-homotopic word $w(x_1,x_2)$ of length bounded by $K_2 n$ for a suitable constant $K_2>0$. 
\end{proof}

\begin{claim}\label{claim:creating-x3s-p-4}
 There is a constant $C>0$ such that for all $n\in \ZZ$, the null-homotopic word $\left[x_2^n,x_1\right]x_3^n$ admits a filling of area $\leqslant C n^2$ and diameter $\leqslant C n$ in $\Gamma_{4,3}$.
 \label{lemAreaCommwnx1}
\end{claim}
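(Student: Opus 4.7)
\medskip
\noindent\textbf{Plan.} The point is that, in $\Gamma_{4,3}$, the commutator identity $[x_1,x_2]=x_3$ together with the commutation $[x_2,x_3]=1$ already suffice to reduce $[x_2^n,x_1]$ to $x_3^{-n}$: indeed, unwinding with our convention $[g,h]=g^{-1}h^{-1}gh$, we have $[x_2,x_1]=x_3^{-1}$ and hence the relation $x_2x_1\equiv x_1x_2x_3^{-1}$. The proof will not use the full power of Theorem~\ref{thm:5-dim-Heisenberg} or Lemma~\ref{lem:ChangingFactors-p-4}; it is a direct collection argument inside the subgroup $\langle x_1,x_2,x_3\rangle$, and this is what will allow us to pay only $O(n^2)$ relations and stay in a ball of radius $O(n)$.

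\medskip
\noindent The core computation will be the identity
\[
x_2^n x_1 \equiv x_1 x_2^n x_3^{-n},
\]
proved by induction on $n\geqslant 0$. The base case $n=0$ is trivial, and in the inductive step we first write $x_2^{n+1}x_1=x_2(x_2^n x_1)$, apply the induction hypothesis to get $x_2\cdot x_1 x_2^n x_3^{-n}$ at cost $\leqslant C n^2$, then apply $x_2 x_1\equiv x_1 x_2 x_3^{-1}$ once (bounded cost and diameter), and finally push the new $x_3^{-1}$ to the right across $x_2^n x_3^{-n}$ using $n$ instances of $[x_2,x_3]=1$ (at cost $O(n)$ and diameter $O(n)$, since $x_3$ already commutes with $x_3$). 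The total area added at the inductive step is $O(n)$, giving area $\leqslant C(n+1)^2$ for a suitable $C$, and at every moment the intermediate word has length $O(n)$, so by Lemma~\ref{lem:diameter} the diameter stays $O(n)$.

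\medskip
\noindent Once the collection identity is established, the claim follows immediately:
\[
[x_2^n,x_1]\,x_3^n \;=\; x_2^{-n}x_1^{-1}(x_2^n x_1)x_3^n \;\equiv\; x_2^{-n}x_1^{-1}\bigl(x_1 x_2^n x_3^{-n}\bigr)x_3^n \;\equiv\; 1,
\]
where the last step is a free cancellation. The prefix $x_2^{-n}x_1^{-1}$ has word length $n+1$, so by Lemma~\ref{lem:diameter} the overall transformation has diameter $\leqslant Cn$ and area $\leqslant Cn^2$ for a suitable constant $C$, as required.

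\medskip
\noindent The main things to be careful about are purely bookkeeping: one must verify the sign convention (so that indeed $x_3^n$ is what cancels the result of $[x_2^n,x_1]$, and not $x_3^{-n}$), and one must check that the relation $[x_2,x_3]=1$ is available in $\mathcal{P}(\Gamma_{4,3})$ at bounded cost (which it is, as the $x_i$ for $i\geqslant 2$ generate an abelian subgroup in the semi-direct product presentation of $\Lambda_4$). There is no serious obstacle here; this claim is really a basic commutator-collecting computation, and its role is to serve as a combinatorial building block for the more intricate manipulations to come.
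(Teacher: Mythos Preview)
Your proof is correct and follows essentially the same approach as the paper's: move $x_1$ past the $x_2$'s one at a time using $[x_1,x_2]=x_3$, then shuffle the resulting $x_3$'s to the right using $[x_2,x_3]=1$. One small omission: the claim is stated for all $n\in\ZZ$, but your induction only covers $n\geqslant 0$; the case $n<0$ is handled by the symmetric identity $x_2^{-1}x_1\equiv x_1x_2^{-1}x_3$ (using $[x_2,x_3]=1$) and is equally routine.
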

\begin{proof}
The proof is straight-forward: consider $x_2^nx_1$ and move $x_1$ to the left, by commuting it with the $x_2$'s one by one, using the relation $\left[x_1,x_2\right]=x_3$. Then move all $x_3$'s produced in the process to the right using the relation $\left[x_2,x_3\right]=1$ (see also Proposition \ref{prop:binom-Lie} below).
\end{proof}

So let $w(x_1,x_2)$ be a null-homotopic word of length $\ell(w)\leqslant n$.

To obtain an upper bound on the area of $w(x_1,x_2)$ we will iteratively move all instances of $x_1$ in $w$ to the left, starting with the left-most. After moving an $x_1$ to the left we move all $x_3$'s created in the process to the left. As a consequence we will obtain a word of the form $T_i=T(m_i,n_i,0)$ with $|m_i|, |n_i| \leqslant n$, which we move to the right. 

After the $i$-th iteration of this process we may assume that we have a word of the form
\[
x_3^{k_1}x_1^{k_2}x_2^{k_3} x_1^{\pm 1} v(x_1,x_2) \prod _{j=0}^{i-1} T_{i-j},
\]
where $|k_2|+ \vert k_3 \vert +1+\ell(v(x_1,x_2))\leqslant n$ and $|k_1|\leqslant i \cdot n$.

Since the exponent sum of the $x_1$'s and $x_2$'s is zero, repeating this process $I\leqslant n$ times will yield a null-homotopic word
\[
x_3^{a}\prod _{j=0}^{I-1} T_{I-j}.
\]
Since $\prod _{j=0}^{I-1} T_{I-j}$ is in the center of $\Gamma_{4,3}$ it follows that it is null-homotopic and thus $a=0$. We now apply Lemma \ref{lem:merging-central-words-p-4} with $N:=n$ to conclude that $\prod_{j=0}^{I-1} T_{I-j}$ admits a filling of area $\leqslant C_2 I \cdot n^2 \leqslant C_2 n^3$ and diameter $\leqslant 2\cdot C_2 \cdot n$.

It remains to explain the $i+1$-th iteration of our procedure and to check that it has quadratically bounded area and linearly bounded diameter. It is here where we will make fundamental use of Lemma \ref{lem:ChangingFactors-p-4}. We will discuss the case $x_1^{+1}$, the case $x_1^{-1}$ being similar. The following identities hold in $\Gamma_{4,3}$:

\begingroup
\allowdisplaybreaks
\begin{align}
 & x_3^{k_1}x_1^{k_2}x_2^{k_3} x_1 v(x_1,x_2)\prod _{j=0}^{i-1} T_{i-j}\\
 \equiv &x_3^{k_1}x_1^{k_2} x_1 x_2^{k_3} x_3^{-k_3} v(x_1,x_2)\prod _{j=0}^{i-1} T_{i-j}\label{eqar1}\\
 \equiv&x_3^{k_1}x_1^{k_2+1} x_3^{-k_3} x_2^{k_3} v(x_1,x_2)\prod _{j=0}^{i-1} T_{i-j}\label{eqar2}\\
 \equiv&x_3^{k_1}x_3^{-k_3} x_1^{k_2+1} T(k_2+1,-k_3,0) x_2^{k_3} v(x_1,x_2)\prod _{j=0}^{i-1} T_{i-j}\label{eqar3}\\
 \equiv&x_3^{k_1-k_3} x_1^{k_2+1} \left[y_1^{k_2+1},y_3^{-k_3}\right] x_2^{k_3} v(x_1,x_2)\prod _{j=0}^{i-1} T_{i-j}\label{eqar4}\\
 \equiv&x_3^{k_1-k_3} x_1^{k_2+1} x_2^{k_3} v(x_1,x_2) \left[y_1^{k_2+1},y_3^{-k_3}\right] \prod _{j=0}^{i-1} T_{i-j}\label{eqar5}\\
 \equiv&x_3^{k_1-k_3} x_1^{k_2+1} x_2^{k_3} v(x_1,x_2) T(k_2+1,-k_3,0) \prod _{j=0}^{i-1} T_{i-j}\label{eqar6}
\end{align}
\endgroup

Setting $T_{i+1}=T(k_2+1,-k_3,0)$ completes the $i+1$-th step. We remark that in the case $x_1^{-1}$ we obtain new terms $x_3^{+k_3}$ and $T(k_2,k_3,0)$.

Using that $|k_2|+|k_3|+1+\ell(v(x_1,x_2))\leqslant n$ we obtain that the number of relations required to obtain consecutive lines of the equation is bounded as follows:
\begin{itemize}[leftmargin = 2.25cm]
\item[\eqref{eqar1}] $C n^2$ (by Claim \ref{claim:creating-x3s-p-4})
\item[\eqref{eqar2}] $n^2$ (using the relation $\left[x_2,x_3\right]=1$)
\item[\eqref{eqar4} \& \eqref{eqar6}] $C_0 n^2$ (by Lemma \ref{lem:ChangingFactors-p-4})
\item[\eqref{eqar5}] $4 n^2$ (using the relations $\left[x_i,y_j\right]=1$)
\end{itemize} 
In particular, there is a constant $C_3>0$ such that the total cost of this transformation is $\leqslant C_3 n^2$. Since we repeat this process $I\leqslant n$ times, this provides the desired area estimate in Theorem \ref{thmDehnL4ZH3}.

The subgroup $\langle x_3 \rangle \leqslant \Gamma_{4,3}$ is $n^{\frac{1}{2}}$-distorted \cite{Osindistort} (or Lemma \ref{lem:sec-efficient-tech-1} below), meaning that the prefix word of all of our transformations has diameter in $O(\sqrt{i\cdot n}+n)= O(n)$.  Thus, by combining the linear diameter bounds in Lemma \ref{lem:ChangingFactors-p-4} and Claim \ref{claim:creating-x3s-p-4} with Lemma \ref{lem:diameter}, we obtain that all of our transformations satisfy a linear diameter bound, completing the proof of Theorem \ref{thmDehnL4ZH3}.

\subsection{Developing a strategy for the proof for general \texorpdfstring{$p$}{p}}\label{sec:strategyp}

In some sense what made our proof work for $p=4$ is that this degree is low enough so that we could conveniently shift powers of $x_3$ to the left, central words of the form $T(n,m,l)$ to the right and keep the remainder of our word in $x_1$ and $x_2$ in the middle.  This allowed us to elegantly avoid and hide a key difficulty that makes any brute force attempt to generalize our approach to arbitrary values of $p$ fail: the distortion of terms in $\gamma_i(\Gamma_{p,p-1})$ being $n^{\frac{1}{i}}$,  the cost of ``naively'' creating and reordering powers of the $x_i$ will be much too high.  On the other hand the commuting trick exploiting the second factor (generated by the $y_i$'s) will only work for central words. 

We overcome these difficulties through a sequence of results that on the surface seem like a long list of technical lemmas, but really follow a concrete strategy designed to avoid the above obstacles. Moreover, it will turn out to be of great use to switch to the setting of compact presentations and work in the real Malcev completion $G_{p,p-1}$ rather than in the discrete group $\Gamma_{p,p-1}$. But for now let us pretend we work in $\Gamma_{p,p-1}$. For $k\geqslant 1$ and $\nnn=(n_1, \dots, n_k)\in \mathbb Z^k$, we let  $ \Omega_{k}(\nnn)$ be the following word in $x_1$ and $x_2$
\[
 \Omega_{k}(\nnn):= \left[x_1^{n_1},\dots,x_1^{n_{k-1}},x_2^{n_k}\right].
\]
We observe that  $\Omega_k(\nnn)$ corresponds to an element  of the $k$-th term of the lower central series of the free group generated by $x_1$ and $x_2$. In particular, for $k=p$ it defines a relation in $\Lambda_p$, and therefore in $\Gamma_{p,p}$ and $\Gamma_{p,p-1}$.
The non-technical key steps of our proof for general $p$ are:

\vspace{.2cm}\noindent {\bf Step 0:} Similar arguments as above allow us to reduce to words $w(x_1,x_2)$.

\vspace{.2cm}\noindent {\bf Step 1:} We use the results on efficient sets of words presented in \S \ref{subsec:efficient-words} to argue that we can reduce to null-homotopic words of the form
\[
 w(x_1,x_2)= x_1^{n_1}x_2^{m_1} \cdots x_1^{n_k}x_2^{m_k}
\]
with $|n_i|,|m_i|\leqslant n$ and $k$ uniformly bounded by some constant $C>0$.

\vspace{.2cm}\noindent {\bf Step 2:} By shifting the $x_1^{n_i}$'s to the left in blocks, we transform the word $w$ into a product of $\leqslant C'$ iterated commutators of the form $\Omega_{k_i}(\nnn_i)^{\pm 1}$, with $2\leqslant k_i\leqslant p-1$ and $\nnn_i\in \RR^{k_i}$ and order them by the size of the $k_i$ (for a suitable constant $C'>0$). This provides us with a word of length $\lesssim n$ that (at least morally) is very similar to a word in the Malcev normal form of \S \ref{subsec:compact-presentations}.

\vspace{.2cm}\noindent {\bf Step 3:} We consecutively merge all terms of the form $\Omega_{k}(\nnn_i)$ for increasing $k$, starting with $k=2$. Using that $w$ is null-homotopic this process will terminate in the trivial word. At any stage we will make sure that the remaining word stays of length $\lesssim n$.

\vspace{.2cm}

Note that for technical reasons the above steps don't appear in the precisely same order in \S \ref{sec:upper-bound-Dehn}. However, keeping them in mind when reading the proof should be helpful in understanding its structure. 

\vspace{.3cm}
The most difficult steps are Steps 2 and 3. Performing them essentially requires us to be able to do two things at sufficiently low cost:
\begin{enumerate}
 \item Merge two words of the form $\Omega_{k}(\nnn_1)$ and $\Omega_{k}(\nnn_2)$ into a new word of a similar form and of length $\lesssim n$.
 \item Commute certain types of words. In particular, we will have to commute words of the form $\Omega_{k_1}(\nnn_1)$ with words of the form $\Omega_{k_2}(\nnn_2)$ at cost $\lesssim n^{p-1}$.
\end{enumerate}

The bulk of the technical work in \S \ref{sec:upper-bound-Dehn} is concerned with resolving these two problems. Concretely, (1) will be resolved by Lemma \ref{lem:Strong-k-Lemma}, which we will often refer to as the Cancelling Lemma, while (2) will be resolved by Lemma \ref{lem:MainLemma}, which we will often refer to as the Main commuting Lemma. Note that the Cancelling Lemma and the Main commuting Lemma are in some sense beefed-up and considerably harder to prove versions of Lemma \ref{lem:merging-central-words-p-4} and of the commutation of terms enabled by Lemma \ref{lem:ChangingFactors-p-4}.

In fact we will first prove the Main commuting Lemma and then the Cancelling Lemma, as the former will be required in the proof of the latter. The proofs of both will be by a rather subtle double induction in $p$ and $k$ and will be divided into several auxiliary technical lemmas. Throughout the proofs of these results we will rely heavily on applying the fact that, by induction, $\delta_{\Gamma_{p-1,p-1}}\asymp n^{p-2}$ to rewrite words in the generators of the canonically embedded subgroup $\Gamma_{p-1,p-1}\hookrightarrow \Gamma_{p,p-1}$. Similar to the use of Lemma \ref{lem:ChangingFactors-p-4} in \S \ref{subsec:Proof-upper-bound-p-4}, we will also make essential use of the fact that we can replace words of length $n$ in $x_1$ and $x_3$ that are contained in $\gamma_{p-1}(\Gamma_{p-1,p-1})$ by words in $y_1$ and $y_3$ at cost $\lesssim n^{p-2}$, to enable us to commute them with words in the $x_i$ at a low cost. In particular, we will use this to start the induction in some of the technical Lemmas leading up to the Main commuting Lemma.

\section{Preliminaries for the general case}
\label{sec:preliminaries}

In this section we set the stage for the proof of the upper bound on the Dehn functions of $G_{p,p}$ and $G_{p,p-1}$ for general $p$. In \S  \ref{subsec:compact-presentations} we start by constructing explicit compact presentations. In \S \ref{subsec:efficient-words} we recall the notion of efficient words, which will allow us to restrict to certain families of simpler words when proving upper bounds on the Dehn functions. We then explain how to obtain such a set of efficient words with respect to our presentations. Finally, in \S \ref{subsec:techresultsUpperbounds} we prove some technical results that we will require in \S \ref{sec:upper-bound-Dehn} to compute upper bounds on diameters of fillings.

\subsection{Compact presentations of the groups \texorpdfstring{$\Gamma_{p,q}$}{Gammapq} and \texorpdfstring{$G_{p,q}$}{Gpq}}
\label{subsec:compact-presentations}

Recall from the introduction that $\Lambda_p$ denotes the model filiform group with presentation 
\begin{equation*}
\mathcal P (\Lambda_{p}) =
\left\langle 
\begin{array}{cccc} 
x_1,& x_2,\ldots & x_{p-1},& z \end{array} 
\left\mid  
\begin{array}{l} 
\left[x_1,x_i\right]x_{i+1}^{-1}, i = 2, \ldots, p-2\\
\left[x_i,x_j\right], i,j = 2,\ldots, p-1 \\ 
\left[x_1, x_{p-1}\right]z^{-1}, \left[x_i,z\right],~ i=1,\dots, p-1 \\
\end{array}  \right. 
\right\rangle
\end{equation*}
and $L_p$ denotes its real Malcev completion.
The group $\Gamma_{p,q}$ is defined as the central product of $\Lambda_p$ with $\Lambda_q$ for  $3\leqslant q\leqslant p$. 
We deduce the following finite presentation of $\Gamma_{p,q}$:
\begin{equation*}
\mathcal P(\Gamma_{p,q})=
\left\langle 
\begin{array}{cccc} 
x_1,& x_2,\ldots & x_{p-1},& z\\ 
y_1,& y_{p-q+2},\ldots & y_{p-1}, & \end{array} 
\left\mid  
\begin{array}{l} 
\left[x_1,x_i\right]x_{i+1}^{-1}, \left[y_1,y_i \right]y_{i+1}^{-1}, i = 2, \ldots, p-2\\
\left[x_i,y_j\right], i,j = 1,\ldots, p-1 \\ 
\left[x_1, x_{p-1}\right]z^{-1}, [y_1, y_{p-1}]z^{-1} \\
z \mbox{ central}
\end{array}  \right. 
\right\rangle.
\label{eqn:PresJpq}
\end{equation*}

Observe that for $\Gamma_{p,p-1}$ we purposefully used the notation $y_1,y_{p-q+2},\ldots, y_p$ instead of $y_1,y_2,\ldots, y_{q-1}$ as it allows us to see $\Gamma_{p,q}$ as a subgroup of $\Gamma_{p,p}$.
Actually, it will be more convenient to work with {\it compact presentations} of their respective Malcev completions  $G_{p,q}$. 
 We describe below a way to deduce a compact presentation of the group from a finite presentation of a lattice.

Let $\Gamma$ be a finitely-generated torsion-free nilpotent group. Then $\Gamma$ is strongly polycyclic, i.e. admits a composition series 
$
\Gamma = P^0 \Gamma \triangleright P^1 \Gamma \triangleright \cdots \triangleright P^n \Gamma = \lbrace 1 \rbrace
$
with $P^{i} \Gamma / {P^{i+1} \Gamma} = \mathbf Z$. It can be chosen to refine the lower central series, i.e. there exist integers $k_i$ such that $\gamma_i \Gamma = P^{k_i} \Gamma$ for all $i$ with suitable $k_i$.
Choosing representatives of the generators of the quotients $P^{i} \Gamma / {P^{i+1} \Gamma}$, one can build a generating set $S = \lbrace \gamma_1, \ldots, \gamma_n \rbrace$ such that $[\gamma_i, \gamma_j] \in \langle \gamma_{j+1}, \ldots, \gamma_n \rangle$ whenever $i \less j$ and every $\gamma \in \Gamma$ uniquely writes as $\gamma_1^{\ell_1} \cdots \gamma_n^{\ell_n}$ with $\ell_i \in \mathbf Z$.
$S$ is called a Malcev basis for $\Gamma$. 
\begin{example}\label{ex:Malcev}
Note that $S=\{x_1,x_2\}$ forms a generating subset of $\Lambda_p$ and that $\widehat S=\{x_1,\ldots, x_p\}$ is a Malcev basis. 
Similarly $T=\{x_1,x_2,y_1,y_{p-q+2}\}$ is a generating subset of $\Gamma_{p,q}$ and the set $\widehat{T}=\{x_1,x_2,\ldots, x_{p-1}, z, 
y_1, y_{p-q+2},\ldots, y_{p-1}\}$ is a Malcev basis.
\end{example}
With respect to the integer coordinates $\ell_i$ one can prove that the multiplication law is polynomial, i.e. that there are polynomials $M_1, \ldots, M_n \in \mathbf Z [X_1, \ldots, X_n, X'_1, \ldots, X'_n]$ such that $(\gamma_1^{\ell_1} \cdots \gamma_n^{\ell_n})\cdot (\gamma_1^{\ell'_1} \cdots \gamma_n^{\ell'_n}) \equiv \gamma_1^{M_1(\ell_1,\ell'_1)} \cdots \gamma_n^{M_n(\ell_n, \ell'_n)}$ \cite[5.1]{BuserKarcher}.
An effective way of constructing the Malcev completion of $\Gamma$ is to extend this polynomial law (denote it $\star$) from $\mathbf Z^n$ to $\mathbf R^n$. Let $G$ be any simply connected nilpotent Lie group containing $\Gamma$ as a lattice. Then the isomorphism $\Gamma \to (\mathbf Z^n, \star)$ extends to an isomorphism $G \to (\mathbf R^n, \star)$. This can be established independently of the existence part of the Malcev theorem \cite[Corollary 2 p.34]{RagDS} by Zariski-density arguments.

We shall use the following notation throughout: 
for $\gamma \in \Gamma$ and $a \in \mathbf R$ we denote $\gamma^a = \exp(a \log \gamma)$ and for all subsets $S\subset G$ and $A>0$ we define $S_A = \lbrace \gamma^a : a \in [-A,A], \gamma\in S \rbrace$. The subsequent result explains how one can obtain a compact presentation for a simply connected nilpotent Lie group $G$ starting with a Malcev basis of a lattice $\Gamma<G$.
\begin{proposition}
\label{prop:Malcev}
Let $\Gamma$ be a lattice in a simply connected nilpotent Lie group $G$ and let $A>0$. 
Let $\widehat{S}=\left\{\gamma_1,\dots,\gamma_n\right\}$ be a Malcev basis of $\Gamma$.
\begin{enumerate}
\item
\label{item:existence-monomials}
For $1\leqslant i<j\leqslant n$ there exist polynomials $P_{j+1}, \ldots, P_n \in \mathbf Z[X,Y]$ such that for all $\ell,m\in \ZZ$ the following equality holds in $\Gamma$:
\begin{equation*}
[\gamma_i^\ell, \gamma_j^m] \equiv \gamma_{j+1}^{P_{j+1}(\ell,m)} \cdots  \gamma_{n}^{P_{n}(\ell,m)}.
\end{equation*}
\item
\label{item:finite-presentation-of-lattice}
The set of freely reduced words $[\gamma_j, \gamma_i] \gamma_{j+1}^{P_{j+1}(1,1)} \cdots  \gamma_{n}^{P_{n}(1,1)}$ for $1\leqslant i \less j\leqslant n$ determines a presentation for $\Gamma$ over the generating set $\widehat{S}$.
\item
\label{item:compact-presentation-of-Lie}
The set of freely reduced words  $R_A = \lbrace \sigma_{i}(a,b) \rbrace \cup \lbrace\rho_{i,j}(a,b) \rbrace$ with
\begin{equation*}
\sigma_{i}(a,b) = \gamma_i^a \gamma_i^b (\gamma_i^{a+b})^{-1} \text{ and } \rho_{i,j}(a,b) = [\gamma_j^a, \gamma_i^b] \gamma_{j+1}^{P_{j+1}(a,b)} \cdots  \gamma_{n}^{P_{n}(a,b)} 
\end{equation*} for $i\less j$, $a,b \in [-A,A]$ determines a presentation for $G$ over the generating set $\widehat{S}_A$.
\end{enumerate}
\end{proposition}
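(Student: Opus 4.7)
The plan is to handle the three items in sequence, each leveraging the previous one, with the Baker--Campbell--Hausdorff formula in the Malcev completion providing the computational backbone.

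For (1), I would first observe that because $\widehat S$ refines the lower central series, $[\gamma_i, \gamma_j]$ lies in the subgroup generated by $\gamma_{j+1}, \ldots, \gamma_n$, and the same holds for $[\gamma_i^\ell, \gamma_j^m]$. Writing $\gamma_i^\ell = \exp(\ell \log \gamma_i)$ in the Malcev completion, the BCH commutator expansion expresses $\log[\gamma_i^\ell, \gamma_j^m]$ as a linear combination of iterated brackets in $\log\gamma_i$ and $\log\gamma_j$; each such bracket with $p$ copies of $\log\gamma_i$ and $q$ copies of $\log\gamma_j$ carries the monomial $\ell^p m^q$. Since the Malcev basis refines the lower central series, the coordinate along $\gamma_k$ receives contributions only from iterated brackets whose total weight matches that of $\gamma_k$, and for our basis this weight constraint pins down a single bidegree $(p,q)$, forcing $P_k$ to be a monomial.

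For (2), I would use the existence and uniqueness of the Malcev normal form $\gamma_1^{\ell_1} \cdots \gamma_n^{\ell_n}$ in $\Gamma$. Given a null-homotopic word $w$ over $\widehat S$, I would repeatedly apply the given commutator relations to bubble-sort the letters of $w$ into Malcev order: each swap produces only letters of strictly higher index, so the procedure terminates. To reduce general commutator identities to the $\ell=m=1$ relations used as defining relators, I would split $\gamma_i^\ell \gamma_j^m$ into $|\ell m|$ elementary transpositions and reassemble in the free group. The resulting normal form of $w$ has all exponents zero, so $w$ freely reduces to the empty word. For (3), the same strategy works inside $G \simeq (\mathbf R^n, \star)$: generation by $\widehat S_A$ follows by chopping each $\gamma_i^{a_i}$ in the Malcev decomposition of $g \in G$ into at most $\lceil |a_i|/A\rceil$ pieces of exponent in $[-A,A]$, and for the relations one alternates between the $\sigma_i(a,b)$ relations (to merge and re-split same-generator blocks while keeping arguments in $[-A,A]$) and $\rho_{i,j}(a,b)$ relations (to swap letters into Malcev order), with (1) controlling the exponents produced after each swap.

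The main obstacle I expect is the monomial statement in (1): the polynomial version follows immediately from BCH and Malcev's theorem, but the monomial refinement hinges on the precise compatibility of the Malcev basis with the graded pieces of the lower central series, and breaks down for bases not adapted to this grading. A secondary difficulty in (3) is ensuring that the interleaved use of $\sigma_i$ and $\rho_{i,j}$ relations terminates; this calls for a well-founded order on partially rewritten words, most naturally given by the Malcev coordinates in $\mathbf R^n$ of the element they represent in $G$.
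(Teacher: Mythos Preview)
Your approach to (2) and the overall strategy for (3) match the paper's: reduce words to Malcev normal form using the commutator and merge relations.

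For (1), your BCH argument does yield \emph{polynomials} $P_k(\ell,m)$, but the monomiality argument does not work. The claim that ``the weight constraint pins down a single bidegree $(p,q)$'' already fails when $\gamma_i$ and $\gamma_j$ both have weight $1$ and $\gamma_k$ has weight $\geqslant 3$, since $p+q = \text{weight}(\gamma_k)$ then has several solutions with $p,q \geqslant 1$; and the passage from exponential coordinates (where BCH lives) to Malcev coordinates is itself a polynomial change of variables that further mixes homogeneous pieces. In fact the paper's own proof only asserts polynomiality (directly from the polynomial group law $\star$), and the explicit filiform formula in Proposition~\ref{prop:binom-Lie} involves binomial coefficients $\binom{a}{j}$, which are not monomials. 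So the obstacle you flagged is real, but harmless: only polynomiality is used downstream.

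For (3) there is a genuine gap: you never verify that the relations $\rho_{i,j}(a,b)$ actually hold in $G$ for \emph{real} $a,b \in [-A,A]$. Statement (1) gives this only for integer exponents, and you cannot use the $\rho_{i,j}$ as relators before knowing they lie in $\ker(F_{\widehat S_A} \to G)$. The paper fills this by a Zariski-density argument: for any linear form $\varphi$ on $\mathfrak g$, the map $(a,b) \mapsto \varphi(\log [\rho_{i,j}(a,b)]_G)$ is polynomial (by BCH) and vanishes on $\mathbf Z^2$ by (2), hence vanishes identically on $\mathbf R^2$. This step is short but not optional.
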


\begin{proof}
\eqref{item:existence-monomials} is a direct consequence of the existence of the polynomials $M_1, \ldots, M_n$ and the construction of $\widehat{S}$ from a refinement of the lower central series.
For \eqref{item:finite-presentation-of-lattice} note that these relations allow us to transform any word over $\widehat{S}$ into its Malcev normal form $\gamma_1^{\ell_1} \cdots \gamma_n^{\ell_n}$. Finally, we prove \eqref{item:compact-presentation-of-Lie} in three steps:
\begin{itemize}
\item
$\widehat{S}_A$ is a generating set: this is clear from the isomorphism $G\to (\mathbf R^n, \star)$. Moreover, $\widehat{S}_A$ is compact as image of a compact set under the exponential map.
\item
The relations in $R_A$ hold in $G$, i.e. they lie in $\ker (F_{\widehat S_A} \to G)$: $a \mapsto \gamma^a$ defines a group homomorphism by construction, so the $\sigma_i(a,b)$ hold. To prove that the $\rho_{i,j}(a,b)$ hold let $\varphi$ be any linear form on the Lie algebra $\mathfrak{g}$ of $G$ and define $\pi(a,b)  := \varphi(\log [\rho_{i,j}]_G)$ (where $[\cdot]_G$ denotes the evaluation in $G$). Then \eqref{item:finite-presentation-of-lattice} implies that $\varphi(a,b) = 0$ for all $(a,b) \in \mathbf Z^2$. On the other hand $\pi$ is a polynomial function by the Baker-Campbell-Hausdorff formula. We deduce that it is identically $0$ on $\mathbf R^2$ and therefore that $\rho_{i,j}(a,b)$ holds for all $a,b$.
\item
As in \eqref{item:finite-presentation-of-lattice} the relations in $R_A$ allow us to transform any product of powers of elements in $\widehat{S}_A$ into its normal form $\gamma_1^{a_1} \cdots \gamma_n^{a_n}$. Hence, the normal subgroup of $G$ generated by $R_A$ coincides with $\ker (F_{\widehat S_A} \to G)$. \qedhere
\end{itemize}
\end{proof}

\begin{remark}
Compact presentations offer a technical advantage over finite presentations when manipulating words as they allow to reduce length.
For instance, representing a central element in $H_5(\mathbf{Z})$ by a short length word over $\widehat{S}$ needs a product of two commutators due to divisibility issues (compare \cite{OlsSapCombDehn} and \S \ref{subsec:Proof-upper-bound-p-4}) while a single one is sufficient over $\widehat{S}_A$. 
\end{remark}

\begin{remark}
\label{rmk:Fixing-choice-for-A}
 For our purposes it will suffice to consider only the case $A=1$ and we will restrict to it in \S \ref{sec:upper-bound-Dehn}. However, producing a presentation for general $A$ is no harder and might be useful for future applications. Hence, we write our results in this general context in this section.
\end{remark}

\begin{convention*}
From now on we will omit the relations $\sigma_i(a,b)$ from our compact presentations to simplify notation, as they are rather self-explanatory.
\end{convention*}

To obtain an explicit compact presentation for $G_{p,q}$ we compute the polynomials $P_{i,j}$ corresponding to the Malcev basis $\widehat{S}$. 

\begin{proposition}
\label{prop:binom-Lie}
For $a,b \in \mathbf R$ the following relation holds in $L_p$:
\begin{equation}
\label{eq:Malcev-commutator-monomials-in-the-filiform}
[x_1^a, x_i^b]\equiv x_{i+1}^{ab} x_{i+2}^{-\binom{a}{2}b} x_{i+3}^{\binom{a}{3}b} \cdots z^{(-1)^{p+i+1}\binom{a}{p-i}b}.
\end{equation}
In particular, let $S=\{x_1, x_2\}$ and $\widehat{S}=\{x_1,\ldots,x_{p-1},z\}$. Then for every $A>0$ the set $S_A$ is a compact generating subset of 
 $L_p$ and the latter admits a compact presentation $\mathcal P_{A}(L_p)$ given by the generating subset $\widehat{S}_A$ and the relators 
\[ R_A=\{[x_1^a, x_i^b] = x_{i+1}^{ab} x_{i+2}^{-\binom{a}{2}b} x_{i+3}^{\binom{a}{3}b} \cdots z^{(-1)^{p+i+1}\binom{a}{p-i}b}\}, \]
for $2\leqslant i\leqslant p-1$ and $a,b\in [-A,A]$.
Moreover, for $a,b\in \mathbf R$ the identity \eqref{eq:Malcev-commutator-monomials-in-the-filiform} admits a filling of area $\lesssim_{p,A} a^{p-i+1}b^2$ and diameter $\lesssim_{p,A} |a|+|b|$ in $\mathcal{P}_{A}(L_p)$.
\end{proposition}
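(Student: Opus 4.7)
The proof splits naturally into three parts.

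\textbf{Identity \eqref{eq:Malcev-commutator-monomials-in-the-filiform}.} I would first prove it for integer $a,b$ and then extend by polynomial continuation. For integer $a\geqslant 1$ and $b=1$, proceed by induction on $a$ simultaneously for all $2 \leqslant i \leqslant p-1$. The base $a=1$ is the presentation relation $[x_1,x_i]=x_{i+1}$, consistent with the formula since $\binom{1}{k}=0$ for $k\geqslant 2$. For the induction step, expand
\[
[x_1^{a+1},x_i] = [x_1,x_i]^{x_1^a}\cdot[x_1^a,x_i] = x_{i+1}\cdot [x_1^a,x_{i+1}]^{-1}\cdot [x_1^a,x_i],
\]
plug in the inductive formulas at levels $i$ and $i+1$, and regroup powers using the fact that all $x_j$ with $j\geqslant 2$ commute. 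Pascal's rule $\binom{a+1}{k}=\binom{a}{k}+\binom{a}{k-1}$ is precisely what synthesises the binomial coefficients at level $a+1$ from those at level $a$. To upgrade from $b=1$ to arbitrary integer $b$, observe that $[x_1^a,x_i]$ lies in $\langle x_{i+1},\ldots,z\rangle$ and so commutes with $x_i$; hence $[x_1^a,x_i^b]=[x_1^a,x_i]^b$, and raising the $b=1$ identity to the $b$th power gives \eqref{eq:Malcev-commutator-monomials-in-the-filiform} since all $x_j$ involved commute. Finally, for real $a,b$, both sides of \eqref{eq:Malcev-commutator-monomials-in-the-filiform}, read in the Malcev coordinates $(\mathbf{R}^p,\star)$ from Proposition~\ref{prop:Malcev}, are polynomials in $(a,b)$; equality on $\mathbf{Z}^2$ forces equality on $\mathbf{R}^2$ by Zariski density.

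\textbf{Compact presentation.} This follows directly from Proposition~\ref{prop:Malcev}(3) applied to $\Gamma=\Lambda_p$ with Malcev basis $\widehat{S}=\{x_1,\ldots,x_{p-1},z\}$: the only non-trivial Malcev commutators are the $[x_1^a,x_i^b]$, whose monomial normal forms are furnished by \eqref{eq:Malcev-commutator-monomials-in-the-filiform}, while all commutators $[x_i^a,x_j^b]$ with $i,j\geqslant 2$ are trivial from the presentation of $\Lambda_p$.

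\textbf{Area and diameter estimate.} I would mirror the inductive proof of Part 1 at the level of van Kampen diagrams in $\mathcal{P}_A(L_p)$, doubly inducting on the depth $p-i$ and on $\lceil (|a|+|b|)/A\rceil$. When $|a|,|b|\leqslant A$ the identity is a single relator and has area $1$, diameter $O_A(1)$. For larger $a$, replacing $x_1$ by $x_1^s$ with $|s|\leqslant A$ in the Part 1 step gives the group identity
\[
[x_1^{a+s},x_i^b]\equiv x_{i+1}^{sb}\cdot[x_1^a,x_{i+1}^{sb}]^{-1}\cdot[x_1^a,x_i^b],
\]
which produces a recursion $\mathcal A(a+s,b,i)\leqslant \mathcal A(a,b,i)+\mathcal A(a,b,i+1)+O_{p,A}(|b|)$, paired with a symmetric recursion in $b$. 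Unravelling this, together with a careful count of the cost of rearranging the intermediate products of commuting powers $x_{i+k}^{\pm\binom{a}{k}b}$ into Malcev normal form (which is where the additional $a$-factor beyond the naive $a^{p-i}b$ count arises), gives the claimed bound $\lesssim_{p,A} a^{p-i+1}b^2$. The diameter stays $O(|a|+|b|)$ throughout because all conjugators are powers of $x_1$ of exponent $\leqslant|a|$ and every intermediate partial normal form has word length $O_{p,A}(|a|+|b|)$ in $\widehat{S}_A$.

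The main obstacle is in Part 3: tracking conjugator lengths and intermediate word diameters carefully enough through the double recursion, and verifying that rearranging the products $x_{i+1}^{ab}x_{i+2}^{-\binom{a}{2}b}\cdots$ into normal form does not blow the area past $a^{p-i+1}b^2$. Parts 1 and 2 are essentially formal once the Pascal-rule computation has been pinpointed.
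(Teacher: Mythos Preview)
Your proposal is correct and close in spirit to the paper's argument, but organized differently. The paper first reduces to $i=2$ via the isomorphism $\langle x_1,x_i\rangle\simeq \Lambda_{p+2-i}$, then inducts on $a$ by directly manipulating the word $x_1^{a+1}x_2^b$: it pushes the extra $x_1$ through $x_2^b$ (producing $x_3^b$), applies the inductive normal form for $[x_1^a,x_2^b]$, and then pushes the stray $x_1$ and $x_3^b$ through the already-formed product $x_3^{ab}x_4^{-\binom{a}{2}b}\cdots$, gathering exponents via Pascal's rule. The per-step cost is read off explicitly as $O_p(b^2 a^{p-2})$, which sums to the stated bound. Your approach instead inducts simultaneously over all $i$ via the depth recursion $[x_1^{a+1},x_i]=x_{i+1}\,[x_1^a,x_{i+1}]^{-1}\,[x_1^a,x_i]$, which is cleaner structurally and makes the role of Pascal's rule more transparent; the paper's version avoids the outer depth induction at the price of a more hands-on normal-form computation.

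Two small points. First, your Part~3 identity for step size $|s|\leqslant A$ is not quite $[x_1^{a+s},x_i^b]\equiv x_{i+1}^{sb}\,[x_1^a,x_{i+1}^{sb}]^{-1}\,[x_1^a,x_i^b]$: since $[x_1^s,x_i^b]$ is already a full relator $x_{i+1}^{sb}x_{i+2}^{-\binom{s}{2}b}\cdots$, conjugating by $x_1^a$ produces additional terms $[x_1^a,x_{i+k}^{O_A(b)}]^{-1}$ for $k\geqslant 2$. These are harmless for the recursion (each is bounded by $\mathcal A(a,O_A(b),i+k)$, dominated by the $k=1$ term), but they should be acknowledged. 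Second, your diameter claim that ``every intermediate partial normal form has word length $O_{p,A}(|a|+|b|)$'' is correct but relies on the distortion of $\langle x_j\rangle$ in $L_p$ being $\simeq n^{1/(j-1)}$; the paper invokes this explicitly (Lemma~\ref{lem:sec-efficient-tech-1}), and you should too.
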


\begin{proof}[Proof of Proposition \ref{prop:binom-Lie}]
It suffices to prove the formula and area estimate for $i=2$ since $\langle x_1, x_i \rangle \cong \Lambda_{p+2-i}$ with $x_1 \mapsto x_1$ and $x_2 \mapsto x_i$ defines an isomorphism. The first step is to prove $[x_1, x_2^b] = x_3^b$ for every $b$; this is obtained by induction on $b$ (for $b$ an integer) and we deduce the area and diameter estimates $O(b^2)$ and $O(b)$ respectively. We now assume the formula for $(a,b)$, denoting its area by $\operatorname{Area}(a,b)$, and consider $x_1^{a+1} x_2^b$. In the following calculation we record the cost on the right.
\begin{align}
x_1^{a+1}x_2^b  = x_1^a x_1x_2^b 
& \equiv x_1^a x_2^b x_1 x_3^b \tag{Area $b$} \\
& \equiv x_2^b x_1^a x_{3}^{ab} x_{4}^{-\binom{a}{2}b} x_{5}^{\binom{a}{3}b} \cdots z^{(-1)^{p+1}\binom{a}{p-2}b} x_1 x_3^b \tag{$\operatorname{Area}(a,b)$} \\
& \equiv x_2^b x_1^{a+1} x_{3}^{ab} x_{4}^{-\binom{a+1}{2}b} x_{5}^{\binom{a+1}{3}b} \cdots z^{(-1)^{p+1}\binom{a+1}{p-2}b} x_3^b \tag{Area $b^2 \sum_{j = 1}^{p-2} \binom{a}{j}$} \\
& \equiv x_2^b x_1^{a+1} x_{3}^{(a+1)b} x_{4}^{-\binom{a+1}{2}b} x_{5}^{\binom{a+1}{3}b} \cdots z^{(-1)^{p+1}\binom{a+1}{p-2}b}  \tag{Area $b \sum_{j = 1}^{p-2} \binom{a+1}{j}$}.
\end{align}
We provide some explanations for our transformations: on the third line the rightmost $x_1$ is brought to the left which creates $x_j$-terms for $j \geqslant 4$; they are gathered with the previous ones. On the fourth line the rightmost $x_3^b$ is brought to the left and no new term is produced since $x_3$ commutes with all the $x_j$ for $j \geqslant 4$. 

We deduce from our estimates that 
\begin{align*}
\operatorname{Area}(a+1,b) & \leqslant \operatorname{Area}(a,b) + b + C b^2 a^{p-2} + C'b (a+1)^{p-2},
\end{align*}
where $C$ and $C'$ are positive constants, and thus that $\operatorname{Area}(a,b) =O_{p,A}( a^{p-1}b^2)$ by induction on $a$.

For the diameter bound observe that the $i$-th term of the lower central series is $n^{\frac{1}{i}}$-distorted \cite{Osindistort} (see also Lemma \ref{lem:sec-efficient-tech-1} below). Thus all prefix words of transformations appearing above have diameter in $O_{p,A}(|a|+|b|)$ and we conclude by Lemma \ref{lem:diameter} that our filling for \eqref{eq:Malcev-commutator-monomials-in-the-filiform} has diameter $\lesssim_{p,A} |a|+|b|$.

Finally, the remaining properties follow from Example \ref{ex:Malcev} and Proposition \ref{prop:Malcev}.
\end{proof}
Combining Proposition \ref{prop:binom-Lie} and the fact that $G_{p,q}$ is the central product of $L_p$ with $L_q$, we deduce the following compact presentation of $G_{p,q}$.
\begin{corollary}\label{cor:presentation-of-Gpq}
For $3\leqslant q\leqslant p$, a compact presentation of $G_{p,q}$ is given for every $A>0$ by $\mathcal P_A(G_{p,q})=\langle \widehat{T}_A \mid R_A \rangle$, where $\widehat{T}=\{x_1,x_2,\ldots, x_{p-1},x_p, z, 
y_1, y_{p-q+1},\ldots, y_{p-1},y_p\}$, and
\[
R_A  = \left\{\begin{array}{l} \left[x_1^a, x_i^b\right] = x_{i+1}^{ab} x_{i+2}^{-\binom{a}{2}b} x_{i+3}^{\binom{a}{3}b} \cdots z^{(-1)^{p+i+1}\binom{a}{p-i}b},~ 2\leqslant i \leqslant p \\
 \left[y_1^a, y_i^b\right] = y_{i+1}^{ab} y_{i+2}^{-\binom{a}{2}b} y_{i+3}^{\binom{a}{3}b} \cdots z^{(-1)^{p+i+1}\binom{a}{p-i}b},~ p-q+1\leqslant i \leqslant p,\\ 
\left[x_i^a, y_j^b\right]=1,~ 1\leqslant i,j\leqslant p\\ 
 z^a = x_p^a = y_p^a,  ~ a,b \in [-A,A] \end{array} \right\}.
\]
\end{corollary}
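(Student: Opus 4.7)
The plan is to invoke Proposition \ref{prop:Malcev}(3) directly with the lattice $\Gamma_{p,q} \subset G_{p,q}$ and the Malcev basis $\widehat{T}$ recorded in Example \ref{ex:Malcev}. This reduces the problem to pinning down, for each ordered pair of basis elements $(\gamma_i,\gamma_j)$, the monomials $P_{j+1},\ldots,P_n$ that describe $[\gamma_i^a,\gamma_j^b]$ in Malcev normal form.

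The pairs split into three families according to the decomposition $\mathfrak{g}_{p,q} = \mathfrak{l}_p \times_\theta \mathfrak{l}_q$. When both generators lie in $\{x_1,\ldots,x_{p-1},z\}$, they generate a copy of $\Lambda_p$ inside $\Gamma_{p,q}$, so the relevant monomials are precisely those produced by Proposition \ref{prop:binom-Lie}, which yields the first family of relators listed in $R_A$. The symmetric computation inside the $\Lambda_q$-subgroup handles pairs of $y$-generators and gives the second family. Finally, if one generator is some $x_i$ and the other is some $y_j$, the two factors commute by the very definition of the central product, so $[x_i^a,y_j^b]=1$ and no $P_\bullet$ term is needed. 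The residual identification $z=x_p=y_p$ reflects the quotient by the central subspace $\{(u,-\theta(u)):u\in\mathfrak{z}\}$ in the construction of $\mathfrak{g}_{p,q}$.

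Once these monomials are identified, the presentation of $\Gamma_{p,q}$ produced by Proposition \ref{prop:Malcev}(2) is exactly the specialization of $R_A$ at $(a,b)=(1,1)$ together with $z=x_p=y_p$. Proposition \ref{prop:Malcev}(3) then promotes this finite presentation to the compact presentation $\mathcal P_A(G_{p,q})=\langle \widehat T_A \mid R_A\rangle$ of the Malcev completion $G_{p,q}$: $\widehat T_A$ is compact and generating as the image of a compact set under $\exp$, each relator in $R_A$ holds in $G_{p,q}$ by the Zariski-density argument applied to the polynomial $(a,b)\mapsto \varphi(\log[\cdot])$, and any word in $\widehat T_A$ representing the identity can be reduced to its Malcev normal form using $R_A$.

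There is no serious obstacle here beyond bookkeeping: the hard polynomial computation was already carried out in Proposition \ref{prop:binom-Lie}, and the commutation between the two factors is built into the central product, so the whole argument is an assembly of the presentations of the factors via Proposition \ref{prop:Malcev}. The one point to watch is the notational convention that $y_p$ plays the role of $y_q$ under the identification $\theta(x_p)=y_p$, which is exactly what makes $z=x_p=y_p$ a well-defined central generator shared by both families of relators.
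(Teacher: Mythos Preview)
Your proposal is correct and follows essentially the same approach as the paper: the paper's proof is the single sentence ``Combining Proposition \ref{prop:binom-Lie} and the fact that $G_{p,q}$ is the central product of $L_p$ with $L_q$'', and you have faithfully unpacked this by invoking Proposition \ref{prop:Malcev}\eqref{item:compact-presentation-of-Lie} with the Malcev basis from Example \ref{ex:Malcev}, identifying the commutator monomials factor by factor via Proposition \ref{prop:binom-Lie} and the central-product commutation relations.
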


We end this section by recalling the following well-known free equalities that hold in every group and that we will require at many points throughout the remainder of this work.
\begin{lemma}
\label{lem:FreeIdentities}
 Let $G$ be a group and let $u,v,w$ be words in some generating set for $G$. Then the following free identities hold:
 \begin{enumerate}
  \item $\left[u\cdot v, w\right] \equiv \left[u,w\right]^v\cdot \left[v,w\right]$;
  \item $\left[u, v\cdot w\right] \equiv \left[u,w\right] \cdot \left[u,v\right]^w$;
  \item $u^w\equiv u\left[u,w\right]$.
 \end{enumerate}
\end{lemma}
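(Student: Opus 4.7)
The plan is to verify each of the three identities by direct expansion using the definitions $[g,h] = g^{-1}h^{-1}gh$ and $g^h = h^{-1}gh$, and then check that after carrying out the obvious free reductions (cancelling adjacent $u\cdot u^{-1}$ and $w\cdot w^{-1}$ pairs) both sides of each equality reduce to the same freely reduced word. Since the claim is that these equalities hold freely rather than only modulo the relations of $G$, no group-theoretic input is needed beyond this bookkeeping.

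Concretely, for identity (3) the verification is immediate: expanding $u\cdot [u,w]$ gives $u\cdot u^{-1}w^{-1}uw$, and cancelling the initial $u\cdot u^{-1}$ leaves $w^{-1}uw = u^w$. Identity (1) is a one-line expansion: the right-hand side is
\[
[u,w]^{v}\cdot [v,w] \;=\; v^{-1}u^{-1}w^{-1}u w v \cdot v^{-1} w^{-1} v w,
\]
and the central pair $v\cdot v^{-1}$ cancels, followed by $w\cdot w^{-1}$, leaving $v^{-1}u^{-1}w^{-1}uvw$, which is exactly $(uv)^{-1}w^{-1}(uv)w = [uv,w]$. Identity (2) is handled identically: the right-hand side expands to
\[
u^{-1}w^{-1}u w \cdot w^{-1}u^{-1}v^{-1} u v w,
\]
and after cancelling $w\cdot w^{-1}$ and then $u\cdot u^{-1}$ one is left with $u^{-1}w^{-1}v^{-1}uvw = [u, vw]$.

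There is no real obstacle here; the only thing to be careful about is the convention, since a different convention for $[g,h]$ (such as $ghg^{-1}h^{-1}$) would produce variants of these identities that look slightly different. In particular, one should display the computations using the convention $[g,h] = g^{-1}h^{-1}gh$ and $g^h = h^{-1}gh$ fixed at the beginning of the paper, so that the reader can match the definitions used here with the ones applied throughout Sections \ref{sec:upper-bound-Dehn} and beyond, where these three identities are invoked repeatedly to manipulate words in our compact presentations.
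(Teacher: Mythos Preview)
Your proof is correct. The paper itself does not supply a proof of this lemma; it simply states these three identities as well-known free equalities, so your direct expansion using the conventions $[g,h]=g^{-1}h^{-1}gh$ and $g^h=h^{-1}gh$ is exactly the routine verification one would expect.
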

\subsection{A family of special words}\label{sec:Omega}
We now introduce a family of words that will play a crucial role in the following sections. For $p\geqslant j\geqslant 2$, $k\geqslant 1$ and $\nnn=(n_1, \dots, n_k)\in \RR^k$ we let  $ \Omega_{k}^j(\nnn)$ be the word  
\[
 \Omega_{k}^j(\nnn):= \left[x_1^{n_1},\dots,x_1^{n_{k-1}},x_j^{n_k}\right].
\]
for $k\geq 2$ and $x_j^{n_1}$ if $k=1$. For $j=2$, we shall simply denote it\footnote{A notation that we had already introduced in our sketch of proof in \S \ref{sec:strategyp}.} by $\Omega_k(\nnn)$.

We observe that although $\Omega_{k}^j(\nnn)$ is a priori defined as a word in $S_\infty$ we can view it as an element of $F_{S_A}$ by identifying $x_i^{n_i}$ with a product of $\lceil |n_i|/A\rceil$ letters of the form $x_1^{t_i}$ with $|t_i|\leqslant A$. In what follows such identifications will be made implicitly.  Using that $G_{p,p-1}$ is $(p-1)$-nilpotent and Proposition \ref{prop:binom-Lie}, we easily deduce the following useful identities. 
\begin{lemma}\label{lem:OmegaIdentity}
For all $2\leq j \leq p-1$ and $\nnn=\left(n_1,\dots, n_{p-j+1}\right)\in \RR^{p-j+1}$ 
\[\Omega_{p-1}^j(\nnn)=z^{n_1\cdots n_{p-j+1}}.\]
In particular, for all $\nnn\in \RR^{p-1}$ there exists $m\in \RR$ with $|m|\lesssim_p |\nnn|:=|n_1|+\dots + |n_{p-1}|$, such that 
\[\Omega_{p-1}(\nnn)=\Omega_{p-2}^3(|\nnn|,\dots,|\nnn|)^{m}.\]
\end{lemma}

\subsection{Reduction to products of efficient words}
\label{subsec:efficient-words} 
We build here on \cite{CornulierTesseraDehnBaums}.  We let $S$ be a finite alphabet and let $F_S$ denote the free group on $S$. Given a subset $\mathcal F \subset F_S$ and an integer $k\geqslant 1$, we denote $\mathcal F[k]$ the collection of concatenations of at most $k$ words in $\mathcal F$.

\begin{definition}
\label{def:efficient-family-of-words}
Given an integer $r\geqslant 1$, a subset $\mathcal F \subset F_S$ is called $r$-efficient with respect to a presentation $\langle S \mid R \rangle$ of a group $G$ if there exists a constant $C$ such that for every $w\in F_S$ there exists $w' \in \mathcal F[r]$ such that $w \equiv w' \mod \langle \langle R \rangle \rangle$ and $\ell(w') \leqslant C \ell(w)$. 
\end{definition}

Given a set $\mathcal F$ of words in $S$, we shall say that we have a filling pair $(f,g)$ for $G$ \emph{in restriction to words in $\mathcal F$} if every relation of length $n$ that lies in $\mathcal F$ admits a filling of area in $O(f(n))$ and filling diameter in $O(g(n)).$

The following is based on an original observation of Gromov \cite[{5.$A''_3$}]{AsInv}.

\begin{proposition}[{\cite[Proposition 4.3]{CornulierTesseraDehnBaums}}]
\label{prop:CorTesEff1}
Let  $s \ggreater 1$. Assume that $\mathcal F$ is $r$-efficient for some $r\geqslant 1$ and that $(n^s,n)$ is a filling pair for $G$ in restriction to 
$\mathcal F[k]$ for all $k\geqslant 1$. Then $(n^s,n)$ is a filling pair for $G$.
\end{proposition}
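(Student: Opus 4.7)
The plan is to mimic Gromov's $5.A_3''$ trick in its dyadic-subdivision form: given a null-homotopic word $w$ of length $n$, I would build a van Kampen diagram for $w$ by a balanced binary triangulation in which every internal edge is replaced by an $r$-efficient representative in $\mathcal F[r]$, so that each 2-cell to fill belongs to $\mathcal F[k]$ for some $k$ bounded uniformly in terms of $r$. The $(n^s,n)$-filling pair hypothesis for $\mathcal F[k]$ would then fill each cell, and the assumption $s>1$ is precisely what makes the area contributions sum to a convergent geometric series.

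The construction would go as follows. For $0 \leq j \leq J := \lceil \log_2 n \rceil$ and $0 \leq k \leq 2^j$, set $v_{j,k} := [s_1 \cdots s_{\lfloor kn/2^j \rfloor}]_G$, and apply $r$-efficiency to the corresponding subword of $w$ to produce $e_{j,k} \in \mathcal F[r]$ of length $\lesssim n/2^j$ representing $v_{j,k}^{-1} v_{j,k+1}$. The top edge $e_{0,0}$ is itself null-homotopic in $\mathcal F[r]$ of length $\lesssim n$; at the bottom the $e_{J,k}$ are efficient representatives of single letters, hence of bounded length. At each intermediate level the triangular word
\[ T_{j,k} := e_{j+1,2k}\, e_{j+1,2k+1}\, e_{j,k}^{-1} \in \mathcal F[3r] \]
is null-homotopic of length $\lesssim n/2^j$, and at the bottom the $n$ bigons $B_k := s_{k+1}\, e_{J,k}^{-1} \in \mathcal F[r+1]$ are null-homotopic of bounded length. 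Applying the hypothesis (with $k \in \{r, 3r, r+1\}$, hence with uniform constants) fills each of these cells, with area $\lesssim (\text{perimeter})^s$ and intrinsic diameter $\lesssim \text{perimeter}$.

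Gluing these fillings into a single van Kampen diagram for $w$ then yields a total area
\[ O(n^s) \;+\; \sum_{j=0}^{J-1} 2^j \cdot O\bigl((n/2^j)^s\bigr) \;+\; n \cdot O(1) \;=\; O(n^s)\cdot \sum_{j\geq 0} 2^{j(1-s)} \;+\; O(n), \]
which is $O(n^s)$ precisely because $s > 1$. For the diameter bound, every vertex $v_{j,k}$ lies within distance $n$ of the basepoint, while each cell's intrinsic filling stays within distance $\lesssim n/2^j$ of its own basepoint, so the global filling sits in the ball $B(1_G, O(n))$. The main obstacle will be the careful bookkeeping of conjugators when assembling the individual fillings into a global one, since each hypothesis-supplied filling of $T_{j,k}$ lives intrinsically around $T_{j,k}$ rather than around the identity; I would invoke Lemma \ref{lem:diameter} at every gluing step to propagate the $O(n)$ diameter bound across all levels.
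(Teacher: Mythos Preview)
Your proposal is correct and follows essentially the same route as the paper, which defers to the Cornulier--Tessera implementation of Gromov's subdivision trick: iterate the efficient-replacement $O(\log n)$ times so that the area contributions form a geometric series convergent precisely when $s>1$, while the diameter stays $O(n)$ because every intermediate vertex is (represented by) a prefix of $w$. One minor correction: the claim $B_k \in \mathcal F[r+1]$ need not hold, since Definition~\ref{def:efficient-family-of-words} does not force single generators to lie in $\mathcal F$; this is harmless, as the bigons have uniformly bounded length and $G$ is compactly presented, so they admit fillings of bounded area and diameter regardless.
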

\begin{proof}
The  statement of  \cite[Proposition 4.3]{CornulierTesseraDehnBaums} is that $n^s$ is an isoperimetric function for $G$. However, it is easy to deduce from its proof that $(n^s,n)$ is a filling pair. Indeed, the proof consists of filling a loop of length $n$ using $k$ loops of length in $O(n/k)$ and a loop $\gamma'$ in $\mathcal F[k]$ of length in $O(n)$. 
While the argument used in \cite{CornulierTesseraDehnBaums} to obtain
the desired area bounds applies for very general functions, it is not
hard to check that using their methods one can actually produce a
filling of area $\lesssim n^{s}$ by iterating this procedure $\log_k(n)$
times. In particular, this yields the existence of such a filling of
$\gamma$ of diameter in $O(\sum_{j\geqslant 1} n/k^j)=O(n)$.
\end{proof}

We recall that $S=\{x_1,x_2\}\subset \Lambda_p$. We define the subset $\mathcal F_A\subset F_{S_A}$ of all powers of elements in $S_A$:
\[
\mathcal F_A:= \left\{s^n\mid s\in S_A, n\in \mathbf{N}\right\}.
\]
The main result of this section is
\begin{proposition}\label{prop:F_AB-is-efficient}
For all $p\geqslant 3$ and $A>0$ the subset $\mathcal F_A$ is $O_p(1)$-efficient with respect to the compact presentation $\mathcal P_A(L_{p})$ of $L_p$ provided by Proposition \ref{prop:binom-Lie}.
\end{proposition}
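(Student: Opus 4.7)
The plan is to reduce every element $g=[w]\in L_p$ to a Hall-type normal form consisting of a bounded number of iterated-commutator blocks of controlled length, and then to expand each block freely into $O_p(1)$ elements of $\mathcal F_A$ with total length $O_{p,A}(\ell(w))$. Since $\mathcal P_A(L_p)=\langle \widehat S_A\mid R_A\rangle$ is a presentation of $L_p$, the equivalence $w\equiv w'\pmod{\langle\langle R_A\rangle\rangle}$ coincides with the equality $[w']=[w]=:g$, so it will suffice to exhibit a word $w'\in\mathcal F_A[r_p]$ with $[w']=g$ and $\ell(w')\leqslant C_{p,A}\ell(w)$. As a preliminary observation, every letter of $\widehat S_A$ is of the form $x_i^a$ with $|a|\leqslant A$, which Proposition~\ref{prop:binom-Lie} applied iteratively expresses as a word of length $O_{p,A}(1)$ over $S_A$; hence for $w\in F_{\widehat S_A}$ of length $n$ we have $|g|_{S_A}\leqslant K_{p,A}\,n$.

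The central step is to show that every $g\in L_p$ admits a decomposition
\[
g= x_1^{a_1}\,x_2^{a_2}\prod_{i=3}^{p}\Omega_{i-1}(\nnn_i),
\qquad
\nnn_i=(b_1^{(i)},\ldots,b_{i-2}^{(i)},t_i)\in\mathbf R^{i-1},
\]
where every scalar has absolute value at most $C_p\cdot|g|_{S_A}$. Starting from the Malcev normal form $g=x_1^{a_1}x_2^{a_2}\cdots x_{p-1}^{a_{p-1}}z^{a_p}$, the distortion of $\gamma_{i-1}$ in $L_p$ (a classical consequence of its Carnot structure) yields $|a_i|\leqslant C_p|g|_{S_A}^{i-1}$ for each $i\geqslant 2$. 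Proceeding inductively on $i=3,\ldots,p$, Proposition~\ref{prop:binom-Lie} gives $\Omega_{i-1}(\nnn_i)\equiv x_i^{b_1^{(i)}\cdots b_{i-2}^{(i)}t_i}$ modulo $\gamma_i$; setting each $b_j^{(i)}$ equal to $|g|_{S_A}$ and solving for $t_i$ matches the $x_i$-coordinate of the partial product with that of $g$, while the higher-order corrections (polynomials in the scalars of the predicted degree) stay within the bound $|a_j^{(\mathrm{new})}|\lesssim_p|g|_{S_A}^{j-1}$ for $j>i$, preserving the invariant. After $p-2$ steps the remaining tail lies in $\gamma_p=\{1\}$ and the decomposition is complete.

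It remains to expand each $\Omega_{i-1}(\nnn_i)$: the identity $[u,v]=u^{-1}v^{-1}uv$ applied from inside out writes it freely as a product of $2^{i-1}$ factors, each of the form $(x_1^{\pm b_j^{(i)}})^{\pm 1}$ or $(x_2^{\pm t_i})^{\pm 1}$, i.e.\ an element of $\mathcal F_A$ of length $O_{p,A}(|g|_{S_A})$ in $\widehat S_A$. The total number of $\mathcal F_A$-factors of the resulting $w'$ is $2+\sum_{i=3}^{p}2^{i-1}\leqslant 2^{p}=:r_p$, and its total word length is $O_{p,A}(|g|_{S_A})=O_{p,A}(n)$, as required.

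The main difficulty is the inductive step in the central paragraph: one must verify that the higher-order error terms produced by Proposition~\ref{prop:binom-Lie} do not blow up the magnitude of the unhandled Malcev coordinates beyond the inductive bound. This estimate is clean precisely because $L_p$ is Carnot and the distortion bounds on the lower central series match the polynomial degrees arising in the commutator expansions.
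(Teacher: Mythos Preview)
Your proof is correct and follows essentially the same approach as the paper: first obtain the Malcev normal form with the distortion bounds $|a_i|\lesssim_p |g|_{S_A}^{\,i-1}$ (the paper's Lemma~\ref{lem:sec-efficient-tech-2}), then rewrite each $x_i^{a_i}$ as an iterated commutator $\Omega_{i-1}$ with linearly bounded entries and controlled higher-order errors (the paper's Lemma~\ref{lem:sec-efficient-tech-1}, carried out there by descending rather than your ascending induction on $i$), and finally expand freely into $O_p(1)$ elements of $\mathcal F_A$. One cosmetic slip: the free expansion of $\Omega_{i-1}$ produces $3\cdot 2^{i-2}-2$ factors rather than $2^{i-1}$, but this is immaterial to the $O_p(1)$ conclusion.
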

We immediately deduce the following corollary, which is the statement we shall need in our proof of the upper bound of the Dehn function. Define
\[
T_A:= \left\{ x_1^{a_1},x_2^{a_2}, y_1^{a_3},y_3^{a_4}\mid |a_1|,|a_2|,|a_3|,|a_4|\leqslant A\right\} \subset G_{p,p-1}
\]
and
\[
\mathcal G_A:= \left\{s^n\mid s\in T_A, n\in \mathbf{N}\right\}.
\]

\begin{corollary}\label{prop:G_A-is-efficient}
For all $p\geqslant 4$ and $A>0$ the subset $\mathcal G_A$ is $O_p(1)$-efficient with respect to the compact presentation of $G_{p,p-1}$ provided by Corollary \ref{cor:presentation-of-Gpq}.
\end{corollary}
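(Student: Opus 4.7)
The plan is to deduce the efficiency of $\mathcal{G}_A$ with respect to $\mathcal{P}_A(G_{p,p-1})$ from Proposition \ref{prop:F_AB-is-efficient} applied separately to each filiform factor $L_p$ and $L_{p-1}$, exploiting the fact that $G_{p,p-1}$ is the central product of these two groups and that the defining relations of each factor appear verbatim inside $\mathcal{P}_A(G_{p,p-1})$ as presented in Corollary \ref{cor:presentation-of-Gpq}.

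Given a word $w \in F_{\widehat{T}_A}$, I would first normalize the central letters by using the relations $z^a = x_p^a$ and $y_p^a = x_p^a$ to replace every $z$-letter or $y_p$-letter by an $x_p$-letter. This operation preserves length. Next, I would use the commutation relations $[x_i^a, y_j^b]$, which are all part of $\mathcal{P}_A(G_{p,p-1})$, to push every $x$-letter to the left of every $y$-letter. The outcome is a decomposition $w \equiv w_x \cdot w_y$ in which $w_x$ is a word over the alphabet $\widehat{S}_A$ of $\mathcal{P}_A(L_p)$, while $w_y$ is a word over the corresponding alphabet of $\mathcal{P}_A(L_{p-1})$ (with generators relabeled as $y_1, y_3, \ldots, y_{p-1}$ to reflect how $L_{p-1}$ embeds inside $G_{p,p-1}$). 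Because only rearrangements have taken place, $\ell(w_x) + \ell(w_y) = \ell(w)$.

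Proposition \ref{prop:F_AB-is-efficient}, applied to each factor, then yields integers $r_p, r_{p-1} = O_p(1)$, constants $C_p, C_{p-1}$, and words $w_x'$ and $w_y'$ each concatenating at most $r_p$, respectively $r_{p-1}$, elements of the relevant $\mathcal{F}_A$ set, with $w_x \equiv w_x'$ modulo the relations of $\mathcal{P}_A(L_p)$, $w_y \equiv w_y'$ modulo those of $\mathcal{P}_A(L_{p-1})$, and $\ell(w_x') \leqslant C_p \ell(w_x)$, $\ell(w_y') \leqslant C_{p-1} \ell(w_y)$. Since the two filiform presentations are both contained in $\mathcal{P}_A(G_{p,p-1})$, these equivalences pass to $G_{p,p-1}$, giving $w \equiv w_x' \cdot w_y'$. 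The concatenated word lies in $\mathcal{G}_A[r_p + r_{p-1}]$ (noting that $\{x_1^a, x_2^a\}$ and $\{y_1^a, y_3^a\}$ for $|a| \leqslant A$ are all elements of $T_A$) and satisfies $\ell(w_x' \cdot w_y') \leqslant \max(C_p, C_{p-1}) \ell(w)$, which establishes the desired $O_p(1)$-efficiency.

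No step presents a substantial difficulty: the central product structure ensures that the two filiform factors interact only through their shared one-dimensional center, an interaction that is completely resolved by the identification $z = x_p = y_p$. The only minor technicality is the bookkeeping shift between the standard filiform indexing of $L_{p-1}$ and its embedded generators $y_1, y_3, y_4, \ldots$ inside $G_{p,p-1}$, which merely requires relabeling in the invocation of Proposition \ref{prop:F_AB-is-efficient}.
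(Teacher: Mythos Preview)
Your argument is correct and is precisely the natural unpacking of what the paper means by ``we immediately deduce'': separate an arbitrary word into an $x$-part and a $y$-part using the commutation relations, then apply Proposition~\ref{prop:F_AB-is-efficient} to each filiform factor and concatenate. The only detail worth noting is that Proposition~\ref{prop:F_AB-is-efficient} is stated for all $p\geqslant 3$, so it applies to the $L_{p-1}$-factor as well since $p\geqslant 4$.
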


Cyclic subgroups of the $i$-th term of the descending central series have distortion in $n^{1/i}$ (\cite{Osindistort}). The following lemmas provide related estimates that will be required in various places of our proof.

\begin{lemma}\label{lem:sec-efficient-tech-1}
Let $b\in \RR$ and let $2\leqslant i \leqslant p$. Then $x_i^b\equiv w \mbox{ mod } \left\langle\left\langle R_A\right\rangle\right\rangle$ for a word $w\in \mathcal{F}_{A}[O_p(1)]$ satisfying
\[
 \ell(w)=O_p(b^{\frac{1}{i-1}}) + O_p(1).
\]
In particular, $S_A$ is a generating subset of $L_p$.
\end{lemma}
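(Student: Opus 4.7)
The plan is to exploit the Carnot grading of $\mathfrak{l}_p$ to scale a fixed bounded-length word for $x_i$ into a word for $x_i^b$ of the correct length, proceeding by downward induction on $i$.

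First, since $[x_1, x_{i-1}] = x_i$ holds in $L_p$ (the $a=b=1$ instance of Proposition \ref{prop:binom-Lie}), iterating produces $x_i = [x_1,[x_1,\ldots,[x_1,x_2]\ldots]]$ with $i-2$ nestings. Expanding each group commutator yields a word $w_i$ over $\{x_1^{\pm 1}, x_2^{\pm 1}\}\subset S_A$ of length $O_p(1)$ representing $x_i$. Next, since $\mathfrak{l}_p$ is Carnot graded by $m_1=\mathrm{span}(x_1,x_2)$ and $m_k=\mathrm{span}(x_{k+1})$ for $k\geqslant 2$, the Carnot dilations integrate to a one-parameter family $\delta_t$ of Lie group automorphisms of $L_p$ with $\delta_t(x_1)=x_1^t$ and $\delta_t(x_2)=x_2^t$. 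Applying $\delta_t$ letter-by-letter to $w_i$ gives a word representing $\delta_t(x_i)\in L_p$. Since $x_2,x_3,\ldots,x_p$ pairwise commute in $L_p$ (the only nonzero brackets in $\mathfrak{l}_p$ involve $x_1$) and $\delta_t$ respects the lower central series filtration, the Malcev normal form of $\delta_t(x_i)$ reads
\[
\delta_t(x_i) = x_i^{t^{i-1}} \cdot \prod_{k > i} x_k^{q_{ik}(t)},
\]
where each $q_{ik}$ is a polynomial in $t$ of degree at most $k-1$.

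Setting $t=|b|^{1/(i-1)}$ (and inverting words if $b<0$) makes the leading factor equal $x_i^b$; commutativity of the $x_k$ for $k\geqslant 2$ then gives
\[
x_i^b = \delta_t(w_i) \cdot \prod_{k > i} x_k^{-q_{ik}(t)}.
\]
The word $\delta_t(w_i)$ has $O_p(1)$ letters of the form $x_j^{\pm t}$, $j\in\{1,2\}$, each decomposing as $(x_j^{\pm A})^{\lfloor t/A \rfloor}\, x_j^{\pm(t - A\lfloor t/A\rfloor)}$, i.e.\ $2$ elements of $\mathcal{F}_A$ of combined length $O(t/A)+O(1)$. Hence $\delta_t(w_i)\in\mathcal{F}_A[O_p(1)]$ has length $O_p(b^{1/(i-1)})+O_p(1)$. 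To bound the correction factors, I would proceed by downward induction on $i\in\{p,p-1,\ldots,2\}$: the base case $i=p$ has no correction since no $x_k$ exists with $k>p$; for $i<p$, the estimate $|q_{ik}(t)| = O_p(t^{k-1})$ combined with the induction hypothesis at index $k>i$ yields $\ell(x_k^{-q_{ik}(t)}) = O_p(t)+O_p(1) = O_p(b^{1/(i-1)})+O_p(1)$ using $O_p(1)$ elements of $\mathcal{F}_A$, and summing the $O_p(1)$ such factors preserves the bound. The closing sentence then follows because the proven lemma expresses every $x_j^a \in \widehat{S}_A$ as a word over $S_A$, so $L_p=\langle\widehat{S}_A\rangle=\langle S_A\rangle$.

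The main obstacle to avoid is the naive upward induction on $i$ via Proposition \ref{prop:binom-Lie}: expanding $x_i^{\alpha\beta} = [x_1^\alpha, x_{i-1}^\beta]\cdot(\text{corrections in }x_{i+1},\ldots,x_p)$ requires bounds on the corrections at higher indices, which an upward hypothesis cannot yet supply. The Carnot dilation resolves this by applying the correct scaling simultaneously to all indices, enabling a downward induction that absorbs each correction into already-controlled terms.
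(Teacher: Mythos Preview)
Your proof is correct and follows essentially the same route as the paper's: both use the iterated commutator $\Omega_{i-1}(\beta,\ldots,\beta)=[x_1^\beta,\ldots,[x_1^\beta,x_2^\beta]\ldots]$ with $\beta=|b|^{1/(i-1)}$ (your word $\delta_t(w_i)$ is literally this), show that its Malcev normal form is $x_i^{b}\cdot\prod_{k>i}x_k^{q_k(\beta)}$ with $\deg q_k\leqslant k-1$, and then conclude by descending induction on $i$. The only difference is cosmetic: the paper obtains the normal form and degree bounds by explicitly iterating the binomial identity of Proposition~\ref{prop:binom-Lie}, whereas you package the same computation via the Carnot dilation automorphism.
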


\begin{proof}

The proof is by descending induction on $i$. 
By Lemma \ref{lem:OmegaIdentity}, we have
\begin{equation}\label{eq:Omega_p-1}
 \Omega_{p-1}\left(b^{\frac{1}{p-1}},\dots,b^{\frac{1}{p-1}}\right) \equiv z^b \mbox{ mod } \left\langle\left\langle R_A\right\rangle\right\rangle,
\end{equation}
proving the case $i=p$.

Now assume that the result holds for $i=i_0+1$ and let  $\beta:=b^{\frac{1}{i_0-1}}$. Observe that an iterated application of Proposition \ref{prop:binom-Lie}, Lemma \ref{lem:FreeIdentities}(2) and the fact that $L_p$ is metabelian to the innermost commutator yields the following identities in $L_p$ (i.e.\ modulo $R_A$):
\begin{align*}
 \Omega_{i_0-1}(\beta,\dots,\beta) &\equiv \prod_{j_1=3}^p \Omega_{i_0-2}^{j_1}\left(\beta,\dots,\beta,(-1)^{j_1+1} \binom{\beta}{j_1-2} \beta\right)\\
 &\equiv \dots\\
 &\equiv \prod_{3\leqslant j_1<\dots<j_{i_0-2}\leqslant p} x_{j_{i_0-2}}^{(-1)^{j_1+1}(-1)^{j_2-j_1+1}\cdots (-1)^{j_{i_0-2}-j_{i_0-3}+1}\binom{\beta}{j_1-2}\binom{\beta}{j_2-j_1}\cdots\binom{\beta}{j_{i_0-2}-j_{i_0-3}} \beta}.
\end{align*}
Because $\binom{x}{j}$ is a polynomial of degree $j$ in $x$, we deduce that for any choice of $3\leqslant j_1<\dots<j_{i_0-2}\leqslant p$ the exponent of $x_{j_{i_0-2}}$ is a polynomial of degree $j_{i_0-2}-1$ in $\beta$. Since there are only finitely many terms for each index $i_0\leqslant j_{i_0-2}\leqslant p$, we deduce that there are polynomials $q_j(\beta)$ of degree $j-1$ for $i_0\leqslant j \leqslant p$ such that
\[
 \Omega_{i_0-1}(\beta,\dots,\beta)\equiv\prod_{j=i_0}^p x_j^{q_j(\beta)}.
\]
Finally, an explicit evaluation shows that $q_{i_0}(\beta)=\binom{\beta}{1}^{i_0-2}\cdot \beta=\beta^{i_0-1}$ and we deduce that
\[
x_{i_0}^b \equiv \Omega_{i_0-1}(\beta,\dots,\beta) \cdot \prod_{j=i_0+1}^p x_j^{-q_j(\beta)} \mbox{ mod } \left\langle\left\langle R_A\right\rangle\right\rangle.
\]  
The result now follows by applying the induction hypothesis to the $x_j^{-q_j(\beta)}$.
\end{proof}

\begin{lemma}\label{lem:sec-efficient-tech-2}
 For $m_1,\dots,m_k,n_1,\dots,n_k\in \RR$ let $w=x_2^{m_1}x_1^{n_1}\cdot \dots \cdot x_2^{m_k} x_1^{n_k}$ and let $l:= \sum_{i=1}^k(|m_i|+|n_i|)$. There exist $b_1,\dots,b_p\in \RR$, with $|b_1|=O_p(l) + O_p(1)$ and $|b_i|=O_p(l^{i-1})+O_p(1)$ for $2\leqslant i \leqslant p$, such that
 \[
  w\equiv x_1^{b_1}\cdot \dots \cdot x_p^{b_p} \mbox{ mod } \left\langle \left \langle R_A\right \rangle \right \rangle.
 \] 
\end{lemma}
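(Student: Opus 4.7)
The plan is to proceed by induction on $k \geq 1$, using Proposition \ref{prop:binom-Lie} to swap $x_2$-powers past $x_1$-powers, and exploiting the pairwise commutativity of the $x_j$ for $j \geq 2$ in $L_p$ (a consequence of the filiform structure of $\mathfrak{l}_p$, in which all brackets among $x_2, \ldots, x_p$ vanish).

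For the base case $k = 1$, the word $x_2^{m_1} x_1^{n_1}$ satisfies, by Proposition \ref{prop:binom-Lie} together with the free identity $x_2^{m} x_1^{n} \equiv x_1^n x_2^m \cdot [x_2^m, x_1^n]$,
\begin{equation*}
x_2^{m_1} x_1^{n_1} \equiv x_1^{n_1} \cdot x_2^{m_1} \cdot \prod_{j=3}^{p} x_j^{c_j},
\end{equation*}
with $|c_j| = O_p(|n_1|^{j-2}\, |m_1|) = O_p(l^{j-1})$. Setting $b_1 := n_1$, $b_2 := m_1$, and $b_j := c_j$ for $j \geq 3$ produces the required bounds.

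For the inductive step, I apply the induction hypothesis to $w' := x_2^{m_2} x_1^{n_2} \cdots x_2^{m_k} x_1^{n_k}$, whose length parameter is bounded by $l$, to obtain $w' \equiv x_1^{b_1'} x_2^{b_2'} \cdots x_p^{b_p'}$ with $|b_1'| = O_p(l) + O_p(1)$ and $|b_j'| = O_p(l^{j-1}) + O_p(1)$ for $j \geq 2$. Merging the adjacent $x_1$-powers and setting $N := n_1 + b_1'$, with $|N| = O_p(l) + O_p(1)$, gives $w \equiv x_2^{m_1} x_1^{N} x_2^{b_2'} \cdots x_p^{b_p'}$. One further application of Proposition \ref{prop:binom-Lie} yields $x_2^{m_1} x_1^{N} \equiv x_1^{N} x_2^{m_1} \prod_{j=3}^{p} x_j^{e_j}$ with $|e_j| = O_p(|N|^{j-2}\, |m_1|) = O_p(l^{j-1})$. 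Using that $x_i^a$ and $x_j^b$ commute for $i, j \geq 2$, I reorder and collect like terms to obtain
\begin{equation*}
w \equiv x_1^{N} \cdot x_2^{m_1 + b_2'} \cdot \prod_{j=3}^{p} x_j^{b_j' + e_j},
\end{equation*}
which is the desired Malcev normal form with $b_1 = N$, $b_2 = m_1 + b_2'$ and $b_j = b_j' + e_j$ for $j \geq 3$. The announced bounds follow from the triangle inequality.

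The only (mild) technical point is to justify the commutativity $[x_i^a, x_j^b] \equiv 1$ for $i, j \geq 2$ and $a, b \in \mathbf{R}$, since $\mathcal{P}_A(L_p)$ includes only relators of the form $[x_1^a, x_i^b] = \cdots$. This is immediate from the filiform structure of $\mathfrak{l}_p$ combined with the Baker--Campbell--Hausdorff formula, which transfers the Lie algebra commutativity to $L_p$; alternatively, these relations can be derived from the presented ones by repeated use of Proposition \ref{prop:binom-Lie} and the free identities of Lemma \ref{lem:FreeIdentities}.
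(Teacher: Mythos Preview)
Your strategy is the same as the paper's --- use Proposition~\ref{prop:binom-Lie} to push $x_1$-powers to the left and collect exponents using that the $x_j$ commute for $j\geqslant 2$. The paper carries out all the swaps in one pass and writes the exponents explicitly as $b_j = (-1)^j \sum_{i=1}^k \binom{\widetilde n_i}{j-2} m_i$ (with $\widetilde n_i = \sum_{r\geqslant i} n_r$), whence $|b_j| \lesssim_p l^{j-2}\sum_i |m_i| \leqslant l^{j-1}$ with a constant visibly independent of $k$.

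Your inductive version has a small but real gap: the $O_p$-constant, as you have written the argument, is not uniform in $k$. At each step you add an error $e_j$ and appeal to the triangle inequality; after $k$ steps the constant in front of $l^{j-1}$ has been incremented $k$ times, so the induction does not close with a bound depending only on $p$. The repair is already in your hands: you record $|e_j| \lesssim_p |N|^{j-2}|m_1| \lesssim_p l^{j-2}|m_1|$ before weakening to $O_p(l^{j-1})$. Keep that factor of $|m_1|$ and sum the contributions over the $k$ steps to obtain $\lesssim_p l^{j-2}\sum_i |m_i| \leqslant l^{j-1}$ with a single constant depending only on $p$. Alternatively, unroll the recursion once and for all, as the paper does.
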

\begin{proof}
We will move all $x_1$'s in $w$ to the left to put the word in normal form. Setting $n_0=0$ and introducing the notation $\widetilde{n}_i:= \sum_{j=i}^k n_j$ we first observe that, by Proposition \ref{prop:binom-Lie}, the identity
\[
 x_1^{n_{i-1}} x_2^{m_i} x_1^{\widetilde{n}_i} \equiv x_1^{\widetilde{n}_{i-1}}x_2^{m_i}x_3^{-\widetilde{n}_i m_i}x_4^{\binom{\widetilde{n}_i}{2} m_i}\cdot \dots \cdot x_p^{(-1)^p \binom{\widetilde{n}_i}{p-2} m_i}
\]
holds in $L_p$ for $1\leqslant i \leqslant k$. Thus, moving powers of $x_1$ to the left, starting with the rightmost one, and $\left[x_i,x_j\right]=1$ for $i,j\geqslant 2$ imply that
\[
 w\equiv x_1^{\widetilde{n}_1}\cdot x_2^{\sum_{i=1}^k m_i}\cdot x_3^{-\sum_{i=1}^k\widetilde{n}_i m_i} \cdot x_4^{\sum_{i=1}^k \binom{\widetilde{n}_i}{2}m_i} \cdot \dots \cdot x_p^{(-1)^p \sum_{i=1}^k \binom{\widetilde{n}_i}{p-2}m_i} \mbox{ mod } \left\langle\left\langle R_A\right\rangle\right\rangle.
\]
Set $b_1:=\widetilde{n}_1$ and $b_j:= (-1)^j \sum_{i=1}^k \binom{\widetilde{n}_i}{j-2}m_i$. Using that $\binom{x}{j}$ is a polynomial of degree $j$ in $x$ and that $|\widetilde{n}_i|\leqslant l$, it is now easy to deduce that $|b_i|=O_p(l^{i-1})+O_p(1)$. This completes the proof.
\end{proof}

We will now explain how to derive Proposition \ref{prop:F_AB-is-efficient} from Lemmas \ref{lem:sec-efficient-tech-1} and \ref{lem:sec-efficient-tech-2}.

\begin{proof}[Proof of Proposition \ref{prop:F_AB-is-efficient}]
Since $S_A$ is a compact generating subset of $L_p$, it is enough to consider words in $S_A$.
Let $w=x_2^{m_1}x_1^{n_1}\cdot \dots \cdot x_2^{m_k}x_1^{n_k}$ be a word in $S_A$ of length $\ell(w)$.
By Lemma \ref{lem:sec-efficient-tech-2} there exist $b_1,\dots,b_p\in \RR$ with $|b_1|=O_p(\ell(w))+O_p(1)$ and $|b_i|= O_p(\ell(w)^{i-1})+O_p(1)$, $2\leqslant i\leqslant p$, such that
\[
w\equiv x_1^{b_1}\cdot \dots \cdot x_p^{b_p}\;{\rm mod} \left\langle\left\langle R_A\right\rangle\right\rangle.
\]
Lemma \ref{lem:sec-efficient-tech-1}  implies that there exist words $u_j\in \mathcal{F}_{A}[O_p(1)]$ with 
\[
 x_j^{b_j}\equiv u_j \; {\rm mod} \left\langle\left\langle R_A \right\rangle\right\rangle
\]
and $\ell(u_j)=O_p(b_j^{\frac{1}{j-1}}) + O_p(1)$ for $2\leqslant j \leqslant p$. Note, moreover, that $u_1=x_1^{b_1}\in \mathcal{F}_{A}$ and $\ell(u_1)=O_p(\ell(w)) +O_p(1)$.

Observe that the word $u:= u_1\cdot \dots \cdot u_p$ satisfies $w\equiv u \mbox{ mod } \left\langle \left\langle R_A\right\rangle\right\rangle$ and $u\in \mathcal{F}_{A}[O_p(1)]$. Moreover, a direct calculation shows that  $\ell(u)=O_p( \ell(w)) + O_p(1).$
This shows that $\mathcal{F}_{A}$ is $O_p(1)$-efficient, ending the proof of the proposition.
\end{proof}

\subsection{Upper bounds on diameters}
\label{subsec:techresultsUpperbounds}
We conclude this section by recording a few results which we will require to show that all fillings in \S \ref{sec:upper-bound-Dehn} have linearly bounded diameter.

\begin{lemma}\label{lem:distance-estimates}
 Let $I\geqslant 0$,  and let $j\leqslant k$ be two integers in $\{2,\ldots, p-1\}$, and, for $1\leqslant i \leqslant I$, let $u_i=u_i(x_1,x_j)$ be a word of word length $n_i=\ell(u_i)\geqslant 1$ such that $u_i$ represents an element in $\gamma_k(L_p)$. Then the element $g\in L_p$ represented by the word $w=\prod_{i=1}^I u_i$ satisfies
 \[
   |g|_{S_A}\lesssim_{p} \sum_{m=k+1}^p\left(\sum_{i=1}^I n_i^{m-j+1}\right)^{\frac{1}{m-1}}.
 \]
Moreover, $w$ has word diameter $\lesssim_p \sum_{m=k+1}^p\left(\sum_{i=1}^I n_i^{m-j+1}\right)^{\frac{1}{m-1}}+ {\rm max}_{i\in I} n_i$.
\end{lemma}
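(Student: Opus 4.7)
The plan is to reduce the estimate to a computation of Malcev coordinates in $L_p$, exploiting the fact that $\gamma_k(L_p)$ is abelian (since $k\geqslant j\geqslant 2$) together with the distortion estimates for $\langle x_m\rangle$ already proved in Lemmas~\ref{lem:sec-efficient-tech-1} and~\ref{lem:sec-efficient-tech-2}. First I would put each $u_i$ into Malcev normal form. Observing that the subgroup $\langle x_1,x_j\rangle\leqslant L_p$ is isomorphic to the filiform group $L_{p-j+2}$ via $x_1\leftrightarrow x_1$ and $x_{j+s}\leftrightarrow x_{2+s}$, the content of Lemma~\ref{lem:sec-efficient-tech-2} transported through this isomorphism (after an innocuous regrouping of the letters of $u_i$ into blocks of the form $x_1^{\pm 1}$ and $x_j^{\pm 1}$, which only costs a uniform constant in length) yields
\[
u_i \;\equiv\; x_1^{c_1^{(i)}}\,x_j^{c_j^{(i)}}\,x_{j+1}^{c_{j+1}^{(i)}}\cdots x_p^{c_p^{(i)}} \quad\text{mod}\;\langle\langle R_A\rangle\rangle,
\]
with $|c_1^{(i)}|=O_p(n_i)$ and $|c_m^{(i)}|=O_p(n_i^{m-j+1})$ for $j\leqslant m\leqslant p$. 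The hypothesis $u_i\in \gamma_k(L_p)=\langle x_{k+1},\dots,x_p\rangle$ forces the remaining exponents $c_1^{(i)}$ and $c_m^{(i)}$ for $j\leqslant m\leqslant k$ to vanish.

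Second, since $k\geqslant 2$ we have $\gamma_k(L_p)\subseteq \gamma_2(L_p)=\langle x_3,\ldots,x_p\rangle$, which is abelian in $L_p$. Consequently the exponents in the product $g=\prod_{i=1}^I u_i$ simply add up and
\[
g \;\equiv\; \prod_{m=k+1}^p x_m^{C_m}, \qquad C_m := \sum_{i=1}^I c_m^{(i)}.
\]
Applying Lemma~\ref{lem:sec-efficient-tech-1} to each factor gives $|x_m^{C_m}|_{S_A}=O_p(|C_m|^{1/(m-1)})+O_p(1)$, and using $|C_m|\leqslant \sum_i |c_m^{(i)}|=O_p\bigl(\sum_i n_i^{m-j+1}\bigr)$ together with the triangle inequality produces the claimed bound on $|g|_{S_A}$.

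Third, for the diameter estimate I would run the same argument on each partial product of $w$. Any point reached while reading $w$ from left to right has the form $u_1\cdots u_{I'}\cdot v$ where $0\leqslant I'\leqslant I$ and $v$ is a prefix of $u_{I'+1}$ of length at most $\max_i n_i$. Since $u_1\cdots u_{I'}\in \gamma_k(L_p)$, the previous argument bounds its $S_A$-length by $\sum_{m=k+1}^p(\sum_{i=1}^{I'} n_i^{m-j+1})^{1/(m-1)}$, which is no larger than the corresponding sum over all $i\leqslant I$. Adding the trivial contribution $\ell(v)\leqslant \max_i n_i$ of the dangling prefix yields the stated diameter bound. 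There is no real obstacle here: once the relevant filiform Malcev coordinates are identified, the argument is a direct bookkeeping exercise, and the only subtle point is to remember that the abelianness of $\gamma_k$ (which allows exponents to be summed without any commutator contribution) relies on the standing hypothesis $j\geqslant 2$.
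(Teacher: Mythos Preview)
Your proposal is correct and follows essentially the same route as the paper: both use the isomorphism $\langle x_1,x_j\rangle\cong L_{p-j+2}$ to apply Lemma~\ref{lem:sec-efficient-tech-2}, then exploit that $\gamma_k(L_p)$ is abelian to sum the Malcev exponents, and finish with Lemma~\ref{lem:sec-efficient-tech-1}. Your treatment of the diameter claim via partial products is exactly the intended argument (the paper's proof leaves this step implicit).
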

\begin{proof}
 The subgroup of $L_p$ generated by $x_1$ and $x_j$ is isomorphic to $L_{p-j+2}$ and there is a canonical embedding $L_{p-j+2}\hookrightarrow L_p$ induced by an embedding of presentations. Thus, by Lemma \ref{lem:sec-efficient-tech-2} for $L_{p-j+2}$, there are $b_{m,i}\in \RR$ such that
 \[
  u_i\equiv x_{k+1}^{b_{k+1,i}}\cdots x_p^{b_{p,i}} \mbox{ mod } \langle\langle R_A \rangle \rangle
 \]
 with $|b_{m,i}|\lesssim_p 1+ n_i^{m-j+1}\lesssim_p n_i^{m-j+1}$, for $k+1\leqslant m \leqslant p$ and $1\leqslant i\leqslant I$. We deduce that
 \[
  \prod_{i=1}^I u_i \equiv x_{k+1}^{b_{k+1}}\cdots x_p^{b_p} \mbox{ mod } \langle\langle R_A \rangle \rangle 
 \]
 for $b_m:= \sum_{i=1}^I b_{m,i}$. In particular,
 \[
  |b_m|\lesssim_p  \sum_{i=1}^I n_i^{m-j+1}.
 \]
 By Lemma \ref{lem:sec-efficient-tech-1} there is a word $w=w_{k+1}\cdot \dots \cdot w_p$ with
 \[
  w\equiv \prod_{i=1}^I u_i \mbox{ mod } \langle \langle R_A \rangle \rangle,
 \]
 $w_m \equiv x_m^{b_m} \mbox{ mod } \langle \langle R_A \rangle \rangle$ and
 \[
  \ell(w)\leqslant \sum_{m=k+1}^p \ell(w_m)\lesssim_p \sum_{m=k+1}^p|b_m|^{\frac{1}{m-1}} \lesssim_p \sum_{m=k+1}^p \left(\sum_{i=1}^I n_i^{m-j+1}\right)^{\frac{1}{m-1}}.
 \]
 This completes the proof.
\end{proof}

\begin{corollary}\label{cor:distance-estimates}
 Assume that in Lemma \ref{lem:distance-estimates}, $n$ is a positive integer such that $n_i\leqslant n$ for $1\leqslant i \leqslant I$. Then in the conclusion we obtain
\[
  |g|_{S_A}\lesssim_{p} \sum_{m=k+1}^p \left( I^{\frac{1}{m-1}}\cdot n^{1-\frac{j-2}{m-1}}\right).
\]
 In particular, for $I\leqslant n$ and $j=3$ we deduce that
\[
  |g|_{S_A}\lesssim_{p} n.
\]
 and $w$ has word diameter $\lesssim_{p} n$.
\end{corollary}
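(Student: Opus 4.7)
The plan is to apply the inequality provided by Lemma \ref{lem:distance-estimates} and simply use the hypothesis $n_i \leqslant n$ to bound each summand. From Lemma \ref{lem:distance-estimates} we have
\[
|g|_{S_A}\lesssim_{p} \sum_{m=k+1}^p\left(\sum_{i=1}^I n_i^{m-j+1}\right)^{\frac{1}{m-1}},
\]
and since $n_i \leqslant n$ for every $i$, we get $\sum_{i=1}^I n_i^{m-j+1} \leqslant I \cdot n^{m-j+1}$, hence
\[
\left(\sum_{i=1}^I n_i^{m-j+1}\right)^{\frac{1}{m-1}} \leqslant I^{\frac{1}{m-1}} \cdot n^{\frac{m-j+1}{m-1}} = I^{\frac{1}{m-1}} \cdot n^{1-\frac{j-2}{m-1}}.
\]
Summing over $m$ from $k+1$ to $p$ yields the first inequality.

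Next, specialising to $j = 3$ and $I \leqslant n$, the exponent of $I$ is $\frac{1}{m-1}$ and the exponent of $n$ is $1 - \frac{1}{m-1}$, so $I^{\frac{1}{m-1}} \leqslant n^{\frac{1}{m-1}}$ and the product is $\leqslant n$ for each $m$. Since there are only $p - k$ terms in the sum (a quantity depending only on $p$), we conclude $|g|_{S_A}\lesssim_{p} n$.

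Finally, for the word diameter bound on $w = \prod_{i=1}^I u_i$, observe that any point on the path associated to $w$ in the Cayley graph of $(L_p, S_A)$ is of the form $g_s \cdot h$ where $g_s = \prod_{i=1}^s u_i$ for some $0 \leqslant s \leqslant I$ and $h$ is a prefix of $u_{s+1}$. Applying the first part of the corollary to the subword $\prod_{i=1}^s u_i$ (which satisfies the hypotheses of Lemma \ref{lem:distance-estimates} with $I$ replaced by $s \leqslant I \leqslant n$ and the same $n$) gives $|g_s|_{S_A} \lesssim_p n$, while $|h|_{S_A} \leqslant \ell(u_{s+1}) \leqslant n_{s+1} \leqslant n$. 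The triangle inequality then yields word diameter $\lesssim_p n$, as required. The only routine point requiring care is the $\max_{i \in I} n_i$ term in the general statement, which accounts precisely for travelling partway through a single factor $u_i$ and is bounded by $n$ under our hypothesis; no real obstacle is anticipated in this argument.
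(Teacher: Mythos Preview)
Your proof is correct and follows the natural approach: substitute $n_i\leqslant n$ into the bound from Lemma~\ref{lem:distance-estimates}, simplify the exponent, and then specialise. The paper does not spell out a proof of this corollary, treating it as immediate from the lemma; your argument is exactly the intended elaboration. One small simplification: for the word diameter bound you re-derive the estimate by analysing prefixes, but this is already contained in the ``Moreover'' clause of Lemma~\ref{lem:distance-estimates}, which gives word diameter $\lesssim_p \sum_{m=k+1}^p\bigl(\sum_i n_i^{m-j+1}\bigr)^{1/(m-1)} + \max_i n_i$; under the hypotheses $n_i\leqslant n$, $I\leqslant n$, $j=3$ the first summand is $\lesssim_p n$ by your first part and the second is $\leqslant n$, so you are done in one line.
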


In a second application of Lemma \ref{lem:distance-estimates} we will require the following estimate.
\begin{lemma}\label{lem:Geom-series-diameter}
 For $n,k,B\geqslant 1$, $p\geqslant k+1$ and $1\leqslant j \leqslant \left \lceil \log_2(n)\right \rceil=:l$  there is a constant $C=C(p,B)$ such that the following inequality holds:
\[
 \sum_{m=k+1}^p \left( 2^{jk} \cdot \left(\frac{n}{2^j}\right) ^{m-2} + B\cdot \sum_{i=1}^j 2^{(i-1)k} \left(\frac{n}{2^i}\right)^{m-2}\right)^{\frac{1}{m-1}}\leqslant C \cdot n
\]
\end{lemma}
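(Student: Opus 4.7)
The plan is to bound each term of the outer sum separately, then add. Setting $\alpha = \alpha(m) := k - m + 2$ and pulling out the common factor $n^{m-2}$, the inner expression becomes
\[
n^{m-2}\!\left( 2^{j\alpha} + B\, 2^{-k}\sum_{i=1}^{j} 2^{i\alpha}\right).
\]
I will split into three cases according to the sign of $\alpha$, namely $m=k+1$ (so $\alpha=1$), $m=k+2$ (so $\alpha=0$), and $m\geqslant k+3$ (so $\alpha\leqslant -1$). In each case the bound will turn out to be $\leqslant C_m \cdot n$, and then $C = \sum_{m=k+1}^{p} C_m$ works.

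For $m=k+1$ the geometric sum gives $\sum_{i=1}^j 2^i \leqslant 2^{j+1}$, so the inner bracket is at most $2^j(1 + 2B\cdot 2^{-k}) \leqslant (1+2B)\, 2^j$. Taking the $(m-1)$th root $=1/k$ root yields
\[
n^{(k-1)/k}\cdot (1+2B)^{1/k}\cdot 2^{j/k} \leqslant (1+2B)^{1/k}\, n,
\]
where the last inequality uses $2^{j/k}\leqslant n^{1/k}$, which is exactly the constraint $j\leqslant \lceil \log_2 n\rceil$. This is the one place where the hypothesis on $j$ is used critically.

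For $m=k+2$, $\alpha=0$, so the inner sum equals $j\leqslant \log_2 n + 1$, giving an inner bracket bounded by $1 + B\,2^{-k}(\log_2 n + 1) \leqslant 1 + B(\log_2 n + 1)$. Its $(k+1)$th root is at most a constant times $(1 + \log_2 n)^{1/(k+1)}$, and hence times $n^{1/(k+1)}$ since $\log_2 n + 1 \leqslant (1+\log 2)\, n$ for $n\geqslant 1$. Multiplied by $n^{(m-2)/(m-1)} = n^{k/(k+1)}$, this yields a bound in $O_{B,k}(n)$. For $m\geqslant k+3$, $\alpha\leqslant -1$ so $2^{j\alpha}\leqslant 1$ and $\sum_{i\geqslant 1} 2^{i\alpha}\leqslant 1$; the inner bracket is bounded by a constant depending only on $B$ and $k$, and the $(m-1)$th root is $\leqslant$ constant $\cdot n^{(m-2)/(m-1)} \leqslant$ constant $\cdot n$.

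Summing the three bounds over $m \in \{k+1,\dots,p\}$ produces the desired constant $C = C(p,B)$. The only nontrivial step is the case $m = k+1$, where the exponential factor $2^{j}$ must be absorbed into $n$ using $j \leqslant \lceil \log_2 n\rceil$; once this is noted, the remaining cases are routine geometric-series and dominance estimates.
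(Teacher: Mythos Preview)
Your argument is correct (modulo the harmless imprecision that $j\leqslant\lceil\log_2 n\rceil$ only gives $2^j<2n$, not $2^j\leqslant n$, so a stray factor of $2^{1/k}$ should appear in the $m=k+1$ case; this is absorbed in $C$). The organisation, however, differs from the paper's. You pull out $n^{m-2}$ and then case-split on the sign of $\alpha=k-m+2$, which forces you to treat $m=k+1$, $m=k+2$, and $m\geqslant k+3$ separately. The paper instead pulls out a full factor of $n^{m-1}$ by using $n^{-1}\lesssim 2^{-l}$; this replaces your exponent $\alpha$ by $k-m+1$, which is $\leqslant 0$ for \emph{every} $m\geqslant k+1$, so all terms are bounded uniformly without any case distinction and the geometric series $\sum_i 2^{i(k-m+1)}/2^{l-i}\leqslant 2$ handles everything at once. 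Your approach is more hands-on and makes explicit where the hypothesis $j\leqslant l$ is used; the paper's trick is shorter but slightly slicker.
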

\begin{proof}
 First observe that by definition of $l$:
 \[
  \sum_{m=k+1}^p \left( 2^{jk} \cdot \left(\frac{n}{2^j}\right) ^{m-2} + B\cdot \sum_{i=1}^j 2^{(i-1)k} \left(\frac{n}{2^i}\right)^{m-2}\right)^{\frac{1}{m-1}}
  \leqslant n \cdot \sum_{m=k+1}^p \left(\frac{2^{j(k-m+1)}}{2^{l-j}} + B\cdot \sum_{i=1}^j \frac{2^{i(k-m+1)}}{2^{l-i}}\right)^{\frac{1}{m-1}}.
 \]
Since $k-m+1\leqslant 0$ and $l\geqslant j$ we now deduce from the geometric series that
\[
\sum_{m=k+1}^p \left(\frac{2^{j(k-m+1)}}{2^{l-j}} + B\cdot \sum_{i=1}^j \frac{2^{i(k-m+1)}}{2^{l-i}}\right)^{\frac{1}{m-1}}\leqslant \sum_{m=k+1}^p \left(1 + 2\cdot B\right)^{\frac{1}{m-1}}\leqslant 2 \cdot p \cdot B.
\]
This completes the proof.
\end{proof}

\section{Upper bounds on the Dehn functions of \texorpdfstring{$G_{p,p}$}{G{p,p}} and \texorpdfstring{$G_{p,p-1}$}{G{p,p-1}}}
\label{sec:upper-bound-Dehn}

In this section we will derive upper bounds on the Dehn functions of $G_{p,p}$ and $G_{p,p-1}$. In \S \ref{sec:Intro-proof-main-theorem} we state a sequence of auxiliary results and explain how they are used to prove the desired upper bounds by induction on $p$. This will be visualized by Figure \ref{fig:structure-proof-upper-bound}. In the remaining sections we then prove these results in the described order, finishing with the proof of the main result in \S \ref{subsec:PfMainThm}.

\subsection{Main theorem and structure of the proof}\label{sec:Intro-proof-main-theorem}
 The goal of this section is to prove the following key result of our paper. 
\begin{theorem}[{Main Theorem}]
 \label{thm:Upperbound}
  For $p\geqslant 4$, $(n^{p-1},n)$ is a filling pair for both $G_{p,p}$ and $G_{p,p-1}$.  
\end{theorem}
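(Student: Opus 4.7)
The plan is to induct on $p$, taking as base case $p=3$ where Theorem \ref{thm:5-dim-Heisenberg} gives the filling pair $(n^2,n)$ for $\Gamma_{3,3} \cong \HH_5(\mathbf{Z})$, which transfers to the Malcev completion $G_{3,3}$ by the quasi-isometry invariance of filling pairs. For the inductive step I assume the filling pair $(n^{p-2},n)$ for $G_{p-1,p-1}$ and derive $(n^{p-1},n)$ for $G_{p,p-1}$; the case $G_{p,p}$ runs along the same lines but is easier (the Carnot graded group is already the ambient group, and the central product trick described below is not needed). I work throughout with the compact presentation of Corollary \ref{cor:presentation-of-Gpq} with $A=1$.

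The first step is a reduction on the shape of the relators to fill. By Proposition \ref{prop:CorTesEff1} combined with the efficiency of $\mathcal{G}_A$ (Corollary \ref{prop:G_A-is-efficient}), it suffices to establish the filling pair in restriction to $\mathcal{G}_A[k]$ for each fixed $k\geqslant 1$. Using the commutation relations $[x_i,y_j]=1$ together with the identification $z=x_p=y_p$, all $y$-letters in such a word can be pushed past the $x$-letters and eventually absorbed into $x$-letters, with the resulting central corrections handled inside the embedded copy of $G_{p-1,p-1}$ using the induction hypothesis. The relations $[x_1,x_i]=x_{i+1}$ and Proposition \ref{prop:binom-Lie} then compress everything into a null-homotopic word $w(x_1,x_2)$ of length $O(n)$, as in the warm-up of Section \ref{sec:warmup}.

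Following the strategy of Section \ref{sec:strategyp}, I then carry out Step 2: scan $w$ from the left and, using the free identities of Lemma \ref{lem:FreeIdentities} together with Proposition \ref{prop:binom-Lie}, rewrite $w$ as a product of $O_{p,k}(1)$ iterated commutators $\Omega_{k_i}(\nnn_i)^{\pm 1}$ with $k_i\in\{2,\ldots,p-1\}$ and $\|\nnn_i\|_\infty \lesssim n$. The Main Commuting Lemma is invoked to reorder these factors by increasing $k_i$. Step 3 then applies the Cancelling Lemma successively, for $k=2,3,\ldots,p-1$: at each stage the null-homotopy of the residual word forces the product of the $\Omega_k$ factors of minimal remaining degree to represent a trivial element modulo $\gamma_{k+1}$, which enables their cancellation at cost $\lesssim n^{p-1}$ and diameter $\lesssim n$. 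Summing the $O_{p,k}(1)$ contributions yields the claimed filling pair, with diameter controlled by Lemmas \ref{lem:sec-efficient-tech-1}, \ref{lem:distance-estimates} and \ref{lem:Geom-series-diameter}.

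The main obstacle lies in proving the Main Commuting Lemma and the Cancelling Lemma themselves, which form the technical heart of the argument. A naive commutation of $\Omega_{k_1}(\nnn)$ with $\Omega_{k_2}(\mmm)$ would cost $\Theta(n^p)$, one power of $n$ too many; the crucial saving comes from exploiting the central product structure. Indeed, the relation $z = x_p = y_p$ lets one transfer central contributions supported in $\gamma_{p-1}$ from the $x$-factor to the $y$-factor at cost $\lesssim n^{p-2}$ via the induction hypothesis applied in $G_{p-1,p-1}$, and once inside the $y$-factor these central words commute freely with the $x$-letters. Implementing this cleanly demands a delicate double induction on the pair $(p,k)$, plus constant monitoring of filling diameters through the distortion estimates of Section \ref{subsec:techresultsUpperbounds} so that every intermediate word remains within a ball of radius $O(n)$.
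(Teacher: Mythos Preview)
Your outline follows the paper's approach closely: induction on $p$ with base case $G_{3,3}$, reduction via efficient words to null-homotopic $w(x_1,x_2)$, then the Reduction Lemma followed by the Cancelling Lemma, with the Main Commuting Lemma as the engine and the central-product trick through the embedded $G_{p-1,p-1}$ supplying the crucial saving. As a high-level roadmap this is correct, though of course the substance lies entirely in the proofs of Lemmas \ref{lem:MainLemma} and \ref{lem:Strong-k-Lemma}, which you acknowledge but do not attempt.

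There is one concrete error. You write that the case $G_{p,p}$ ``runs along the same lines but is easier'' and that ``the central product trick described below is not needed''. This is wrong: $G_{p,p}$ \emph{is} a central product of two copies of $L_p$, and the trick is every bit as essential there. Being Carnot graded buys nothing for the filling argument. What the paper actually does is the reverse of what you suggest: it proves the harder case $G_{p,p-1}$ first, and then \emph{deduces} $G_{p,p}$ from it via Lemma \ref{lem:OneFactor}, which splits a null-homotopic word in $G_{p,p}$ into a product of two null-homotopic words $w(x_1,x_2)\cdot w'(y_1,y_2)$, each of which can then be filled inside the canonically embedded copy of $G_{p,p-1}\hookrightarrow G_{p,p}$. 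The splitting itself already costs $\lesssim n^{p-1}$ and uses Corollary \ref{cor:Usingx3stoReplaceRpp-1s}, so the central-product mechanism is invoked even in this reduction step.
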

 The proof proceeds by induction on $p$.
We will see that for both groups we can reduce to null-homotopic words of the form $w(x_1,x_2)$, where $x_1$ and $x_2$ generate the first factor  (see \S \ref{subsec:PfMainThm}). In view of the canonical embedding $G_{p,p-1}\hookrightarrow G_{p,p}$, we deduce that it is enough to show that $\delta_{G_{p,p-1}}(n)\preccurlyeq n^{p-1}$ (see Lemma \ref{lem:OneFactor}). The core of the proof consists in deducing from $\delta_{G_{p-1,p-1}}(n)\preccurlyeq n^{p-2}$ that $\delta_{G_{p,p-1}}(n)\preccurlyeq n^{p-1}$. 

We recall the following notation, for every $A>0$: 
\[
T_{A}:= \left\{ x_1^{a_1},x_2^{a_2}, y_1^{a_3},y_3^{a_4}\mid |a_1|,|a_2|,|a_3|,|a_4|\leqslant A\right\} \quad \text{and}\quad S_{A}:= \left\{ x_1^{a_1},x_2^{a_2}\mid |a_1|,|a_2|\leqslant A\right\},
\]

and
\[
\mathcal G_{A}:= \left\{s^n\mid s\in T_A, n\in \mathbf{N}\right\} \quad \text{and}\quad \mathcal F_{A}:= \left\{s^n\mid s\in S_A, n\in \mathbf{N}\right\}.
\]

We will fix $A=1$ once and forever and will omit the prefix $A$ in all expressions, as one fixed choice for $A$ will suffice for the remainder of our proof (cf. Remark \ref{rmk:Fixing-choice-for-A}).	

By Propositions \ref{prop:CorTesEff1} and Corollary \ref{prop:G_A-is-efficient},  it suffices to prove that for every $\alpha>1$ we have $\delta_{\mathcal{G}[\alpha]}(n)\preccurlyeq n^{p-1}$, where we recall that by definition the set $\mathcal{G}[\alpha]$ (resp. $\mathcal{F}[\alpha]$) is the set consisting of all words obtained by concatenating at most $\alpha$ words from the set $\mathcal{G}$ (resp. from $\mathcal{F}$).

We now describe the structure of the proof via a list of technical lemmas. In what follows, saying that an identity between words in $T_A$ holds in $G_{p,p-1}$ will be shorthand for saying that it holds in $\mathcal{P}(G_{p,p-1}).$
 It is easy to deduce from its presentation that $G_{p,p-1}$ is a metabelian group. The first important step is to prove that the commutation relations in $G_{p,p-1}$, induced by its metabelian structure, have area $\lesssim_p n^{p-1}$ and diameter $\lesssim_p n$.  More generally we prove

\begin{lemma}[{Main commuting Lemma}]
\label{lem:MainLemma}
Let $p\geqslant 5$, $\alpha, n\geqslant 1$. Let $w_1,w_2$ be either powers of $x_2$ or words in $\mathcal{F}\left[\alpha\right]$ representing elements of the derived subgroup, such that $\ell(w_1),\ell(w_2)\leqslant n$.  Then the identity $\left[w_1,w_2\right]\equiv 1$ holds in $G_{p,p-1}$ with area $\lesssim_{\alpha,p} n^{p-1}$ and diameter $\lesssim_{\alpha,p} n$. 
\end{lemma}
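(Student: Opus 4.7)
I would proceed by induction on $p$. The base case $p=3$ uses the filling pair $(n^2, n)$ for the $5$-dimensional Heisenberg group $G_{3,3}$ from Theorem~\ref{thm:5-dim-Heisenberg}. For the inductive step, the canonically embedded subgroup $H := \langle x_1, x_3, y_1, y_3 \rangle \leqslant G_{p,p-1}$ is isomorphic to $G_{p-1,p-1}$: indeed, $[x_1,x_3]=x_4$ and iterated commutation with $x_1$ produces all $x_j$ with $j\geqslant 4$ inside $\langle x_1, x_3 \rangle$, and symmetrically on the $y$-side. By the inductive version of the Main Theorem, $H$ admits the filling pair $(n^{p-2}, n)$.

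The central observation is that the derived subgroup of $G_{p,p-1}$ lies entirely inside $H$: the derived Lie subalgebra of $\mathfrak g_{p,p-1}$ is spanned by $\{x_j : 3\leqslant j\leqslant p-1\}\cup\{y_j : 4\leqslant j\leqslant p-1\}\cup\{z\}$, all of which belong to $H$. Since $G_{p,p-1}$ is metabelian and $x_2$ centralizes its derived subgroup, the identity $[w_1, w_2]=1$ really does hold in $G_{p,p-1}$ whenever $w_1,w_2$ meet the hypotheses of the lemma. Moreover, when that identity is computed by pushing $w_1$ past $w_2$ using Proposition~\ref{prop:binom-Lie} together with the cross-factor relations $[x_i, y_j]=1$, every intermediate error term stays inside $H$---this is what makes the inductive filling pair for $H$ applicable.

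Using Lemma~\ref{lem:FreeIdentities}, I reduce to the atomic case $w_1=s^a$ with $s\in T$ and $|a|\leqslant n$: each such expansion produces $O_\alpha(1)$ conjugates of atomic commutators, the conjugators having word length $O(n)$ and contributing at most $O_\alpha(n^2)$ to the area and $O_\alpha(n)$ to the diameter. In the atomic case, my plan is to push $s^a$ through $w_2$ letter by letter, applying Proposition~\ref{prop:binom-Lie} at each micro-step to produce an incremental error of word length $O(n)$ inside $H$, and cancelling that error in flight using the inductive filling pair for $H$ at cost $O(n^{p-2})$ per micro-step. Summing over the $O(n)$ micro-steps yields total area $O(n^{p-1})$; the diameter stays linear thanks to Corollary~\ref{cor:distance-estimates} and the diameter tracking of Lemma~\ref{lem:diameter}.

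The hardest part of the full proof will be executing the in-flight cancellation rigorously. If the incremental errors were allowed to accumulate into a single word of length $\Theta(n^2)$ before filling, the inductive bound $\delta_H(n^2)\asymp n^{2(p-2)}$ would be too costly for $p\geqslant 4$; one is therefore forced to interleave pushes with partial fillings in $H$. One must also control how an error produced at an early letter of $w_2$ is conjugated as $s^a$ continues to traverse the remaining letters---conjugation by generators acts non-trivially on the derived subgroup, despite metabelianness. Handling both issues uniformly is presumably what forces the subtle double induction on $p$ and on the lower-central-depth parameter $k$ alluded to in the statement.
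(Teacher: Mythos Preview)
Your high-level setup is right --- induction on $p$, the embedded copy $H=\langle x_1,x_3,y_1,y_3\rangle\cong G_{p-1,p-1}$, and the fact that $[G_{p,p-1},G_{p,p-1}]\subset H$ --- but the core of the argument has a genuine gap that you yourself flag in the last paragraph without resolving.

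The atomic reduction does not produce fillable subproblems. Writing $w_1=s_1^{a_1}\cdots s_k^{a_k}$ and expanding $[w_1,w_2]$ via Lemma~\ref{lem:FreeIdentities} yields conjugates of $[s_i^{a_i},w_2]$, but $s_i^{a_i}\notin [G,G]$, so these atomic commutators are \emph{not} null-homotopic and cannot be ``cancelled in flight''. What you would need is to push $s^a$ through $w_2$, collecting a single error word $E$ with $s^a w_2\equiv w_2 s^a E$, and then fill $E$; but $E$ is only null-homotopic after you recombine all the atoms, at which point it has length $\Theta(n^2)$ and the inductive bound for $H$ gives $n^{2(p-2)}$, too much. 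Your micro-step cost estimate is also off: converting $[x_1^a,x_2]$ to a word in $x_3,\dots,x_p$ via Proposition~\ref{prop:binom-Lie} already costs $\asymp a^{p-1}=n^{p-1}$ (not $n^{p-2}$), so $n$ micro-steps give $n^p$.

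The paper avoids both problems by never decomposing $w_1$ into atoms. Instead it uses the Reduction Lemma~\ref{lem:derived-words-prod-Rpk-Mainthm} to rewrite $w_1$ as a product of $O_{\alpha,p}(1)$ structured commutators $\Omega_k(\nnn)=[x_1^{n_1},\dots,x_1^{n_{k-1}},x_2^{n_k}]$, each of which \emph{does} lie in the derived subgroup. The identity $[\Omega_k(\nnn),w_2]\equiv 1$ is then handled by the Third/Fourth commuting $k$-Lemmas. The mechanism behind those is the step you are missing: Proposition~\ref{prop:Decompx2intox3} rewrites $\Omega_k(\nnn)$ (a word in $x_1,x_2$) as a product of $O(n)$ words $\Omega^3_\ell(\mmm)$ in $x_1,x_3$ --- so genuinely words in $H$'s generators, not just elements of $H$ --- at cost $\lesssim n^{p-1}$. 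Each $\Omega_\ell^3(\mmm)$ can then be commuted with $w_2$ at cost $\lesssim n^{p-2}$ via the First/Second commuting $k$-Lemmas, whose proofs use the Fractal form Lemma~\ref{lem:FractalForm} together with the Substitution Lemma~\ref{lem:ChangingFactors} (swap a central $\Omega^3_{p-2}$ for its $y$-side counterpart $\widetilde\Omega^3_{p-2}$, then commute freely with $x$-letters). This last trick is exactly the ``cross-factor'' use of $[x_i,y_j]=1$ that you mention but do not deploy; it, together with the $x_2\rightsquigarrow x_3$ conversion, is what makes the descending induction on $k$ terminate with the right exponent.
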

This result will be the consequence of four more specific lemmas. Before stating them we shall recall and introduce some additional notation.

We will denote by $\nnn=(n_1,\dots,n_k)\in \RR^k$ a $k$-tuple of real numbers and $|\nnn|:=\sum_{i=1}^k |n_i|$ its $\ell^1$-norm. 
As before, for $p-1\geqslant k\geqslant 2$ and $p \geqslant j \geqslant 2$, we denote
\[
 \Omega_{k}^j(\nnn):= \left[x_1^{n_1},\dots,x_1^{n_{k-1}},x_j^{n_k}\right],
\]
and
\[
 \widetilde{\Omega}^j_{k}(\nnn):=\left[y_1^{n_1},\dots,y_1^{n_{k-1}},y_j^{n_k}\right],
\]
while for $k=1$ we define $\Omega_{k}^j(\nnn):= x_j^{n_1}$, $\widetilde{\Omega}_k^j(\nnn):=y_j^{n_1}$.
To simplify notation, when $j=2$, we shall simply write $ \Omega_{k}(\nnn)$ and $\widetilde{\Omega}_{k}(\nnn).$

We record the following key observation.
\begin{lemma}[{Substitution Lemma}]\label{lem:ChangingFactors} 
Let $p\geqslant 5$. For $\nnn\in \RR^{p-2}$ the word $\Omega_{p-2}^3(\nnn)$ is central in $G_{p,p-1}$. In particular, the identity $$\Omega_{p-2}^3(\nnn)\equiv \widetilde{\Omega}^3_{p-2}(\nnn)$$ holds in $G_{p,p-1}$ with area $\lesssim_{p} |\nnn|^{p-2}$ and diameter $\lesssim_p |\nnn|$.
\end{lemma}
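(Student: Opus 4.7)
The plan is to leverage the canonical copy of $G_{p-1,p-1}$ sitting inside $G_{p,p-1}$ together with an inductive hypothesis on the Dehn function of $G_{p-1,p-1}$. First, I would identify the subgroup $H := \langle x_1, x_3, y_1, y_3\rangle \leqslant G_{p,p-1}$. The Lie subalgebra spanned by $x_1, x_3$ contains $[x_1, x_3] = x_4$, $[x_1, x_4] = x_5$, \ldots, $[x_1, x_{p-1}] = x_p = z$, so $\langle x_1, x_3\rangle \cong L_{p-1}$ via $x_1 \leftrightarrow \tilde x_1,\, x_3 \leftrightarrow \tilde x_2,\, x_{i} \leftrightarrow \tilde x_{i-1}$ for $i \geqslant 4$; the same holds on the $y$-side. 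The two subgroups commute (since $[x_i, y_j] = 1$) and intersect in $\langle z\rangle = \langle x_p\rangle = \langle y_p\rangle$. Hence $H$ is the central product of two copies of $L_{p-1}$ amalgamated along $\langle z\rangle$, which is exactly $G_{p-1,p-1}$.

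Second, under the isomorphism $\langle x_1, x_3\rangle \cong L_{p-1}$, the word $\Omega_{p-2}^3(\nnn)$ becomes $[\tilde x_1^{n_1}, \ldots, \tilde x_1^{n_{p-3}}, \tilde x_2^{n_{p-2}}]$, a $(p-2)$-fold iterated commutator whose image lies in $\gamma_{p-2}(L_{p-1}) = \langle \tilde x_{p-1}\rangle = \langle z\rangle$. Since $\mathfrak l_{p-1}$ has step $p-2$, any BCH correction to the top-degree Lie bracket lands in $\gamma_{p-1}(\mathfrak l_{p-1}) = 0$. Using $[ab, c] \equiv [a,c]^b [b,c]$ and the vanishing of higher-order corrections at the top of the lower central series, the expression is multilinear in the $n_i$, so $\Omega_{p-2}^3(\nnn) \equiv z^{c\, n_1 n_2 \cdots n_{p-2}}$ in $G_{p,p-1}$ for the constant $c = \pm 1$ determined by $[\tilde x_1, [\tilde x_1, \ldots [\tilde x_1, \tilde x_2]\ldots]] = \pm \tilde x_{p-1}$. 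The same reasoning applied to $\widetilde{\Omega}_{p-2}^3(\nnn) \in \langle y_1, y_3\rangle \cong L_{p-1}$ produces \emph{the same} polynomial, using the identification $x_p = y_p = z$. This yields both the centrality statement and the identity $\Omega_{p-2}^3(\nnn) \equiv \widetilde{\Omega}_{p-2}^3(\nnn)$ abstractly in $G_{p,p-1}$.

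Third, I would upgrade this abstract identity to a quantitative filling by induction on $p$. The Main Theorem \ref{thm:Upperbound} at level $p-1$ (base case $p=4$ provided by Theorem \ref{thm:5-dim-Heisenberg}) asserts that $G_{p-1, p-1}$ admits $(n^{p-2}, n)$ as a filling pair. The word $\Omega_{p-2}^3(\nnn) \cdot \widetilde{\Omega}_{p-2}^3(\nnn)^{-1}$ lies in $H \cong G_{p-1,p-1}$ and is null-homotopic there by step 2, and its length is $\lesssim_p |\nnn|$ (each layer of commutator at most doubles the length, and $p$ is fixed). The inductive filling pair therefore produces a filling of area $\lesssim_p |\nnn|^{p-2}$ and diameter $\lesssim_p |\nnn|$ in $H$; transferring through the inclusion $H \hookrightarrow G_{p,p-1}$ preserves both bounds.

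The one delicate point is the coefficient computation in the second paragraph: one must ensure that the BCH corrections truly vanish and that the same multilinear polynomial $n_1 n_2 \cdots n_{p-2}$ appears on both sides (with the same sign). This follows from working inside each $L_{p-1}$-factor separately, observing that any bracket of weight exceeding $p-2$ in $\mathfrak l_{p-1}$ is zero, so only the straight Lie bracket survives and the two sides become identical powers of $z$. Once this is granted, the entire argument reduces to a single appeal to the inductive filling pair of $G_{p-1,p-1}$.
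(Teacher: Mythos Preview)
Your proposal is correct and follows essentially the same route as the paper: identify the canonically embedded copy of $G_{p-1,p-1}$ inside $G_{p,p-1}$ (generated by $x_1,x_3,y_1,y_3$), observe that the word $\Omega_{p-2}^3(\nnn)\cdot(\widetilde{\Omega}_{p-2}^3(\nnn))^{-1}$ lies in this subgroup and is null-homotopic there, and invoke the inductive filling pair $(n^{p-2},n)$ for $G_{p-1,p-1}$. The only difference is that your second and fourth paragraphs carry out an explicit BCH/multilinearity computation to show both sides equal the same power of $z$; the paper bypasses this entirely, since the embedding of presentations $\mathcal{P}(G_{p-1,p-1})\hookrightarrow\mathcal{P}(G_{p,p-1})$ is symmetric under $x_i\leftrightarrow y_i$ (fixing $z$), so the abstract identity is immediate and no coefficient needs to be computed.
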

\begin{proof}
 This is a direct consequence of the fact that there is a canonical embedding of presentations $\mathcal{P}(G_{p-1,p-1})\hookrightarrow \mathcal{P}(G_{p,p-1})$ such that the word $\Omega_{p-2}^3(\nnn)\cdot \left(\widetilde{\Omega}_{p-2}^3(\nnn)\right)^{-1}$ is contained in the image of $\mathcal{P}(G_{p-1,p-1})$ and the fact that null-homotopic words of length $n$ in $\mathcal{P}(G_{p-1,p-1})$ admit a filling of area $\lesssim_{p} n^{p-2}$ and diameter $\lesssim_{p} n$.
\end{proof}
Despite being very basic, this result is the fundamental reason for why the Dehn functions of $G_{p,p}$ and $G_{p,p-1}$ are bounded by $n^{p-1}$ rather than $n^p$. Indeed, it allows us to ``push'' words in the first factor which represent central elements into the second factor at a cost that is bounded by the Dehn function of $G_{p-1,p-1}$. Using that the $y_i$ commute with the $x_i$ we can then commute them with words in the $x_i$ at a lower cost than one might a priori expect. We use Lemma \ref{lem:ChangingFactors} at various points and, in particular, in the proof of Lemma \ref{lem:second(k,j)-Lemma} to kick-start our induction step from $p-1$ to $p$.

As mentioned above, the Main commuting Lemma \ref{lem:MainLemma} will result from four sublemmas, dealing with specific commuting relations involving words of type $\Omega_{k}^j$. These lemmas depend on a parameter $k\leqslant p-1$. By $k$-lemma, we mean the statement of the corresponding lemma for a specific value of $k$.

 The first one deals with commutators of words of type $\Omega_{k}^j$ with words representing elements of the derived subgroup.
 \begin{lemma}[{First commuting $k$-Lemma}]
\label{lem:Basic-k-Lemma}
 Let $p\geqslant 5$, $n,\alpha \geqslant 1$, $j\geqslant 3$, $1\leqslant k\leqslant p-2$. Let $w=w(x_1,x_2)$ be a word of length at most $n$ in $\mathcal{F}[\alpha]$ corresponding to an element of $\left[G_{p,p-1},G_{p,p-1}\right]$, and let $\nnn\in \RR^k$ with $|\nnn|\leqslant n$. Then the relation $\left[\Omega_{k}^j(\nnn),w\right]\equiv 1$ holds in $G_{p,p-1}$ with area $\lesssim_{p,\alpha} n^{p-2}$ and diameter $\lesssim_{p,\alpha} n$.
\end{lemma}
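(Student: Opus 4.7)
The plan is to exploit that $\Omega_k^j(\nnn)$ with $j\geqslant 3$ lies in $\gamma_{k+j-2}(G_{p,p-1})\subseteq \gamma_2(G_{p,p-1})$, while $w$ is in $\gamma_2(G_{p,p-1})$ by hypothesis. Since $G_{p,p-1}$ is metabelian, $\gamma_2$ is abelian and $[\Omega_k^j(\nnn),w]\equiv 1$ as a group element; the work is only to exhibit an efficient filling. I would induct on $p$, invoking the Main Theorem for $p-1$ in the form $\delta_{G_{p-1,p-1}}(n)\lesssim n^{p-2}$, which is also what underlies the Substitution Lemma \ref{lem:ChangingFactors}.

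Using $w\in\mathcal F[\alpha]$, I would write $w=s_1^{m_1}\cdots s_\alpha^{m_\alpha}$ with $s_i\in\{x_1^{\pm 1},x_2^{\pm 1}\}$ and $|m_i|\leqslant n$, then apply the free identity $[u,vw']=[u,w']\,[u,v]^{w'}$ from Lemma \ref{lem:FreeIdentities} iteratively to rewrite $[\Omega_k^j(\nnn),w]$ as a product of at most $\alpha$ conjugates of pieces of the form $[\Omega_k^j(\nnn),s_i^{m_i}]$. By Lemma \ref{lem:diameter} and Corollary \ref{cor:distance-estimates}, it then suffices to bound the area of each such piece by $\lesssim_{p,\alpha} n^{p-2}$ and its diameter by $\lesssim_{p,\alpha} n$.

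For a piece $[\Omega_k^j(\nnn),s^m]$, I would first put $\Omega_k^j(\nnn)$ into Malcev normal form $\prod_{m'\geqslant k+j-1}x_{m'}^{c_{m'}}$ with $|c_{m'}|\lesssim n^{m'-j+1}$ via iterated use of Proposition \ref{prop:binom-Lie}. When $s=x_2$, the bracket $[x_2,x_{m'}]$ vanishes in $\mathfrak l_p$ for every $m'\geqslant 2$, so commuting $x_2^m$ past $\prod x_{m'}^{c_{m'}}$ is essentially free once this normalization has been done. When $s=x_1$, I would use the Malcev identity $[x_1^m,x_{m'}^{c_{m'}}]=x_{m'+1}^{mc_{m'}}\cdots$ from Proposition \ref{prop:binom-Lie} together with additive bilinearity of commutators in the abelian $\gamma_2$; correction terms that become central in $G_{p,p-1}$ can then be transported into the second factor via the Substitution Lemma \ref{lem:ChangingFactors}, where they commute freely with the $x$'s at cost $\lesssim n^{p-2}$, while the non-central corrections lie in the embedded $G_{p-1,p-1}=\langle x_1,x_3,y_1,y_3\rangle$ and are cancelled using the inductive hypothesis.

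The hard part will be the area accounting, since a brute application of Proposition \ref{prop:binom-Lie} gives a commutation cost of $|c_{m'}|^{p-m'+1}m^2$ per step, which can exceed $n^{p-2}$. The saving comes from $\Omega_k^j(\nnn)$ already lying in $\gamma_{k+j-2}$ (thanks to $j\geqslant 3$), so that every commutator produced is one step deeper in the lower central series than in the generic situation treated by Lemma \ref{lem:MainLemma}; combined with the abelianness of $\gamma_2$, this should let one commute $x_{m'}^{c_{m'}}$ past $x_1^m$ by $m$ single-exponent applications of Proposition \ref{prop:binom-Lie}, each of cost $\lesssim n^{p-3}$, for a total of $\lesssim n^{p-2}$. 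Diameters stay $\lesssim n$ because every intermediate word remains in the derived subgroup, whose distortion in $G_{p,p-1}$ is controlled by Lemma \ref{lem:sec-efficient-tech-1}.
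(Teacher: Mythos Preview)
Your overall plan (split $[\Omega_k^j(\nnn),w]$ via free identities into pieces indexed by the factors $s_i^{m_i}$ of $w$, then treat $s_i=x_1$ and $s_i=x_2$ separately) is the same shape as the paper's argument via Lemma~\ref{lem:basicklemmageneralized}. The genuine gap is in your treatment of the $s=x_2$ case.

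You put $\Omega_k^j(\nnn)$ into Malcev normal form $\prod_{m'\geqslant k+j-1} x_{m'}^{c_{m'}}$ with $|c_{m'}|\lesssim n^{m'-j+1}$ and then assert that commuting $x_2^m$ past this product is ``essentially free'' because $[x_2,x_{m'}]=0$ in $\mathfrak l_p$. That is not free in the area calculus: the relation $[x_2^a,x_{m'}^b]\equiv 1$ is in $R_A$ only for $|a|,|b|\leqslant 1$, so passing $x_2^m$ across $x_{m'}^{c_{m'}}$ costs $\lceil|m|\rceil\cdot\lceil|c_{m'}|\rceil$ relations. For $j=3$ and $m'=p$ this is $\sim n\cdot n^{p-2}=n^{p-1}$, one power of $n$ too many. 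The same problem kills your proposed fix in the $x_1$ case: a single-exponent application of Proposition~\ref{prop:binom-Lie} with $a=1$ and $b=c_{m'}$ costs $\lesssim c_{m'}^2$, which for $m'=p-1$, $j=3$ is $n^{2(p-3)}$; this already exceeds $n^{p-2}$ once $p\geqslant 5$, and you still need $m$ of them.

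The paper avoids Malcev normal form precisely because the exponents $c_{m'}$ blow up. Instead it proves the Second commuting $k$-Lemma~\ref{lem:second(k,j)-Lemma} via the Fractal form Lemma~\ref{lem:FractalForm}: one recursively cuts $\Omega_k^3(\nnn)$ into $\lesssim|\nnn|^k$ copies of $\Omega_k^3(\nnn/2^{\lceil\log_2|\nnn|\rceil})$, each of \emph{bounded} length, plus higher-$k$ error terms. Commuting each bounded piece past $x_2^m$ costs $\lesssim|m|$, giving total cost $\lesssim|m|\cdot|\nnn|^k\leqslant n^{p-2}$ since $k\leqslant p-3$; the error terms are handled by descending induction on $k$, with the base case $k=p-2$ using the Substitution Lemma~\ref{lem:ChangingFactors} as you anticipated. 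Once the Second commuting $k$-Lemma is in hand, the $x_1$ case is genuinely easy (it is an identity in $G_{p-1,p-1}$ of length $\lesssim n$, Lemma~\ref{lem:CommutingR3withx1l}), and the First commuting $k$-Lemma follows by the descending-induction structure of Lemma~\ref{lem:basicklemmageneralized}. Your sketch is missing exactly this fractal decomposition, and without it the $x_2$ step cannot be completed within the area budget $n^{p-2}$.
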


Our second lemma treats commutators of words of type $\Omega_{k}^j$ with powers of $x_2$.
\begin{lemma}[{Second commuting $k$-Lemma}]
\label{lem:second(k,j)-Lemma}
 Let $p\geqslant 5$, $n\geqslant 1$, $j\geqslant 3$, $1\leqslant k\leqslant p-2$, $\nnn \in \RR^k$ and $m\in \RR$ with $|\nnn|\leqslant n$. 
 Then the relation $\left[\Omega_{k}^j(\nnn),x_2^m\right]\equiv 1$ holds in $G_{p,p-1}$ with area $\lesssim_{p} |m|\cdot n^{p-3} + n^{p-2}$ and diameter $\lesssim_{p} n+|m|$.
\end{lemma}

The following lemmas are versions of Lemmas \ref{lem:Basic-k-Lemma} and \ref{lem:second(k,j)-Lemma} for $\Omega_{k}$ instead of $\Omega_{k}^j$.

\begin{lemma}[{Third commuting $k$-Lemma}]
\label{lem:Weak-k-Lemma}
 Let $p\geqslant 5$, $n,\alpha  \geqslant 1$ and $2\leqslant k\leqslant p-1$. Let $w=w(x_1,x_2)$ be a word in $\mathcal{F}[\alpha]$ of length at most $n$ corresponding to an element of $\left[G_{p,p-1},G_{p,p-1}\right]$, and let $\nnn\in \RR^k$ with $|\nnn|\leqslant n$. Then the relation $\left[\Omega_{k}(\nnn),w\right]\equiv 1$ holds in $G_{p,p-1}$ with area $\lesssim_{p,\alpha} n^{p-1}$ and diameter $\lesssim_{p,\alpha} n$.
\end{lemma}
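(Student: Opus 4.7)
The strategy is to reduce Lemma \ref{lem:Weak-k-Lemma} (the case $j=2$) to Lemma \ref{lem:Basic-k-Lemma} (the case $j\geqslant 3$), paying at most one extra factor of $n$ in the area estimate. I would argue by descending induction on $k\in\{2,\ldots,p-1\}$; the fact that $j=2$ is ``one step shallower'' in the lower central series than $j\geqslant 3$ is precisely compensated by the extra factor of $n$ in the bound $n^{p-1}$ versus $n^{p-2}$.

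For the base case $k=p-1$, the element $\Omega_{p-1}(\nnn)$ lies in $\gamma_{p-1}(G_{p,p-1})$, which is central since the step of $G_{p,p-1}$ equals $p-1$. Modulo $\gamma_p=\{1\}$ the innermost bracket $[x_1^{n_{p-2}},x_2^{n_{p-1}}]$ equals $x_3^{n_{p-2}n_{p-1}}$ (the higher-order corrections appearing in Proposition \ref{prop:binom-Lie} are all trivial), so $\Omega_{p-1}(\nnn)$ can be rewritten as a product of at most $O(n)$ words of type $\Omega_{p-2}^3(\nnn')$ of length $\lesssim n$. Each such word is central in $G_{p,p-1}$, and Lemma \ref{lem:ChangingFactors} swaps it at cost $\lesssim n^{p-2}$ with its counterpart $\widetilde{\Omega}_{p-2}^3(\nnn')$ living in the second factor, where it commutes freely with $w$ (which involves only the $x_i$'s). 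Summed over the $O(n)$ pieces, the total cost stays $\lesssim n^{p-1}$.

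For the inductive step at $k<p-1$, write $\Omega_k(\nnn)=[x_1^{n_1},\Omega_{k-1}(n_2,\ldots,n_k)]$ and expand the innermost bracket via Proposition \ref{prop:binom-Lie}, distributing the outer $[x_1^{n_i},\cdot]$ layers through Lemma \ref{lem:FreeIdentities}(2). Modulo an error $E\in\gamma_{k+1}$, this presents $\Omega_k(\nnn)$ as a product of $\Omega_{k-1}^3$-type words whose total cumulative length is $\lesssim n$, split into $O(n)$ pieces to keep individual coefficients bounded by $n$. Lemma \ref{lem:Basic-k-Lemma} commutes each such factor with $w$ at cost $\lesssim n^{p-2}$, contributing $\lesssim n\cdot n^{p-2}=n^{p-1}$. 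The error $E\in\gamma_{k+1}$ admits an expansion into $\Omega_{k'}^{j'}$-words with $k'\geqslant k+1$ (either with $j'\geqslant 3$, handled by Lemma \ref{lem:Basic-k-Lemma}, or with $j'=2$, handled by the inductive hypothesis since $k'>k$), all at cost $\lesssim n^{p-1}$. The linear diameter bound follows by combining the diameter estimates of the invoked lemmas with Lemma \ref{lem:diameter} and Corollary \ref{cor:distance-estimates}.

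The main obstacle is controlling the cost of the decomposition of $\Omega_k(\nnn)$ itself: a direct unfolding of $[x_1^{n_{k-1}},x_2^{n_k}]$ via Proposition \ref{prop:binom-Lie} produces correction terms $x_j^{c_j}$ with $|c_j|\lesssim n^{j-1}$, which, when absorbed into the outer commutators, inflate the lengths of the resulting $\Omega_{k-1}^j$-words well beyond $n$ and yield a prohibitive total cost $\sim n^{p+1}$. The remedy is to split $x_1^{n_{k-1}}$ into unit-sized pieces \emph{before} expansion (using Lemma \ref{lem:FreeIdentities}(1)), so that each piece contributes a bracket of cost $O(n^2)$, and to funnel the surplus corrections into the error $E$ so that the inductive hypothesis can absorb them without blowing up either the number of factors or the individual lengths.
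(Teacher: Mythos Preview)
Your plan shares the paper's core idea: peel off unit pieces from $x_1^{n_{k-1}}$ so that the innermost bracket $[x_1^{n_{k-1}},x_2^{n_k}]$ gets replaced, one unit at a time, by terms involving $x_3$ instead of $x_2$; then invoke the First commuting $k$-Lemma on the resulting $\Omega^3$-type pieces. This is exactly what the paper does.

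The organizational difference is that the paper does \emph{not} run a descending induction on $k$. Instead it isolates the unit-peeling procedure into Proposition~\ref{prop:Decompx2intox3}, which for every $k$ gives the identity
\[
\Omega_k(\nnn)\;\equiv\;[x_1^{n_1},\ldots,x_1^{\beta},x_2^{n_k}]\;\prod_{j}\bigl(\Omega_k^3(\cdots)^{-1}\cdot\Omega_{k-1}^3(\cdots)\bigr)
\]
at cost $\lesssim n^{p-1}$. Every factor on the right except the short remainder (with $|\beta|\leqslant 1$) is of $\Omega^3$-type, so the First commuting $k$-Lemma handles them all directly; there are no $j'=2$ error terms, and your inductive hypothesis on $k$ is never actually needed. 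Your base case $k=p-1$ is not simpler either: the claim that ``higher-order corrections are trivial'' is inaccurate (they vanish only after hitting $\gamma_p$, and controlling the cost of that rewriting is exactly Corollary~\ref{cor:Usingx3stoReplaceRpp-1s}, itself proved via Proposition~\ref{prop:Decompx2intox3}).

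The one step you gloss over is ``distributing the outer $[x_1^{n_i},\cdot]$ layers through Lemma~\ref{lem:FreeIdentities}(2)''. The free identity $[a,uvw]=[a,w][a,v]^w[a,u]^{vw}$ produces conjugations by the as-yet-unprocessed $j=2$ tail $\Omega_{k}(n_1,\ldots,n_{k-1}-j,n_k)$, and these must be removed at cost $\lesssim n^{p-2}$ per step. The paper packages this into Lemma~\ref{lem:Decompx2intox3}: the tail lies in the derived subgroup, so conjugation of the $\Omega^3$-pieces by it can be undone via the First commuting $k$-Lemma. Your ``funnel the surplus corrections into $E$'' is pointing at this, but it is the technical heart of the argument and deserves to be made explicit.
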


\begin{lemma}[{Fourth commuting $k$-Lemma}]
\label{lem:second(k)-Lemma}
	 Let $p\geqslant 5$, $n\geqslant 1$, $1\leqslant k\leqslant p-1$, $\nnn \in \RR^k$ and $m\in \RR$ with $|\nnn|\leqslant n$. Then the relation $\left[\Omega_{k}(\nnn),x_2^m\right]\equiv 1$ holds in $G_{p,p-1}$ with area $\lesssim_{p} |m|\cdot n^{p-2} + n^{p-1}$ and diameter $\lesssim_{p} n+|m|$.
\end{lemma}

To prove the Main commuting Lemma \ref{lem:MainLemma}, we shall need a further reduction step, reducing to words of bounded length in elements of type $\Omega_{k}$.
\begin{lemma}[{Reduction Lemma}]
\label{lem:derived-words-prod-Rpk-Mainthm}
 Let $p\geqslant 5$, $\alpha\geqslant 1$ and let $w=w(x_1,x_2)$ be a word of length at most $n$ in $\mathcal{F}[\alpha]$ corresponding to an element of $\left[G_{p,p-1},G_{p,p-1}\right]$. Then there exists $L=O_{\alpha,p}(1)$ such that the identity
 \[
    w(x_1,x_2)\equiv\prod_{j=1}^L \Omega_{l_j} (\mmm_{j})^{\pm 1},
 \]
 holds in $G_{p,p-1}$ with area $\lesssim_{\alpha,p}  n^{p-1}$ and diameter $\lesssim_{\alpha,p} n$, for some $|\mmm_j|\lesssim_{\alpha,p} n$, and $2\leqslant l_j \leqslant p-1$. 
\end{lemma}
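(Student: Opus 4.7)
The plan is to reduce $w$ to a product of $\Omega_2$'s by repeatedly applying the basic commutator identity $\alpha\beta = \beta\alpha[\alpha,\beta]$ of Lemma~\ref{lem:FreeIdentities}, using the Fourth commuting $k$-Lemma (Lemma~\ref{lem:second(k)-Lemma}) at $k = 2$ to absorb the conjugations that arise. First I would rewrite $w$ in the alternating form
\[
w = x_1^{a_1} x_2^{b_1} \cdots x_1^{a_K} x_2^{b_K}
\]
with $K \leqslant \alpha$ and $\sum_i(|a_i|+|b_i|) \leqslant n$, at no cost (inserting trivial powers if needed). Since $w$ represents an element of $[G_{p,p-1}, G_{p,p-1}]$ and the abelianization of $G_{p,p-1}$ has the classes of $x_1, x_2, y_1, y_3$ as a basis, the exponent sums $A := \sum_i a_i$ and $B := \sum_i b_i$ both vanish.

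The core step is a right-to-left induction on $i$ maintaining the invariant that $P_i := x_1^{a_i} x_2^{b_i} \cdots x_1^{a_K} x_2^{b_K}$ has been transformed into $x_1^{A_i} x_2^{B_i} \cdot C_i$, where $A_i = \sum_{j\geqslant i} a_j$, $B_i = \sum_{j\geqslant i} b_j$, and $C_i$ is a product of at most $K-i$ factors of the form $\Omega_2(\mmm)^{\pm 1}$ with $|\mmm| \leqslant 2n$. To advance from $P_i$ to $P_{i-1}$ I prepend $x_1^{a_{i-1}} x_2^{b_{i-1}}$ and apply $\alpha\beta = \beta\alpha[\alpha,\beta]$ twice: first to slide $x_1^{A_i}$ past $x_2^{b_{i-1}}$, producing the factor $[x_2^{b_{i-1}}, x_1^{A_i}] = \Omega_2(A_i, b_{i-1})^{-1}$; then to slide this factor past $x_2^{B_i}$, producing a double commutator $[\Omega_2(A_i, b_{i-1})^{-1}, x_2^{B_i}]$. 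Invoking Lemma~\ref{lem:second(k)-Lemma} with $k = 2$, $\nnn = (A_i, b_{i-1})$, and $m = B_i$ trivializes this double commutator at area $\lesssim_p |B_i| n^{p-2} + n^{p-1}$ and filling diameter $\lesssim_p n + |B_i|$, after which $C_{i-1} := \Omega_2(A_i, b_{i-1})^{-1} \cdot C_i$ restores the invariant.

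Summing over $K \leqslant \alpha$ iterations, the total area is $O_{\alpha, p}(n^{p-1})$ because $|B_i| \leqslant \alpha n$ forces $|B_i| n^{p-2} = O_\alpha(n^{p-1})$, and the filling diameter stays $O_{\alpha, p}(n)$: at every stage the relevant prefix word has length $\lesssim_\alpha n$, so Lemma~\ref{lem:diameter} combines with the per-step diameter bound to give the overall estimate. The final invariant reads $w \equiv x_1^A x_2^B \cdot C_1$; as $A = B = 0$ the leading factor is empty, so $C_1 = \prod_{j=1}^L \Omega_2(\mmm_j)^{\pm 1}$ with $L \leqslant \alpha - 1$, delivering the statement with every $l_j = 2 \in \{2, \dots, p-1\}$ and $|\mmm_j| \leqslant 2n$. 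The principal subtlety, and the reason for invoking Lemma~\ref{lem:second(k)-Lemma} rather than a cruder commutator bound, is that the accumulated conjugators $x_2^{B_i}$ can have length of order $\alpha n$: the bound $|m| n^{p-2} + n^{p-1}$ is exactly tight enough to keep the cumulative area at $O_{\alpha, p}(n^{p-1})$. The extra freedom of allowing $l_j \in \{2, \dots, p-1\}$ in the statement is not needed for this argument.
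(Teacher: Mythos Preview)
Your argument is correct, modulo one small wrinkle: Lemma~\ref{lem:second(k)-Lemma} is stated for $[\Omega_k(\nnn), x_2^m]$, not for $[\Omega_k(\nnn)^{-1}, x_2^m]$. This is easily repaired by using instead the free identity
\[
\Omega_2(\nnn)^{-1}\, x_2^{B_i} \;=\; [\Omega_2(\nnn),\, x_2^{-B_i}]\cdot x_2^{B_i}\cdot \Omega_2(\nnn)^{-1},
\]
so that Lemma~\ref{lem:second(k)-Lemma} applies verbatim with $m = -B_i$, yielding the same area and diameter bounds. (Also, your estimate $|B_i|\leqslant \alpha n$ is an overcount; in fact $|B_i|\leqslant \sum_j |b_j|\leqslant n$, which only helps.)

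Your route differs from the paper's in a genuinely useful way. The paper argues left-to-right: at each step it merges the two leftmost syllables, creating a factor $\Omega_2(n_2,m_1)^{-1}$ which then has to be pushed past the \emph{entire} remaining mixed word $x_1^{n_3}x_2^{m_3}\cdots$. This requires Lemma~\ref{lem:commutinwithRpks-main-theorem}, and that lemma produces higher-order factors $\Omega_{l_j}(\mmm_j)^{\pm 1}$ with $l_j$ ranging over $\{2,\dots,p-1\}$. Your right-to-left induction is organised so that each newly created $\Omega_2$ only has to pass a \emph{pure} $x_2$-block $x_2^{B_i}$ (the $C_i$ already sitting to its right), for which Lemma~\ref{lem:second(k)-Lemma} alone suffices. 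You therefore avoid Lemma~\ref{lem:commutinwithRpks-main-theorem} entirely in this proof and obtain the sharper conclusion that one may take $l_j=2$ for all $j$ and $L\leqslant \alpha-1$. The paper's approach does not buy anything extra here; Lemma~\ref{lem:commutinwithRpks-main-theorem} is needed elsewhere in the paper (in the proof of the Main commuting Lemma), but for the Reduction Lemma itself your argument is the cleaner one.
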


The Main Theorem \ref{thm:Upperbound} will be a consequence of the Reduction Lemma \ref{lem:derived-words-prod-Rpk-Mainthm} and the following more subtle technical result, which deals with products of $\Omega_{k}$-terms with different values of $k$. 

\begin{lemma}[{Cancelling $k$-Lemma}]
\label{lem:Strong-k-Lemma}
 Let $p\geqslant 5$, $n\geqslant 1$, $2\leqslant k\leqslant p-1$ and for all $1\leqslant j\leqslant p-1$, let $M_j$ be a positive integer. Consider a word $w(x_1,x_2)$ of the form 
\[
 w(x_1,x_2)= \left( \prod_{i=1}^{M_k} \Omega_{k}(\nnn_{k,i})^{\pm 1}\right)\left( \prod_{i=1}^{M_{k+1}} \Omega_{k+1}(\nnn_{k+1,i})^{\pm 1}\right)\dots  \left( \prod_{i=1}^{M_{p-1}} \Omega_{p-1}(\nnn_{p-1,i})^{\pm 1}\right),
\]
where  $\nnn_{l,i}\in \RR^j$ satisfies $|\nnn_{l,i}|\leqslant n$.

If $w$ is null-homotopic, then it admits a filling of area $\lesssim_{p,M} n^{p-1}$ and diameter $\lesssim_{p,M} n$ in $G_{p,p-1}$, where $M=\max_j M_j$.
\end{lemma}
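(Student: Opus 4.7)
The plan is to argue by descending induction on $k \in \{2, \ldots, p\}$, with the trivial base case $k = p$ (empty product). Suppose the statement is known at level $k+1$, and consider $w$ at level $k$. Since every term $\Omega_l(\nnn)$ with $l \geqslant k+1$ lies in $\gamma_{k+1}(G_{p,p-1})$, while $\Omega_k(\nnn) = [x_1^{n_1}, \dots, x_1^{n_{k-1}}, x_2^{n_k}]$ is multilinear modulo $\gamma_{k+1}$ (its class in the quotient $\gamma_k/\gamma_{k+1}$ is $n_1 \cdots n_k$ times a fixed basis vector), the null-homotopy of $w$ forces the numerical identity $\sum_i \pm \prod_j n_{k,i,j} = 0$.

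The heart of the argument is a binary merging identity: given $\Omega_k(\nnn_1)$ and $\Omega_k(\nnn_2)^{\pm 1}$ with $|\nnn_1|, |\nnn_2| \leqslant n$, I would produce a tuple $\nnn_3$ with $|\nnn_3| \lesssim_p n$ and $\prod_j n_{3,j} = \prod_j n_{1,j} \pm \prod_j n_{2,j}$, together with at most $O_p(1)$ correction words $\Omega_{l_s}(\mmm_s)$ with $l_s > k$ and $|\mmm_s| \lesssim_p n$, such that
\[
\Omega_k(\nnn_1) \cdot \Omega_k(\nnn_2)^{\pm 1} \equiv \Omega_k(\nnn_3) \cdot \prod_s \Omega_{l_s}(\mmm_s)
\]
holds in $G_{p,p-1}$ with area $\lesssim_p n^{p-1}$ and diameter $\lesssim_p n$. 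The derivation uses iterated applications of Proposition \ref{prop:binom-Lie} to expand the left-hand side by bilinearity in each commutator slot; the resulting correction lies in $\gamma_{k+1}$ and is put into $\Omega_{l_s}$-normal form via the Reduction Lemma \ref{lem:derived-words-prod-Rpk-Mainthm}, while the various reorderings of these correction terms past one another (and past $\Omega_k(\nnn_3)$) are handled by the Main commuting Lemma \ref{lem:MainLemma}.

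Iterating this merging $M_k - 1$ times collapses the entire $\Omega_k$-block. The final surviving $\Omega_k$-term has $\prod_j n_j = 0$ by the numerical cancellation above, so it contains a zero entry and reduces freely to the empty word. The procedure generates $O_{p,M}(1)$ correction terms at levels $\geqslant k+1$, and these are transported past the existing $\Omega_{k+1}, \dots, \Omega_{p-1}$ blocks using the Main commuting Lemma \ref{lem:MainLemma}, at total cost $\lesssim_{p,M} n^{p-1}$. The resulting word then satisfies the hypotheses of the lemma at level $k+1$ with new multiplicities $M'_l \leqslant M_l + O_{p,M}(1)$, and the induction hypothesis completes the area estimate.

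The main obstacle I expect is the diameter control. Prefixes of the partial $\Omega_k$-product lie in $\gamma_k(G_{p,p-1})$, which is $n^{1/k}$-distorted (Lemma \ref{lem:sec-efficient-tech-1}), so their word length is $\lesssim_{p,M} (M \cdot n^k)^{1/k} \lesssim_{p,M} n$; this is precisely the threshold needed so that Lemma \ref{lem:diameter} upgrades the pointwise area bound $\lesssim_{p,M} n^{p-1}$ to the required linear filling diameter at every stage of the induction. A secondary subtlety is realising the merged tuple $\nnn_3$ explicitly with $|\nnn_3| \lesssim n$: since the target value $|\prod_j n_{3,j}| \leqslant 2 n^k$, one can take $k-1$ entries equal to $n$ and adjust the remaining entry to size $\leqslant 2n$, which suffices.
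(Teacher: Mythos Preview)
Your high-level plan (descending induction on $k$, reduce the $\Omega_k$-block to corrections at level $\geqslant k+1$, then invoke the inductive hypothesis) is exactly the paper's. The genuine gap is the ``binary merging identity'' itself, which you assert but do not prove, and which is precisely the hard step.

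The tools you invoke do not deliver it. Proposition~\ref{prop:binom-Lie} concerns a single commutator $[x_1^a,x_i^b]$, not an iterated one, and even there its area bound is $a^{p-i+1}b^2$; for $i=2$ and $a,b\sim n$ this is already $\gtrsim n^{p}$, one power too many. The Reduction Lemma~\ref{lem:derived-words-prod-Rpk-Mainthm} takes a word in $[G,G]$ to a product of $\Omega_{l_j}$'s with $l_j\geqslant 2$, not $l_j\geqslant k+1$; even when the input lies in $\gamma_{k+1}$, nothing in that lemma forces the output terms individually to lie there, so you cannot feed them to the Cancelling $(k+1)$-Lemma. Thus neither ``expand by bilinearity'' nor ``normalize the correction via the Reduction Lemma'' does what you need at the required cost.

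The paper replaces your pairwise merge by a different normalization: Lemma~\ref{lem:key-tech-result-strong-k-lemma} converts each $\Omega_k(\nnn)^{\pm 1}$ into $x_{k+1}^{\beta}\cdot E_{p,k}(\nnn)$ with $E_{p,k}$ a bounded product of $\Omega_l$'s at levels $l\geqslant k+1$, area $\lesssim_p n^{p-1}$, and diameter $\lesssim_p n$. Proving this is delicate: it uses a $\log n$-stage recursion (Claim~\ref{claim:recur(m)}) based on the Cutting in half Lemma~\ref{lem:Cutinhalf}, and inside each stage it invokes the Cancelling $(k+1)$-Lemma (through Lemma~\ref{lem:errorterms}) to keep the number of error terms bounded. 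The paper explicitly flags, in the paragraph preceding Lemma~\ref{lem:key-tech-result-strong-k-lemma}, that the naive extraction you are sketching produces too many error terms of cumulatively super-linear length, making the subsequent commutations too expensive; the log-scale procedure with on-the-fly merging of errors is what fixes this. Once each $\Omega_k$-term is so normalized, the $x_{k+1}^{\beta_i}$'s assemble on the left with $\sum\beta_i=0$ by null-homotopy, and the remaining product is handled by the inductive hypothesis exactly as you describe. Your diameter argument for prefixes of full $\Omega_k$-terms is fine; the missing piece is entirely in establishing the merging/normalization step at cost $n^{p-1}$.
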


Finally we record the following technical result which plays a key role at various stages of the proof.

\begin{lemma}[{Cutting in half $k$-Lemma}]
\label{lem:Cutinhalf}
	For $p\geqslant 4$ consider the group $G_{p,p-1}$. 
Let $k\geqslant 2$ and $\nnn=\left(n_1,\dots,n_k\right)\in \RR^k$. Identities of the form
$
\Omega_{k}(2\nnn) \equiv \Omega_{k}(\nnn)^{2^k} \cdot w_k(\underline{n})
$
and $\Omega_{k}(2\nnn)  \equiv w_k(\underline{n})\cdot \Omega_{k}(\nnn)^{2^k}$
hold in $G_{p,p-1}$, where $w_k=\prod_{i=1}^L \Omega_{l_i}(\mmm_i)^{\pm 1}$ with  $L=O_p(1)$, $l_i\geqslant k+1$ and $|\mmm_i|\lesssim_p |\nnn|$ for $1\leqslant i \leqslant L$. Moreover,  these identities have area $\lesssim_p |\nnn|^{p-1}$ and diameter $\lesssim_p |\nnn|$ in $G_{p,p-1}$.
\end{lemma}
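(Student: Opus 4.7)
The plan is to halve each of the $k$ arguments of $\Omega_k(2\nnn)$ in turn, using the free commutator identities of Lemma \ref{lem:FreeIdentities}. Splitting $x_1^{2n_1} = x_1^{n_1}\cdot x_1^{n_1}$ and applying Lemma \ref{lem:FreeIdentities}(1) followed by Lemma \ref{lem:FreeIdentities}(3) yields the free identity
\begin{equation*}
\Omega_k(2n_1, 2n_2, \ldots, 2n_k) \equiv \Omega_k(n_1, 2n_2, \ldots, 2n_k)^{2} \cdot c_1,
\end{equation*}
where $c_1 = [\Omega_k(n_1, 2n_2, \ldots, 2n_k), x_1^{n_1}]^{-1}$ is a $(k+1)$-fold iterated commutator, hence an element of $\gamma_{k+1}(G_{p,p-1})$, and via $[a,b]=[b,a]^{-1}$ is directly of the $\Omega_{k+1}$-form. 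I would then iterate this halving on $n_2, \ldots, n_{k-1}$ using the same identity at inner levels of the commutator, and on $n_k$ using the mirror identity Lemma \ref{lem:FreeIdentities}(2) to split $x_2^{2n_k}$ (where the metabelian structure $[x_i,x_j]=1$ for $i,j\geqslant 2$ makes the innermost split contribute no correction). After all $k$ halvings, one collects $2^k$ copies of $\Omega_k(\nnn)$ together with $O_p(1)$ correction terms $c_i$, each of weight $\geqslant k+1$.

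Each correction $c_i$ is by construction an iterated commutator of letters $x_1^{n_j}$ and one copy of $x_2^{n_k}$, all of size $\lesssim|\nnn|$. After flipping with $[a,b]=[b,a]^{-1}$ to bring the $x_1$-factors to the outer positions and using the metabelian relations to keep $x_2^{n_k}$ at the innermost position, each $c_i$ takes the $\Omega_l$-form with $l\geqslant k+1$ and $|\mmm_i|\lesssim_p|\nnn|$. Collecting the $2^k$ copies of $\Omega_k(\nnn)$ to one side to produce either $\Omega_k(\nnn)^{2^k}\cdot w_k$ or $w_k\cdot\Omega_k(\nnn)^{2^k}$ requires commuting elements of $\gamma_k$ past the $\gamma_{k+1}$-corrections; the resulting further errors lie in $[\gamma_k,\gamma_{k+1}]\subseteq\gamma_{2k+1}\subseteq\gamma_{k+1}$, and, being products of $x_j$-powers for $j\geqslant k+2$ of exponent $O(|\nnn|^{j-1})$, can be rewritten as additional $\Omega_l$-factors with $l\geqslant k+1$ and arguments of size $\lesssim_p|\nnn|$ by Lemma \ref{lem:sec-efficient-tech-1}.

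For the quantitative estimates, each relation invoked is a commutator formula from Proposition \ref{prop:binom-Lie} with parameters of size $O(|\nnn|)$, so has area $\lesssim_p|\nnn|^{p-1}$ and diameter $\lesssim_p|\nnn|$. Since only $O_p(1)$ such relations are used throughout, the total area is $\lesssim_p|\nnn|^{p-1}$. All intermediate subwords are products of $O_p(1)$ iterated commutators in $x_1,x_2$-powers of size $O(|\nnn|)$, so by Corollary \ref{cor:distance-estimates} they have word diameter $\lesssim_p|\nnn|$, giving the claimed diameter bound via Lemma \ref{lem:diameter}. The main obstacle is the systematic bookkeeping: tracking the conjugations produced by Lemma \ref{lem:FreeIdentities}(3), consistently trading them for $\gamma_{k+1}$-corrections via $u^v = u[u,v]$, and ensuring after the final reorganization that $L=O_p(1)$ and $|\mmm_i|\lesssim_p|\nnn|$ uniformly in $\nnn$.
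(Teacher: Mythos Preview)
Your outline has the right shape (split each coordinate in half via Lemma \ref{lem:FreeIdentities}, produce $2^k$ copies of $\Omega_k(\nnn)$ plus higher-weight corrections), and this is essentially how the paper proceeds, by induction on $k$. But there is a genuine gap in your cost accounting.

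You claim that ``each relation invoked is a commutator formula from Proposition \ref{prop:binom-Lie}''. This is false at the key step. Proposition \ref{prop:binom-Lie} only computes $[x_1^a,x_i^b]$; it says nothing about the area of $[\Omega_k(\nnn),c_i]\equiv 1$, which is what you need when you ``collect the $2^k$ copies of $\Omega_k(\nnn)$ to one side'' past the correction terms, or when you push a free identity applied ``at inner levels of the commutator'' back out through the outer bracket. Since $G_{p,p-1}$ is metabelian, these commutators are indeed trivial as group elements (so your ``further errors in $[\gamma_k,\gamma_{k+1}]$'' are actually $1$, not merely in $\gamma_{2k+1}$); the content is the \emph{area} of the corresponding relation, and that is precisely what the Main commuting Lemma \ref{lem:MainLemma} supplies. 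In the paper's proof this lemma (already established for $p$ at this point in the logical flow) is invoked at every step marked $(\Delta)$, and Remark \ref{rem:dealing-with-inverses} (itself a corollary of \ref{lem:MainLemma}) is used to flip $[u,v^{-1}]\equiv[u,v]^{-1}$ at controlled cost.

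A secondary issue: Lemma \ref{lem:sec-efficient-tech-1} gives only word-length bounds for $x_i^b$; it provides no area estimate, so it cannot be used to ``rewrite'' the errors as $\Omega_l$-factors with area control. For that you would again need the commuting lemmas together with Lemma \ref{lem:extracting-exponents-of-xi}. In short, your argument becomes correct once you replace the appeal to Proposition \ref{prop:binom-Lie} at the reorganization steps by appeals to Lemma \ref{lem:MainLemma}; without it the $\lesssim_p|\nnn|^{p-1}$ area bound is unjustified.
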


The way the Cutting in half $k$-Lemma is used throughout the proof is a bit subtle: we will require its version for $p-1$ as part of the induction step when proving the commuting $k$-Lemmas for $p$. On the other hand, its $p$-version will be obtained as a corollary of the Main commuting Lemma \ref{lem:MainLemma} that results from the four commuting $k$-lemmas. Finally its $p$-version will be instrumental in the proof of the Cancelling $k$-Lemma \ref{lem:Strong-k-Lemma} for $p$. 

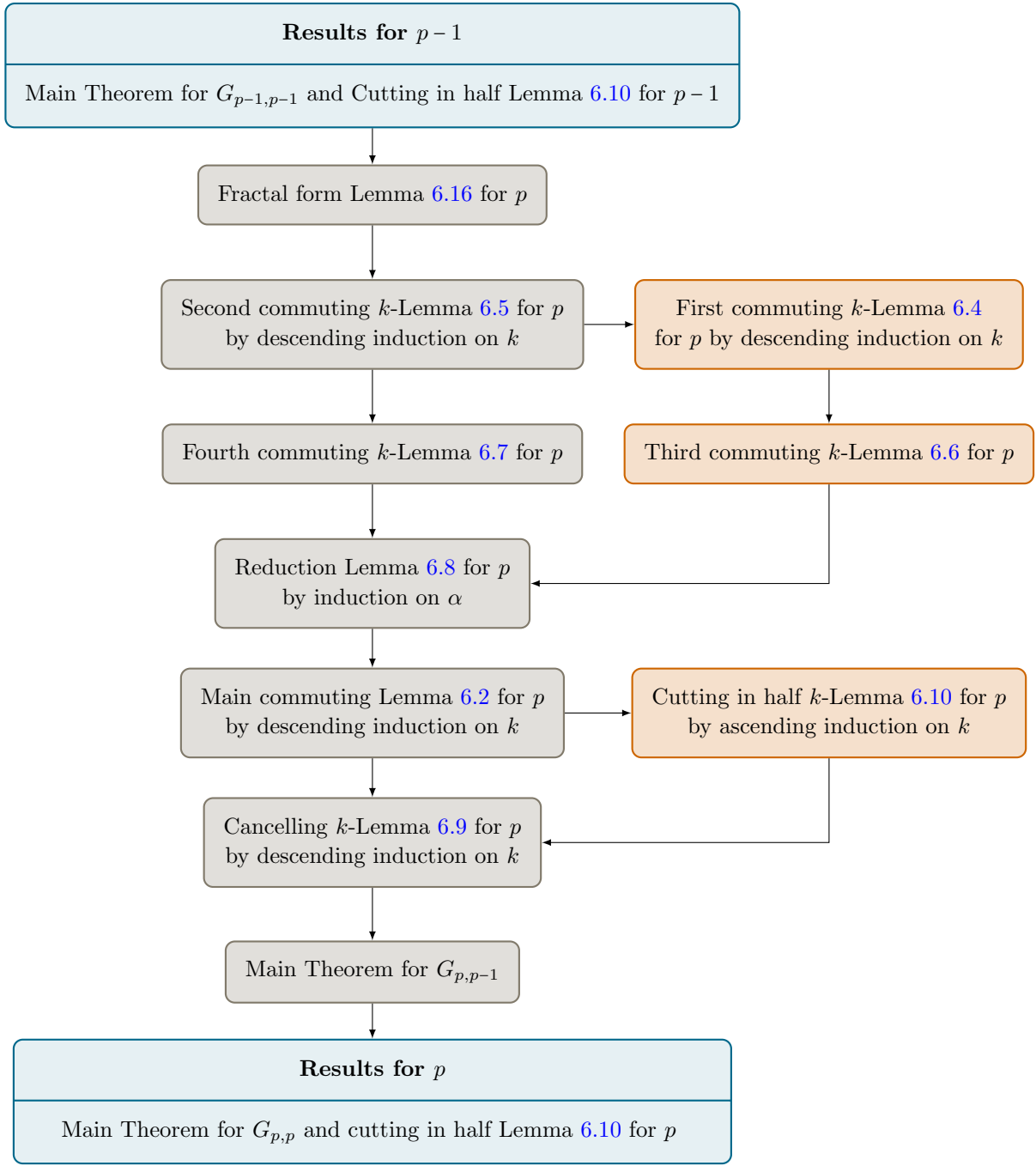
\begin{figure}
\begin{tikzpicture}[
  result/.style={rectangle, rounded corners, inner sep=3mm,
    fill=bleufonce!10, draw=bleufonce, thick },
   mev/.style={rectangle, rounded corners, inner sep=3mm,
    fill=orangefonce!20, draw=orangefonce, thick},
   normaux/.style={rectangle, rounded corners, inner sep=3mm, 
     fill=marronfonce!20, draw=marronfonce, thick},
   edge from parent/.style = {draw, -latex},
  ]

  \node[result, rectangle split, rectangle split parts=2, minimum width=11cm]{
    \textbf{Results for $p-1$}
    \nodepart{second}
    Main Theorem for $G_{p-1,p-1}$ and
    Cutting in half Lemma \ref{lem:Cutinhalf} for $p-1$
  }
  [level distance=20mm] 
  child{
    node[normaux]{Fractal form Lemma {\ref{lem:FractalForm}} for $p$}
    child{
      node[normaux, align=center]{Second commuting $k$-Lemma \ref{lem:second(k,j)-Lemma}
        for $p$ \\ by descending induction on $k$}
      child[grow=right, level distance=70mm]{
        node[mev, align=center]{First commuting $k$-Lemma \ref{lem:Basic-k-Lemma}\\
          for $p$ by descending induction on $k$}
        child[grow=down, level distance=20mm]{
          node[mev, align=center] (third) {Third commuting $k$-Lemma \ref{lem:Weak-k-Lemma} for $p$}
        }
      }
      child[grow=down]{
        node[normaux, align=center] (fourth) {Fourth commuting $k$-Lemma \ref{lem:second(k)-Lemma} for $p$}
        child{
          node[normaux,align=center] (reduction) {Reduction Lemma \ref{lem:derived-words-prod-Rpk-Mainthm} for $p$\\ by induction on $\alpha$}
          child{
            node[normaux,align=center]{Main commuting Lemma \ref{lem:MainLemma} for $p$\\ by descending induction on $k$}
            child[grow=right,level distance=70mm]{
              node[mev, align=center] (cuttinginhalf) {Cutting in half $k$-Lemma  \ref{lem:Cutinhalf} for $p$ \\ by ascending induction on $k$} 
            }
            child[grow=down]{
              node[normaux,align=center] (cancelling) {Cancelling $k$-Lemma \ref{lem:Strong-k-Lemma} for $p$ \\ by descending induction on $k$}
              child{
                node[normaux]{Main Theorem for $G_{p,p-1}$}
                child{ 
                  node[result,align=center, rectangle split, rectangle split parts=2, minimum width=11cm]{
                    \textbf{Results for $p$}
                    \nodepart{second}
                    Main Theorem for $G_{p,p}$  and cutting in half Lemma  \ref{lem:Cutinhalf} for $p$
                  }
                }
              }
            }
          }
        }
      }
    }
  };
  \draw[->, >=latex] (cuttinginhalf.south) |- (cancelling.east);
  \draw[->, >=latex] (third.south) |- (reduction.east);
\end{tikzpicture}
\caption{Main steps and structure of the proof of Theorem \ref{thm:Upperbound} (by induction on $p$).}
\label{fig:structure-proof-upper-bound}
\end{figure}

In the proof of the Cutting in half $k$-Lemma we will use the following immediate consequence of the Main commuting Lemma \ref{lem:MainLemma} for $p\geq 5$ (resp. Theorem \ref{thmDehnL4ZH3} for $p=4$). We record it here, as we will require its $(p-1)$-version in the proof of the third and fourth commuting $k$-Lemmas for $p\geq 5$.	

\begin{remark}\label{rem:dealing-with-inverses}	
Let $p\geq 4$ and let $u$ and $v$ be words in $\mathcal{F}[\alpha]$ representing elements of $G_{p,p-1}$ and $\left[G_{p,p-1},G_{p,p-1}\right]$ respectively, with $\ell(u),\ell(v)\leqslant n$. Then the identity	
 \[	
  \left[u,v^{-1}\right] \equiv  \left[u,v\right]^{-1}	
 \]	
 holds with area $\lesssim_{\alpha,p} n^{p-1}$ and diameter $\lesssim_{\alpha,p} n$.	
Indeed, we have the group identity $\left[u,v^{-1}\right] \equiv u^{-1}v u v^{-1} \equiv  [v,u]^{v^{-1}}$.	
We deduce from the fact that $G_{p,p-1}$ is metabelian that $v$ commutes with $[v,u]$. For $p\geq 5$ the Main commuting lemma \ref{lem:MainLemma} for $p$ then implies that the relation	
$[v,u]^{v^{-1}} \left[u,v\right]$ has area $\lesssim_p n^{p-1}$ and diameter $\lesssim_p n$. For $p=4$ the same area and diameter estimates follow from Theorem \ref{thmDehnL4ZH3}.
\end{remark}

Regarding the proof of the diameter bounds we will adopt the following
\begin{convention*}
 Throughout this section the diameter bounds for our fillings will follow from Lemma \ref{lem:diameter}. In most cases this will be obvious, since the transformations used, as well as their prefix words, will satisfy evident linear diameter bounds. To keep the proofs as simple as possible we will only add detailed explanations for the diameter bounds where this is not the case.
\end{convention*}

Throughout the remainder of this section we will assume that by induction $\delta_{G_{p-1,p-1}}(n)\asymp n^{p-2}$ and that every null-homotopic word of length $\leqslant n$ in $\mathcal{P}(G_{p-1,p-1})$ admits a filling of area $\lesssim_{p-1} n^{p-2}$ and diameter $\lesssim_{p-1} n$. 

\begin{initial*}	
As explained in  Figure \ref{fig:structure-proof-upper-bound}, the initial step (for $p=4$) only needs to be settled for the Main Theorem \ref{thm:Upperbound}, and the Cutting in half lemma \ref{lem:Cutinhalf}. The former is provided by Theorem \ref{thmDehnL4ZH3} in the case of $G_{4,3}$. We also observe that the area and diameter estimates of the Cutting in half lemma \ref{lem:Cutinhalf} follow from Theorem \ref{thmDehnL4ZH3}. Hence, in order to initiate the induction, two facts need to be established:	

\begin{enumerate} 	
\item show that the identities of Lemma \ref{lem:Cutinhalf} hold for $k=2,3$  in $G_{4,3}$;	
 \item prove  the Main Theorem \ref{thm:Upperbound} for $G_{4,4}$.	
 \end{enumerate}	
 Let us start by checking (1).	
 For $k=3$ the 3-nilpotency of $G_{4,3}$ implies that	
$\Omega_{3}(2\nnn) = \Omega_{3}(\nnn)^{2^3}$ for all $\nnn=\left(n_1,n_2,n_3\right)\in \RR^3$. The case $k=2$ requires a slightly longer argument.  By Proposition \ref{prop:binom-Lie} and since $z$ is central for $\underline{n}=(n_1,n_2)$ the identities	
\[	
\Omega_2(2 \underline{n}) 	
= x_3^{4n_1n_2}z^{- \binom{2n_1}{2} 2n_2} 	
= (x_3^{n_1n_2}z^{- \binom{n_1}{2} n_2} )^4 z^{4\binom{n_1}{2} n_2} z^{- \binom{2n_1}{2} 2n_2} 	
= (\Omega_2(\underline{n}))^4 z^m	
\]	
hold for some $m\in\RR$ with $|m|\lesssim n^3$. By Lemma \ref{lem:OmegaIdentity}, we have $z^m=\Omega_{3}(m^{1/3},m^{1/3},m^{1/3})$. So writing $n'=m^{1/3}$ we deduce that 	
\[\Omega_{2}(2\nnn) = \Omega_{2}(\nnn)^{2^2}\Omega_{3}(n',n',n')=\Omega_{3}(n',n',n')\Omega_{2}(\nnn)^{2^2},\]	
where $|n'|\lesssim n$, so we are done.	

We now turn to the proof of the Main Theorem \ref{thm:Upperbound} for $G_{4,4}$. It is a direct consequence of Main Theorem \ref{thm:Upperbound} for $G_{4,3}$ and Lemma \ref{lem:OneFactor}. However some explanation is required as the proof of Lemma \ref{lem:OneFactor} itself relies on two statements: Lemma  \ref{lem:ChangingFactors} and Corollary \ref{cor:Usingx3stoReplaceRpp-1s}. Lemma \ref{lem:ChangingFactors} has a short self-contained proof which has already been given. Corollary \ref{cor:Usingx3stoReplaceRpp-1s} asserts that the second identity of Lemma \ref{lem:OmegaIdentity} holds in $G_{4,3}$ with area $\lesssim |\nnn|^{3}$ and diameter $\lesssim |\nnn|$, which is a consequences of the Main Theorem \ref{thm:Upperbound} for $G_{4,3}$.	
\end{initial*}	

{\noindent \bf Induction hypothesis:} Throughout the remainder of this section we will now assume that $p\geqslant 5$ and that the Main Theorem \ref{thm:Upperbound} and the Cutting in half lemma \ref{lem:Cutinhalf} hold for $p-1$. In particular, by induction $\delta_{G_{p-1,p-1}}(n)\asymp n^{p-2}$ and every null-homotopic word of length $\leqslant n$ in $\mathcal{P}(G_{p-1,p-1})$ admits a filling of area $\lesssim_{p-1} n^{p-2}$ and diameter $\lesssim_{p-1} n$. The way the induction procedure works is explained in Figure \ref{fig:structure-proof-upper-bound}.	

\subsection{Preliminary results}
We will now record a few simple preliminary results which we will require at different points in the subsequent sections.

\begin{lemma}
\label{lem:Introducing-x3s}
 The following identities hold in $G_{p,p}$ and $G_{p,p-1}$ for all $p\geqslant 3$, $\beta,n,m \in \RR$ and $|\beta|\leqslant 1$:
 \begin{enumerate}
  \item $\left[x_1,x_2^n\right] \equiv x_3^n$ with area $\lesssim_p n^2$ and diameter $\lesssim_p |n|$;
  \item $\left[x_1^m,x_2^n\right] \equiv x_3^n\left[x_3^n,x_1^{m-1}\right]\cdot \left[x_1^{m-1},x_2^n\right]$ and $\left[x_1^m,x_2^n\right]\equiv \left[x_1^m,x_3^n\right]x_3^{-n}\left[x_1^{m+1},x_2^n\right]$  with area $\lesssim_p n^2$ and diameter $\lesssim_p |n|+|m|$;
  \item $\left[x_1^{\beta},x_2^n\right]\equiv x_3^{\beta n}x_4^{t_4}\cdots x_{p-1}^{t_{p-1}}z^{t_p}$ for $|t_i|\lesssim_p n$ with area $\lesssim_p n^2$ and diameter $\lesssim_p |n|$.
 \end{enumerate}
\end{lemma}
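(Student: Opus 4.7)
All three identities live in the subgroup generated by $x_1,x_2,x_3,\ldots,z$, i.e., inside the copy of $L_p$ embedded in $G_{p,p}$ and $G_{p,p-1}$ via the canonical inclusion of presentations from Corollary \ref{cor:presentation-of-Gpq}. So my plan is to work entirely inside $\mathcal{P}_A(L_p)$ and to derive everything from Proposition \ref{prop:binom-Lie} and the free identities of Lemma \ref{lem:FreeIdentities}.

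For (1), I will simply specialize Proposition \ref{prop:binom-Lie} to $a=1$, $i=2$, $b=n$. Since $\binom{1}{k}=0$ for every $k\geqslant 2$, the right-hand side of \eqref{eq:Malcev-commutator-monomials-in-the-filiform} collapses to $x_3^n$, and the area estimate of the proposition gives $\lesssim_p 1^{p-1}n^2 = n^2$ while the diameter estimate gives $\lesssim_p 1+|n|\lesssim_p |n|$. For (3), the strategy is identical: apply Proposition \ref{prop:binom-Lie} with $a=\beta$, $i=2$, $b=n$, defining $t_i := (-1)^{i+1}\binom{\beta}{i-2}n$ for $4\leqslant i\leqslant p$. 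The assumption $|\beta|\leqslant 1$ ensures $|\binom{\beta}{k}|\lesssim_p 1$ uniformly in $\beta$, so $|t_i|\lesssim_p |n|$, and the area and diameter bounds follow verbatim from the proposition (noting that $|\beta|^{p-1}\leqslant 1$).

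For (2), the two identities are obtained purely from part (1) and free manipulations. Using Lemma \ref{lem:FreeIdentities}(1) with $u=x_1$, $v=x_1^{m-1}$, $w=x_2^n$, the word $[x_1^m,x_2^n]$ is freely equal to $[x_1,x_2^n]^{x_1^{m-1}}[x_1^{m-1},x_2^n]$, and expanding the conjugation via Lemma \ref{lem:FreeIdentities}(3) rewrites this freely as $[x_1,x_2^n]\cdot [[x_1,x_2^n],x_1^{m-1}]\cdot[x_1^{m-1},x_2^n]$. I then substitute $[x_1,x_2^n]\equiv x_3^n$ from part (1); this substitution occurs in three positions (one plain occurrence on the left and two inside the nested commutator), each at area cost $\lesssim_p n^2$. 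The prefix words for these substitutions have word diameter $\lesssim_p |m|+|n|$, so by Lemma \ref{lem:diameter} the filling diameter is $\lesssim_p |m|+|n|$. The second identity in (2) is derived in parallel: apply Lemma \ref{lem:FreeIdentities}(1) to $[x_1^{m+1},x_2^n]=[x_1\cdot x_1^m,x_2^n]$ to get $[x_1^{m+1},x_2^n]\equiv x_3^n[x_3^n,x_1^m][x_1^m,x_2^n]$ (again using part (1) three times), then solve this identity for $[x_1^m,x_2^n]$.

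I do not anticipate any serious obstacle. The only delicate point is the diameter bookkeeping in (2), where the substitutions occur inside a commutator bracket and the conjugating prefixes have length $\Theta(|m|)$; this is precisely why the stated diameter bound is $\lesssim_p |m|+|n|$ rather than $\lesssim_p |n|$, and it is absorbed without loss by Lemma \ref{lem:diameter}.
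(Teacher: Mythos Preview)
Your proposal is correct and takes essentially the same approach as the paper. Parts (1) and (3) are handled identically (direct specialization of Proposition \ref{prop:binom-Lie}), and for (2) the paper performs the equivalent direct word manipulation $x_2^{-n}x_1\equiv x_1x_3^nx_2^{-n}$ inside $[x_1^m,x_2^n]=x_1^{-m}x_2^{-n}x_1\,x_1^{m-1}x_2^n$, then rearranges freely; this uses part (1) once rather than three times, but the resulting bounds are the same and the second identity is obtained exactly as you describe, by shifting $m\mapsto m+1$ and solving.
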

\begin{proof}

Identities (1) and (3) are immediate consequences of Proposition \ref{prop:binom-Lie}.
For the first identity in (2) observe that by (1) $x_2^{-n} x_1 \equiv  x_1 x_3^n x_2^{-n}$ with area $\lesssim_p n^2$ and diameter $\lesssim_p |n|$. Thus, we obtain
 \begin{align}
  \left[x_1^m,x_2^n\right]&\equiv  x_1^{-m}x_2^{-n}x_1 x_1^{m-1} x_2^n\\
  &\equiv x_1^{-(m-1)}x_3^nx_2^{-n}x_1^{m-1}x_2^n\\
  &\equiv x_3^n\left[x_3^n,x_1^{m-1}\right]\left[x_1^{m-1},x_2^n\right].
 \end{align}
 The second identity follows from the first one by replacing $m$ by $m+1$ and rearranging the terms.
\end{proof}

We will also require the following:
\begin{lemma}
\label{lem:extracting-exponents-of-xi}
For $n\geqslant 1$, $k\geqslant 3$, and $w=x_k^{t_k}\cdot \dots  x_{p-1}^{t_{p-1}}z^{t_p}$ with $|t_i|\leqslant n^{i-1}$ there are $\nnn_i\in \RR^i$, $k-1\leqslant i \leqslant p-1$, with $|\nnn_i|\lesssim_p n$  such that the identity
 \[
  w\equiv \prod_{i=k-1}^{p-1} \Omega_{i}(\nnn_i)
 \]
 holds in $G_{p,p-1}$ (and in $G_{p,p}$). 
\end{lemma}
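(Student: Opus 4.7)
The strategy is a descending induction on $k$ from $k=p$ down to $k=3$, exploiting the fact that $x_2,x_3,\dots,x_{p-1},z$ all pairwise commute in $L_p$, and hence also in $G_{p,p-1}$ and $G_{p,p}$. In practice I would prove the slightly stronger statement in which the hypothesis is $|t_i|\leqslant C_p\, n^{i-1}$ for some constant $C_p$ depending only on $p$, which is what makes the induction go through cleanly. For the base case $k=p$, the word $w=z^{t_p}$ has $|t_p|\leqslant n^{p-1}$. By iterated application of Proposition \ref{prop:binom-Lie}, $\Omega_{p-1}(\nnn)$ lies in $\gamma_{p-1}(L_p)=\langle z\rangle$ and evaluates to $z^{\epsilon\,n_1\cdots n_{p-1}}$ for some sign $\epsilon\in\{\pm 1\}$ depending only on $p$; choosing $n_1=\cdots=n_{p-2}=n$ and $n_{p-1}=\epsilon t_p/n^{p-2}$ gives $|n_l|\leqslant n$ and $\Omega_{p-1}(\nnn_{p-1})\equiv z^{t_p}$, with $|\nnn_{p-1}|\lesssim_p n$.

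For the induction step, suppose the result for $k+1$ and let $w=x_k^{t_k}\cdots z^{t_p}$. The idea is to extract the $x_k^{t_k}$ factor using a single $\Omega_{k-1}$ and let the induction hypothesis handle the residual higher-order terms. Since $\Omega_{k-1}(\nnn)$ lies in $\gamma_{k-1}(L_p)$, its Malcev expansion reads
\[
\Omega_{k-1}(\nnn)\equiv x_k^{\epsilon\,n_1\cdots n_{k-1}}\cdot x_{k+1}^{s_{k+1}(\nnn)}\cdots z^{s_p(\nnn)},
\]
for certain polynomials $s_j$ in $\nnn=(n_1,\dots,n_{k-1})$. I pick $\nnn_{k-1}$ with $|n_l|\leqslant n$ and $\epsilon n_1\cdots n_{k-1}=t_k$, which is possible exactly because $|t_k|\leqslant C_p n^{k-1}$. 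Using the commutativity of $x_2,\dots,x_{p-1},z$, I rearrange
\[
w\equiv \Omega_{k-1}(\nnn_{k-1})\cdot x_{k+1}^{t_{k+1}-s_{k+1}}\cdots z^{t_p-s_p}.
\]
Provided the polynomials satisfy $|s_j(\nnn)|\lesssim_p n^{j-1}$ whenever $|n_l|\leqslant n$, the new exponents still obey $|t_j-s_j|\lesssim_p n^{j-1}$, and the induction hypothesis applied to the residual tail yields a product $\prod_{i=k}^{p-1}\Omega_i(\nnn_i)$ with $|\nnn_i|\lesssim_p n$; concatenation finishes the step.

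The only nontrivial point is the polynomial bound $|s_j(\nnn)|\lesssim_p n^{j-1}$. This is essentially a dimensional statement: by Proposition \ref{prop:Malcev}(\ref{item:existence-monomials}) the Malcev coordinates of a commutator are polynomials in the coordinates of its arguments, and assigning to each generator $x_m$ the Carnot weight $m-1$ makes every Lie bracket weight-additive; hence the coefficient of $x_j$ in $\Omega_{k-1}(\nnn)$ is a polynomial in $n_1,\dots,n_{k-1}$ of weighted degree $j-1$, where each $n_l$ has weight one. For $|n_l|\leqslant n$ this yields the desired bound $|s_j(\nnn)|\lesssim_p n^{j-1}$. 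Alternatively, this can be verified directly by iterating Proposition \ref{prop:binom-Lie}: the model computation already appears in the proof of Lemma \ref{lem:sec-efficient-tech-1}, where the same kind of weighted-polynomial estimate on the Malcev coordinates of nested commutators is established. This is the main technical point of the argument; once it is in hand, the inductive extraction goes through without further difficulty.
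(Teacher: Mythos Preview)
Your proposal is correct and is essentially the same argument the paper intends: the paper's proof is the one-liner ``This is a direct consequence of Lemma~\ref{lem:sec-efficient-tech-1} and its proof,'' and what you wrote is precisely the unpacking of that citation---a descending induction on $k$ that peels off $x_k^{t_k}$ using $\Omega_{k-1}(\nnn_{k-1})$ with $|\nnn_{k-1}|\lesssim_p n$, controls the residual exponents by the weighted-degree bound $|s_j|\lesssim_p n^{j-1}$ (which is exactly what the computation in the proof of Lemma~\ref{lem:sec-efficient-tech-1} establishes), and then recurses. The only cosmetic difference is that the paper's Lemma~\ref{lem:sec-efficient-tech-1} uses the diagonal choice $\nnn=(\beta,\dots,\beta)$ with $\beta=t_k^{1/(k-1)}$, whereas you solve $n_1\cdots n_{k-1}=t_k$ by fixing all but one coordinate equal to $n$; both choices work for the same reason, namely that the leading $x_k$-coefficient of $\Omega_{k-1}(\nnn)$ is exactly $n_1\cdots n_{k-1}$.
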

\begin{proof}
This is a direct consequence of Lemma \ref{lem:sec-efficient-tech-1} and its proof.
\end{proof}

As a consequence of Lemma \ref{lem:extracting-exponents-of-xi} and the induction hypothesis for $p-1$ we obtain:
\begin{lemma}
\label{lem:reducing-products-of-Rpp-2s}
  Let $n,I \geqslant 1$. If $\mmm_1,\dots, \mmm_k\in \RR^{p-2}$, with $|\mmm_i|\leqslant n$, satisfy the identity
 \begin{equation}
 \label{eqn:prodRpp-2s-null-hom}
  \prod_{i=1}^I \Omega_{p-2}^3(\mmm_i)^{\epsilon_i} \equiv 1
 \end{equation}
 in $G_{p-1,p-1}$  (and thus in $G_{p,p-1}$) for $\epsilon_i\in \left\{\pm 1\right\}$, then the corresponding relation admits a filling of area $\lesssim_p I\cdot n^{p-2}$ and diameter $\lesssim_{p} n+\sum_{m=3}^p\left(I\cdot n^{m-2}\right)^{\frac{1}{m-1}}$. In particular, if $I\leqslant n$ then the area is $\lesssim_p n^{p-1}$ and the filling diameter is $\lesssim_p n$.
\end{lemma}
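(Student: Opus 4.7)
The strategy is to fill $w = \prod_{i=1}^I \Omega_{p-2}^3(\mmm_i)^{\epsilon_i}$ piecewise by invoking the induction hypothesis for $p-1$ on each factor individually. First I would observe that each $\Omega_{p-2}^3(\mmm_i)$ is a null-homotopic word of length $\leqslant n$ in $\mathcal P(G_{p-1,p-1})$: this nested commutator has $p-3$ brackets applied to the weight-$2$ generator $x_3$, producing a weight-$(p-1)$ element in the filiform factor $L_{p-1}$, whose step is $p-2$; hence it reduces to the identity. The same null-homotopy inside the embedded copy of $G_{p-1,p-1}$ is implicit in the proof of the Substitution Lemma~\ref{lem:ChangingFactors}.

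Second, I would invoke the induction hypothesis for $p-1$: every null-homotopic word of length $\leqslant n$ in $\mathcal P(G_{p-1,p-1})$ admits a filling of area $\lesssim_p n^{p-2}$ and filling diameter $\lesssim_p n$, and this filling lifts to $\mathcal P(G_{p,p-1})$ through the canonical embedding of presentations. Concatenating the $I$ individual sub-fillings yields a filling of $w$ of total area $\lesssim_p I \cdot n^{p-2}$.

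For the filling diameter I would apply Lemma~\ref{lem:distance-estimates}. Every conjugator in the combined filling decomposes as $v_{i-1} \cdot u$, where $v_{i-1} = \prod_{j<i} \Omega_{p-2}^3(\mmm_j)^{\epsilon_j}$ is the running prefix and $u$ is a conjugator from the $i$-th sub-filling, of word diameter $\lesssim_p n$. Each factor $\Omega_{p-2}^3(\mmm_j)^{\epsilon_j}$ is a word of length $\leqslant n$ in the letters $\{x_1, x_3\}$ representing a central element of $G_{p,p-1}$, in particular a $\gamma_2$-element. Applying Lemma~\ref{lem:distance-estimates} with $j = 3$ and $k = 2$ bounds the word diameter of $v_{i-1}$ by $\lesssim_p \sum_{m=3}^p (I n^{m-2})^{1/(m-1)}$, and Lemma~\ref{lem:diameter} combines this with the per-piece diameter to give the stated total. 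The particular case $I \leqslant n$ is then immediate because each summand $(I n^{m-2})^{1/(m-1)}$ is at most $n$.

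The main obstacle is the careful interpretation of the first step: the weight-count argument requires identifying $x_3$ of $\mathcal P(G_{p,p-1})$ with the relabeled degree-$2$ generator of the embedded $L_{p-1}$ inside $\mathcal P(G_{p-1,p-1})$, which is exactly the content of the canonical embedding invoked in Lemma~\ref{lem:ChangingFactors}. Once this identification is made precise, the remainder is a direct combination of the induction hypothesis with the distance estimates of Section~\ref{subsec:techresultsUpperbounds}, so no new machinery is needed.
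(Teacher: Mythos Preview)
Your argument has a genuine gap at the very first step: the individual factors $\Omega_{p-2}^3(\mmm_i)$ are \emph{not} null-homotopic in $G_{p-1,p-1}$. They are central, but central does not mean trivial. Concretely, $\Omega_{p-2}^3(\mmm_i)^{\epsilon_i} \equiv z^{q_i}$ for some real $q_i$ with $|q_i|\lesssim_p n^{p-2}$ (by the distortion of $\langle z\rangle$ in $G_{p-1,p-1}$), and in general $q_i\neq 0$. Only the hypothesis $\prod_i \Omega_{p-2}^3(\mmm_i)^{\epsilon_i}\equiv 1$ forces $\sum_i q_i=0$; it says nothing about each $q_i$ individually. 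So you cannot fill the factors one by one.

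Your weight count goes astray by mixing two gradings. If you work in the ambient $L_p$, then indeed $x_3\in\gamma_2$ and $\Omega_{p-2}^3(\mmm)\in\gamma_{p-1}(L_p)$, but $L_p$ has step $p-1$, so $\gamma_{p-1}(L_p)=\langle z\rangle\neq 0$. If instead you work inside the embedded $L_{p-1}=\langle x_1,x_3\rangle$, then $x_3$ is a \emph{weight-one} generator there, the commutator has weight $p-2$, and $\gamma_{p-2}(L_{p-1})=\langle z\rangle\neq 0$. Either way the element lands in the one-dimensional centre, not in $0$. (Compare the statement of the Substitution Lemma~\ref{lem:ChangingFactors}, which asserts centrality, not triviality.)

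The paper's proof instead uses the global condition $\sum_i q_i=0$ to perform a merging procedure: at each step one finds an adjacent pair $i_0,i_0+1$ with $|q_{i_0}+q_{i_0+1}|\leqslant\max(|q_{i_0}|,|q_{i_0+1}|)$, replaces the pair by a single $\Omega_{p-2}^3(\mmm')$ with $|\mmm'|\lesssim_p n$ via Lemma~\ref{lem:extracting-exponents-of-xi}, and fills the resulting short identity in $G_{p-1,p-1}$ at cost $\lesssim_p n^{p-2}$ using the induction hypothesis. After $I-1$ repetitions the word is gone. Your diameter estimate via Lemma~\ref{lem:distance-estimates} with $j=3$ is essentially what the paper uses for the prefix words of these transformations, so that part of your outline is on the right track; it is the area half that needs the merging idea.
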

\begin{proof}
By definition the $\Omega_{p-2}^3(\mmm_i)^{\epsilon_i}$ are central in $G_{p-1,p-1}$. Thus there are $q_i\in \RR$ with $\Omega_{p-2}^3(\mmm_i)^{\epsilon_i}\equiv  z^{q_i}$. Since the distortion of $\left\langle z\right\rangle \leqslant G_{p-1,p-1}$ is $\simeq n^{\frac{1}{p-2}}$ we deduce that $|q_i|\lesssim_p n^{p-2}$. Since the right hand side of \eqref{eqn:prodRpp-2s-null-hom} is trivial we must have $\sum_{i=1}^I q_i = 0$. In particular, there is $i_0$ such that $|q_{i_0}+q_{i_0+1}|\leqslant \mathrm{max}\left\{|q_{i_0}|,|q_{i_0+1}|\right\}$. Thus, Lemma \ref{lem:extracting-exponents-of-xi} implies that there is $|\mmm'|\lesssim_p |q_{i_0}+q_{i_0+1}|^{\frac{1}{p-2}}\lesssim_p n$ such that 
 \[
   \Omega_{p-2}^3(\mmm_{i_0})^{\epsilon_{i_0}} \Omega_{p-2}^3(\mmm_{i_0+1})^{\epsilon_{i_0+1}} \equiv  \Omega_{p-2}^3(\mmm')
 \]
 in $G_{p-1,p-1}$.
Since this is an identity of length $\lesssim_p n$ in $G_{p-1,p-1}$ it has area $\lesssim_p n^{p-2}$ and diameter $\lesssim_p n$. We can thus reduce to a null-homotopic product
 \[
  \left(\prod_{i=1}^{i_0-1} \Omega_{p-2}^3(\mmm_i)^{\epsilon_i}\right) \cdot \Omega_{p-2}^3(\mmm')\cdot \left(\prod_{i=i_0+2}^{I} \Omega_{p-2}^3(\mmm_i)^{\epsilon_i}\right)
 \]
 of $I-1$ terms such that every factor is of length $\lesssim_p n$ and equal to $z^{r}$ with $|r|\lesssim_p n^{p-2}$. Repeating this argument a further $I-1$ times shows that our initial word can be reduced to the trivial word at cost $\lesssim_p I \cdot n^{p-2}$. Noting that by Corollary \ref{cor:distance-estimates} all prefix words of our transformations satisfy the asserted diameter bound of $n+\sum_{m=3}^p\left(I\cdot n^{m-2}\right)^{\frac{1}{m-1}}$ completes the proof.
\end{proof}

We finish with two more technical results which we will require later.
\begin{lemma}
\label{lem:prodformLiegroupx3}
 Let $n\geqslant 1$, $k\geqslant 2$, and let $|n_i|,|n_{k,j}|\leqslant n$ for $1\leqslant i \leqslant k-1$ and $3\leqslant j\leqslant p$. Denote $u=x_3^{n_{k,3}}\dots x_{p-1}^{n_{k,p-1}} z^{n_{k,p}}$.  The identity 
 \[
  \left[x_1^{n_1},\dots,x_1^{n_{k-1}},u\right]\equiv  \prod_{j=3}^{p-1} \left[x_1^{n_1},\dots,x_1^{n_{k-1}},x_j^{n_{k,j}}\right],
 \]
 holds in $G_{p-1,p-1}$ (and thus in $G_{p,p-1}$) with area $\lesssim_p n^{p-2}$ and diameter $\lesssim_p n$.
\end{lemma}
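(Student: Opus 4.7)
The plan is to reduce the identity to an application of the inductive hypothesis $\delta_{G_{p-1,p-1}}(n) \lesssim_p n^{p-2}$ (with linear filling diameter) that we are assuming in this section.

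First, I would observe that the identity takes place entirely inside the canonical subgroup $G_{p-1,p-1} \leq G_{p,p-1}$. Indeed, under the embedding described in \S \ref{sec:Intro-proof-main-theorem}, the generators $x_1, x_3, x_4, \dots, x_{p-1}, z$ of the first factor of $G_{p-1,p-1}$ correspond to $x_1, x_3, \dots, x_{p-1}, z \in G_{p,p-1}$ (with a shift in indices: what plays the role of $x_j \in \gamma_j$ in the standard $L_{p-1}$ is $x_{j+1}$ here). Moreover, both sides of the claimed identity are words of length $O_p(n)$: the LHS is a $k$-fold commutator with $k \leq p-1$ and each entry of length at most $n$, while the RHS is a product of $p-3 = O_p(1)$ such $k$-fold commutators. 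Hence the concatenation $\mathrm{LHS} \cdot (\mathrm{RHS})^{-1}$ is a word of length $O_p(n)$ in $G_{p-1,p-1}$.

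Next, I would verify that the LHS and RHS define the same element of $G_{p-1,p-1}$. The strategy is to expand $[x_1^{n_{k-1}}, u]$ from the right using the free identity $[a, bc] \equiv [a,c] \cdot [a,b]^c$ of Lemma \ref{lem:FreeIdentities}(2), then propagate the resulting factors through the outer brackets $[x_1^{n_{k-2}}, \cdot], \dots, [x_1^{n_1}, \cdot]$ by the same distribution. The factor $z^{n_{k,p}}$ of $u$ drops out because $z$ is central, which means both $[x_1^{n_{k-1}}, z^{n_{k,p}}] \equiv 1$ and conjugation by $z^{n_{k,p}}$ is trivial. The remaining iterations produce factors $[x_1^{n_1},\dots, x_1^{n_{k-1}}, x_j^{n_{k,j}}]^{c_j}$ where each conjugator $c_j$ is a product of $x_{j'}^{n_{k,j'}}$ with $j' > j$, hence lies in a strictly deeper term of the lower central series. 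Each such commutator lies in $\gamma_{j+k-1}(G_{p-1,p-1})$, and the correction terms coming from clearing the conjugators and reordering the factors live in even deeper terms, so they either vanish (since $G_{p-1,p-1}$ has step $p-2$) or collapse against each other using the nilpotent structure. This confirms that the two sides define the same element.

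Finally, the word $\mathrm{LHS} \cdot (\mathrm{RHS})^{-1}$ being null-homotopic and of length $O_p(n)$ in $G_{p-1,p-1}$, the inductive hypothesis supplies a filling of area $\lesssim_p n^{p-2}$ and diameter $\lesssim_p n$, yielding the conclusion.

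The main obstacle is the second step: carefully tracking all correction terms produced by iterated applications of $[a,bc] = [a,c][a,b]^c$, since conjugators proliferate across the $k$ layers of bracketing. However, the saving grace is that we do not need an explicit reduction: once equality of group elements is confirmed, the quantitative bounds come from the induction hypothesis at no further cost. In particular, one can prove equality by a short Malcev normal form computation rather than by tracking every cancellation.
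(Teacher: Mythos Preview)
Your proposal is correct and follows essentially the same approach as the paper's (two-sentence) proof: verify the identity holds in $G_{p-1,p-1}$ via the free identities of Lemma~\ref{lem:FreeIdentities}, then invoke the induction hypothesis for $G_{p-1,p-1}$ to get the area and diameter bounds. Your ``main obstacle'' of tracking correction terms dissolves once you recall that $L_p$ (hence $G_{p-1,p-1}$) is metabelian, so the conjugators $c_j$, being in the derived subgroup, act trivially on the commutator factors and no degree bookkeeping is needed.
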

\begin{proof}
 It follows readily from Lemma \ref{lem:FreeIdentities} that this identity holds in $G_{p-1,p-1}$ and we obtain the area and diameter estimates using the induction hypothesis for $G_{p-1,p-1}$.
\end{proof}
This result will allow us to commute  elements of the form $$ \left[x_1^{n_1},\dots,x_1^{n_{k-1}},x_3^{n_{k,3}}\dots x_{p-1}^{n_{k,p-1}} z^{n_{k,p}}\right]$$ with words $w(x_1,x_2)$ in the derived subgroup of $G_{p,p-1}$ using the First commuting $k$-Lemma \ref{lem:Basic-k-Lemma} (see \S \ref{sec:PfBasickLemma}).
We end this section with the following converse of Lemma \ref{lem:extracting-exponents-of-xi}.
\begin{lemma}
 \label{lem:expansion-of-Rpk}
Let $k\geqslant 1$. For every $\nnn\in \RR^k$ there are $t_i\in \RR$ with $|t_i|\lesssim_p |\nnn|^{i-1}$, $k\leqslant i \leqslant p$, which satisfy the following identity in $G_{p,p-1}$
 \[
  \Omega_{k+1}(\nnn) \equiv x_{k+1}^{t_{k+1}}x_{k+2}^{t_{k+2}}\cdots x_{p-1}^{t_{p-1}} z^{t_p}.
 \]
\end{lemma}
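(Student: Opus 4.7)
The plan is to prove this by induction on $k$. For the base case $k=1$, we have $\Omega_2(\nnn) = [x_1^{n_1}, x_2^{n_2}]$, and Proposition \ref{prop:binom-Lie} (with $i=2$, $a=n_1$, $b=n_2$) directly supplies the normal form
\[
[x_1^{n_1}, x_2^{n_2}] \equiv x_3^{n_1 n_2}\, x_4^{-\binom{n_1}{2} n_2} \cdots z^{(-1)^{p+1}\binom{n_1}{p-2} n_2},
\]
and reading off binomial coefficients gives $|t_i| \lesssim_p |n_1|^{i-2}|n_2| \leqslant |\nnn|^{i-1}$, settling the base case.

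For the inductive step, I would write $\Omega_{k+1}(\nnn) = [x_1^{n_1}, \Omega_k(n_2, \dots, n_{k+1})]$, apply the inductive hypothesis to the inner commutator to put it in normal form $x_k^{s_k} x_{k+1}^{s_{k+1}}\cdots x_{p-1}^{s_{p-1}} z^{s_p}$ with $|s_j| \lesssim_p |\nnn|^{j-1}$, and then expand the outer commutator. The key structural observation I will exploit is that in $L_p$ the bracket $[x_i, x_l]$ vanishes for all $i, l \geqslant 2$, so that $\gamma_2(L_p)$ is abelian. Consequently every $[x_1^{n_1}, x_j^{s_j}]$ lies in the abelian ideal $\gamma_2(L_p)$, and iterating Lemma \ref{lem:FreeIdentities}(2) on $[x_1^{n_1}, x_k^{s_k}\cdots x_{p-1}^{s_{p-1}} z^{s_p}]$ makes all conjugations act trivially, so that the expansion collapses to the plain product
\[
\Omega_{k+1}(\nnn) \equiv \prod_{j=k}^{p-1} [x_1^{n_1}, x_j^{s_j}],
\]
where the factor $[x_1^{n_1}, z^{s_p}]$ drops out by centrality of $z$.

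To finish I would expand each factor via Proposition \ref{prop:binom-Lie} and collect exponents in the abelian group $\gamma_{k+1}(L_p)$, obtaining the claimed normal form with
\[
t_m = \sum_{j=k}^{m-1} (-1)^{m-j+1}\, \binom{n_1}{m-j}\, s_j, \qquad k+1 \leqslant m \leqslant p.
\]
Bounding $|\binom{n_1}{m-j}| \lesssim_p |n_1|^{m-j}$ and invoking $|s_j| \lesssim_p |\nnn|^{j-1}$ together with $|n_1|\leqslant |\nnn|$ yields $|t_m| \lesssim_p (m-k)\,|\nnn|^{m-1} \lesssim_p |\nnn|^{m-1}$, completing the induction. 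The only conceptual point that needs care is the vanishing of the conjugations in the expansion step, but this is not really an obstacle since it follows immediately from the abelianness of $\gamma_2(L_p)$, a direct feature of the filiform presentation; the rest is a routine computation.
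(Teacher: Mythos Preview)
Your argument is correct. The paper's own proof is a one-line appeal to Lemma~\ref{lem:sec-efficient-tech-2}: that lemma puts an arbitrary word in $x_1,x_2$ of length $l$ into Malcev normal form $x_1^{b_1}\cdots x_p^{b_p}$ with $|b_i|\lesssim_p l^{i-1}$, and since $\Omega_{k+1}(\nnn)$ is such a word of length $\lesssim_p|\nnn|$ lying in the appropriate term of the lower central series, the leading $b_i$ vanish and the result follows immediately. Your route instead unwinds this by a direct induction on the commutator depth, combining Proposition~\ref{prop:binom-Lie} with the abelianness of $\gamma_2(L_p)$ --- which is precisely the mechanism underlying the proof of Lemma~\ref{lem:sec-efficient-tech-2} itself, specialised to these particular words. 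What you gain is an explicit recursive formula for the $t_m$; what the paper gains is brevity, since the general normal-form lemma has already been established earlier in the same section.
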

\begin{proof}
 This is an immediate consequence of Lemma \ref{lem:sec-efficient-tech-2}.
\end{proof}

\subsection{First and Second commuting $k$-Lemmas}\label{sec:PfBasickLemma}
For simplicity of notation, we will assume that $j=3$. The proof for $j>3$ is the same. Recall that we are  proceeding by induction on $p$ as shown in Figure \ref{fig:structure-proof-upper-bound}: i.e.\  the Main commuting Lemma \ref{lem:MainLemma} (which is a special case of the Main Theorem \ref{thm:Upperbound}) and the Cutting in half Lemma \ref{lem:Cutinhalf} can be used in the group $G_{p-1,p-1}$ with area $\lesssim_{p} n^{p-2}$ and diameter $\lesssim_p n$. Note that formally the Cutting in half Lemma \ref{lem:Cutinhalf} is stated in the subgroup $G_{p-1,p-2}\leqslant G_{p-1,p-1}$. However, the natural inclusion of the corresponding presentations means that it also holds in $G_{p-1,p-1}$.

A crucial step in the proof of the First commuting Lemma \ref{lem:Basic-k-Lemma} will be the following technical result, allowing us to cut $\Omega_k^3(\nnn)$ into pieces. We notice that $\Omega_k^3(\nnn)$  is a word in $x_1$ and  $x_3$ which therefore belongs to  $G_{p-1,p-1}.$
\begin{lemma}[{Fractal form Lemma}]
\label{lem:FractalForm}
Let $1\leqslant k \leqslant p-3$ and let $\underline{n}\in \RR^k$. In $G_{p,p-1}$, $\Omega_k^3(\nnn)$ is equal to a word $w$ consisting of $\lesssim_{p} |\nnn|^k$ copies of $\Omega_k^3\left(\frac{\nnn}{2^{\left\lceil\log_2(|\nnn|)\right\rceil}}\right)$ and $2^{(j-1)k}$ ``error terms" $w_{k,j}$ for $1\leqslant j \leqslant \left\lceil\log_2(|\nnn|)\right\rceil$. Each $w_{k,j}$ is a product of  $O_p(1)$ commutators of the form $\Omega_l^3(\mmm)^{\pm 1}$ with $|\mmm|\lesssim_p \frac{|\nnn|}{2^j}$ and $k+1\leqslant l \leqslant p-2$. In $G_{p,p-1}$, the area of this identity is $\lesssim_{p} |\nnn|^{p-2}$ and its diameter is $\lesssim_p|\nnn|$. Moreover, the word diameter of $w$ is $\lesssim_p |\nnn|$.
\end{lemma}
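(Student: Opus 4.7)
The plan is to iterate the Cutting in half $k$-Lemma \ref{lem:Cutinhalf} on dyadic scales. The subgroup $H:=\langle x_1,x_3,x_4,\ldots,x_{p-1},z\rangle$ of $G_{p,p-1}$ is isomorphic to the standard filiform group $L_{p-1}$ via the relabelling $x_1\mapsto x_1$, $x_i\mapsto x_{i-1}$ for $i\geqslant 3$, and under this isomorphism $\Omega_k^3(\mmm)$ corresponds to $\Omega_k(\mmm)$ in $L_{p-1}$. The $(p-1)$-case of Lemma \ref{lem:Cutinhalf}, available by the outer induction on $p$, therefore yields in $G_{p,p-1}$ an identity
\[
\Omega_k^3(2\mmm)\equiv \Omega_k^3(\mmm)^{2^k}\cdot w(\mmm)
\]
of area $\lesssim_p |\mmm|^{p-2}$ and diameter $\lesssim_p|\mmm|$, where $w(\mmm)$ is a product of $O_p(1)$ terms of the form $\Omega_{l_i}^3(\mmm_i)^{\pm 1}$ with $l_i\geqslant k+1$ and $|\mmm_i|\lesssim_p|\mmm|$.

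Setting $L:=\lceil\log_2|\nnn|\rceil$, I would apply the above identity scale by scale: at step $j\in\{1,\ldots,L\}$ I replace each of the $2^{(j-1)k}$ copies of $\Omega_k^3(\nnn/2^{j-1})$ currently present in the word by $\Omega_k^3(\nnn/2^j)^{2^k}\cdot w(\nnn/2^j)$. After $L$ such steps the resulting word $w$ contains $2^{Lk}\asymp|\nnn|^k$ copies of $\Omega_k^3(\nnn/2^L)$ (whose coordinates are $O(1)$), together with $2^{(j-1)k}$ copies of a single error word $w_{k,j}:=w(\nnn/2^j)$ at each scale $1\leqslant j\leqslant L$, precisely matching the conclusion.

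For the area, step $j$ consumes $2^{(j-1)k}$ applications of Cutting in half each at cost $\lesssim_p(|\nnn|/2^{j-1})^{p-2}$, giving a per-step total of $|\nnn|^{p-2}\cdot 2^{(j-1)(k-p+2)}$. The hypothesis $k\leqslant p-3$ forces $k-p+2\leqslant -1$, so the series in $j$ is geometric of ratio $\leqslant 1/2$ and the cumulative area is $\lesssim_p|\nnn|^{p-2}$. Each individual cut has diameter $\lesssim_p|\nnn|/2^{j-1}\leqslant|\nnn|$, so invoking Lemma \ref{lem:diameter} reduces the diameter bound for the full identity to a control on the word diameter of the running prefix of $w$.

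The main obstacle is thus the last assertion of the lemma, because a crude length bound on $w$ is already of order $|\nnn|^{k-1}$, so the word diameter cannot come from length alone. I would exploit that the innermost pieces $\Omega_k^3(\nnn/2^L)$ represent elements at bounded Cayley distance from the identity, whereas each error term $w_{k,j}$ lies in $\gamma_{k+1}$ and, having coordinates of scale $|\nnn|/2^j$, sits at Cayley distance $\lesssim|\nnn|/2^j$ from the identity. For an arbitrary prefix of $w$, grouping its contributions by scale (at most $2^{jk}$ fine pieces and $\sum_{i\leqslant j}2^{(i-1)k}$ error terms of scale $|\nnn|/2^i$) and then applying Lemma \ref{lem:distance-estimates} produces an expression bounded by the left-hand side of Lemma \ref{lem:Geom-series-diameter}, whose statement is precisely calibrated to give $\lesssim_p|\nnn|$.
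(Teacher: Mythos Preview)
Your argument is essentially the same as the paper's: iterate the $(p-1)$-case of the Cutting in half Lemma on dyadic scales, sum the resulting geometric series for the area, and invoke Lemmas~\ref{lem:distance-estimates} and~\ref{lem:Geom-series-diameter} for the word diameter of intermediate prefixes. One small imprecision: the area bound $\lesssim_p |\mmm|^{p-2}$ does not come from the subgroup $H\cong L_{p-1}$ alone (whose Dehn function is $n^{p-1}$) but from the embedded copy of $G_{p-1,p-1}=\langle x_1,x_3,y_1,y_3\rangle\leqslant G_{p,p-1}$, through which the Cutting in half identity transfers with the correct cost by the induction hypothesis on the Main Theorem.
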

\begin{figure}[ht]
 \centering
 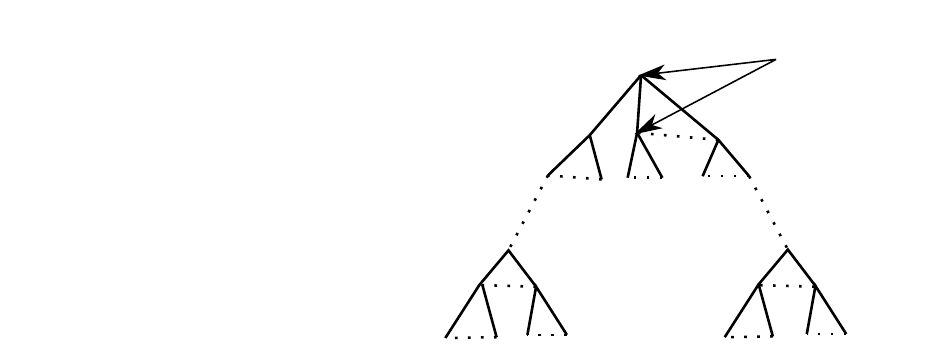
 \caption{Sketch of the $n$ steps leading to the fractal form with $n:=\left\lceil\log_2(|\nnn|)\right\rceil$. It omits the error terms $w_{k,j}$ for simplicity.}
\end{figure}
\begin{proof}
 Note that for $k=1$ this is obvious and the error terms are trivial. The proof for $k\neq 1$ is in $\left\lceil \log_2(\nnn)\right\rceil$ stages. At the $j$-th stage we will be left with $2^{jk}$ terms of the form $\Omega_k^3(\nnn/ 2^{j})$ interlaced with $2^{(i-1)k}$ error terms $w_{k,i}$ for $1\leqslant i \leqslant j$. By Lemmas \ref{lem:distance-estimates} and \ref{lem:Geom-series-diameter} the diameter of this word is
 \begin{equation}\label{eqn:lin-bd-lem-fractal}
   \lesssim_p \sum_{m=k+1}^p \left( 2^{jk} \cdot \left(\frac{|\nnn|}{2^j}\right) ^{m-2} + O_p(1) \cdot \sum_{i=1}^j 2^{(i-1)k} \left(\frac{|\nnn|}{2^i}\right)^{m-2}\right)^{\frac{1}{m-1}} \lesssim_p |\nnn|
 \end{equation}
 for a constant $O_p(1)$ as in Lemma \ref{lem:Cutinhalf}. The same reasoning shows that the word diameter of the word obtained at every stage is $\lesssim_p |\nnn|$. 
 
We apply the Cutting in half Lemma \ref{lem:Cutinhalf} for $p-1$ to each of the words $\Omega_k^3(\nnn/ 2^{j})$ starting with the right-most one; it holds by induction hypothesis. As a consequence we obtain $2^{(j+1)k}$ words of the form $\Omega_k^3(\nnn/ 2^{j+1})$ and $2^{jk}$ error terms of the form $w_{k,j+1}$. By the Cutting in half Lemma \ref{lem:Cutinhalf}, Lemma \ref{lem:diameter} and \eqref{eqn:lin-bd-lem-fractal} the total area and diameter of the identities performed in the $(j+1)$-th iteration are $\lesssim_p \left|\frac{\nnn}{2^j}\right|^{p-2}$ and $\lesssim_p |\nnn|$ respectively. After $\left\lceil\log_2(|\nnn|)\right\rceil$ iterations we obtain the asserted word of word diameter $\lesssim_p |\nnn|$. 

The total area of all identities used in the proof is 
 \begin{align*}
  &\lesssim_p \sum_{j=1}^{\left\lceil \log_2(|\nnn|)\right\rceil} 2^{(j-1)k} \cdot \left(\frac{|\nnn|}{2^{j-1}} \right)^{p-2}\\
  &= |\nnn|^{p-2} \sum_{j=1}^{\left\lceil \log_2(|\nnn|)\right\rceil} 2^{(j-1)(k-(p-2))}\\
  & \lesssim_{p}|\nnn|^{p-2},
 \end{align*}
 where the last inequality follows since the sum is a convergent geometric series. Indeed, by assumption, $k\leqslant p-3$ and thus $k-(p-2)<0$. This completes the proof.
\end{proof}

\begin{proof}[{Proof of the Second commuting $k$-Lemma \ref{lem:second(k,j)-Lemma}}]
Observe that the Second commuting $(p-2)$-Lemma is an easy consequence of Lemma \ref{lem:ChangingFactors} and the fact that $\left[x_i,y_j\right]=1$ $\forall i,j$. We now assume that for $k\leqslant p-3$ we proved the Second commuting $(k+1)$-Lemma by induction. We estimate the area and diameter of the null-homotopic word $\left[ \Omega_k^3(\nnn),x_2^m\right]$. By the Fractal form Lemma \ref{lem:FractalForm} we have 
\begin{equation}
\label{eqn:kprop-1}
\Omega_k^3(\nnn)\equiv u(x_1,x_3), 
\end{equation}
where $u$ is a word that is a product of $\lesssim_p |\nnn|^k$ terms of the form $\Omega_k^3\left(\frac{\nnn}{2^{\left\lceil\log_2(|\nnn|)\right\rceil}}\right)$ and, for $1\leqslant j \leqslant \left\lceil\log_2(|\nnn|)\right\rceil$, $2^{(j-1)k}$ error terms $w_{k,j}$; the terms are in no specific order and we will thus commute them with $x_2^m$ one-by-one. 

Note that, by the Fractal form Lemma \ref{lem:FractalForm}, identity \eqref{eqn:kprop-1} has area $\lesssim_p |\nnn|^{p-2}$ and diameter $\lesssim_p |\nnn|$. Moreover, $u$ has word diameter $\lesssim_p |\nnn|$ and thus the same holds for any of its prefix words. Since all transformations used in the remainder of the proof will consist of commuting a piece of the word $u$ with $x_2^m$ and will have diameter $\lesssim_p |\nnn|+|m|$, the diameter bound of $\lesssim_p |\nnn|+|m|$ in the Second commuting $k$-Lemma will follow from Lemma \ref{lem:diameter}.

Observe that the word $\Omega_k^3\left(\frac{\nnn}{2^{\left\lceil\log_2(|\nnn|)\right\rceil}}\right)$ has length in $O_p(1)$ so that  the area of $\left[\Omega_k^3\left(\frac{\nnn}{2^{\left\lceil\log_2(|\nnn|)\right\rceil}}\right),x_2^t\right]$ for $|t|\leqslant 1$ is in $O_p(1)$. Thus, the total cost of commuting the $|\nnn|^k$ terms of the form $\Omega_k^3\left(\frac{\nnn}{2^{\left\lceil log_2(|\nnn|)\right\rceil }}\right)$ with $x_2^m$ is $\lesssim_p |m| \cdot |\nnn|^k \leqslant  |m| \cdot |\nnn|^{p-3}$, where for the last inequality we use that $k\leqslant p-3$ by assumption.

We now estimate the cost of commuting the error terms $w_{k,j}$ with $x_2^m$. For this we distinguish the cases $k=p-3$ and $k<p-3$, starting with the former. An error term $w_{p-3,j}$ consists of $O_p(1)$ words of the form $\Omega_{p-2}^3\left(\underline{l}\right)^{\pm 1}$, with $|\underline{l}|\lesssim_p \frac{|\underline{n}|}{2^j}$. To move it past $x_2^m$ we use the second factor of our central product: by Lemma \ref{lem:ChangingFactors} the identity $\Omega_{p-2}^3\left(\underline{l}\right)^{\pm 1}\equiv \widetilde{\Omega}_{p-2}^3\left(\underline{l}\right)^{\pm 1}$ holds at cost $\lesssim_p \left|\underline{l}\right|^{p-2}$, with $|\underline{l}|\lesssim_p \frac{|\nnn|}{2^j}$. Since 
$\widetilde{\Omega}_{p-2}^3\left(\underline{l}\right)^{\pm 1}$ is a word in the $y_i$'s, we can commute it with $x_2^m$ at cost $\lesssim_p |m| \cdot \frac{|\nnn|}{2^j}$. Considering that there are $2^{(j-1)(p-3)}$ copies of $w_{p-3,j}$ in $u$ we thus obtain the following upper bound for the total cost of commuting all of the error terms with $x_2^m$:
\begin{align*}
  \mathrm{Area} &\lesssim_p \sum_{j=1}^{\left\lceil\log_2(|\nnn|)\right\rceil} 2^{(j-1)(p-3)}\cdot \left( \left(\frac{|\nnn|}{2^j}\right)^{p-2} + |m| \cdot \frac{|\nnn|}{2^j}\right)\\
  &= 2^{-(p-3)} \cdot \left(|\nnn|^{p-2} \cdot \sum_{j=1}^{\left\lceil\log_2(|\nnn|)\right\rceil} 2^{-j}\right) + 2^{-(p-3)} \cdot\left(|m|\cdot |\nnn|\cdot \sum_{j=1}^{\left\lceil\log_2(|\nnn|)\right\rceil} 2^{(p-4)j}\right)\\
  &\lesssim_p 2^{-(p-3)}\cdot \left(|\nnn|^{p-2}\cdot 2 + |m| \cdot |\nnn|\cdot |\nnn|^{p-4}\cdot 2\right)\lesssim_p |\nnn|^{p-2}+ |m| \cdot |\nnn|^{p-3},
\end{align*}
where to obtain the first inequality in the last line we observe that 
\begin{align*}
 \sum_{j=1}^{\left\lceil\log_2(|\nnn|)\right\rceil} 2^{(p-4)j} &= \sum_{j=0}^{\left\lceil\log_2(|\nnn|)\right\rceil-1} 2^{(p-4)\cdot (\left\lceil\log_2(|\nnn|)\right\rceil-j)}\\
 &\lesssim_p |\nnn|^{p-4} \cdot \sum_{j=0}^{\left\lceil\log_2(|\nnn|)\right\rceil-1}2^{-j(p-4)} \lesssim |\nnn|^{p-4}.
\end{align*}
This completes this step of the proof for $k=p-3$.

To complete the same step of the proof for $k<p-3$ we now assume that by induction the Second commuting $l$-Lemma holds for $p-3\geqslant l\geqslant k+1$. In this case an error term $w_{k,j}$ is equal to a product of $O_p(1)$ words of the form $\Omega_{l}^3\left(\frac{\mmm}{2^j}\right)^{\pm 1}$ with $|\mmm|\lesssim_p |\nnn|$ and $k+1 \leqslant l \leqslant p-2$, and there are $2^{(j-1)k}$ error terms of the form $w_{k,j}$.

By the Second commuting $l$-Lemma for $l\geqslant k+1$ the total cost of commuting the $w_{k,j}$ with $x_2^m$ is thus bounded by
\begin{align*}
\mathrm{Area} & \lesssim_{p}  2^{(j-1)k} \cdot \left( |m| \cdot \left(\frac{|\nnn|}{2^j}\right)^{p-3} +\left(\frac{|\nnn|}{2^j}\right)^{p-2}\right)\\
& = 2^{-k} \cdot \left( |m| \cdot |\nnn|^{p-3} \cdot 2^{j(k-(p-3))} + |\nnn|^{p-2} 2^{j(k-(p-2))}\right).
\end{align*} 
We observe that the assumption $k<p-3$ implies that $j(k-(p-3))< j(k-(p-2))<0$. Using the convergence of the geometric series we hence obtain the following bound on the total cost for commuting the $w_{k,j}$, for $1\leqslant j \leqslant \left\lceil\log_2(|\nnn|)\right\rceil$, with $x_2^m$: 
\begin{align*}
 \mathrm{Area} & \lesssim_{p} 2^{-k} \sum_{j=1}^{\left\lceil\log_2(|\nnn|)\right\rceil} \left( |m| \cdot |\nnn|^{p-3} \cdot 2^{j(k-(p-3))} + |\nnn|^{p-2} 2^{j(k-(p-2))}\right) \\
 &\lesssim \left(|m| \cdot |\nnn|^{p-3} +|\nnn|^{p-2}\right).
\end{align*}

We have thus proved that the cost of commuting all of the $w_{k,j}$ in $u$ with $x_2^m$ is $\lesssim_{p}   \left( m\cdot |\nnn |^{p-3} + |\nnn|^{p-2}\right)$ irrespectively of whether $k=p-3$ or $k<p-3$.

Summing up the total cost for all steps in this proof we obtain that 
\begin{align*}
\mathrm{Area}(\left[ \Omega_k^3(\nnn),x_2^m\right])& \lesssim_p |\nnn|^{p-2} + |m| \cdot |\nnn|^{p-3} + |\nnn|^{p-2} + |m|\cdot |\nnn|^{p-3}\\
& \lesssim_p |\nnn|^{p-2} + |m| \cdot |\nnn|^{p-3}
\end{align*} 
This completes the proof of the Second commuting $k$-Lemma.
\end{proof}

After estimating the cost of commuting $\Omega_k^3(\nnn)$ with $x_2^m$ we now need to estimate the cost of commuting $\Omega_k^3(\nnn)$ with a word in $\mathcal{F}[\alpha]$.
\begin{lemma}
\label{lem:CommutingR3withx1l}
 For $1\leqslant k \leqslant p-2$,   $\nnn=\left(n_1,\dots,n_k\right)\in \RR^k$ and $l\leqslant |\nnn|$ the identity
 \[
  \Omega_k^3(\nnn)^{\pm 1}\cdot x_1^l \equiv x_1^l \cdot \left(\Omega_{k+1}^3(l,n_1,\dots,n_k)\right)^{\mp 1} \cdot \Omega_k^3(\nnn)^{\pm 1}
 \]
 holds with area $\lesssim_p |\nnn|^{p-2}$ and diameter $\lesssim_p |\nnn|$ in $G_{p,p-1}$.
\end{lemma}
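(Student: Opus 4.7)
The plan is as follows. First, I would observe that the sign-$-$ case of the identity actually holds as a free group identity with area $0$ and diameter $\lesssim_p |\nnn|$: writing $A=\Omega_k^3(\nnn)$, $b=x_1^l$ and $B=\Omega_{k+1}^3(l,n_1,\dots,n_k)=[b,A]=b^{-1}A^{-1}bA$, one checks directly that $b\cdot B\cdot A^{-1}=b\cdot b^{-1}A^{-1}bA\cdot A^{-1}=A^{-1}\cdot b$ in the free group.

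For the sign-$+$ case, I would next apply Lemma \ref{lem:FreeIdentities}(3) (the identity $ab=ba[a,b]$) together with $[A,b]=[b,A]^{-1}=B^{-1}$ to obtain the free rewriting
\[
\Omega_k^3(\nnn)\cdot x_1^l\equiv x_1^l\cdot \Omega_k^3(\nnn)\cdot \Omega_{k+1}^3(l,\nnn)^{-1}.
\]
This reduces the lemma to showing that the commutator $[\Omega_k^3(\nnn),\Omega_{k+1}^3(l,\nnn)^{-1}]$ has area $\lesssim_p |\nnn|^{p-2}$ and diameter $\lesssim_p|\nnn|$ in $G_{p,p-1}$.

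The key idea is then that both $\Omega_k^3(\nnn)$ and $\Omega_{k+1}^3(l,\nnn)$ are words of length $\lesssim_p|\nnn|$ in $x_1$ and $x_3$, so they lie in the subgroup $\langle x_1,x_3\rangle\subset G_{p,p-1}$, which is isomorphic to $L_{p-1}$ via $x_1\mapsto \tilde x_1$, $x_3\mapsto \tilde x_2$, $x_{i+1}\mapsto \tilde x_i$ for $i\geqslant 3$. Together with its symmetric partner $\langle y_1,y_3\rangle\cong L_{p-1}$, which commutes with $\langle x_1,x_3\rangle$ and shares only the centre $\langle z\rangle$, this gives a canonical embedded copy of $G_{p-1,p-1}\hookrightarrow G_{p,p-1}$ realised through a presentation inclusion. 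Under this identification, both $\Omega$'s sit inside the abelian subalgebra $\langle \tilde x_2,\tilde x_3,\ldots,\tilde x_{p-1}\rangle$ of the first factor (abelian since $[\tilde x_i,\tilde x_j]=0$ for $i,j\geqslant 2$), hence commute in $G_{p-1,p-1}$.

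Consequently the commutator word is null-homotopic in $G_{p-1,p-1}$ with length $\lesssim_p|\nnn|$, so by the inductive Main Theorem (namely that $(n^{p-2},n)$ is a filling pair for $G_{p-1,p-1}$), it admits a filling of area $\lesssim_p|\nnn|^{p-2}$ and diameter $\lesssim_p|\nnn|$, which lifts to $G_{p,p-1}$ via the inclusion of presentations. Combining this swap with the free rewriting through Lemma \ref{lem:diameter} (noting that the prefix $x_1^l$ has length $|l|\leqslant|\nnn|$) finishes the proof. There is no substantial obstacle; the decisive point is that one must invoke the induction on the central product $G_{p-1,p-1}$ rather than on the filiform $L_{p-1}$ alone, whose Dehn function $\asymp n^{p-1}$ would be too weak to give the claimed exponent.
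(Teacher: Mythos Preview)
Your proof is correct and follows essentially the same route as the paper's: both arguments reduce to the observation that $\Omega_k^3(\nnn)$ and $\Omega_{k+1}^3(l,\nnn)$ are words in $x_1,x_3$, hence lie in the embedded $G_{p-1,p-1}\leqslant G_{p,p-1}$, where the required identities hold with area $\lesssim_p|\nnn|^{p-2}$ and diameter $\lesssim_p|\nnn|$ by the inductive filling pair. Your added observation that the sign-$-$ case is actually a free identity (area $0$) is a nice refinement, though the paper simply handles both signs uniformly via the induction hypothesis.
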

\begin{proof}
 The identities
 \begin{align*}
  \Omega_k^3(\nnn)^{\pm 1} \cdot x_1^l &\equiv x_1^l \cdot \Omega_k^3(\nnn)^{\pm 1}\cdot \left[\Omega_k^3(\nnn),x_1^l\right]\\
   & \equiv  x_1^l \Omega_k^3(\nnn)^{\pm 1} \cdot \left( \Omega_{k+1}^3(l,n_1,\dots,n_k)\right)^{\mp 1} \\
   & \equiv  x_1^l \left(\Omega_{k+1}^3(l,n_1,\dots,n_k)\right)^{\mp 1} \cdot \Omega_k^3(\nnn)^{\pm 1}
 \end{align*}
 hold in $G_{p-1,p-1}\leqslant G_{p,p-1}$ and thus with area $\lesssim_p |\nnn|^{p-2}$ and diameter $\lesssim_p |\nnn|$ by induction hypothesis.
\end{proof}

\begin{remark}
 For $k=p-2$ we have $\Omega_{k+1}^3(l,n_1,\dots,n_k)\equiv 1$ in $G_{p-1,p-1}$ with area $\lesssim_p |\nnn|^{p-2}$ and diameter $\lesssim_p |\nnn|$. Thus, Lemma \ref{lem:CommutingR3withx1l} reduces to $\left[\Omega_{p-2}^3(\nnn),x_1^l\right]=1$ in this case.
\end{remark}

\begin{lemma}
\label{lem:basicklemmageneralized}
Let $\alpha \geqslant 2$ and $1\leqslant k \leqslant p-2$. Then for $u=u(x_1,x_2)\in \mathcal{F}[\alpha]$ with $\ell(u)\leqslant n$ an identity of the form
\[
 \Omega_k^3(\nnn)^{\pm 1}\cdot u \equiv  u \cdot \left( \prod_{j=1}^{\nu} \Omega_{l_j}^3(\mmm_j)^{\pm 1}\right) \cdot \Omega_k^3(\nnn)^{\pm 1}
\] 
holds in $G_{p,p-1}$ with $\nu=O_{p,\alpha}(1)$, $l_j\geqslant k+1$, $\mmm_j\in \RR^{l_j}$, and $|\mmm_j|\lesssim_p |\nnn|$. Moreover, this identity has area $\lesssim_{p,\alpha} |\nnn|^{p-2}$ and diameter $\lesssim_{p,\alpha} |\nnn|$ in $G_{p,p-1}$.
\end{lemma}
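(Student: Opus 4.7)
The plan is to proceed by induction on $\alpha$. Write $u = u'' \cdot u_\alpha$ with $u'' \in \mathcal{F}[\alpha - 1]$ and $u_\alpha \in \mathcal{F}$ a power $s^a$ of some $s \in \{x_1, x_2\}$ with $|a| \leqslant \ell(u)$. The case $\alpha = 1$ will serve as the base and is already settled by the preceding results: if $s = x_1$, Lemma \ref{lem:CommutingR3withx1l} produces exactly the desired identity with a single $\Omega_{k+1}^3$-term in the middle; if $s = x_2$, the Second commuting $k$-Lemma \ref{lem:second(k,j)-Lemma} gives $[\Omega_k^3(\nnn), x_2^a] \equiv 1$ and no middle term appears. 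In either subcase the area is $\lesssim_p |\nnn|^{p-2}$ and the diameter is $\lesssim_p |\nnn|$.

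For the inductive step ($\alpha \geqslant 2$), first apply the induction hypothesis (or the $\alpha=1$ base case if $\alpha = 2$) to commute $\Omega_k^3(\nnn)^{\pm 1}$ past $u''$, obtaining
\begin{equation*}
\Omega_k^3(\nnn)^{\pm 1} \cdot u \equiv u'' \cdot \prod_{j=1}^{\nu_1} \Omega_{l'_j}^3(\mmm'_j)^{\pm 1} \cdot \Omega_k^3(\nnn)^{\pm 1} \cdot u_\alpha
\end{equation*}
with $\nu_1 = O_{p,\alpha}(1)$, all $l'_j \geqslant k+1$, and $|\mmm'_j| \lesssim_p |\nnn|$. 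Next apply the base case to push $\Omega_k^3(\nnn)^{\pm 1}$ past $u_\alpha$, which costs $\lesssim_p |\nnn|^{p-2}$ and introduces at most one new $\Omega_{k+1}^3$-term on the right of $u_\alpha$. The remaining task is to move each $\Omega_{l'_j}^3(\mmm'_j)^{\pm 1}$ still sitting to the left of $u_\alpha$ past $u_\alpha$: this is once more an application of a base case, now at depth $l'_j$ instead of $k$. Concretely, Lemma \ref{lem:CommutingR3withx1l} with parameter $l'_j$ when $u_\alpha = x_1^a$ yields a new $\Omega_{l'_j + 1}^3$-term (which is trivial as soon as $l'_j + 1 \geqslant p - 1$, since $G_{p,p-1}$ is $(p-1)$-step nilpotent), while the Second commuting $l'_j$-Lemma handles $u_\alpha = x_2^a$ with no new term at all. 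Each such substep costs $\lesssim_p |\nnn|^{p-2}$ and contributes diameter $\lesssim_p |\nnn|$.

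Combining these steps yields a decomposition of the required shape with $\nu = \nu_1 + O_{p,\alpha}(1) = O_{p,\alpha}(1)$ middle terms, all satisfying $l_j \geqslant k+1$ and $|\mmm_j| \lesssim_p |\nnn|$. The total area is $\lesssim_{p,\alpha} |\nnn|^{p-2}$ because the whole procedure consists of $O_{p,\alpha}(1)$ transformations, each of area $\lesssim_p |\nnn|^{p-2}$. The diameter bound $\lesssim_{p,\alpha} |\nnn|$ follows from Lemma \ref{lem:diameter} since every intermediate word (and hence every prefix word) has word length $\lesssim_{p,\alpha} |\nnn|$. The main obstacle is the bookkeeping: one must verify at every stage that the newly produced $\Omega^3$-terms maintain $l_j \geqslant k+1$ (which is automatic because each commutation strictly increases depth) and that the parameters $|\mmm_j|$ do not blow up beyond $\lesssim_p |\nnn|$, which holds thanks to the linear growth of the new parameters produced by Lemma \ref{lem:CommutingR3withx1l} and the assumption $\ell(u) \lesssim |\nnn|$.
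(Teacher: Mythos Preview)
Your induction on $\alpha$ is a valid alternative to the paper's argument, which instead runs a \emph{descending induction on $k$}: the paper takes $k=p-2$ as base case (handled via the substitution $\Omega_{p-2}^3 \equiv \widetilde{\Omega}_{p-2}^3$), then for smaller $k$ pushes $\Omega_k^3(\nnn)$ through all of $u$ in one sweep using Lemma~\ref{lem:CommutingR3withx1l} and the Second commuting Lemma, producing $\mu \leqslant \alpha$ terms $\Omega_{k+1}^3(\beta_i,\nnn)^{-1}$, and finally invokes the induction hypothesis at level $k+1$ to push each of these through the remaining tail of $u$. Your scheme peels off one $\mathcal{F}$-block at a time and uses the single-block case at varying depths instead; the price is that your $\nu$ satisfies a doubling recursion $\nu(\alpha)\leqslant 2\nu(\alpha-1)+1$, giving $\nu = O(2^\alpha)$ rather than the paper's cleaner linear-in-$\alpha$ count, but this is still $O_{p,\alpha}(1)$.

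Two small points. First, your justification ``since $G_{p,p-1}$ is $(p-1)$-step nilpotent'' for the triviality of $\Omega_{l}^3$ with $l\geqslant p-1$ is not quite the right reason: these are words in $\langle x_1,x_3\rangle \leqslant G_{p-1,p-1}$, and it is the fact that $G_{p-1,p-1}$ is $(p-2)$-step (with area $\lesssim_p |\mmm|^{p-2}$ by the induction hypothesis on $p$) that makes them vanish cheaply. Second, you invoke an ``assumption $\ell(u)\lesssim |\nnn|$'' which is not in the statement; the paper silently uses the same convention (needed already to apply Lemma~\ref{lem:CommutingR3withx1l}, whose hypothesis is $|l|\leqslant |\nnn|$), and in the only downstream use of this lemma one has $\ell(u)\leqslant n$ and $|\nnn|\leqslant n$ simultaneously, so neither proof is in trouble---but it would be cleaner to state the bound as $\lesssim_{p,\alpha}\max(n,|\nnn|)^{p-2}$.
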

\begin{proof}
We treat the case $\Omega_k^3(\nnn)^{+1}$, the case $\Omega_k^3(\nnn)^{-1}$ being similar.

 The proof is by descending induction on $k$. The case $k=p-2$ is an easy consequence of the identity $\Omega_{p-2}^3(\nnn)\equiv \widetilde{\Omega}_{p-2}^3(\nnn)$ in $G_{p-1,p-1}$. Thus, assume that $k\leqslant p-3$ and assume that the Lemma holds for $k+1, \dots, p-2$. Since $u\in \mathcal{F}[\alpha]$ we have $$u(x_1,x_2)=x_1^{\beta_1}x_2^{\gamma_1}\dots x_1^{\beta_{\mu}}x_2^{\gamma_{\mu}}$$ for $2\mu \leqslant \alpha$ and $\sum_{i=1}^{\mu}\left( |\beta_i| + |\gamma_i| \right) \leqslant n$.
 
 Applying each, the Second commuting $k$-Lemma \ref{lem:second(k,j)-Lemma} and Lemma \ref{lem:CommutingR3withx1l}, $\mu$ times we obtain that the identity
 \begin{align*}
  \Omega_k^3(\nnn)\cdot u &\equiv \Omega_k^3(\nnn)\cdot \prod_{i=1}^{\mu} x_1^{\beta_i}x_2^{\gamma_i}\\
  & \equiv  \left( \prod_{i=1}^{\mu} x_1^{\beta_i} \left(\Omega_{k+1}^3(\beta_i,\nnn)\right)^{-1} \cdot x_2^{\gamma_i} \right) \cdot \Omega_k^3(\nnn) 
 \end{align*}
 holds with area $\lesssim_p 2 \cdot \alpha \cdot |\nnn|^{p-2}$ and diameter $\lesssim_{p} \alpha \cdot |\nnn|$.
 
 In particular, we have produced $\mu \leqslant \alpha$ words $\Omega_{k+1}^3(\beta_i,\nnn)^{-1}$. Applying the induction hypothesis $\mu$ times (once to each $\Omega_{k+1}^3(\beta_i,\nnn)^{-1}$, starting with the rightmost one), we obtain with area $\lesssim_{p,\alpha} \alpha \cdot |\nnn|^{p-2}$ and diameter $\lesssim_{p,\alpha} \alpha \cdot |\nnn|$ an identity of the form:
 \[
  \Omega_k^3(\nnn)\cdot u \equiv  u\cdot \prod_{i=1}^{\mu} \left(\left(\prod_{j=1}^ {L_j} \Omega_{l_{i,j}}^3(\mmm_{i,j},\nnn)^{\pm 1}\right) \cdot \Omega_{k+1}^3(\beta_i,\nnn)^{-1}\right) \cdot \Omega_k^3(\nnn),
 \]
 where $L_j=O_{k,\alpha}(1)$, $|(\mmm_{i,j},\nnn)|\lesssim |\nnn|$ and $l_{i,j}\geqslant k+2$. Hence we are done.

\end{proof}

The First commuting $k$-Lemma \ref{lem:Basic-k-Lemma} is now a straight-forward consequence.
\begin{proof}[{Proof of the First commuting $k$-Lemma \ref{lem:Basic-k-Lemma}}]
 We apply Lemma \ref{lem:basicklemmageneralized} to $w=w(x_1,x_2)\in \mathcal{F}[\alpha]$ with $w\in \left[G_{p,p-1},G_{p,p-1}\right]$ and $\ell(w)\leqslant n$, observing that under these assumptions the identity $\left[\Omega_k^3(\nnn),w\right] \equiv 1 $ holds in $G_{p,p-1}$. 
 
 It follows that there is $\nu=O_{\alpha,p}(1)$ such that with area $\lesssim_{\alpha,p} |\nnn|^{p-2}$ and diameter $\lesssim_{\alpha,p} |\nnn|$ the identity
 \[
  \Omega_k^3(\nnn)\cdot w \equiv  w \cdot \left( \prod_{i=1}^{\nu} \Omega^3_{l_i}(\mmm_i) ^{\pm 1}\right) \cdot \Omega_k^3(\nnn)
 \] 
 holds with $l_i\geqslant k+1$ and $|\mmm_i|\lesssim_p |\nnn|$ and that, moreover,
 \[
  \prod_{i=1}^{\nu} \Omega^3_{l_i}(\mmm_i,\nnn)^{\pm 1}
 \]
 is null-homotopic in $G_{p-1,p-1}$. However, the latter word has length $\lesssim_{\alpha,p} |\nnn|$. By induction hypothesis for $G_{p-1,p-1}$ we deduce that this null-homotopic word has area $\lesssim_{\alpha,p} |\nnn|^{p-2}$  and diameter $\lesssim_{\alpha,p} |\nnn|$. This completes the proof.
\end{proof}

\subsection{Third and Fourth commuting $k$-Lemmas}\label{sec:weak-k-lemma}

Both lemmas will be easy consequences of the first two commuting $k$-lemmas and the following result:
\begin{proposition}
\label{prop:Decompx2intox3}
  Let $2\leqslant k \leqslant p-1$, $\nnn\in \RR^k$ and $\beta= n_{k-1}- \left\lfloor n_{k-1}\right\rfloor$ Then, if $n_{k-1}\geqslant 0$ the equality
 \begin{equation}\label{eqn:prop:Decompx2intox3-1}
  \Omega_{k}(\nnn)\equiv  \left[x_1^{n_1},\dots,x_1^{\beta},x_2^{n_k}\right] \prod_{j=0}^{\left\lfloor n_{k-1}\right\rfloor-1} \left(\left[x_1^{n_1},\dots,x_1^{n_{k-2}},x_1^{j+\beta},x_3^{n_k}\right]^{-1}\cdot \left[x_1^{n_1},\dots,x_1^{n_{k-2}},x_3^{n_{k}}\right]\right)  
 \end{equation}
 holds  in $G_{p,p-1}$ at cost $\lesssim_p |\nnn|^{p-1}$ and with diameter $\lesssim_p |\nnn|$ and if $n_{k-1}<0$ the equality
  \begin{equation}
  \label{eqn:prop:Decompx2intox3-2}
  \Omega_{k}(\nnn)\equiv  \left[x_1^{n_1},\dots,x_1^{\beta},x_2^{n_k}\right] \prod_{j=1}^{-\left\lfloor n_{k-1}\right\rfloor} \left(\left[x_1^{n_1},\dots,x_1^{n_{k-2}},x_1^{\beta-j},x_3^{n_k}\right]\cdot \left[x_1^{n_1},\dots,x_1^{n_{k-2}},x_3^{n_{k}}\right]^{-1}\right)
  \end{equation}
 holds  in $G_{p,p-1}$ at cost $\lesssim_p |\nnn|^{p-1}$ and with diameter $\lesssim_p |\nnn|$.
\end{proposition}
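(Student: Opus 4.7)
The plan is to proceed by induction on the integer $M := \lfloor n_{k-1} \rfloor \geqslant 0$, treating the case $n_{k-1} \geqslant 0$; the case $n_{k-1} < 0$ then follows by a symmetric argument, using $x_1^{-1}$ and reversing the direction of the iteration. Write $\beta := n_{k-1} - M \in [0, 1)$. The base case $M = 0$ is tautological, since both sides of \eqref{eqn:prop:Decompx2intox3-1} then reduce to $[x_1^{n_1}, \ldots, x_1^\beta, x_2^{n_k}]$.

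For the inductive step, I would use Lemma \ref{lem:FreeIdentities}(1) with the splitting $x_1^{(M+1)+\beta} = x_1 \cdot x_1^{M+\beta}$, together with $[x_1, x_2^{n_k}] \equiv x_3^{n_k}$ from Lemma \ref{lem:Introducing-x3s}(1), to produce the innermost identity
\[
[x_1^{(M+1)+\beta}, x_2^{n_k}] \equiv (x_3^{n_k})^{x_1^{M+\beta}} \cdot [x_1^{M+\beta}, x_2^{n_k}].
\]
Substituting this into $\Omega_k(n_1, \ldots, n_{k-2}, (M+1)+\beta, n_k)$ and distributing the $k-2$ outer commutators $[x_1^{n_i}, \cdot]$ over this product by iterated use of Lemma \ref{lem:FreeIdentities}(2) separates the expression into $\Omega_k(n_1, \ldots, n_{k-2}, M+\beta, n_k)$ and a correction built from $[x_1^{n_1}, \ldots, x_1^{n_{k-2}}, (x_3^{n_k})^{x_1^{M+\beta}}]$. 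Applying the induction hypothesis to the first factor and invoking the identity $(x_3^{n_k})^{x_1^{M+\beta}} = [x_1^{M+\beta}, x_3^{n_k}]^{-1} \cdot x_3^{n_k}$, valid because $\gamma_2 G_{p,p-1}$ is abelian (the two filiform factors of the central product have abelian $\gamma_2$'s whose elements commute with one another), yields the new rightmost factor $[x_1^{n_1}, \ldots, x_1^{n_{k-2}}, x_1^{M+\beta}, x_3^{n_k}]^{-1} \cdot [x_1^{n_1}, \ldots, x_1^{n_{k-2}}, x_3^{n_k}]$ of the product in \eqref{eqn:prop:Decompx2intox3-1}.

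The extra conjugation terms produced by Lemma \ref{lem:FreeIdentities}(2) have the form $[\cdot,\cdot]^w$ with $w \in \gamma_2 G_{p,p-1}$, and the resulting discrepancies lie in deeper terms of the lower central series that can be re-absorbed at cost $\lesssim_p |\nnn|^{p-2}$ by appealing to the First commuting $k$-Lemma \ref{lem:Basic-k-Lemma} and the Second commuting $k$-Lemma \ref{lem:second(k,j)-Lemma}, which are available by the order of proofs in Figure \ref{fig:structure-proof-upper-bound}. Summing this $O(|\nnn|^{p-2})$ cost over the $M \leqslant |\nnn|$ inductive steps yields the area bound $\lesssim_p |\nnn|^{p-1}$. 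The diameter estimate $\lesssim_p |\nnn|$ follows from Lemma \ref{lem:diameter}, since by Corollary \ref{cor:distance-estimates} every prefix word arising during the argument has word diameter $\lesssim_p |\nnn|$.

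The main obstacle is the bookkeeping required to show that the iterated outer-commutator expansion produces exactly the product structure demanded by \eqref{eqn:prop:Decompx2intox3-1}, rather than merely that structure up to further conjugations by elements of $\gamma_2$. The abelianness of $\gamma_2 G_{p,p-1}$ does most of the heavy lifting, collapsing conjugations of $x_3$-valued factors cleanly; the residual errors are handled by the commuting $k$-Lemmas, which is where the non-trivial accumulation of the cost $|\nnn|^{p-1}$ takes place.
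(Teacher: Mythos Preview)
Your proposal is correct and follows essentially the same strategy as the paper: induction on $\lfloor n_{k-1}\rfloor$, peeling off one $x_1$ at the innermost level per step, distributing the outer commutators via Lemma~\ref{lem:FreeIdentities}, and removing the resulting conjugates at cost $\lesssim_p |\nnn|^{p-2}$ per step. The paper packages the distribution step as the separate Lemma~\ref{lem:Decompx2intox3} (applied $k-2$ times), whereas you inline it; your innermost splitting via Lemma~\ref{lem:FreeIdentities}(1) together with $[x_1,x_2^{n_k}]\equiv x_3^{n_k}$ is exactly what Lemma~\ref{lem:Introducing-x3s}(2) encodes.

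Two small points. First, the Second commuting $k$-Lemma is not needed here: the conjugates you must remove are either of the form $(\text{word in }x_1,x_3)^{w}$ with $w\in\mathcal F[\alpha]\cap[G,G]$, handled by the First commuting $k$-Lemma, or entirely inside $G_{p-1,p-1}$, handled directly by the induction hypothesis on $p$ (this is how the paper's Lemma~\ref{lem:Decompx2intox3} proceeds). Second, to keep the iterated distribution in the precise $\Omega^3_k/\Omega^3_{k+1}/\Omega_{k+1}$ shape that the First commuting $k$-Lemma expects, it is slightly cleaner to split $(x_3^{n_k})^{x_1^{M+\beta}}$ into the product $x_3^{n_k}\cdot[x_3^{n_k},x_1^{M+\beta}]$ \emph{before} distributing the outer commutators (as the paper does via Lemma~\ref{lem:Introducing-x3s}(2)), rather than after; otherwise you carry a factor not literally of $\Omega_k^j$-form through the iteration. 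This is cosmetic, but it avoids the bookkeeping obstacle you flag at the end.
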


\begin{addendum}
\label{add:prop:Decompx2intox3}
 The words in \eqref{eqn:prop:Decompx2intox3-1} and \eqref{eqn:prop:Decompx2intox3-2} have word diameter $\lesssim_p |\nnn|$.
\end{addendum}
\begin{proof}
 This is a direct consequence of the ``in particular'' part of Corollary \ref{cor:distance-estimates}.
\end{proof}

The key step in the proof of Proposition \ref{prop:Decompx2intox3} is summarized by the next result.
\begin{lemma}
\label{lem:Decompx2intox3}
For $p-2\geqslant  k \geqslant 1$, $\beta\in \RR$, $n\geqslant 1$ and words $u=\Omega_{k}^3(\nnn)^{\pm 1}$, $v=\Omega_{k+1}^3(\mmm)^{\pm 1}$ and $w=\Omega_{k+1}(\underline{l})^{\pm 1}$ with $|\beta|, |\nnn|,|\mmm|,|\underline{l}|\leqslant n$, the identity
 \[
  \left[x_1^{\beta},u\cdot v \cdot w\right] \equiv  \left[x_1^{\beta},w\right] \cdot \left[ x_1^{\beta},v\right]\cdot \left[x_1^{\beta},u\right]
 \]
 holds with area $\lesssim_{p} n^{p-2}$ and diameter $\lesssim_{p} n$ in $G_{p,p-1}$.
\end{lemma}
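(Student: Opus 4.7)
The plan is to expand $[x_1^\beta, uvw]$ into the target product by free manipulations, and to trivialize the resulting discrepancy using the First commuting $k$-Lemma~\ref{lem:Basic-k-Lemma} together with the induction hypothesis on $L_{p-1}$. Two applications of the free identity $[a, bc] \equiv [a, c] \cdot [a, b]^c$ from Lemma~\ref{lem:FreeIdentities}(2) give the free equality
\[
[x_1^\beta, uvw] = [x_1^\beta, w] \cdot [x_1^\beta, v]^w \cdot [x_1^\beta, u]^{vw},
\]
at no cost. Using Lemma~\ref{lem:FreeIdentities}(3) to trade each $a^b$ for $a \cdot [a,b]$, and expanding $[[x_1^\beta, u], vw]$ by one more application of Lemma~\ref{lem:FreeIdentities}(2), stripping the conjugations reduces to producing fillings of area $\lesssim_p n^{p-2}$ and diameter $\lesssim_p n$ for the three commutators $[[x_1^\beta, v], w]$, $[[x_1^\beta, u], w]$ and $[[x_1^\beta, u], v]$.

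The key observation is that, up to a conjugation by a word of length $\lesssim n$ lying in $\langle x_1, x_3 \rangle \cong L_{p-1}$, one has the word equalities $[x_1^\beta, v] \equiv \Omega_{k+2}^3(\beta, \mmm)^{\pm 1}$ and $[x_1^\beta, u] \equiv \Omega_{k+1}^3(\beta, \nnn)^{\pm 1}$: the bracket identity $[x_1^\beta, \Omega_j^3(\underline{s})] = \Omega_{j+1}^3(\beta, \underline{s})$ is exact when the ambient sign is $+1$, while for the sign $-1$ one invokes $[a, b^{-1}] = ([a, b]^{-1})^{b^{-1}}$. In particular both $[x_1^\beta, v]$ and $[x_1^\beta, u]$ belong to the subgroup $\langle x_1, x_3 \rangle \cong L_{p-1}$ of $G_{p,p-1}$, so the commutator $[[x_1^\beta, u], v]$ is a null-homotopic word of length $\lesssim n$ in $L_{p-1}$; the induction hypothesis $\delta_{L_{p-1}}(n) \asymp n^{p-2}$, with linear filling diameter, then provides the required filling.

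For $[[x_1^\beta, v], w]$ and $[[x_1^\beta, u], w]$, I use that $w = \Omega_{k+1}(\underline{l})^{\pm 1}$ belongs to $\mathcal{F}[O_p(1)]$ and represents an element of the derived subgroup of $G_{p,p-1}$. When $k+2 \leqslant p-2$ (resp.\ $k+1 \leqslant p-2$), the First commuting $(k+2)$-Lemma (resp.\ $(k+1)$-Lemma) applied to $\Omega_{k+2}^3(\beta, \mmm)^{\pm 1}$ (resp.\ $\Omega_{k+1}^3(\beta, \nnn)^{\pm 1}$) against $w$ supplies the desired filling; in the remaining boundary cases the inner element lies in $\gamma_{k+4}(L_{p-1})$ (resp.\ $\gamma_{k+3}(L_{p-1})$), which is trivial since $L_{p-1}$ has step $p-2$, and it is filled as a null-homotopic word of length $\lesssim n$ within $L_{p-1}$ at cost $\lesssim_p n^{p-2}$ by the induction hypothesis, trivializing the outer commutator at the same cost. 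The main subtlety I expect is the absorption of the extra conjugators arising in the sign-$(-1)$ cases: after the identity $[g^h, w'] = [g, (w')^{h^{-1}}]^h$, one writes $(w')^{h^{-1}} = w' \cdot [w', h^{-1}]$ via Lemma~\ref{lem:FreeIdentities}(3), and further applications of the First commuting $k$-Lemma at level $k$ or $k+1$ show that $g$ commutes with $[w', h^{-1}]$ at cost $\lesssim_p n^{p-2}$, so that the conjugations contribute only to the diameter (by a quantity $\lesssim n$) and leave the area estimate intact.
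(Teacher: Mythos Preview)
Your overall strategy matches the paper's: expand $[x_1^\beta,uvw]$ via Lemma~\ref{lem:FreeIdentities}(2), then undo the conjugations by $v$ and by $w$ separately. The paper handles the $v$-conjugation by the induction hypothesis for $G_{p-1,p-1}$ and the $w$-conjugation by the First commuting $k$-Lemma, exactly as you do.

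However, your argument has a genuine gap: you repeatedly invoke ``the induction hypothesis $\delta_{L_{p-1}}(n)\asymp n^{p-2}$'', and this is false. The filiform group $L_{p-1}$ has Dehn function $\asymp n^{p-1}$ (see Example~\ref{exm:filiform-extension-of-filiform} and Table~\ref{table:dim-less-5}), so you cannot fill a null-homotopic word of length $n$ in $\langle x_1,x_3\rangle\cong L_{p-1}$ at cost $n^{p-2}$ by working inside $L_{p-1}$. The correct statement, and what the paper uses, is that $\langle x_1,x_3\rangle$ sits inside the embedded subgroup $G_{p-1,p-1}\leqslant G_{p,p-1}$ (generated by $x_1,x_3,y_1,y_3$), and the induction hypothesis gives $\delta_{G_{p-1,p-1}}(n)\asymp n^{p-2}$ with linear filling diameter. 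Since your word $[[x_1^\beta,u],v]$ is null-homotopic in $G_{p-1,p-1}$ (both factors lie in its derived subgroup, and $G_{p-1,p-1}$ is metabelian), you can fill it there at the required cost. The same correction applies to your boundary-case analysis, where you again fill inside $L_{p-1}$.

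A minor point: your indices $\gamma_{k+4}(L_{p-1})$ and $\gamma_{k+3}(L_{p-1})$ in the boundary discussion are off; the elements in question lie in $\gamma_{k+2}(L_{p-1})$ and $\gamma_{k+1}(L_{p-1})$ respectively (since in $L_{p-1}$ the generator $x_3$ has weight $1$, not $2$). This does not affect the conclusion once the group is corrected to $G_{p-1,p-1}$.
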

\begin{proof}
 Applying Lemma \ref{lem:FreeIdentities}(2) twice, we deduce the free identities
\begin{align*}
  \left[x_1^{\beta},u\cdot v \cdot w\right] &\equiv  \left[x_1^{\beta},w\right]\cdot \left[x_1^{\beta},u\cdot v \right]^{w}\\
  &\equiv \left[x_1^{\beta},w\right]\cdot \left[x_1^{\beta}, v \right]^{w}\cdot \left(\left[x_1^{\beta}, u \right]^v\right)^{w}.
\end{align*}
Since $v=\Omega_{k+1}^3(\mmm)^{\pm 1}\in \left[G_{p-1,p-1},G_{p-1,p-1}\right]$ and $u\in G_{p-1,p-1}$ it follows from the induction hypothesis for $p-1$ and the assumptions, that the identity $ \left[x_1^{\beta}, u \right]^v \equiv \left[x_1^{\beta}, u \right]$ holds in $G_{p,p-1}$ with area $\lesssim_p n^{p-2}$ and diameter $\lesssim_p n$. Since $w\in \left[G_{p,p-1},G_{p,p-1}\right]\cap \mathcal{F}\left[ \alpha \right]$ for all $\alpha$ sufficiently large, two applications of the First commuting $k$-Lemma \ref{lem:Basic-k-Lemma} imply that the identity
\[
 \left[x_1^{\beta},w\right]\cdot \left[x_1^{\beta}, v \right]^{w}\cdot \left[x_1^{\beta}, u \right]^{w} \equiv  \left[x_1^{\beta},w\right] \cdot \left[ x_1^{\beta},v\right]\cdot \left[x_1^{\beta},u\right]
\]
holds in $G_{p,p-1}$ with area $\lesssim_p n^{p-2}$ and diameter $\lesssim_p n$. This completes the proof. 
\end{proof}

\begin{proof}[{Proof of Proposition \ref{prop:Decompx2intox3}}]
We will assume that $n_{k-1}\geqslant 0$, the proof for $n_{k-1}<0$ being similar. The proof is by induction on $\left \lfloor n_{k-1}\right \rfloor$.  The case $\left \lfloor |n_{k-1}|\right \rfloor=0$ is trivial, so assume that  $\left\lfloor n_{k-1} \right\rfloor>0$.
By Lemma \ref{lem:Introducing-x3s}(2), the identity
\[
\left[x_1^{n_{k-1}},x_2^{n_{k}}\right] \equiv x_3^{n_{k}} \left[x_3^{n_k},x_1^{n_{k-1}-1}\right] \left[x_1^{n_{k-1}-1},x_2^{n_k}\right]\equiv \Omega_{1}^3(n_{k})\cdot \left(\Omega_{2}^3(n_{k-1}-1,n_k)\right)^{-1}\cdot \Omega_{2}(n_{k-1}-1,n_k)
\]
holds in $G_{p,p-1}$ at cost $\lesssim_p |\nnn|^2$ and with diameter $\lesssim_p |\nnn|$. Applying Lemma \ref{lem:Decompx2intox3} and the $(p-1)$-version of Remark \ref{rem:dealing-with-inverses} 
a total of $k-2\leqslant p-3$ times to $\Omega_{k}(\nnn)$ we obtain the identities
\begin{align*}
\Omega_{k}(\nnn)&\equiv  \left[x_1^{n_1},\dots,x_1^{n_{k-2}},x_1^{n_{k-1}},x_2^{n_k}\right]\\
& \equiv  \Omega_{k}(n_1,\dots,n_{k-2},n_{k-1}-1,n_k) \cdot \Omega_{k}^3(n_1,\dots,n_{k-2},n_{k-1}-1,n_k)^{-1} \cdot \Omega_{k-1}^3(n_1,\dots,n_{k-2},n_{k})
\end{align*}
in $G_{p,p-1}$ at cost $\lesssim_p  |\nnn|^{p-2}$ and with diameter $\lesssim_p |\nnn|$. Note that a priori the three factors in the last line of the equation may appear in a different order after applying Lemma \ref{lem:Decompx2intox3}. However, since for all $\alpha$ sufficiently large $\Omega_{k}(n_1,\dots,n_{k-2},n_{k-1}-1,n_k)\in \left[G_{p,p-1},G_{p,p-1}\right]\cap \mathcal{F}[\alpha]$, the First commuting $k$-Lemma \ref{lem:Basic-k-Lemma} and the induction hypothesis for $G_{p-1,p-1}$ imply that we can reorder the factors in the given order at cost $\lesssim_p |\nnn|^{p-2}$ and with diameter $\lesssim_p |\nnn|$. 

Applying the induction hypothesis to the word $\Omega_{k}(n_1,\dots,n_{k-2},n_{k-1}-1,n_k)$ concludes the proof (the prefix word being trivial). 
\end{proof}

\begin{proof}[Proof of the Third commuting $k$-Lemma \ref{lem:Weak-k-Lemma}]
Let $w=w(x_1,x_2)\in \left[G_{p,p-1},G_{p,p-1}\right]$ be a word with $w\in \mathcal{F}[\alpha]$ and $\ell(w)\leqslant n$ and let $\nnn\in \RR^k$ with $|\nnn|\leqslant n$. Assume that $n_{k-1}\geqslant 0$, the case $n_{k-1}<0$ being similar. By Proposition \ref{prop:Decompx2intox3} the identity
\begin{equation}
 \label{eqn:decompR2pk}
   \Omega_{k}(\nnn)\equiv  \left[x_1^{n_1},\dots,x_1^{\beta},x_2^{n_k}\right] \prod_{j=0}^{\left\lfloor n_{k-1}\right\rfloor-1} \left(\left[x_1^{n_1},\dots,x_1^{n_{k-2}},x_1^{j+\beta},x_3^{n_k}\right]^{-1}\cdot \left[x_1^{n_1},\dots,x_1^{n_{k-2}},x_3^{n_{k}}\right]\right)  
\end{equation}
holds in $G_{p,p-1}$ with area $\lesssim_p |\nnn|^{p-1}$ and diameter $\lesssim_p |\nnn|$, where $|\beta|\leqslant 1$. Applying the First commuting $k$-Lemma \ref{lem:Basic-k-Lemma} at most $ 2n$ times to commute the terms on the right side of the identity \eqref{eqn:decompR2pk} with $w$ thus yields
\[
  \Omega_{k}(\nnn)\cdot w \equiv   \left[x_1^{n_1},\dots,x_1^{\beta},x_2^{n_k}\right] \cdot w \cdot  \prod_{j=0}^{\left\lfloor n_{k-1}\right\rfloor-1} \left(\left[x_1^{n_1},\dots,x_1^{n_{k-2}},x_1^{j+\beta},x_3^{n_k}\right]^{-1}\cdot \left[x_1^{n_1},\dots,x_1^{n_{k-2}},x_3^{n_{k}}\right]\right)  
\]
in $G_{p,p-1}$ with area $\lesssim_p n  |\nnn|^{p-2}+ |\nnn|^{p-1}$ and diameter $\lesssim_p |\nnn|$; for the diameter estimate we use Addendum \ref{add:prop:Decompx2intox3}. 

Lemma \ref{lem:Introducing-x3s}(3) and Lemma \ref{lem:prodformLiegroupx3} imply that there are $t_3=\beta n_{k}$ and $t_i$ with $|t_i|\lesssim_p n$, $4\leqslant i \leqslant p$ such that the identities
\begin{align*}
 \left[x_1^{n_1},\dots,x_1^{\beta},x_2^{n_k}\right] & \equiv  \left[x_1^{n_1},\dots , x_1^{n_{k-2}},x_3^{t_3}\cdots x_{p-1}^{t_{p-1}}z^{t_p}\right]\\
 & \equiv  \prod _{j=3}^{p-1} \left[x_1^{n_1},\dots , x_1^{n_{k-2}},x_j^{t_j}\right] 
\end{align*}
hold in $G_{p,p-1}$ with area $\lesssim_p |\nnn|^{p-2}$ and diameter $\lesssim_p |\nnn|$. Applying the First commuting $k$-Lemma \ref{lem:Basic-k-Lemma} $p-3$ times yields that 
\begin{align*}
 \Omega_{k}(\nnn)\cdot w\equiv & w\cdot  \prod _{j=3}^{p-1} \left(\left[x_1^{n_1},\dots , x_1^{n_{k-2}},x_j^{t_j}\right]\right)  &\cdot  \prod_{j=0}^{\left\lfloor n_{k-1}\right\rfloor-1} \left(\left[x_1^{n_1},\dots,x_1^{n_{k-2}},x_3^{n_k},x_1^{j+\beta}\right]\cdot \left[x_1^{n_1},\dots,x_1^{n_{k-2}},x_3^{n_{k}}\right]\right)  
\end{align*}
in $G_{p,p-1}$ with area $\lesssim_p n |\nnn|^{p-2}+ |\nnn|^{p-1}$ and diameter $\lesssim_p |\nnn|$. Finally, a further application of Lemma \ref{lem:prodformLiegroupx3}, Lemma \ref{lem:Introducing-x3s}(3) and Proposition \ref{prop:Decompx2intox3} to the right hand side yields that
\[
 \Omega_{k}(\nnn) \cdot w \equiv  w\cdot \Omega_{k}(\nnn)
\]
holds in $G_{p,p-1}$ with area $\lesssim_p n|\nnn|^{p-2} +  |\nnn|^{p-1}$ and diameter $\lesssim_p |\nnn|$. This completes the proof of the Third commuting $k$-Lemma.
\end{proof}

\begin{proof}[Proof of the Fourth commuting $k$-Lemma \ref{lem:second(k)-Lemma}]
 Note that the same proof demonstrates the Fourth commuting $k$-Lemma, except that in this case the area is $ \lesssim_p |m|\cdot |\nnn|^{p-2} +|\nnn|^{p-1}$ and the diameter is $\lesssim_p |m|+|\nnn|$. Indeed for $k=1$ the result is trivial and for $k\geqslant 2$ we simply replace $w$ by $x_2^m$ everywhere in the above proof and use the Second commuting $k$-Lemma \ref{lem:second(k,j)-Lemma} instead of the First commuting $k$-Lemma \ref{lem:Basic-k-Lemma}.
\end{proof}

We also record the following useful consequence of the arguments presented in this section.

\begin{corollary}
\label{cor:Usingx3stoReplaceRpp-1s}
 For all $\nnn\in\RR^{p-1}$, an identity of the form
 \[
  \Omega_{p-1}(\nnn)\equiv  \left(\Omega_{p-2}^3(|\nnn|,\ldots,|\nnn|)\right)^{m}
 \]
  with $m\in \RR$, $|m|\lesssim_p |\nnn|$, holds in $G_{p,p-1}$ with area $\lesssim_p |\nnn|^{p-1}$ and diameter $\lesssim_p |\nnn|$.
\end{corollary}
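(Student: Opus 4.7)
The plan is to apply Proposition \ref{prop:Decompx2intox3} with $k=p-1$, kill the resulting ``dead'' commutator factors of degree $p$ via embedded filiform subgroups, rewrite the residual term as a single central $\Omega_{p-2}^3$-term using Lemmas \ref{lem:Introducing-x3s} and \ref{lem:prodformLiegroupx3}, and finally reshape the product of central $\Omega_{p-2}^3$-terms into the specific form $(\Omega_{p-2}^3(\mmm))^{|\nnn|}$ via Lemma \ref{lem:reducing-products-of-Rpp-2s}. Assume $n_{p-2}\geqslant 0$ (the other sign being analogous) and let $\beta := n_{p-2}-\lfloor n_{p-2}\rfloor$. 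Proposition \ref{prop:Decompx2intox3} then yields, at area $\lesssim_p |\nnn|^{p-1}$ and diameter $\lesssim_p |\nnn|$,
\[
\Omega_{p-1}(\nnn)\equiv R \cdot \prod_{j=0}^{\lfloor n_{p-2}\rfloor-1}\Bigl(\Omega_{p-1}^3(n_1,\dots,n_{p-3},j+\beta,n_{p-1})^{-1}\cdot\Omega_{p-2}^3(n_1,\dots,n_{p-3},n_{p-1})\Bigr),
\]
with residual $R:=[x_1^{n_1},\dots,x_1^{n_{p-3}},x_1^\beta,x_2^{n_{p-1}}]$.

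Each $\Omega_{p-1}^3$-factor is a $(p-1)$-fold commutator in $\langle x_1,x_3\rangle\leqslant G_{p,p-1}$; this subgroup is isomorphic to $L_{p-1}$ (via $x_1\leftrightarrow \tilde x_1,\ x_3\leftrightarrow \tilde x_2$, whose compact presentations embed), and the factor corresponds there to $\Omega_{p-1}(\cdot)$, trivial because $L_{p-1}$ has step $p-2$. Since $L_{p-1}$ is a retract of $G_{p-1,p-1}$, Lemma \ref{lem:Dehn-functions-of-retracts} combined with the induction hypothesis fills each such factor of length $\lesssim_p |\nnn|$ at area $\lesssim_p |\nnn|^{p-2}$ and diameter $\lesssim_p |\nnn|$; summing the $\lfloor n_{p-2}\rfloor\leqslant |\nnn|$ contributions yields $\lesssim_p |\nnn|^{p-1}$. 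For $R$, Lemma \ref{lem:Introducing-x3s}(3) expands the innermost bracket as $[x_1^\beta,x_2^{n_{p-1}}]\equiv x_3^{\beta n_{p-1}}x_4^{t_4}\cdots z^{t_p}$ with $|t_j|\lesssim_p |n_{p-1}|$, and Lemma \ref{lem:prodformLiegroupx3} then distributes the outer $(p-3)$-fold commutator, giving $R\equiv \prod_{j=3}^{p-1}[x_1^{n_1},\dots,x_1^{n_{p-3}},x_j^{t_j}]$ at area $\lesssim_p |\nnn|^{p-2}$. The $j=3$ factor is exactly $\Omega_{p-2}^3(n_1,\dots,n_{p-3},\beta n_{p-1})$, while for $j\geqslant 4$ the factor lies in $\langle x_1,x_j\rangle\cong L_{p-j+2}$ (step $p-j+1$) as a trivial $(p-2)$-fold commutator, killable at area $\lesssim_p |\nnn|^{p-3}$ by the same retract argument.

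Collecting the above, at total area $\lesssim_p |\nnn|^{p-1}$ and diameter $\lesssim_p |\nnn|$,
\[
\Omega_{p-1}(\nnn)\equiv \Omega_{p-2}^3(n_1,\dots,n_{p-3},\beta n_{p-1})\cdot\Omega_{p-2}^3(n_1,\dots,n_{p-3},n_{p-1})^{\lfloor n_{p-2}\rfloor},
\]
a product of at most $|\nnn|$ central $\Omega_{p-2}^3$-terms each of argument size $\lesssim_p |\nnn|$, representing the central element $z^{n_1\cdots n_{p-1}}$. Choosing $\mmm:=(n_1,\dots,n_{p-3},n_{p-2}n_{p-1}/|\nnn|)\in\RR^{p-2}$ gives $|\mmm|\leqslant |\nnn|$ and ensures that $(\Omega_{p-2}^3(\mmm))^{|\nnn|}$, viewed as a product of $\lceil|\nnn|\rceil$ central factors (the last absorbing any fractional part by multilinearity in the final coordinate), represents the same central element. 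The null-homotopic difference is then a product of $O(|\nnn|)$ central $\Omega_{p-2}^3$-terms of size $\lesssim_p |\nnn|$, which Lemma \ref{lem:reducing-products-of-Rpp-2s} fills at area $\lesssim_p |\nnn|^{p-1}$ and diameter $\lesssim_p |\nnn|$. The main obstacle is this last reshaping step: one must ensure that both sides are expressible as products of at most $O(|\nnn|)$ central $\Omega_{p-2}^3$-terms of size $O(|\nnn|)$, so that the hypothesis $I\leqslant n$ of Lemma \ref{lem:reducing-products-of-Rpp-2s} is met and the sharp bound $|\nnn|^{p-1}$ is achieved.
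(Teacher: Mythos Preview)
Your approach is essentially the same as the paper's: apply Proposition~\ref{prop:Decompx2intox3} with $k=p-1$, collapse everything to a product of $\Omega_{p-2}^3$-terms, and then invoke Lemma~\ref{lem:reducing-products-of-Rpp-2s} to reshape into $(\Omega_{p-2}^3(\mmm))^{|\nnn|}$. Your treatment is in fact more explicit than the paper's, which compresses the first two steps into a single sentence and obtains $\mmm$ abstractly via Lemma~\ref{lem:extracting-exponents-of-xi} applied to $z^{q/|\nnn|}$ rather than writing down a formula.

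There is, however, one genuine error. You claim that $L_{p-1}$ is a retract of $G_{p-1,p-1}$ and invoke Lemma~\ref{lem:Dehn-functions-of-retracts}. This is false: any Lie algebra homomorphism $\rho:\mathfrak{g}_{p-1,p-1}\to\mathfrak{l}_{p-1}$ restricting to the identity on the first factor must send the second factor into the centre $\langle z\rangle$ (so that $[\rho(x_i),\rho(y_j)]=0$), hence factor through $\mathfrak{l}_{p-1}/[\mathfrak{l}_{p-1},\mathfrak{l}_{p-1}]$ on the second factor; but then $\rho$ kills $z'\in[\mathfrak{l}_{p-1},\mathfrak{l}_{p-1}]$ while simultaneously needing $\rho(z')=z\neq 0$ for well-definedness on the central product quotient. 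The same obstruction applies to your $L_{p-j+2}$ retract claim for $j\geqslant 4$.

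Fortunately this detour is unnecessary. Each $\Omega_{p-1}^3(\cdot)$ and each $[x_1^{n_1},\dots,x_1^{n_{p-3}},x_j^{t_j}]$ for $j\geqslant 4$ is a null-homotopic word of length $\lesssim_p|\nnn|$ in the generators of the embedded presentation $\mathcal{P}(G_{p-1,p-1})\hookrightarrow\mathcal{P}(G_{p,p-1})$ (cf.\ Lemma~\ref{lem:ChangingFactors}). The induction hypothesis on $G_{p-1,p-1}$ directly fills each at area $\lesssim_p|\nnn|^{p-2}$ and diameter $\lesssim_p|\nnn|$, and the filling transfers verbatim to $G_{p,p-1}$. (Your claimed bound $|\nnn|^{p-3}$ for the $j\geqslant 4$ terms is thus not justified, but $|\nnn|^{p-2}$ suffices since there are only $O_p(1)$ such terms.) With this correction the rest of your argument goes through.
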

\begin{proof}
The identity itself follows from Lemma \ref{lem:OmegaIdentity}. On the other hand, for $k=p-1$, Proposition \ref{prop:Decompx2intox3}, Proposition \ref{prop:binom-Lie} and Lemma \ref{lem:prodformLiegroupx3} yield $\nnn',\nnn''\in \RR^{p-2}$ with $|\nnn'|,|\nnn''|\lesssim_p |\nnn|$ such that the identity	
 \[	
  \Omega_{p-1}(\nnn) \equiv  \Omega_{p-2}^3(\nnn')\cdot \left(\Omega_{p-2}^3(\nnn'')\right)^{\left\lfloor n_{p-2}\right\rfloor}	
 \]	
 holds in $G_{p,p-1}$ with area $\lesssim_p  |\nnn|^{p-1}$ and diameter $\lesssim_p |\nnn|$. Combining these two identities for $\Omega_{p-1}(\nnn)$, we deduce the area and diameter estimates from Lemma \ref{lem:reducing-products-of-Rpp-2s}.
\end{proof}

\subsection{Reduction  and Main commuting Lemmas}

\begin{lemma}
\label{lem:standard-relations-of-length-n-hold}
For $l\geqslant 3$, $\nnn\in \RR^{p}$ and $\mmm\in \RR^{l}$ with $|\nnn|,|\mmm|\leqslant n$ the identities
\begin{enumerate}
 \item $\Omega_{p}(\nnn)\equiv 1$; and
 \item $\left[ x_2^{m_1}, \Omega_{l-1}\left(m_2,\dots, m_l\right)\right]\equiv 1$
\end{enumerate}
hold with area $\lesssim_p n^{p-1}$ and diameter $\lesssim_p n$ in $G_{p,p-1}$.
\end{lemma}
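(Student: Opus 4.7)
The plan is to treat the two identities separately, using the reductions and commuting machinery already established.

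For (1), I would begin by observing that $\Omega_p(\nnn) = [x_1^{n_1}, \Omega_{p-1}(n_2, \dots, n_p)]$ and that $\Omega_{p-1}(n_2, \dots, n_p) \in \gamma_{p-1}(G_{p,p-1}) \subset Z(G_{p,p-1})$, so the commutator is indeed trivial; the task is to produce a filling of the right cost. First I would apply Corollary \ref{cor:Usingx3stoReplaceRpp-1s} to the tuple $(n_2, \dots, n_p) \in \RR^{p-1}$ to obtain some $\mmm \in \RR^{p-2}$ with $|\mmm| \lesssim_p n$ such that
\[
  \Omega_{p-1}(n_2, \dots, n_p) \equiv \bigl(\Omega_{p-2}^{3}(\mmm)\bigr)^{L},
\]
where $L := |(n_2, \dots, n_p)| \leqslant n$, at cost $\lesssim_p n^{p-1}$ and diameter $\lesssim_p n$. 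Substituting this into $\Omega_p(\nnn)$ reduces the problem to showing that
\[
  x_1^{-n_1} \cdot \bigl(\Omega_{p-2}^{3}(\mmm)\bigr)^{-L} \cdot x_1^{n_1} \cdot \bigl(\Omega_{p-2}^{3}(\mmm)\bigr)^{L} \equiv 1
\]
with the same budget.

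The next step is to commute $x_1^{n_1}$ past each copy of $\Omega_{p-2}^{3}(\mmm)^{\pm 1}$ individually by iterating Lemma \ref{lem:basicklemmageneralized} with $k = p-2$, $\alpha = 2$, and $u = x_1^{n_1} \in \mathcal{F}[1] \subset \mathcal{F}[2]$. For this value of $k$, the product of $\Omega_{l_j}^{3}$-terms appearing on the right-hand side of Lemma \ref{lem:basicklemmageneralized} collapses in $G_{p,p-1}$, since $l_j \geqslant p-1$ forces $\Omega_{l_j}^{3}(\cdot) \in \gamma_p(G_{p,p-1}) = \{1\}$; hence each commutation $\Omega_{p-2}^{3}(\mmm)^{\pm 1} \cdot x_1^{n_1} \equiv x_1^{n_1} \cdot \Omega_{p-2}^{3}(\mmm)^{\pm 1}$ costs $\lesssim_p n^{p-2}$. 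Performing the $2L \leqslant 2n$ such commutations needed to bring both copies of $x_1^{\pm n_1}$ together yields a total cost of $\lesssim_p L \cdot n^{p-2} \lesssim_p n^{p-1}$, and the remaining word trivialises by free cancellation of $x_1^{-n_1} x_1^{n_1}$ and of the powers $(\Omega_{p-2}^{3}(\mmm))^{-L}(\Omega_{p-2}^{3}(\mmm))^{L}$. Since the prefix words throughout these manipulations have word length $\lesssim_p n$ (the word $\Omega_{p-2}^{3}(\mmm)^{L}$ has bounded length by construction, and the reduction in the first step has diameter $\lesssim_p n$), Lemma \ref{lem:diameter} delivers the diameter bound $\lesssim_p n$.

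For (2), the identity is a direct consequence of the Fourth commuting $k$-Lemma \ref{lem:second(k)-Lemma} applied with $k = l-1$ and $\nnn = (m_2, \dots, m_l)$, which lies in the range of applicability $1 \leqslant k \leqslant p-1$ (i.e.\ $l \leqslant p$, the only value of $l$ for which the commutator is actually a relation in $G_{p,p-1}$, and the case of interest here). The lemma yields the asserted relation with area $\lesssim_p |m_1| \cdot n^{p-2} + n^{p-1}$, which simplifies to $\lesssim_p n^{p-1}$ since $|m_1| \leqslant |\mmm| \leqslant n$, and with diameter $\lesssim_p n + |m_1| \lesssim_p n$. There is no substantial obstacle in either part; the main content of the lemma is purely bookkeeping, packaging the commuting $k$-Lemmas and Corollary \ref{cor:Usingx3stoReplaceRpp-1s} into the specific form needed for subsequent arguments.
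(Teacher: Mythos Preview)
Your argument is correct and follows essentially the same route as the paper. For part~(2) it is identical. For part~(1), both proofs reduce $\Omega_{p-1}(n_2,\dots,n_p)$ to a product of $\lesssim_p n$ words of type $\Omega_{p-2}^3(\cdot)$ and then commute $x_1^{n_1}$ past each at cost $\lesssim_p n^{p-2}$; you package the reduction via Corollary~\ref{cor:Usingx3stoReplaceRpp-1s} and the commutation via Lemma~\ref{lem:basicklemmageneralized} at $k=p-2$, while the paper does the reduction by hand through Proposition~\ref{prop:Decompx2intox3}, Lemma~\ref{lem:Introducing-x3s}(3), Lemma~\ref{lem:prodformLiegroupx3} and the commutation via the Substitution Lemma~\ref{lem:ChangingFactors} directly. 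These are the same ingredients in a slightly different order.

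One slip in your diameter justification: the word $(\Omega_{p-2}^3(\mmm))^L$ does \emph{not} have bounded length---since $|\mmm|\lesssim_p n$ and $L\leqslant n$, its word length is $\lesssim_p n^2$. What makes the diameter bound work is that every partial product $(\Omega_{p-2}^3(\mmm))^i$ with $i\leqslant L$ represents a central element $z^{iq}$ with $|iq|\lesssim_p n^{p-1}$, hence has word diameter $\lesssim_p n$ by the distortion of the centre; this is exactly Corollary~\ref{cor:distance-estimates} with $j=3$, $k=p-2$, $I\leqslant n$. With that replacement, your prefix estimates go through via Lemma~\ref{lem:diameter} as claimed.
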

\begin{proof}
Assertion (2) is an immediate consequence of the Fourth commuting $k$-Lemma \ref{lem:second(k)-Lemma}.
We turn to the proof of (1). We focus on the case $n_{p-1}\geqslant 0$, the case $n_{p-1}<0$ being similar. 
	
By Proposition \ref{prop:Decompx2intox3} and Addendum \ref{add:prop:Decompx2intox3} the identities 	
\begin{align*}	
	\Omega_p(\nnn)= &\left[x_1^{n_1},\Omega_{p-1}\left(n_2,\dots,n_p\right)\right]\\	
	 \equiv & \left[x_1^{n_1},\Omega_{p-1}(n_2,\dots,\beta,n_p)\cdot \prod_{j=0}^{\lfloor n_{p-1}\rfloor -1}\left( \Omega_{p-1}^3(n_2,\dots,n_{p-2},j+\beta,n_p)^{-1}\cdot \Omega_{p-1}^3(n_1,\dots,n_{p-2},n_p)\right)\right]	
\end{align*}	
hold at cost $\lesssim_p |\nnn|^{p-1}$ and with diameter $\lesssim_p|\nnn|$. In fact Addendum \ref{add:prop:Decompx2intox3} shows that the last word has word diameter $\lesssim_p|\nnn|$.  We will implicitly use this in all further diameter estimates of this proof.	

Using that $\Omega_{p-1}^3(n_2,\dots,n_{p-2},j+\beta,n_p)\equiv 1$ in $G_{p-1,p-1}$ at cost $\lesssim_p |\nnn|^{p-2}$ and with diameter $\lesssim_p |\nnn|$, and applying Lemma \ref{lem:Introducing-x3s}(3), we obtain that	
\[	
	\Omega_p(\nnn) \equiv \left[x_1^{n_1},\left[x_1^{n_2},\dots,\left[x_1^{n_{p-2}},x_3^{t_3}\cdots z^{t_p}\right]\right]\cdot \prod_{j=0}^{\lfloor n_{p-1}\rfloor -1}\left( \Omega_{p-1}^3(n_1,\dots,n_{p-2},n_p)\right)\right]	
\]	
at cost $\lesssim_p|\nnn|^{p-1}$ and diameter $\lesssim |\nnn|$.	

From Lemma \ref{lem:prodformLiegroupx3} and the fact that $\Omega_{p-2}^j(\cdot ) \equiv 1$ in $G_{p-1,p-1}$ for $j\geq 4$ we can now deduce that	
\begin{align*}	
	\Omega_p(\nnn)\equiv& \left[x_1^{n_1},\left(\prod_{j=3}^{p} \left[x_1^{n_1},\dots \left[x_1^{n_{p-2}},x_j^{t_{j}}\right]\dots\right]\right)\left(\Omega_{p-2}^3\left(n_2,\dots,n_{p-3},n_{p-2},n_p \right) \right)^{\left\lfloor n_{p-1}\right\rfloor} \right]\\	
  \equiv &  \left[x_1^{n_1},\Omega_{p-2}^3(n_1,\dots,n_{p-2},t_3)\cdot \left(\Omega_{p-2}^3 \left(n_2,\dots,n_{p-3},n_{p-2},n_p \right) \right)^{\left\lfloor n_{p-1}\right\rfloor} \right]	
\end{align*}	
with cost $\lesssim_p|\nnn|^{p-1}$ and diameter $\lesssim_p|\nnn|$ (where we use the identification $z=x_p$ to simplify notation in the first line).

 We apply Lemma \ref{lem:ChangingFactors} $\left\lfloor n_{p-1}\right\rfloor +1$ times at cost $\lesssim_p |\nnn|^{p-2}$ and diameter $\lesssim_p |\nnn|$ to obtain
 \begin{align*}
  &\left[x_1^{n_1},\Omega_{p-1}\left(n_2,\dots,n_p\right)\right]\\
  \equiv & \left[x_1^{n_1},\widetilde{\Omega}_{p-2}^3(n_1,\dots,n_{p-2},t_3)\cdot \left(\widetilde{\Omega}_{p-2}^3\left(n_2,\dots,n_{p-3},n_{p-2},n_p \right) \right)^{\left\lfloor n_{p-1}\right\rfloor} \right].
 \end{align*}
 Commuting the $\widetilde{\Omega}_{p-2}^3$ with $x_1^{n_1}$ at cost $\lesssim_p |n_1| \cdot n_{p-1}^2\lesssim_p n^3$ completes the proof, since the total area of all steps is $\lesssim_p |\nnn|^{p-1}$ and the diameter is $\lesssim_p |\nnn|$.
\end{proof}

\begin{lemma}
\label{lem:commutinwithRpks-main-theorem}
  Let $\alpha\geqslant 0$, let $w=w(x_1,x_2)\in \mathcal{F}[\alpha]$ with $\ell(w)\leqslant n$, and let $k\geqslant 1$, $\nnn\in \RR^k$ with $|\nnn|\leqslant n$. Then there exists a positive integer $L=O_{\alpha,p}(1)$ such that the  identity
   \[
    \Omega_{k}(\nnn)^{\pm 1} \cdot w(x_1,x_2)\equiv w(x_1,x_2)\cdot \prod_{j=1}^L \Omega_{l_j} (\mmm_{j})^{\epsilon_j}
 \]
 holds  in $G_{p,p-1}$ with area $\lesssim_{\alpha,p} n^{p-1}$ and diameter $\lesssim_{\alpha,p} n$, for suitable $\epsilon_j\in \left\{\pm 1\right\}$, $|\mmm_j|\lesssim_{\alpha,p}n$, and $k\leqslant l_j \leqslant p-1$. 
\end{lemma}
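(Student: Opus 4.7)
The plan is to prove the lemma by descending induction on $k\in\{1,\dots,p-1\}$, closely following the strategy of Lemma~\ref{lem:basicklemmageneralized} but substituting $\Omega_k$ for $\Omega_k^3$ and invoking the newly available Third and Fourth commuting $k$-Lemmas. I treat the $\Omega_k(\nnn)^{+1}$ case; the other is symmetric. I write a generic $w\in\mathcal F[\alpha]$ as $w=x_1^{\beta_1}x_2^{\gamma_1}\cdots x_1^{\beta_\mu}x_2^{\gamma_\mu}$ with $2\mu\leqslant\alpha$ and $\sum_i(|\beta_i|+|\gamma_i|)\leqslant n$, and I repeatedly exploit the fact that $\Omega_k(\nnn)$ itself lies in $\mathcal F[O_p(1)]$ with word length $\lesssim_p n$.

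Base case $k=p-1$: here $\Omega_{p-1}(\nnn)$ is central in $G_{p,p-1}$, so the expected output is just $w\cdot\Omega_{p-1}(\nnn)$. I commute $\Omega_{p-1}(\nnn)$ past each $x_2^{\gamma_i}$ at cost $\lesssim_p n^{p-1}$ by the Fourth commuting $(p-1)$-Lemma~\ref{lem:second(k)-Lemma}, and past each $x_1^{\beta_i}$ by combining the free identity $\Omega_{p-1}(\nnn)\,x_1^{\beta_i}=x_1^{\beta_i}\,\Omega_{p-1}(\nnn)\,\Omega_p(\beta_i,\nnn)^{-1}$ with the relation $\Omega_p(\beta_i,\nnn)\equiv 1$ furnished at cost $\lesssim_p n^{p-1}$ by Lemma~\ref{lem:standard-relations-of-length-n-hold}(1). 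Summing over the $O_\alpha(1)$ blocks of $w$ gives the required total area.

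Inductive step ($k\leqslant p-2$): I first push $\Omega_k(\nnn)$ all the way across $w$ block by block, as in the proof of Lemma~\ref{lem:basicklemmageneralized}. Commuting past $x_2^{\gamma_i}$ is the Fourth commuting $k$-Lemma~\ref{lem:second(k)-Lemma}. To commute past $x_1^{\beta_i}$ I use the free identity $\Omega_k(\nnn)\,x_1^{\beta_i}=x_1^{\beta_i}\,\Omega_k(\nnn)\,\Omega_{k+1}(\beta_i,\nnn)^{-1}$ and then swap $\Omega_k(\nnn)$ with the new error: for $k\geqslant 2$ the swap is provided by the Third commuting $(k+1)$-Lemma~\ref{lem:Weak-k-Lemma} applied with $w'=\Omega_k(\nnn)\in\mathcal F[O_p(1)]\cap[G_{p,p-1},G_{p,p-1}]$, while for $k=1$ (where $\Omega_1(\nnn)=x_2^{n_1}$) it is provided by the Fourth commuting $2$-Lemma~\ref{lem:second(k)-Lemma}; either way the swap costs $\lesssim_p n^{p-1}$. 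This yields, at total area $O_{\alpha,p}(n^{p-1})$,
\[
  \Omega_k(\nnn)\cdot w \;\equiv\; \prod_{i=1}^{\mu}\bigl(x_1^{\beta_i}\,\Omega_{k+1}(\beta_i,\nnn)^{-1}\,x_2^{\gamma_i}\bigr)\cdot \Omega_k(\nnn).
\]
I then apply the induction hypothesis at $k+1$ to each error $\Omega_{k+1}(\beta_i,\nnn)^{-1}$, starting from the rightmost, pushing it through the remainder. At each stage the remainder lies in $\mathcal F[O_{\alpha,p}(1)]$ and has length $\lesssim_p n$, so each application costs $\lesssim_{\alpha,p} n^{p-1}$ and injects $O_{\alpha,p}(1)$ new factors $\Omega_l(\mmm)^{\epsilon}$ with $l\geqslant k+1$ and $|\mmm|\lesssim_p n$. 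After all $\mu$ applications we reach the required form $w\cdot\prod_j\Omega_{l_j}(\mmm_j)^{\epsilon_j}$, counting $\Omega_k(\nnn)$ as the factor with $l_j=k$.

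The diameter bound $\lesssim_{\alpha,p} n$ follows from Lemma~\ref{lem:diameter}: all invoked lemmas have diameter $\lesssim_{\alpha,p} n$, and the prefix words are subwords of $w$ possibly concatenated with $O_{\alpha,p}(1)$ bounded-length $\Omega$-factors of word diameter $\lesssim_p n$. The principal technical subtlety, handled just as in the proof of Lemma~\ref{lem:basicklemmageneralized}, is to check that the complexity parameter of the remainder word controlling the applications of the induction hypothesis in the second step stays uniformly $O_{\alpha,p}(1)$ across the $\mu$ iterations, despite the accumulation of error factors.
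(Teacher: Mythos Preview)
Your proof is correct, but it differs from the paper's in the choice of inductive scheme. The paper proves Lemma~\ref{lem:commutinwithRpks-main-theorem} by induction on $\alpha$: for $w\in\mathcal F[2(\alpha+1)]$ one peels off the first block $x_1^{n_1}x_2^{m_1}$, pushes $\Omega_k(\nnn)^{\pm1}$ past it (producing a single error $\Omega_{k+1}(n_1,\nnn)^{\mp1}$ via the free identity, with the commutators with $x_2^{m_1}$ killed by Lemma~\ref{lem:standard-relations-of-length-n-hold}(2)), and then applies the inductive hypothesis on $\alpha$ twice --- once to $\Omega_k(\nnn)^{\pm1}$ and once to the error --- to push both through the remaining $\mathcal F[2\alpha]$-word. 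Your approach instead follows the template of Lemma~\ref{lem:basicklemmageneralized} and does descending induction on $k$: you first push $\Omega_k(\nnn)$ all the way through $w$, leaving $\mu$ errors $\Omega_{k+1}(\beta_i,\nnn)^{-1}$ behind, and then clear each error by the inductive hypothesis at level $k+1$.

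Both arguments are valid. The paper's induction on $\alpha$ has the advantage that the word to which the inductive hypothesis is applied is always a genuine suffix of $w$, so the complexity parameter is automatically $\leqslant\alpha$ and the subtlety you flag at the end does not arise. Your approach has the merit of reusing the structure of Lemma~\ref{lem:basicklemmageneralized} verbatim; the subtlety is easily handled by pushing each error only through the remaining blocks of $w$ (and not through the previously accumulated $\Omega$-tail), so that each application of the hypothesis at level $k+1$ uses a word in $\mathcal F[\alpha]$ of length $\leqslant n$, and the final product $P_1\cdots P_\mu\cdot\Omega_k(\nnn)$ has $\mu\cdot O_{\alpha,p}(1)+1=O_{\alpha,p}(1)$ factors.
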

\begin{proof}
The proof proceeds by induction on $\alpha$. The case $\alpha=0$ is trivially true for $L=1$ and $\Omega_{l_1}(\mmm_1)= \Omega_{k}(\nnn)$. Assume that the result holds for some $2\cdot \alpha \geqslant 0$ and let  
 \[
  w(x_1,x_2)=x_1^{n_1}x_2^{m_1}\cdot \dots \cdot x_1^{n_\kappa} x_2^{m_\kappa}\in \mathcal{F}\left[2(\alpha+1)\right]
 \]
be a word with $\ell(w)\leqslant n$. If $\kappa\leqslant \alpha$ then the result holds by induction hypothesis. We may thus assume $\kappa=\alpha +1$. The following identities hold in $G_{p,p-1}$
 \begin{align*}
  \Omega_{k}(\nnn)^{\pm 1} w(x_1,x_2) & \equiv  \Omega_{k}(\nnn)^{\pm 1} x_1^{n_1}x_2^{m_1}\cdot \dots \cdot x_1^{n_\kappa} x_2^{m_\kappa}\\
  \equiv & x_1^{n_1} \Omega_{k}(\nnn)^{\pm 1} \left[\Omega_{k}(\nnn)^{\pm 1},x_1^{n_1}\right]x_2^{m_1}\cdot \dots \cdot x_1^{n_\kappa} x_2^{m_\kappa}\\
  \equiv & x_1^{n_1} \Omega_{k}(\nnn)^{\pm 1} \left(\left[x_1^{n_1},\Omega_{k}(\nnn)^{\pm 1}\right]\right)^{-1} x_2^{m_1}\cdot \dots \cdot x_1^{n_\kappa} x_2^{m_\kappa}\\
  \overset{(\ast 1)}{=}& x_1^{n_1} \Omega_{k}(\nnn)^{\pm 1} \left(\left[x_1^{n_1},\Omega_{k}(\nnn)\right]\right)^{\mp 1} x_2^{m_1}\cdot \dots \cdot x_1^{n_\kappa} x_2^{m_\kappa}\\
  \equiv & x_1^{n_1} \Omega_{k}(\nnn)^{\pm 1} \left(\Omega_{k+1}(n_1,\nnn)\right)^{\mp 1} x_2^{m_1}\cdot \dots \cdot x_1^{n_\kappa} x_2^{m_\kappa}\\
  \equiv & x_1^{n_1} x_2^{m_1} \Omega_{k}(\nnn)^{\pm 1} \left[\Omega_{k}(\nnn)^{\pm 1},x_2^{m_1}\right] \left(\Omega_{k+1}(n_1,\nnn)\right)^{\mp 1} \left[\left(\Omega_{k+1}(n_1,\nnn)\right)^{\mp 1}, x_2^{m_1}\right]\\ &\cdot x_1^{n_2}x_2^{m_2}\cdot \dots \cdot x_1^{n_\kappa} x_2^{m_\kappa}\\
  \overset{(\ast 2)}{\equiv }& x_1^{n_1} x_2^{m_1} \Omega_{k}(\nnn)^{\pm 1} \left(\Omega_{k+1}(n_1,\nnn)\right)^{\mp 1} x_1^{n_2} x_2^{m_2} \cdot \dots \cdot x_1^{n_\kappa} x_2^{m_\kappa}
 \end{align*}
 where $(\ast 1)$ holds by applying the Third (or Fourth) commuting $k$-Lemma \ref{lem:Weak-k-Lemma} to the right hand side of the identity $\left[x_1^{n_1},\Omega_{k}(\nnn)^{\mp}\right]\equiv \left[\Omega_{k}(\nnn)^{\pm 1},x_1^{n_1}\right]^{\Omega_{k}(\nnn)^{\mp}}$ with area $\lesssim_{p} n^{p-1}$ and diameter $\lesssim_p n$, $(\ast 2)$ holds with area $\lesssim_{p}n^{p-1}$ and diameter $\lesssim_p n$ by Lemma \ref{lem:standard-relations-of-length-n-hold}(2), and the remaining identities are free. Note that if $k=p-1$ then $\Omega_{k+1}(n_1,\nnn)\equiv 1$ with area $\lesssim_{p} n^{p-1}$ and diameter $\lesssim_p n$ by Lemma \ref{lem:standard-relations-of-length-n-hold}(1) and we can thus get rid of it in this case.
 
 Applying the induction hypothesis first to $\left(\Omega_{k+1}(n_1,\nnn)\right)^{\mp 1}$ (if $k\leqslant p-2$) and then to $\Omega_{k}(\nnn)^{\pm 1}$ yields an identity of the form
 \begin{align*}
   x_1^{n_1} x_2^{m_1} \Omega_{k}(\nnn)^{\pm 1} \left(\Omega_{k+1}(n_1,\nnn)\right)^{\mp 1} x_1^{n_2} x_2^{m_2} \cdot \dots \cdot x_1^{n_\kappa} x_2^{m_\kappa} \equiv  w\cdot \prod_{j=1}^{L_1} \Omega_{l_{1,j}} (\mmm_{1,j})^{\epsilon_{1,j}} \cdot \prod_{j=1}^{L_2} \Omega_{l_{2,j}} (\mmm_{2,j})^{\epsilon_{2,j}}
 \end{align*}
 with $L_1,L_2=O_{\alpha,p}(1)$, $k\leqslant l_{1,j},l_{2,j}\leqslant p-1$ and $|\mmm_{1,j}|,|\mmm_{2,j}|\lesssim_{\alpha,p}|\nnn|+|n_1| \lesssim_{\alpha,p} n$ with area $\lesssim_{\alpha,p}n^{p-1}$ and diameter $\lesssim_{\alpha,p} n$. This completes the proof. 
\end{proof}

We now turn to the proof of the Reduction Lemma \ref{lem:derived-words-prod-Rpk-Mainthm}.

\begin{proof}[Proof of the Reduction Lemma \ref{lem:derived-words-prod-Rpk-Mainthm}]
 The proof is by induction on $2\cdot \alpha$. The case $\alpha=1$ is trivial, since $w(x_1,x_2)=x_1^{n_1}x_2^{m_1}\in \left[G_{p,p-1},G_{p,p-1}\right]$ implies that $n_1=m_1=0$. Thus assume that by induction the result holds for some $2\cdot \alpha\geqslant 1$ and consider a word of length at most $n$
 \[
  w(x_1,x_2)=x_1^{n_1}x_2^{m_1}\cdot \dots \cdot x_1^{n_k} x_2^{m_k}\in  \mathcal{F}[2(\alpha+1)]
 \]
corresponding to an element of $\left[G_{p,p-1},G_{p,p-1}\right]$. By induction hypothesis we may assume that $k=\alpha+1$. 
 
The following identities hold:
 \begin{align*}
   w(x_1,x_2) &\equiv  x_1^{n_1}x_2^{m_1}\cdot \dots \cdot x_1^{n_k} x_2^{m_k}\\
   &\equiv  x_1^{n_1+n_2}x_2^{m_1}\left[x_2^{m_1},x_1^{n_2}\right] x_2^{m_2}x_1^{n_3}\cdot \dots \cdot x_1^{n_k}x_2^{m_k}\\
   &\equiv  x_1^{n_1+n_2} x_2^{m_1+m_2} \left[x_2^{m_1},x_1^{n_2}\right] \left[\left[x_2^{m_1},x_1^{n_2}\right],x_2^{m_2}\right]x_1^{n_3}\cdot \dots \cdot x_1^{n_k}x_2^{m_k}\\
   &\equiv  x_1^{n_1+n_2} x_2^{m_1+m_2} \Omega_{2}(n_2,m_1)^{-1}  \left[\left[x_2^{m_1},x_1^{n_2}\right],x_2^{m_2}\right]x_1^{n_3}\cdot \dots \cdot x_1^{n_k}x_2^{m_k}\\
   &\equiv  x_1^{n_1+n_2} x_2^{m_1+m_2} \Omega_{2}(n_2,m_1)^{-1}  x_1^{n_3}\cdot \dots \cdot x_1^{n_k}x_2^{m_k},
 \end{align*}
 where the last identity holds with area $\lesssim_p n^{p-1}$ and diameter $\lesssim_p n$ by Lemma \ref{lem:standard-relations-of-length-n-hold}(2). We apply Lemma \ref{lem:commutinwithRpks-main-theorem} and obtain that
 \[
  x_1^{n_1+n_2} x_2^{m_1+m_2} \Omega_{2}(n_2,m_1)^{-1} x_1^{n_3}\cdot \dots \cdot x_1^{n_k}x_2^{m_k} \equiv x_1^{n_1+n_2} x_2^{m_1+m_2} x_1^{n_3}\cdot \dots \cdot x_1^{n_k}x_2^{m_k} \prod_{j=1}^L \Omega_{l_j} (\mmm_{j})^{\pm 1}
 \]
 with $L=O_{\alpha,p}(1)$, $|\mmm_j|\lesssim_{\alpha,p}  n$, and $2\leqslant l_j\leqslant p-1$, with area $\lesssim_{\alpha,p} n^{p-1}$ and diameter $\lesssim_{\alpha,p} n$. The word $v(x_1,x_2)= x_1^{n_1+n_2} x_2^{m_1+m_2} x_1^{n_3}\cdot \dots \cdot x_1^{n_k}x_2^{m_k}$ has the same exponent sums for $x_1$ and $x_2$ as the word $w$ and  thus also corresponds to an element of $\left[G_{p,p-1},G_{p,p-1}\right]$ of length $\leqslant n$. Moreover, $v\in \mathcal{F}[2\cdot \alpha]$ and hence we can apply the induction hypothesis for $2\cdot \alpha$ to $v$. 
This completes the proof.
\end{proof}

We are now in position to prove the Main commuting Lemma \ref{lem:MainLemma}.
\begin{proof}[Proof of the Main commuting Lemma \ref{lem:MainLemma}]
The case where both $w_1$ and $w_2$ are powers of $x_2$ is obvious.  Else, we may assume that $w_1$ is in the derived subgroup. We then apply Lemma \ref{lem:derived-words-prod-Rpk-Mainthm} to rewrite it as a product of $O_{\alpha,p}(1)$ many terms of type $\Omega_{k}^{\pm 1}$, with $k\geqslant 2$, and we conclude thanks to the Third commuting $k$-Lemma \ref{lem:Weak-k-Lemma} if $w_2$ is in the derived subgroup and the Fourth commuting $k$-Lemma \ref{lem:second(k)-Lemma} if $w_2$ is a power of $x_2$.
\end{proof}

\subsection{Cutting in half Lemma}
The two identities of Lemma \ref{lem:Cutinhalf} are proved in the same way\footnote{Note that we can also deduce one from the other using the Main commuting lemma that has already been established.} so we focus on $
\Omega_{k}(2\nnn) \equiv \Omega_{k}(\nnn)^{2^k} \cdot w_k(\underline{n})
$. The proof is by induction on $k$.

The case $k=1$ is trivial. We thus assume that Lemma \ref{lem:Cutinhalf} holds for some $k\geqslant 1$ and consider the commutator $ \Omega_{k+1}(2\nnn)=\left[x_1^{2n_1},x_1^{2n_2},\dots,x_1^{2n_{k}},x_2^{2n_{k+1}}\right]$. We introduce the notation $v_k=\Omega_{k}(n_2,\ldots,n_{k+1})=\left[x_1^{n_2},\dots,x_1^{n_{k}},x_2^{n_{k+1}}\right]$. By induction hypothesis the identity
\[
\left[x_1^{2n_1},\dots,x_1^{2n_{k}},x_2^{2n_{k+1}}\right]\equiv  \left[x_1^{2n_1},v_k^{2^k}\cdot w_k\right]
\]
holds with area $\lesssim_{p}|\nnn|^{p-1}$ and diameter $\lesssim_p |\nnn|$ for $w_k=\prod_{i=1}^L \Omega_{l_i}(\mmm_i)^{\pm 1}$ with  $L=O_p(1)$, $l_i\geqslant k+1$ and $|\mmm_i|\lesssim_p |\nnn|$ for $1\leqslant i \leqslant L$.

Using Remark ~\ref{rem:dealing-with-inverses}, Lemma ~\ref{lem:FreeIdentities} and the Main commuting Lemma \ref{lem:MainLemma} we observe that the following identities hold with area $\lesssim_p |\nnn|^{p-1}$ and diameter $\lesssim_p |\nnn|$:
\begin{align*}
 \left[x_1^{2n_1},v_k^{2^k}\cdot w_k\right] &\equiv \left[x_1^{2n_1},w_k\right]\cdot \left[x_1^{2n_1},v_k^{2^k}\right]^{w_k}\\
 &\equiv  \left[x_1^{2n_1},w_k\right]\cdot \left[x_1^{2n_1},v_k^{2^k}\right] \cdot \underbrace{\left[\left[x_1^{2n_1},v_k^{2^k}\right],w_k\right]}_{\mbox{$\equiv 1$ by $(\Delta)$}}\\
 &\overset{(\Delta)}{\equiv } \left[x_1^{2n_1},v_k^{2^k}\right]\cdot \left[x_1^{2n_1},w_k\right]\\
 &\equiv \left[x_1^{n_1},v_k^{2^k}\right]^{x_1^{n_1}}\cdot \left[x_1^{n_1},v_k^{2^k}\right]\cdot \left[x_1^{n_1},w_k\right]^{x_1^{n_1}}\cdot \left[x_1^{n_1},w_k\right]\\
 &\overset{(\ast 1)}{\equiv }\left(\left[x_1^{n_1},v_k\right]^{2^k}\right)^{x_1^{n_1}}\cdot \left[x_1^{n_1},v_k\right]^{2^k}\cdot \left[x_1^{n_1},w_k\right]^{x_1^{n_1}}\cdot \left[x_1^{n_1},w_k\right]\\
 &\equiv  \left(\Omega_{k}(\nnn)\left[\Omega_{k}(\nnn),x_1^{n_1}\right]\right)^{2^k}\cdot \Omega_{k}(\nnn)^{2^k}\cdot \left[x_1^{n_1},w_k\right]^{x_1^{n_1}}\cdot\left[x_1^{n_1},w_k\right]\\
 &\overset{(\ast 2)}{\equiv }  \Omega_{k}(\nnn)^{2^{k+1}}\left[\Omega_{k}(\nnn),x_1^{n_1}\right]^{2^k}\cdot \left[x_1^{n_1},w_k\right]^{x_1^{n_1}}\cdot \left[x_1^{n_1},w_k\right]
\end{align*}
Here we wrote $(\Delta)$ whenever we applied the Main commuting Lemma \ref{lem:MainLemma} to words of length $\lesssim_p |\nnn|$. In step $(\ast 1)$ we iteratively applied Lemma \ref{lem:FreeIdentities} and $(\Delta)$ $2\cdot(2^k-1)$ times to words of length $\lesssim_p |\nnn|$ at cost $\lesssim_p 2\cdot(2^k-1)\cdot |\nnn|^{p-1}$. In step $(\ast 2)$ we apply $(\Delta)$ $2\cdot 2^k \cdot 2^k$ times to terms of length $\lesssim_p |\nnn|$, the cost of which is also $\lesssim_p 2^{2k+1} |\nnn|^{p-1}$.

To complete the proof we need to write the error term 
\[
 \left[\Omega_{k+1}(\nnn),x_1^{n_1}\right]^{2^k}\cdot \left[x_1^{n_1},w_k\right]^{x_1^{n_1}}\cdot \left[x_1^{n_1},w_k\right] 
\]
as a product of $O_{p}(1)$ commutators of the form $\Omega_{l'}(\underline{m}')^{\pm 1}$ with $|\mmm'|\lesssim_p |\nnn|$ and $l'\geqslant k+2$ at cost $\lesssim_p |\nnn|^{p-1}$ and with diameter $\lesssim_p |\nnn|$. To see this let $w_k=\prod_{i=1}^L \Omega_{l_i}(\mmm_i)^{\pm 1}\in \gamma_{k+1}(G)$ with  $L=O_p(1)$, $l_i\geqslant k+1$ and $|\mmm_i|\lesssim_p |\nnn|$ for $1\leqslant i \leqslant L$ and consider the following identities:
\begin{align*}
 &\left[\Omega_{k}(\nnn),x_1^{n_1}\right]^{2^k}\cdot \left[x_1^{n_1},w_k\right]^{x_1^{n_1}}\cdot \left[x_1^{n_1},w_k\right]\\
 \overset{(\ast 1)}{\equiv }& \left[x_1^{n_1},\Omega_{k}(\nnn)\right]^{-2^k} \prod_{i=1}^m \left[x_1^{n_1},\Omega_{l_i}(\mmm_i)^{\pm 1}\right]^{x_1^{n_1}} \cdot \prod_{i=1}^m \left[x_1^{n_1},\Omega_{l_i}(\mmm_i)^{\pm 1}\right]\\
 \equiv & \left[x_1^{n_1},\Omega_{k}(\nnn)\right]^{-2^k} \cdot \left(\prod_{i=1}^m \left[x_1^{n_1},\Omega_{l_i}(\mmm_i)^{\pm 1}\right]\cdot \left[\left[x_1^{n_1},\Omega_{l_i}(\mmm_i)^{\pm 1}\right], x_1^{n_1}\right]\right) \cdot  \prod_{i=1}^m \left[x_1^{n_1},\Omega_{l_i}(\mmm_i)^{\pm 1}\right]\\
 \overset{(\ast 2)}{\equiv }& \left[x_1^{n_1},\Omega_{k}(\nnn)\right]^{-2^k} \cdot \left(\prod_{i=1}^m \left[x_1^{n_1},\Omega_{l_i}(\mmm_i)\right]^{\pm 1}\cdot \left[x_1^{n_1}, \left[x_1^{n_1},\Omega_{l_i}(\mmm_i)\right]\right]^{\mp 1}\right) \cdot  \prod_{i=1}^m \left[x_1^{n_1},\Omega_{l_i}(\mmm_i)\right]^{\pm 1}=: w_{k+1}(\nnn).\\
\end{align*}
Observe that in $(\ast 1)$ we apply Lemma \ref{lem:FreeIdentities} and $(\Delta)$ $\leqslant 2O_p(1)$ times and that in $(\ast 2)$ we apply Remark \ref{rem:dealing-with-inverses} $\leqslant 4\cdot O_p(1)$ times to words of length $\lesssim_p |\nnn|$. It follows that these identities hold with area $\lesssim_p |\nnn|^{p-1}$ and diameter $\lesssim_p |\nnn|$. This completes the proof of the Cutting in half $k$-Lemma \ref{lem:Cutinhalf}.

\subsection{Cancelling $k$-Lemma}

The proof of the Cancelling $k$-Lemma is by descending induction on $k$. The Cancelling $(p-1)$-Lemma is a straight-forward consequence of Lemma \ref{lem:reducing-products-of-Rpp-2s}, Corollary \ref{cor:Usingx3stoReplaceRpp-1s} and Lemma \ref{lem:diameter}. Thus assume that the Cancelling $l$-Lemma holds for all $p-1\geqslant l\geqslant k+1$. 

The induction step in the proof of the Cancelling $k$-Lemma is one of the most subtle parts of our proof of Main Theorem \ref{thm:Upperbound}. Our goal is to manoeuvre ourselves into a position where we can use that for a word being null-homotopic implies that in its Malcev normal form in $G_{p,p-1}$ the exponent sum of the $x_k$ must vanish. In particular, this requires extracting the $x_k$ from the word. Pursuing a naive approach using the Fractal form Lemma \ref{lem:FractalForm} will lead to a word that consists of powers of $x_k$ that cumulatively have word length $n^{k-1}$, as well as many ``error terms'' in the form of short iterated commutators that cumulatively have non-linearly bounded word length. Commmuting them using our Main commuting Lemma to assemble the $x_k$ on the left and the error terms on the right would be much too expensive. To circumvent this problem we perform the extraction of powers of $x_k$ using a more intricate procedure which can be seen as beefed-up version of the Fractal form Lemma: rather than producing a word in fractal form we merge error terms whenever we create them and thereby keep their numbers low. We emphasize that it is only at this point of the proof that we can do this, as it will require the $p$-versions of the Cutting in half Lemma \ref{lem:Cutinhalf} and the Main commuting Lemma \ref{lem:MainLemma}. 

We will now perform the core part of the proof of the induction step from $k+1$ to $k$, where we overcome the aforementioned difficulties. This will provide us with the following technical result.
\begin{lemma}
 \label{lem:key-tech-result-strong-k-lemma}
For $p-2\geqslant k \geqslant 2$, $n \geqslant 1$ and $\nnn\in \RR^{k}$ with $|\nnn|\leqslant n$ an identity of the form
 \[
  \Omega_{k}(\nnn)^{\pm 1}\equiv x_{k+1}^{\beta} \cdot E_{p,k}\left(\nnn\right)
 \]
 holds in $G_{p,p-1}$ with area $\lesssim_{p}n^{p-1}$ and diameter $\lesssim_p n$, where $E_{p,k}(\nnn)$ is a product of the form $\prod_{i=k+1}^{p-1} \Omega_{i}(\mmm_i)^{\pm 1}$ with $|\mmm_i|\lesssim_{p} n$. Moreover, $|\beta|\lesssim_p n^k$.\footnote{Note that an iterated application of Proposition \ref{prop:binom-Lie} shows that in fact $\beta=\pm n_1\cdots n_k$ for $\nnn=\left(n_1,\dots,n_k\right)$.}
\end{lemma}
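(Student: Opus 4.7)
The plan is to proceed by descending induction on $k$, from $k=p-2$ down to $k=2$, using the inductive hypothesis of the present lemma at larger indices together with the $p$-versions of the Cutting in half Lemma~\ref{lem:Cutinhalf} and the Main commuting Lemma~\ref{lem:MainLemma}, already established at this point of the proof. For the base case $k=p-2$, $\Omega_{p-2}(\nnn)$ lies in $\gamma_{p-2}(G_{p,p-1})=\langle x_{p-1},z\rangle$, and one argues essentially as in Corollary~\ref{cor:Usingx3stoReplaceRpp-1s}: first bring it to Malcev form $x_{p-1}^{\beta}z^{t_p}$ with $|\beta|\lesssim_p n^{p-2}$ and $|t_p|\lesssim_p n^{p-1}$ at cost $\lesssim_p n^{p-2}$ (using the outer induction hypothesis $\delta_{G_{p-1,p-1}}\preccurlyeq n^{p-2}$ together with Lemma~\ref{lem:ChangingFactors} where needed), then rewrite $z^{t_p}$ as $\Omega_{p-1}(\mmm)^{\pm 1}$ with $|\mmm|\lesssim_p n$ via Lemma~\ref{lem:extracting-exponents-of-xi} and the $(p-1)$-th root distortion of $\langle z\rangle$.

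For the inductive step with $2\leqslant k<p-2$, the plan is to implement the ``beefed-up Fractal form'' advertised in the paragraph preceding the statement. I apply the Cutting in half Lemma~\ref{lem:Cutinhalf} (second form) iteratively for $J=\lceil\log_2|\nnn|\rceil$ stages, always on the right-most $\Omega_k$-block. After stage $j$ the word has the shape $W^{(j)}\cdot\Omega_k(\nnn/2^j)^{2^{jk}}$, where the prefix $W^{(j)}$ is maintained in the \emph{compressed form}
\[
W^{(j)}=x_{k+2}^{T_{k+2}^{(j)}}\cdots z^{T_p^{(j)}}\cdot \prod_{i=k+2}^{p-1}\Omega_i(\mmm_i^{(j)})^{\pm 1},
\]
with $|T_i^{(j)}|\lesssim_p n^{i-1}$ and $|\mmm_i^{(j)}|\lesssim_p n$, keeping the total word length of $W^{(j)}$ bounded by $\lesssim_p n$. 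The core novelty lies in the compression step: at stage $j$ the halving creates $2^{(j-1)k}$ copies of an error block $w_k(\nnn/2^j)$, each a bounded product of $\Omega_{l}(\mmm)^{\pm 1}$ with $l\geqslant k+1$ and $|\mmm|\lesssim_p n/2^{j-1}$; to each such $\Omega_l$ I apply the descending inductive hypothesis of the present lemma, rewriting it as a leading $x_{l+1}^{\beta}$ times a short tail, then accumulate the $x_i$-powers into the existing $T_i^{(j)}$ and merge tails into the existing $\Omega_i$-factors by permuting with the Main commuting Lemma~\ref{lem:MainLemma} and recombining via Lemma~\ref{lem:extracting-exponents-of-xi}. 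Once stage $J$ is reached, $\Omega_k(\nnn/2^J)$ has bounded-norm arguments, and collecting its $2^{Jk}\lesssim_p n^k$-fold power via Proposition~\ref{prop:binom-Lie} and Lemma~\ref{lem:extracting-exponents-of-xi} yields the leading $x_{k+1}^{\beta}$ with $|\beta|\lesssim_p n^k$ plus further contributions absorbed into $W^{(J)}$; a final application of Lemma~\ref{lem:extracting-exponents-of-xi} converts $x_{k+2}^{T_{k+2}}\cdots z^{T_p}$ into the required short product $E_{p,k}(\nnn)$.

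The main obstacle will be the cost accounting of the merging phase. The stage-$j$ halving contributes $2^{(j-1)k}\cdot(n/2^{j-1})^{p-1}=2^{(j-1)(k-p+1)}n^{p-1}$, which sums over $j$ to a convergent geometric series bounded by $\lesssim_p n^{p-1}$ (using $k\leqslant p-3$). The hard part will be ensuring that the merging cost at stage $j$ is controlled by the scaled norm $n/2^{j-1}$ (rather than the crude scale $n$) by invoking the inductive hypothesis at the appropriate scale, in the spirit of Lemma~\ref{lem:reducing-products-of-Rpp-2s}, so that the merging cost itself telescopes in $j$ and sums to $\lesssim_p n^{p-1}$ without a spurious logarithmic factor. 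The diameter bound $\lesssim_p n$ at every intermediate step follows from Lemma~\ref{lem:diameter} and Corollary~\ref{cor:distance-estimates}, since each intermediate word has word diameter $\lesssim_p n$.
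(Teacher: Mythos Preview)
Your approach differs substantially from the paper's, and the ``hard part'' you flag is a genuine gap that your outline does not close.

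The paper does \emph{not} argue by descending induction on $k$ inside this lemma. Instead it runs an induction on the scale $m=\lceil\log_2|\nnn|\rceil$: one application of the Cutting in half Lemma gives $\Omega_k(\nnn)\equiv\Omega_k(\nnn/2)^{2^k}\cdot w_k(\nnn/2)$, and then the \emph{scale} hypothesis (for $\nnn/2$) is applied to each of the $2^k=O_p(1)$ copies in turn, with the resulting error $E_{p,k}(\nnn/2)$ shuffled right by $O_p(1)$ uses of the Main commuting Lemma. The crucial point is that the accumulated error $(E_{p,k}(\nnn/2))^{2^k}\cdot w_k(\nnn/2)$ is then merged into a single $E_{p,k}(\nnn)$ in one shot by invoking the \emph{Cancelling $(k{+}1)$-Lemma}, which is available from the outer descending induction on $k$ in the proof of the Cancelling Lemma (this is Lemma~\ref{lem:errorterms} in the paper). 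The resulting recurrence $\delta_{m+1}\leqslant 2^k\delta_m+C\,2^{m(p-1)}$ solves to $\delta_m\lesssim_p|\nnn|^{p-1}$.

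Your scheme instead unfolds all $J=\lceil\log_2|\nnn|\rceil$ halvings, producing at stage $j$ on the order of $2^{(j-1)k}$ error blocks to be absorbed into a compressed prefix $W^{(j)}$ that lives at the \emph{full} scale $n$. Even if applying the inductive hypothesis (for $l>k$) to each error term costs only $(n/2^j)^{p-1}$, the commuting and recombination needed to merge the outputs into $W^{(j)}$ are priced by the Main commuting Lemma at the length of the words being commuted, hence at scale $n$, not $n/2^j$. In particular, moving each newly created tail past the standing $\Omega_k(\nnn/2^j)$-block and into the $O_p(1)$-factor part of $W^{(j)}$ costs $n^{p-1}$ per move; doing this for $2^{(j-1)k}$ blocks and summing over $j$ gives $\sum_j 2^{(j-1)k}n^{p-1}\asymp n^{k}\cdot n^{p-1}$, far too large. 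Pairwise merging ``in the spirit of Lemma~\ref{lem:reducing-products-of-Rpp-2s}'' does not rescue this: that lemma works because its terms are \emph{central} so no commuting past long words is needed; your $\Omega_l$-tails for $l\leqslant p-2$ are not central, and the recombination step you appeal to (Lemma~\ref{lem:extracting-exponents-of-xi}) is an \emph{identity} statement, not a cost-controlled transformation. The paper's device for collapsing a bounded product of $\Omega_l$'s with $l\geqslant k+1$ into a single short $E_{p,k}$ at cost $\lesssim_p n^{p-1}$ is precisely the Cancelling $(k{+}1)$-Lemma, and that is what your plan is missing.

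A minor further point: your base case $k=p-2$ claims cost $\lesssim_p n^{p-2}$ to bring $\Omega_{p-2}(\nnn)$ to Malcev form. That word is in $x_1,x_2$ and does not live in $G_{p-1,p-1}$, so the outer hypothesis $\delta_{G_{p-1,p-1}}\preccurlyeq n^{p-2}$ is not directly applicable; the natural route via Proposition~\ref{prop:Decompx2intox3} already costs $|\nnn|^{p-1}$. This is harmless for the final bound (which allows $n^{p-1}$), but the $n^{p-2}$ claim is not justified.
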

We shall focus here on the identity  $ \Omega_{k}(\nnn)\equiv x_{k+1}^{\beta} \cdot \left(E_{p,k}\left(\nnn\right)\right)$, the other one (with $\Omega_{k}(\nnn)^{-1}$) having the same proof\footnote{The only difference lies in the fact that we would have to use the second identity of the Cutting in half Lemma instead of the first one.}.

\begin{proof}
 The proof is by an inductive procedure in $m:=\left\lceil\log_2(|\nnn|)\right\rceil$. When $m= 1$ the result is an immediate consequence of Lemma \ref{lem:extracting-exponents-of-xi} and our choice of relations, since for $m=1$ we have $|\nnn|\leqslant 1$. The inductive step is encoded in the following claim.
\begin{claim}\label{claim:recur(m)}
There exists a constant $C=C(p)$ such that
if Lemma \ref{lem:key-tech-result-strong-k-lemma} holds for an element $\nnn/2\in \RR^k$ satisfying $\left\lceil\log_2(|\nnn/2|)\right\rceil =m$ with
$\beta=\beta_m$,
cost at most $\delta_m$, and diameter at most $d_m$, then it also holds for the element $\nnn\in \RR^k$ satisfying $\left\lceil\log_2(|\nnn|)\right\rceil=m+1$ with 
\[\beta=\beta_{m+1}=2^k\beta_m,\]
cost at most
\[
\delta_{m+1}\leqslant 2^k\delta_m+C2^{m(p-1)},
\]
and diameter at most
\[d_{m+1}\leqslant d_m+C2^m.\]
\end{claim}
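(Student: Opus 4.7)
\textbf{Sketch of proof of Claim~\ref{claim:recur(m)}.} The plan is to peel off a factor of $2$ from $\nnn$ via the Cutting in half $k$-Lemma~\ref{lem:Cutinhalf}, which, per the induction structure in Figure~\ref{fig:structure-proof-upper-bound}, is available for $p$ at this stage (it has been deduced from the $p$-version of the Main commuting Lemma~\ref{lem:MainLemma}). Writing $\nnn = 2\mmm$ with $|\mmm| \leqslant 2^m$, this yields
\[
\Omega_k(\nnn) \equiv \Omega_k(\mmm)^{2^k} \cdot w_k(\mmm)
\]
at area $\lesssim_p 2^{m(p-1)}$ and diameter $\lesssim_p 2^m$, where $w_k(\mmm)$ is already an error term of the shape needed (a product of $O_p(1)$ factors $\Omega_l(\mmm')^{\pm 1}$ with $l \geqslant k+1$ and $|\mmm'| \lesssim_p 2^m \lesssim_p |\nnn|$).

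Next I would apply the induction hypothesis to each of the $2^k$ copies of $\Omega_k(\mmm)$ independently, replacing each by $x_{k+1}^{\beta_m} \cdot E_{p,k}(\mmm)$ at aggregate cost $2^k \delta_m$. By Lemma~\ref{lem:diameter}, since the largest prefix appearing has word diameter $\lesssim_p 2^m$, the resulting filling diameter is at most $d_m + C' \cdot 2^m$ for some constant $C' = C'(p)$. We are thereby reduced to rewriting
\[
(x_{k+1}^{\beta_m} \cdot E_{p,k}(\mmm))^{2^k} \cdot w_k(\mmm)
\]
in the desired form $x_{k+1}^{2^k \beta_m} \cdot E_{p,k}(\nnn)$.

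The central step, and the one I expect to be the main obstacle, is to bubble-sort all $2^k$ copies of $x_{k+1}^{\beta_m}$ to the left. Both $x_{k+1}^{\beta_m}$ and each $\Omega_l$-factor of $E_{p,k}(\mmm)$ lie in the derived subgroup of $G_{p,p-1}$ (since $k \geqslant 2$ forces $k+1 \geqslant 3$), and by Lemma~\ref{lem:sec-efficient-tech-1} each can be represented by a word in $\mathcal{F}[O_p(1)]$ of length $\lesssim_p 2^m$; for $x_{k+1}^{\beta_m}$ this uses the inductive bound $|\beta_m| \lesssim_p 2^{mk}$, which is immediate from the recursion $\beta_{m+1} = 2^k \beta_m$ and the base case. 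The Main commuting Lemma~\ref{lem:MainLemma} (available for $p$) then shows that each adjacent swap costs $\lesssim_p 2^{m(p-1)}$ with diameter contribution $\lesssim_p 2^m$. Since $2^k \leqslant 2^{p-2}$, only $O_p(1)$ such swaps are needed to collect all $x_{k+1}^{\beta_m}$-factors on the left, so the total extra area is $\lesssim_p 2^{m(p-1)}$ and the extra diameter is $\lesssim_p 2^m$.

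After these swaps we arrive at $x_{k+1}^{2^k \beta_m} \cdot E_{p,k}(\mmm)^{2^k} \cdot w_k(\mmm)$, whose second factor is a product of $O_p(1)$ terms $\Omega_l(\mmm')^{\pm 1}$ with $l \geqslant k+1$ and $|\mmm'| \lesssim_p |\nnn|$, hence qualifies as an admissible $E_{p,k}(\nnn)$. Setting $\beta_{m+1} := 2^k \beta_m$ thus gives the claimed recursion, with total area bounded by $2^k \delta_m + C \cdot 2^{m(p-1)}$ and diameter by $d_m + C \cdot 2^m$ for a suitable constant $C = C(p)$ absorbing $C'$ and the bubble-sorting costs.
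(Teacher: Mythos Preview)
Your argument tracks the paper's closely through the Cutting-in-half step, the $2^k$ applications of the level-$m$ hypothesis, and the commuting; the order of operations (bubble-sort the $x_{k+1}^{\beta_m}$ left, versus the paper's ``apply and push the error term right each time'') is immaterial. The genuine gap is in your final sentence. The product $E_{p,k}(\mmm)^{2^k}\cdot w_k(\mmm)$ does \emph{not} qualify as an admissible $E_{p,k}(\nnn)$: by the statement of Lemma~\ref{lem:key-tech-result-strong-k-lemma}, $E_{p,k}(\nnn)$ must have the specific shape $\prod_{i=k+1}^{p-1}\Omega_i(\mmm_i)^{\pm1}$, i.e.\ exactly $p-1-k$ factors. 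Your output has $2^k(p-1-k)+O_p(1)$ of them. If you attempt to salvage this by weakening the induction hypothesis to ``at most $C_p$ factors'', the recursion $N_{m+1}=2^kN_m+O_p(1)$ forces $N_m\gtrsim 2^{km}\sim|\nnn|^k$, so the implicit $O_p(1)$ is not uniform in $m$; the bubble-sort at level $m+1$ then costs $N_m\cdot 2^{m(p-1)}$ rather than $O_p(1)\cdot 2^{m(p-1)}$, and the claimed recurrence $\delta_{m+1}\leqslant 2^k\delta_m+C\,2^{m(p-1)}$ breaks.

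The paper repairs this with a compression step (Lemma~\ref{lem:errorterms}). Using Lemmas~\ref{lem:expansion-of-Rpk} and~\ref{lem:extracting-exponents-of-xi} one sees abstractly that $E_{p,k}(\mmm)^{2^k}\cdot w_k(\mmm)$ equals in $G_{p,p-1}$ a word of the correct shape $\prod_{i=k+1}^{p-1}\Omega_i(\mmm_i')^{\pm1}$ with $|\mmm_i'|\lesssim_p|\nnn|$; the key point is that this identity can be \emph{realized} with area $\lesssim_p|\nnn|^{p-1}$ and diameter $\lesssim_p|\nnn|$ by invoking the Cancelling $(k{+}1)$-Lemma, which is available at this stage because Lemma~\ref{lem:key-tech-result-strong-k-lemma} sits inside the descending induction on $k$ in the proof of the Cancelling $k$-Lemma~\ref{lem:Strong-k-Lemma}. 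This compression is the missing ingredient in your sketch; without it the inductive loop on $m$ does not close.
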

Before proving the claim, let us see why it implies Lemma \ref{lem:key-tech-result-strong-k-lemma}. We immediately deduce that 
$\beta_m\leqslant 2^{km}\beta_1=O_p(|\nnn|^{k})$ and $d_m\leqslant d_1+C\sum_{i=1}^m2^{i-1}=d_1+C2^m=O_p(|\nnn|)$. 
Letting $v_m=2^{-km}\delta_m$, we obtain
\[v_{m+1}\leqslant v_m+C2^{-(m+1)k}2^{m(p-1)} \leqslant v_m+C2^{-mk}2^{m(p-1)} =  v_m + C2^{m(p-1-k)}.\]
Using that $k<p-1$ we deduce that $v_m\leqslant v_1+C\sum_{i=1}^{m-1}2^{i(p-1-k)}=O_p(2^{m(p-1-k)})$, and therefore that $\delta_m=O_p(2^{m(p-1)})=O_p(|\nnn|^{p-1})$. So Lemma \ref{lem:key-tech-result-strong-k-lemma} follows.
\end{proof}

\begin{proof}[Proof of Claim \ref{claim:recur(m)}]
Let $\nnn\in \RR^k$ with $\left\lceil\log_2(|\nnn|)\right\rceil = m+1$. By Lemma \ref{lem:Cutinhalf} for $p$, the identity
 \[
  \Omega_{k}(\nnn)\equiv \left(\Omega_{k}\left(\nnn/2\right)\right)^{2^k} \cdot w_k(\nnn/2)
 \]
 holds with area $\lesssim_{p} |\nnn|^{p-1}$ and diameter $\lesssim_p |\nnn|$, where $w_k(\nnn/2)$ is a product of $O_p(1)$ iterated commutators of the form $\Omega_l(\mmm)^{\pm 1}$ with $|\mmm|\lesssim_p \frac{|\nnn|}{2}$ and $l\geqslant k+1$. 
 
 We apply the induction hypothesis for $m$ to each of the $\Omega_{k}\left(\nnn/2\right)$ successively, starting with the left-most one and moving error terms to the right. After the $i$-th application we obtain an identity of the form
 \[
  \Omega_{k}(\nnn)\equiv x_{k+1}^{i\cdot \beta_{m}} \cdot \left(\Omega_{k}\left(\nnn/2\right)\right)^{2^k-i} \cdot \left(E_{p,k}\left(\nnn/2\right)\right)^{i}\cdot  w_k(\nnn/2).
 \]
Since $|\beta_m|\lesssim_p|\nnn|^k$, Lemma \ref{lem:sec-efficient-tech-1} implies that $x_{k+1}^{i\cdot \beta_m}$ has word diameter $\lesssim_p |\nnn|$. 
 
 An $(i+1)$-th application of the induction hypothesis for $m$ yields
 \[
  \Omega_{k}(\nnn)\equiv x_{k+1}^{i\cdot \beta_m} \cdot x_{k+1}^{\beta_m} \cdot \left(E_{p,k}\left(\nnn/2\right)\right) \cdot \left(\Omega_{k}\left(\nnn/2\right)\right)^{2^k-i-1} \cdot \left(E_{p,k}\left(\nnn/2\right)\right)^{i}\cdot  w_k(\nnn/2)
 \]
 with area $\delta_m$ and diameter $d_m$. 
 
 By applying the Main commuting Lemma \ref{lem:MainLemma} $\leqslant p \cdot 2^k$ times we can commute the terms making up $E_{p,k}\left(\nnn/2\right)$ with the $\left(\Omega_{k}\left(\nnn/2\right)\right)$ and obtain the identity
 \[
  \Omega_{k}(\nnn)\equiv x_{k+1}^{(i+1)\cdot \beta_m} \cdot \left(\Omega_{k}\left(\nnn/2\right)\right)^{2^k-i-1} \cdot \left(E_{p,k}\left(\nnn/2\right)\right)^{i+1}\cdot  w_k(\nnn/2)
 \]
 in $G_{p,p-1}$ with area $\lesssim_p |\nnn|^{p-1}$ and diameter at most $d_m +O_p(|\nnn|)$ (for the latter we use Lemma \ref{lem:diameter} and the fact that $|x_{k+1}^{(i+1)\beta_m}|_{G_{p,p-1}}\lesssim_p |\nnn|$ by the induction hypothesis for $m$ and Lemma \ref{lem:sec-efficient-tech-1}).
 
Putting all of the above steps together, we deduce that the identity
 \begin{equation}\label{eqn:extracting-xk-mod-error}
   \Omega_{k}(\nnn)\equiv x_{k+1}^{2^k\cdot \beta_m} \cdot \left(E_{p,k}\left(\nnn/2\right)\right)^{2^{k}}\cdot  w_k(\nnn/2)
 \end{equation}
 holds in $G_{p,p-1}$ with area 
 \[2^k \cdot \delta_m + O_p(|\nnn|^{p-1}),
 \]
and diameter  at most $d_m +O_p(|\nnn|)$.

 We now apply the  Cancelling $(k+1)$-Lemma  \ref{lem:Strong-k-Lemma} to prove:
 \begin{lemma}\label{lem:errorterms}
  The word $\left(E_{p,k}\left(\nnn/2\right)\right)^{2^{k}}\cdot  w_k(\nnn/2)$ can be tranformed in $G_{p,p-1}$ into an error term of the form $E_{p,k}\left(\nnn\right)$ at cost $\lesssim_{p} |\nnn|^{p-1}$ and with diameter $\lesssim_p |\nnn|$.
 \end{lemma}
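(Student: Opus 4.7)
The plan is to apply the Cancelling $(k+1)$-Lemma, which is available at this stage of the descending induction on $k$, to a null-homotopic product obtained by appending an appropriately chosen inverse of $E_{p,k}(\nnn)$ to the given word.

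First, unfold the word: $(E_{p,k}(\nnn/2))^{2^k} \cdot w_k(\nnn/2)$ is a concatenation of $O_p(1)$ factors of the form $\Omega_l(\mmm_j)^{\pm 1}$ with $l \geqslant k+1$ and $|\mmm_j| \lesssim_p |\nnn|$. Using the Main commuting Lemma \ref{lem:MainLemma} (already established for $p$ by this point), reorder these factors so that those with smaller index $l$ appear first, obtaining a word of the form $\prod_{l=k+1}^{p-1} \prod_{i=1}^{M_l} \Omega_l(\nnn_{l,i})^{\pm 1}$ with each $M_l = O_p(1)$ and $|\nnn_{l,i}| \lesssim_p |\nnn|$. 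Only $O_p(1)$ adjacent transpositions are needed; each costs area $\lesssim_p |\nnn|^{p-1}$ and diameter $\lesssim_p |\nnn|$, and the intermediate prefix words keep word diameter $\lesssim_p |\nnn|$ by Lemma \ref{lem:distance-estimates} applied inside $\gamma_{k+1}$.

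Next, select the target $E_{p,k}(\nnn) = \prod_{l=k+1}^{p-1} \Omega_l(\mmm_l)^{\pm 1}$ so that the full word $(E_{p,k}(\nnn/2))^{2^k} \cdot w_k(\nnn/2) \cdot (E_{p,k}(\nnn))^{-1}$ represents the identity in $G_{p,p-1}$. The reordered word represents an element of $\gamma_{k+1}$ whose Malcev normal form, via Lemma \ref{lem:expansion-of-Rpk}, is $x_{k+1}^{s_{k+1}} x_{k+2}^{s_{k+2}} \cdots x_p^{s_p}$ with $|s_i| \lesssim_p |\nnn|^{i-1}$. Choose $\mmm_l$ successively for $l = k+1, \dots, p-1$ so that $\Omega_l(\mmm_l)^{\epsilon_l}$ cancels the $x_l$-coefficient while perturbing only higher-index coefficients within the same size range. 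The bound $|\mmm_l| \lesssim_p |\nnn|$ is achievable because, again by Lemma \ref{lem:expansion-of-Rpk}, $\Omega_l(\mmm_l)$ contributes an $x_l$-coordinate of order $\asymp_p |\mmm_l|^{l-1}$, matching the distortion of $\langle x_l \rangle$ inside $G_{p,p-1}$.

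With this choice made, the reordered product $(E_{p,k}(\nnn/2))^{2^k} \cdot w_k(\nnn/2) \cdot (E_{p,k}(\nnn))^{-1}$ is null-homotopic and has precisely the shape required by the Cancelling $(k+1)$-Lemma with $M = \max_l M_l = O_p(1)$; that lemma then yields a filling of area $\lesssim_p |\nnn|^{p-1}$ and diameter $\lesssim_p |\nnn|$. Combining with the cost of the reordering step completes the proof. The main subtlety is the bookkeeping in the second step, where one must verify that a single $\Omega_l$ term of parameter size $\lesssim_p |\nnn|$ suffices to cancel a Malcev coefficient $s_l$ of order $|\nnn|^{l-1}$; this works precisely because the scaling of $\Omega_l$ in its $x_l$-coordinate matches the distortion of $\langle x_l \rangle$ in the descending central series.
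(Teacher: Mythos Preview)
Your proof is correct and follows essentially the same route as the paper's: both identify the target $E_{p,k}(\nnn)$ by computing the Malcev normal form of the word (the paper does this via the identity \eqref{eqn:extracting-xk-mod-error} and Lemma \ref{lem:extracting-exponents-of-xi}, you do it by hand), then invoke the Cancelling $(k+1)$-Lemma on the resulting null-homotopic product, with the reordering step --- which you make explicit and the paper leaves implicit --- handled by the Main commuting Lemma. One harmless indexing slip: since each $\Omega_l$ lies in $\gamma_l$, the word you treat lies in $\gamma_{k+1}$ and its Malcev normal form begins at $x_{k+2}$ (not $x_{k+1}$), and $\Omega_l(\mmm_l)$ has leading coordinate $x_{l+1}$ (not $x_l$); shifting your indices accordingly makes the bookkeeping match.
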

 \begin{proof}
We need some preparation that merely involves identities in $G_{p,p-1}$, without considerations of cost.
By Lemma \ref{lem:expansion-of-Rpk} there are $t_i\in \RR$ with $|t_i|\lesssim_{p} |\nnn|^{i-1}$ such that the identity
   \[
    \Omega_{k}(\nnn)\equiv  x_{k+1}^{t_{k+1}}\cdot x_{k+2}^{t_{k+2}} \dots x_{p-1}^{t_{p-1}}z^{t_p}
   \]
holds in $G_{p,p-1}$. Modding out by the $(k+1)$-th term of the central series, we deduce from \eqref{eqn:extracting-xk-mod-error} that
$2^k\beta_m=t_{k+1}$, and so 
   \[
     \left(E_{p,k}\left(\nnn/2\right)\right)^{2^{k}}\cdot  w_k(\nnn/2) \equiv  x_{k+2}^{t_{k+2}} \dots x_{p-1}^{t_{p-1}}z^{t_p}.
   \]
 Finally by Lemma \ref{lem:extracting-exponents-of-xi}, we deduce the following identity
\[ \left(E_{p,k}\left(\nnn/2\right)\right)^{2^{k}}\cdot  w_k(\nnn/2)\equiv \prod_{i=k+1}^{p-1} \Omega_{i}(\mmm_i)^{\pm 1}\]
in $G_{p,p-1}$, with $|\mmm_i|\lesssim_{p} |\nnn|$.
Recall that both $E_{p,k}(\nnn/2)$ and $w_k(\nnn/2)$ are products of $O_p(1)$ many terms of the form $\Omega_l(\underline{m})^{\pm 1}$, with $|\underline{m}|\lesssim_p |\nnn|$, and $l\geqslant k+1$. By the Cancelling $(k+1)$-Lemma, this identity holds with area $\lesssim_{p}|\nnn|^{p-1}$ and diameter $\lesssim_p |\nnn|$.
 \end{proof}
We resume the proof of Claim \ref{claim:recur(m)}. Recall that by definition, we have $|\nnn|\leqslant 2^m$. Choosing $\beta_{m+1}:=2^k\beta_m$, we deduce from (\ref{eqn:extracting-xk-mod-error}) and Lemma \ref{lem:errorterms} that the identity
 \[
   \Omega_{k}(\nnn)^{\pm 1}\equiv  x_{k+1}^{\beta_{m+1}} \cdot E_{p,k}\left(\nnn\right)
 \]
 holds with diameter bounded by $d_{m+1}=d_m+O_p(2^m)$ and area bounded by
 $
  \delta_{m+1}= 2^k \delta_m +  O_p(2^{m(p-1)}),$
thus ending the proof of Claim \ref{claim:recur(m)} (and therefore of Lemma \ref{lem:key-tech-result-strong-k-lemma}). 
\end{proof}

As a consequence of Lemma \ref{lem:key-tech-result-strong-k-lemma} we can complete the proof of the Cancelling $k$-Lemma.
\begin{proof}[{Proof of the Cancelling $k$-Lemma \ref{lem:Strong-k-Lemma} }]

Recall that by induction hypothesis the Cancelling $(k+1)$-Lemma holds. We fix $M\geqslant 1$. Let
\[
 w(x_1,x_2)= \left( \prod_{i=1}^{M_k} \Omega_{k}(\nnn_{k,i})^{\pm 1}\right)\cdot \left( \prod_{i=1}^{M_{k+1}} \Omega_{k+1}(\nnn_{k+1,i})^{\pm 1}\right)\cdot \dots \cdot  \left( \prod_{i=1}^{M_{p-1}} \Omega_{p-1}(\nnn_{p-1,i})^{\pm 1}\right).
\]
be a null-homotopic word in $G_{p,p-1}$ with $\nnn_{j,l}\in \RR^j$, $|\nnn_{j,l}|\leqslant n$, $1\leqslant l\leqslant M_j$,  $k\leqslant l \leqslant p-1$ and $M_j\leqslant M$.

We reduce to the Cancelling $(k+1)$-Lemma by applying Lemma \ref{lem:key-tech-result-strong-k-lemma} iteratively to the terms $\Omega_{k}(\nnn_{k,i})$, $1\leqslant i \leqslant M_k$, starting with the left-most one and then moving the error terms right. At the beginning of the $i_0$-th step of this process we will have an identity of the form
\begin{align*}
 w(x_1,x_2)\equiv &\left(\prod_{i=1}^{i_0-1} x_k^{\beta_{i}}\right) \cdot \left( \prod_{i=i_0}^{M_k} \Omega_{k}(\nnn_{k,i})^{\pm 1}\right)\cdot \left(\prod_{i=1}^{i_0-1} E_{p,k}\left(\nnn_{k,i}\right)
 \right)\\
 &\cdot \left( \prod_{i=1}^{M_{k+1}} \Omega_{k+1}(\nnn_{k+1,i})^{\pm 1}\right)\cdot \dots \cdot  \left( \prod_{i=1}^{M_{p-1}} \Omega_{p-1}(\nnn_{p-1,i})^{\pm 1}\right),
\end{align*}
with $|\beta_i|\lesssim_p n^{k-1}$. In particular, Lemma \ref{lem:sec-efficient-tech-1} implies that all prefix words of transformations will have diameter $\lesssim_p n$.

We apply Lemma \ref{lem:key-tech-result-strong-k-lemma} to obtain 
\begin{align*}
 w(x_1,x_2)\equiv &\left(\prod_{i=1}^{i_0-1} x_k^{\beta_{i}}\right)\cdot x_k^{\beta_{i_0}} \cdot E_{p,k}\left(\nnn_{k,i_0}\right)\cdot  \left( \prod_{i=i_0+1}^{M_k} \Omega_{k}(\nnn_{k,i})^{\pm 1}\right)\cdot \left(\prod_{i=1}^{i_0-1} E_{p,k}\left(\nnn_{k,i}\right) \right)\\
 &\cdot \left( \prod_{i=1}^{M_{k+1}} \Omega_{k+1}(\nnn_{k+1,i})^{\pm 1}\right)\cdot \dots \cdot  \left( \prod_{i=1}^{M_{p-1}} \Omega_{p-1}(\nnn_{p-1,i})^{\pm 1}\right)
\end{align*}
with area  $\lesssim_p  n^{p-1}$ and diameter $\lesssim_p n$.
  
Recall that the $E_{p,k}(\nnn_{k,i})$ are products of $\leqslant p$ terms of the form $\Omega_l(\mmm)^{\pm 1}$ with $|\mmm|\lesssim_{p} |\nnn_{k,i}|\lesssim_{p} n$ and $l\geqslant k+1$. We can thus apply the Main commuting Lemma \ref{lem:MainLemma} a total of $\lesssim_{p} M_k$ times to obtain
\begin{align*}
  w(x_1,x_2)\equiv &\left(\prod_{i=1}^{i_0} x_k^{\beta_{i}}\right) \cdot  \left( \prod_{i=i_0+1}^{M_k} \Omega_{k}(\nnn_{k,i})^{\pm 1}\right)\cdot \left(\prod_{i=1}^{i_0} E_{p,k}\left(\nnn_{k,i}\right) \right)\\
 &\cdot \left( \prod_{i=1}^{M_{k+1}} \Omega_{k+1}(\nnn_{k+1,i})^{\pm 1}\right)\cdot \dots \cdot  \left( \prod_{i=1}^{M_{p-1}} \Omega_{p-1}(\nnn_{p-1,i})^{\pm 1}\right)
\end{align*}
with area $\lesssim_{p}n^{p-1}$ and diameter $\lesssim_p n$.

We obtain the identity
\begin{align*}
 w(x_1,x_2)\equiv &\left(\prod_{i=1}^{M_k} x_k^{\beta_{i}}\right) \cdot \left(\prod_{i=1}^{M_k} E_{p,k}\left(\nnn_{k,i}\right) \right) \cdot \left( \prod_{i=1}^{M_{k+1}} \Omega_{k+1}(\nnn_{k+1,i})^{\pm 1}\right)\cdot \dots \cdot  \left( \prod_{i=1}^{M_{p-1}} \Omega_{p-1}(\nnn_{p-1,i})^{\pm 1}\right)
\end{align*}
in $G_{p,p-1}$ from the original null-homotopic word $w(x_1,x_2)$ with total area $\lesssim_{p} M_k n^{p-1}$ and diameter $\lesssim_p M_k \cdot n$.

By definition of the $E_{p,k}\left(\nnn_{k,i}\right)$, after applying the Main commuting Lemma \ref{lem:MainLemma} at most $p\cdot M_k \cdot M(p-k)$ more times, we obtain
\[
  w(x_1,x_2)\equiv \left(\prod_{i=1}^{M_k} x_k^{\beta_{i}}\right) \cdot \left( \prod_{i=1}^{\widetilde{M}_{k+1}} \Omega_{k+1}(\nnn_{k+1,i})^{\pm 1}\right)\cdot \dots \cdot  \left( \prod_{i=1}^{\widetilde{M}_{p-1}} \Omega_{p-1}(\nnn_{p-1,i})^{\pm 1}\right)
\]
 with area $\lesssim_{p} M n^{p-1}$ and diameter $\lesssim_p M\cdot n$, for suitable $\nnn_{l,i}$, where $\widetilde{M}_l \lesssim_{p} M_l + M_k$ for $l\geqslant k+1$. However, it now follows from the assumption that $w$ is null-homotopic, that $\sum_{i=1}^{M_k} \beta_i = 0$. Thus, we have reduced to the cancellling $(k+1)$-Lemma for some $\widetilde{M} \lesssim_{p} M$ and, by induction hypothesis, the null-homotopic word
\[
 \left( \prod_{i=1}^{\widetilde{M}_{k+1}} \Omega_{k+1}(\nnn_{k+1,i})^{\pm 1}\right)\cdot \dots \cdot  \left( \prod_{i=1}^{\widetilde{M}_{p-1}} \Omega_{p-1}(\nnn_{p-1,i})^{\pm 1}\right)
\]
admits a filling of area $\lesssim_{p,M} n^{p-1}$ and diameter $\lesssim_{p,M} n$. Thus all words $w$ satisfying the hypothesis of the Cancelling $k$-Lemma have area $\lesssim_{p,M} n^{p-1}$ and diameter $\lesssim_p n$. This completes the proof.
\end{proof}

\subsection{Proof of the Main Theorem}\label{subsec:PfMainThm}
We are now ready to complete the proof of the Main Theorem \ref{thm:Upperbound} for $G_{p,p-1}$ and $G_{p,p}$. 

We start by treating the case $p=3$, observing that $G_{3,3}=\HH_5(\mathbf R)$. The fact that $\HH_5(\mathbf R)$ has quadratic Dehn function was originally proved by Allcock using symplectic geometry. His proof is short and elegant and actually proves a stronger statement: any smooth horizontal $L$-Lipschitz map from $S^1$ to $\HH_5(\mathbf R)$ extends to a $O(L)$-Lipschitz map defined on the disc. Here, ``horizontal'' has the following meaning: we consider a ``horizontal'' distribution defined as orthogonal vector complement $\mathfrak m$ of the (one dimensional) derived subalgebra of $\mathfrak h_5(\mathbf R)$, and a path is horizontal if it is tangent to $\mathfrak m$ at every point. 

Allcock's proof can easily be adapted to show that any $L$-Lipschitz {\it piecewise smooth and horizontal} map defined on $S^1$ extends to an $O(L)$-Lipschitz map on the disc. In particular, this applies to ``relation loops", i.e. loops that are obtained by concatenation of paths of the form $\gamma(t)=\gamma(0)u^t$, where $u$ is an element of the generating set
\[T_{1}:= \left\{ x_1^{a_1},x_2^{a_2}, y_1^{a_3},y_3^{a_4}\mid |a_1|,|a_2|,|a_3|,|a_4|\leqslant 1\right\}\]
of $G_{3,3}$ (see \S \ref{subsec:efficient-words}). 
One easily deduces from the Lipschitz filling of such a loop that the corresponding relation admits a Van Kampen diagram of linear diameter and quadratic area. This shows that $G_{3,3}$ admits a $(n^2,n)$
-filling couple. 

\begin{remark}
This also provides a proof of Theorem \ref{thm:5-dim-Heisenberg}: indeed, $\HH_5(\mathbf Z)$ being a uniform lattice in  $\HH_5(\mathbf R)$, the two groups are quasi-isometric, so we can deduce for instance from Lemma \ref{lem:filling-transfer-SBE} (with $e=0$ and $\underline{s}=1$) that $\HH_5(\mathbf Z)$ admits a $(n^2,n)$
-filling couple.  
\end{remark}

We may thus now complete the induction step for $p\geqslant 4$. In particular, we may assume that $G_{p-1,p-1}$ admits $(n^{p-2},n)$ as a filling pair. In the previous sections we have proved that under this assumption all auxiliary results in \S \ref{sec:Intro-proof-main-theorem} hold for $p$ and it remains to put them together. Indeed, as we shall now see, the Main Theorem \ref{thm:Upperbound} for $p$ is a straightforward consequence of the $p$-versions of the Reduction Lemma \ref{lem:derived-words-prod-Rpk-Mainthm} and the Cancelling 2-Lemma \ref{lem:Strong-k-Lemma}.

\begin{proof}[{Proof of the Main Theorem \ref{thm:Upperbound} for $G_{p,p-1}$}]
As explained at the beginning of this section, it suffices to proof that for all $\alpha \geqslant 1$ every null-homotopic word of length $\leqslant n$ in $\mathcal{G}[\alpha]$ admits a filling of area $\lesssim_{\alpha,p} n^{p-1}$ and diameter $\lesssim_{\alpha,p} n$. Let $w=w(x_1,x_2,y_1,y_3)\in \mathcal{G}\left[\alpha\right]$ be a null-homotopic word of length $\ell(w)\leqslant n$. Using that the $x_i$ and $y_i$ commute, there are words $u(x_1,x_2)$ and $v(y_1,y_3)$ such that the identity $w\equiv u\cdot v$ holds in $G_{p,p-1}$ with area $\leqslant n^2$ and diameter $\leqslant n$. The word $v(y_1,y_3)$ represents a central element of length $\leqslant n$ in $G_{p-1,p-1}$. Thus, by induction hypothesis, $u(x_1,x_2)\cdot v(y_1,y_3)\equiv u(x_1,x_2)\cdot v(x_1,x_3)$ in $G_{p,p-1}$ with area $\lesssim_{\alpha,p} n^{p-2}$ and diameter $\lesssim_{\alpha,p} n$. Using again that $v(x_1,x_3)$ represents a central element of length $\leqslant n$, we deduce from Lemma \ref{lem:sec-efficient-tech-2} and Lemma \ref{lem:extracting-exponents-of-xi} that there is $\nnn\in \RR_{p-2}$ with $v(x_1,x_3)\equiv \Omega_{p-2}^3(\nnn)$ and $|\nnn| \lesssim_{\alpha,p} n$, and that this identity holds in $G_{p,p-1}$ with area $\lesssim_{\alpha,p} n^{p-2}$ and diameter $\lesssim_{\alpha,p} n$. Finally, Lemma \ref{lem:Introducing-x3s}(1) implies that the identities
\[
v(x_1,x_3)\equiv \Omega_{p-2}^3(\nnn) \equiv  \Omega_{p-1}(n_1,\dots,n_{p-3},1,n_{p-2})
\]
hold in $G_{p,p-1}$ with area $\lesssim_{\alpha,p} n^{p-2}$ and diameter $\lesssim_{\alpha,p} n$.

Observe that, on enlarging $\alpha$ (twice) if necessary, we may assume that $\Omega_{p-1}(n_1,\dots,n_{p-3},1,n_{p-2})\in \mathcal{F}[\alpha]$ and thus that
\[
 u(x_1,x_2) \cdot \Omega_{p-1}(n_1,\dots,n_{p-3},1,n_{p-2}) \in \mathcal{F}[\alpha].
\]
It follows that we may assume that $w=w(x_1,x_2)$ is a null-homotopic word in $\mathcal{F}\left[\alpha\right]$, at cost $\lesssim_{\alpha,p} n^{p-2}$ and diameter $\lesssim_{\alpha,p} n$.
We apply the Reduction Lemma \ref{lem:derived-words-prod-Rpk-Mainthm} to obtain an identity of the form
   \[
    w(x_1,x_2) \equiv \prod_{j=1}^L \Omega_{l_j} (\mmm_{j})^{\pm 1}
   \]
in $G_{p,p-1}$ with area $\lesssim_{\alpha,p} n^{p-1}$ and diameter $\lesssim_{\alpha,p} n$, and $L=O_{\alpha,p}(1)$. Since $w$ is null-homotopic, the same holds for the right hand side. 

By applying the Main commuting Lemma \ref{lem:MainLemma} at most $L ^2$ times we obtain that in $G_{p,p-1}$
   \begin{equation}\label{eqn:FinalStepMainTheorem}
    w(x_1,x_2) \equiv   \left( \prod_{i=1}^{M_2} \Omega_{2}(\nnn_{2,i})^{\pm 1}\right)\cdot \left( \prod_{i=1}^{M_{3}} \Omega_{3}(\nnn_{3,i})^{\pm 1}\right)\cdot \dots \cdot  \left( \prod_{i=1}^{M_{p-1}} \Omega_{p-1}(\nnn_{p-1,i})^{\pm 1}\right),
   \end{equation}
    with area $\lesssim_{\alpha,p}  n^{p-1}$ and diameter $\lesssim_{\alpha,p} n$, where $M_i\leqslant L$ for $L$ as above. 
    
    The right-hand side of \eqref{eqn:FinalStepMainTheorem} remains null-homotopic. The Cancelling $2$-Lemma \ref{lem:Strong-k-Lemma} thus implies that the right-hand side of \eqref{eqn:FinalStepMainTheorem} has area $\lesssim_{\alpha,p} n^{p-1}$ and diameter $\lesssim_{\alpha,p} n$ in $G_{p,p-1}$. 
    
    Summing up the total area of all tranformations we deduce that $w$ is null-homotopic with area $\lesssim_{\alpha,p} n^{p-1}$ and diameter $\lesssim_{\alpha,p} n$ in $G_{p,p-1}$. In particular, we have proved that every null-homotopic word in $\mathcal{G}[\alpha]$ of length $\leqslant n$ admits a filling of area $\lesssim_{\alpha,p} n^{p-1}$ and filling diameter $\lesssim_{\alpha,p} n$. By Proposition \ref{prop:CorTesEff1}, this implies that $G_{p,p-1}$ admits $(n^{p-1},n)$ as a filling pair. This completes the proof.
 \end{proof}
 
The Main Theorem \ref{thm:Upperbound} for $G_{p,p}$ is a direct consequence of the Main Theorem for $G_{p,p-1}$ and the following result.
\begin{lemma}
 \label{lem:OneFactor}
  Let $v(x_1, x_2, y_1, y_2)$ be a null-homotopic word in $\mathcal{P}(G_{p,p})$ with $\ell(v)\leqslant n$. 
  
  Then there are null-homotopic words $w(x_1, x_2)$ and $w'(y_1, y_2)$ of length $\ell(w),\ell(w')\lesssim_p n$, which satisfy the identity $v(x_1, x_2,y_1,y_2) \equiv  w(x_1, x_2) w'(y_1, y_2)$ in $G_{p,p}$ with area $\lesssim_{p}n^{p-1}$ and diameter $\lesssim_p n$.
\end{lemma}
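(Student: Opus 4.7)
The plan is to reduce $v$ to a product of a pure $x$-word and a pure $y$-word using the commutation relations, identify the common central element linking them, and then replace the $x$-side expression for this element by its $y$-side counterpart via a three-stage conversion through embedded copies of $G_{p,p-1}$.

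First, commuting the $y$-letters past the $x$-letters in $v$ using $[x_i^a, y_j^b]\equiv 1$ yields $v \equiv u(x_1,x_2)\cdot u'(y_1,y_2)$ with $\ell(u),\ell(u')\leqslant n$, at area $\lesssim_p n^2\leqslant n^{p-1}$ and diameter $\lesssim_p n$. Since $v$ is null-homotopic and $\langle x_1,x_2\rangle \cap \langle y_1,y_2\rangle = \langle z\rangle$ inside $G_{p,p}$, there exists $k\in\mathbf{Z}$ with $u\equiv z^k$ and $u'\equiv z^{-k}$, and the distortion estimate $|z^k|_{\widehat T}\asymp_p k^{1/(p-1)}$ coming from Lemma~\ref{lem:sec-efficient-tech-1} forces $|k|\lesssim_p n^{p-1}$. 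Applying Lemma~\ref{lem:sec-efficient-tech-1} inside the first $L_p$-factor produces $\mmm\in\mathbf{R}^{p-1}$ with $|\mmm|\lesssim_p n$ such that $\Omega_{p-1}(\mmm)\equiv z^k$; the automorphism of $G_{p,p}$ swapping the two factors ensures that the same $\mmm$ satisfies $\widetilde\Omega_{p-1}(\mmm)\equiv z^k$ in the second factor. I then set $w:=u\cdot \Omega_{p-1}(\mmm)^{-1}$ and $w':=\widetilde\Omega_{p-1}(\mmm)\cdot u'$; both are null-homotopic in their respective factors, have length $\lesssim_p n$, and satisfy $w\cdot w' = u\cdot \Omega_{p-1}(\mmm)^{-1}\cdot \widetilde\Omega_{p-1}(\mmm)\cdot u'$.

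The heart of the proof is then to bound the cost of the insertion $\Omega_{p-1}(\mmm)^{-1}\cdot \widetilde\Omega_{p-1}(\mmm)\equiv 1$ in $G_{p,p}$ by $n^{p-1}$, with diameter $\lesssim_p n$. I would proceed in three stages. First, inside the embedded copy of $G_{p,p-1}$ generated by $\{x_1,x_2,y_1,y_3\}$, Corollary~\ref{cor:Usingx3stoReplaceRpp-1s} rewrites $\Omega_{p-1}(\mmm)\equiv (\Omega_{p-2}^3(\mmm'))^{|\mmm|}$ for some $\mmm'\in\mathbf{R}^{p-2}$ with $|\mmm'|\lesssim_p n$, at cost $\lesssim_p n^{p-1}$ and diameter $\lesssim_p n$. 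Second, applying the Substitution Lemma~\ref{lem:ChangingFactors} inside the same embedded $G_{p,p-1}$ to each of the $|\mmm|\lesssim_p n$ central factors converts $(\Omega_{p-2}^3(\mmm'))^{|\mmm|}\equiv (\widetilde\Omega_{p-2}^3(\mmm'))^{|\mmm|}$ at total cost $|\mmm|\cdot |\mmm'|^{p-2}\lesssim_p n\cdot n^{p-2}=n^{p-1}$; each intermediate prefix word represents a power of $z$ at Cayley distance $\lesssim_p n$ from the identity, so the diameter remains $\lesssim_p n$. Third, in the symmetric embedded $G_{p,p-1}$ generated by $\{y_1,y_2,x_1,x_3\}$ (obtained by interchanging the roles of $x$ and $y$, which induces an isomorphism), Corollary~\ref{cor:Usingx3stoReplaceRpp-1s} provides $\widetilde\Omega_{p-1}(\mmm)\equiv (\widetilde\Omega_{p-2}^3(\mmm''))^{|\mmm|}$ at cost $\lesssim_p n^{p-1}$; should the canonical choices $\mmm'$ and $\mmm''$ coming out of the two applications not coincide, a further use of Lemma~\ref{lem:reducing-products-of-Rpp-2s} (applied in the second $L_p$-factor via the embedded symmetric $G_{p-1,p-1}$) absorbs the discrepancy at cost $\lesssim_p n^{p-1}$, since both products represent the same central element $z^k$.

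The main obstacle is the second stage, where one performs roughly $n$ successive substitutions: this is affordable precisely because Lemma~\ref{lem:ChangingFactors} provides each substitution at cost $n^{p-2}$, using the inductive hypothesis $\delta_{G_{p-1,p-1}}\lesssim n^{p-2}$. Any approach relying instead on the Dehn function of $L_p$ alone (which is $\asymp n^p$, not $n^{p-1}$) to directly handle $\Omega_{p-1}(\mmm)\equiv \widetilde\Omega_{p-1}(\mmm)$ would be too weak; the central product structure of the embedded $G_{p,p-1}$'s is essential for the cost-saving conversion between $x$- and $y$-expressions of the same central element.
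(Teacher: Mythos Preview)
Your proof is correct and follows essentially the same route as the paper's: separate $v$ into $u(x_1,x_2)\cdot u'(y_1,y_2)$ using the commutation relations, identify the shared central element $z^q$, and then establish $\Omega_{p-1}(\mmm)\equiv\widetilde\Omega_{p-1}(\mmm)$ at cost $\lesssim_p n^{p-1}$ via Corollary~\ref{cor:Usingx3stoReplaceRpp-1s}. Your three-stage conversion through the two embedded copies of $G_{p,p-1}$ and the Substitution Lemma is exactly what the paper compresses into the phrase ``follows readily from Corollary~\ref{cor:Usingx3stoReplaceRpp-1s}''; the only minor points are that $q$ should lie in $\mathbf{R}$ rather than $\mathbf{Z}$ (we are in the Lie group, not a lattice), and your worry about $\mmm'\neq\mmm''$ is unnecessary, since the swap automorphism of $G_{p,p}$ carries the first application of Corollary~\ref{cor:Usingx3stoReplaceRpp-1s} to the second with the \emph{same} $\mmm'$.
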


\begin{proof}
  Using that the $x_i$ commute with the $y_i$, we deduce that there are words $w_1(x_1,x_2)$ and $w_2(y_1,y_2)$ such that the identity $v\equiv w_1 \cdot w_2$ holds with area $\leqslant n^2$ and diameter $\leqslant n$ in $G_{p,p}$. Since $w_1\cdot w_2$ is null-homotopic and the intersection $\left\langle x_1,x_2\right\rangle \cap \left\langle y_1, y_2 \right\rangle$ is equal to the central subgroup $\left\langle z \right \rangle$, we deduce that there is $q\in \RR$ such that $w_1(x_1,x_2)\equiv z^{q}$ and $w_2(y_1,y_2)\equiv z^{-q}$ in $G_{p,p}$. Recall that the distortion of $\left\langle z \right\rangle$ in $G_{p,p}$ is $\simeq n^{\frac{1}{p-1}}$. Since $\ell(w_1)\leqslant n$ it follows that $|q|\lesssim_{p} n^{p-1}$. Thus, by (\ref{eq:Omega_p-1}), there is $\mmm\in\RR^{p-1}$ with $|\mmm|_p\lesssim n$ such that $z^{q} \equiv  \Omega_{p-1}(\mmm)$. In particular, the words $w_1(x_1,x_2)\cdot \left(\Omega_{p-1}(\mmm)\right)^{-1}$ and $\widetilde{\Omega}_{p-1}(\mmm) w_2(y_1,y_2)$ are null-homotopic in $G_{p,p}$.
  
  On the other hand, we deduce from Lemma  \ref{lem:ChangingFactors} and  Corollary \ref{cor:Usingx3stoReplaceRpp-1s} that the identity $\Omega_{p-1}(\mmm) \equiv  \widetilde{\Omega}_{p-1}(\mmm)$ holds in $G_{p,p}$ with area $\lesssim_{p} n^{p-1}$ and diameter $\lesssim_p n$. We deduce that the identity
  \[
   w_1(x_1,x_2) \cdot w_2(y_1,y_2)\equiv  w_1(x_1,x_2)\cdot \Omega_{p-1}(\mmm)^{-1} \widetilde{\Omega}_{p-1}(\mmm) \cdot w_2(y_1,y_2)
  \]
  holds in $G_{p,p}$ with area $\lesssim_{p}  n^{p-1}$ and diameter $\lesssim_p n$. This completes the proof.
\end{proof}

\section{Second cohomology and centralized Dehn functions}
\label{sec:lower-bound-dehn}
The centralized Dehn function of a discrete torsion-free nilpotent group can be computed by computing the maximal distortion of a central extension. In \S \ref{subsec:lower-bound} -- \ref{subsec:tools-for-computingH2r} we will explain how this characterisation of the centralized Dehn function can be rephrased algebraically in terms of the existence of a second real cohomology class with certain properties. We then apply this algebraic characterisation in \S \ref{subsec:central-extensions-central-products} to prove Theorem \ref{propIntro:centralDehn} and, more generally, to analyse the existence of central extensions of central products of nilpotent groups.

\subsection{An algebraic characterization of centralized Dehn functions of nilpotent groups}
\label{subsec:lower-bound}

\begin{definition}
\label{dfn:i-central-extension}
Let $\mathfrak{g}$ be a nilpotent Lie algebra, and let $r \geqslant 1$. 
Then $0 \to \mathbf R \overset{\iota}{\to} \widetilde{\mathfrak{g}} \overset{\pi}{\to} \mathfrak{g}\to 0$ is called a $r$-central extension if 
$\ker (\pi) \subseteq Z(\widetilde{\mathfrak{g}}) \cap \gamma_{r} \widetilde{\mathfrak{g}}$ and $\ker (\pi) \nsubseteq Z(\widetilde{\mathfrak{g}}) \cap \gamma_{r+1} \widetilde{\mathfrak{g}}$.
One similarly defines $r$-central extensions of nilpotent groups (discrete or Lie).
\end{definition}

Being an $r$-central extension only depends on the equivalence class of the extension and we now explain how it can be read off from $H^2(\mathfrak{g}, \mathbf{R})$. 

Let $\mathfrak{g}$ be a real nilpotent Lie algebra with Lie group $G$. Recall that to any $\omega \in Z^2(\mathfrak{g}, \mathbf R)$ one associates a central extension of $\mathfrak{g}$ defined over the vector space $\mathfrak{g} \times \mathbf R$ by
\begin{equation}
\label{eq:definition-of-central-extension-from-cohomology}
\forall X,Y \in \mathfrak g, \forall s, t \in \mathbf R, [(X,s),(Y,t)]_{\widetilde{\mathfrak{g}}} := \left([X,Y]_{\mathfrak{g}}, \omega(X,Y) \right).
\end{equation}

Denote $H^2(\mathfrak{g}, \mathbf R)^r$, resp. $H^2(\mathfrak{g}, \mathbf R)^{\geqslant r}$ the cohomology classes yielding $r$-central extensions, resp. $r'$-central extension for some $r'\geqslant r$ 

Definition \ref{dfn:i-central-extension} is motivated by the following proposition which relates the centralized Dehn function with the existence of $r$-central extensions.
\begin{proposition}[Compare {\cite[Proposition 4]{YoungAverage}}]
\label{prop:Centralrcentralextensions}
Let $\Gamma$ be a torsion-free finitely generated nilpotent group. Let $G$ be its real Malcev completion and $\mathfrak g$ its Lie algebra. Then $\delta^{\mathrm{cent}}_{\Gamma}(n)\asymp n^a$, where $a$ is the maximum integer $r\geqslant 1$ such that one of the following equivalent statement holds 
\begin{itemize}
\item[(i)] $\Gamma$  admits a $r$-distorted central extension;
\item[(i')] $G$  admits a $r$-distorted central extension that is a simply connected Lie group;
\item[(ii)] $\Gamma$ admits an $r$-central extension;	
\item[(ii')] $G$ admits an $r$-central extension that is a simply connected  Lie group;
\item[(ii'')] $\mathfrak g$ admits an $r$-central extension;
\item[(iii)] $H^2(\mathfrak g, \mathbf R)^{\geqslant r}\neq 0$.
\end{itemize}
\end{proposition}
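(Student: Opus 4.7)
The plan is to prove the proposition in two stages: first establish the web of equivalences $(i)\Leftrightarrow (i')\Leftrightarrow (ii)\Leftrightarrow (ii')\Leftrightarrow (ii'')\Leftrightarrow (iii)$, and then identify the common maximum $r$ with the growth exponent of $\delta_{\Gamma}^{\mathrm{cent}}$.

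\medskip

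First I would handle $(ii)\Leftrightarrow (ii')\Leftrightarrow (ii'')$. The Malcev correspondence identifies finitely generated torsion-free nilpotent groups with the lattices in their simply connected Malcev completions, and these with their Lie algebras via $\exp$. A central extension $\mathbf{Z}\to\widetilde{\Gamma}\to\Gamma$ produces, by Malcev completion, a central extension $\mathbf{R}\to\widetilde{G}\to G$ of simply connected Lie groups; conversely any central $\mathbf{R}$-extension $\widetilde{G}$ of $G$ admits a lattice that maps onto $\Gamma$ with kernel a lattice in $\mathbf{R}$, i.e.\ a copy of $\mathbf{Z}$. Since Malcev completion commutes with taking the lower central series, the property of being $r$-central is preserved in both directions, and similarly under passage to Lie algebras. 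For $(ii'')\Leftrightarrow (iii)$ I would use the parameterisation of central extensions of $\mathfrak{g}$ by $\mathbf{R}$ by classes in $H^{2}(\mathfrak{g},\mathbf{R})$ via formula \eqref{eq:definition-of-central-extension-from-cohomology}: the kernel of the extension $\widetilde{\mathfrak{g}}_{\omega}\to\mathfrak{g}$ lies in $\gamma_{r}\widetilde{\mathfrak{g}}_{\omega}$ precisely when $\omega$ vanishes on all pairs of elements whose bracket is in $\gamma_{s}\mathfrak{g}$ with $s<r$, which is exactly the defining property of $H^{2}(\mathfrak{g},\mathbf{R})^{\geqslant r}$.

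\medskip

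Next I would prove $(i)\Leftrightarrow (ii)$ and $(i')\Leftrightarrow (ii')$ via the classical distortion formula for the lower central series of nilpotent groups: by results of Malcev, Guivarc'h and Osin, if $H$ is a one-parameter central subgroup of a simply connected nilpotent Lie group $\widetilde{G}$, then the distortion of $H$ is $n^{r}$ exactly when $H\subset \gamma_{r}\widetilde{G}\setminus\gamma_{r+1}\widetilde{G}$, and the analogous statement holds for the cyclic central subgroup of $\widetilde{\Gamma}$ inside $\widetilde{G}$. Thus an $r$-distorted central extension is the same as an $r$-central extension.

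\medskip

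The heart of the proof is identifying the maximum $r$ for which $(i)$--$(iii)$ hold with the growth exponent of $\delta_{\Gamma}^{\mathrm{cent}}$. For the lower bound, assume there is an $r$-distorted central extension $\mathbf{Z}\to\widetilde{\Gamma}\overset{\pi}{\to}\Gamma$; pick a generator $\widetilde{z}$ of the kernel. By $r$-distortion there exists a sequence of elements $\widetilde{g}_{n}\in\widetilde{\Gamma}$ of word length $O(n)$ such that $\widetilde{g}_{n}=\widetilde{z}^{c n^{r}}$ for some $c>0$. Projecting to $\Gamma$ and lifting to a word $w_{n}\in F_{S}$ of length $O(n)$ with $w_{n}\in\mathfrak{R}$, the image of $w_{n}$ in $\mathfrak{R}/[F_{S},\mathfrak{R}]$ sees the generator of $H_{2}(\Gamma,\mathbf{Z})$ dual to the extension with coefficient $\asymp n^{r}$, so $\mathrm{Area}^{\mathrm{cent}}(w_{n})\succcurlyeq n^{r}$. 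For the upper bound I would use Hopf's formula $H_{2}(\Gamma,\mathbf{Z})=(\mathfrak{R}\cap [F_{S},F_{S}])/[F_{S},\mathfrak{R}]$ to interpret $\mathfrak{R}/[F_{S},\mathfrak{R}]$ as a central extension of $F_{S}^{\mathrm{ab}}$ by (a group mapping onto) $H_{2}(\Gamma,\mathbf{Z})$; tensoring with $\mathbf{R}$, the target becomes the finite-dimensional vector space $H_{2}(\Gamma,\mathbf{R})\cong H^{2}(\mathfrak{g},\mathbf{R})^{\ast}$. Any null-homotopic word of length $n$ is, up to bounded change of coordinates, the product in this vector space of a bounded number of contributions, each of size $O(n^{a})$ where $a$ is the maximal $r$ as above: this is because every primitive class in $H_{2}$ dualises to an $r$-central extension for some $r\leqslant a$, and the distortion controls exactly the size of the contribution.

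\medskip

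The main obstacle will be the precise bookkeeping in this last step: showing that $\mathrm{Area}^{\mathrm{cent}}(w)$ equals, up to multiplicative constants, the maximum distortion realised among the finitely many primitive generators of $H_{2}(\Gamma,\mathbf{Z})/\mathrm{torsion}$, and that this maximum is exactly $a$. The subtle point is that the number of relators required in $\mathfrak{R}/[F_{S},\mathfrak{R}]$ to express a word $w$ equals (up to constants) the $\ell^{1}$-norm of the image of $w$ in the abelianisation of the relation module with respect to a fixed basis, and that the distortion of each basis direction, viewed as a cyclic subgroup of the corresponding central $\mathbf{Z}$-extension of $\Gamma$, is governed by the level in the lower central series of the central subgroup, i.e.\ by the $r$ in the definition of $r$-central. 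Once this dictionary is firmly established, the equivalences above combine to yield $\delta_{\Gamma}^{\mathrm{cent}}(n)\asymp n^{a}$.
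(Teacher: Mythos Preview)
Your treatment of the equivalences $(i)\Leftrightarrow(ii)$ (Osin's distortion formula), $(ii)\Leftrightarrow(ii')\Leftrightarrow(ii'')$ (Malcev correspondence and Lie theory), and $(ii'')\Leftrightarrow(iii)$ (central extensions $\leftrightarrow$ $H^2$) matches the paper's. One quibble: your description of $H^{2}(\mathfrak g,\mathbf R)^{\geqslant r}$ as ``$\omega$ vanishes on all pairs whose bracket lies in $\gamma_{s}\mathfrak g$ with $s<r$'' is not the correct characterisation (see Proposition~\ref{lem:first-criterion-cohomology-to-central-extension} for the right one), but since the paper \emph{defines} $H^{2}(\mathfrak g,\mathbf R)^{\geqslant r}$ to be the classes yielding $r'$-central extensions with $r'\geqslant r$, the equivalence $(ii'')\Leftrightarrow(iii)$ is tautological and your misstatement is harmless.

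Where you diverge from the paper is in the identification of $a$ with the exponent of $\delta_\Gamma^{\mathrm{cent}}$. You propose separate upper and lower bounds via Hopf's formula and a coordinate-wise analysis of $H_2(\Gamma,\mathbf Z)$, worrying (rightly) about the bookkeeping in matching the $R$-word length in $Z=\mathfrak R/[F_S,\mathfrak R]$ with an $\ell^1$-norm in a basis of $H_2$. The paper sidesteps all of this with one observation: $\widetilde\Gamma:=F_S/[F_S,\mathfrak R]$ is itself a finitely generated nilpotent group, sitting in the central extension $1\to Z\to\widetilde\Gamma\to\Gamma\to 1$, and by unwinding the definition one sees that $\delta_\Gamma^{\mathrm{cent}}(n)$ \emph{is} the distortion function of $Z$ inside $\widetilde\Gamma$. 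Since $\widetilde\Gamma$ is the universal central extension (every other central extension $\overline\Gamma$ receives a morphism of extensions from it), the distortion of $Z$ realises the maximal distortion over all cyclic central extensions, which by the equivalences already established is $n^a$. This avoids Hopf's formula, torsion issues, and the basis comparison entirely. Your approach would work, but the universal-extension viewpoint is both shorter and cleaner.
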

\begin{proof}
We start by proving the equivalences between these statements. Note that in (i') and (ii') we specify that the central extension is a connected Lie group, as ``wild" extensions that don't correspond to extensions of the Lie algebra could potentially exist.

The equivalences between (i) and (ii), resp. (i') and (ii'), are due to Osin's computation of the distortion of subgroups of nilpotent groups \cite{Osindistort}. The equivalence between (ii) and (ii') follows from Malcev's correspondence. The equivalence between (ii') and (ii'') follows from the correspondence between a simply connected nilpotent Lie group and its Lie algebra. Finally, the equivalence with (iii) follows from the correspondence between central extensions and the second real cohomology group.

For the remaining part of the statement first observe that the equivalence in the case when $a<2$ or $r<2$ is easy to check. Indeed, this can only happen if all central extensions are by taking direct products. Hence, we may assume that $a\geqslant 2$ (or conversely that there is an $r$-central extension with $r\geqslant 2$).

Given a finite presentation $\langle S \mid R\rangle$ of $\Gamma$ let $ \widetilde{\Gamma}=F_S/[F_S ,\mathfrak R]$, where $\mathfrak R$ is the normal subgroup spanned by $R$. Consider the central extension
\[1\to Z\to \widetilde{\Gamma}\to \Gamma\to 1,\]
where $Z=\mathfrak R/[F_S ,\mathfrak R]$. It follows that $\widetilde{\Gamma}$ is a finitely generated nilpotent group, and $Z$ is generated by the finite subset $R$ (modulo $[F_S ,\mathfrak R]$). 
Let $n\in \mathbf N$ and $k=\delta^{\mathrm{cent}}_{\Gamma}(n)$. This means that there exists an element $g\in \widetilde{\Gamma}$ whose word length with respect to $S$ is $n$ and such that $k$ is the minimal integer such that $g$ can be written as a word of length $k$ in the generating set $R$ of $Z$. In other words, $\delta^{\mathrm{cent}}_{\Gamma}(n)$ is the distortion of $Z$ in $\widetilde{\Gamma}$.  It is a classical fact that the central extension $\tilde{\Gamma}$ of $\Gamma$ is universal in the sense that for any other central extension $\overline{\Gamma}$ there exists a morphism $\tilde{\Gamma}\to \overline{\Gamma}$ that extends to a morphism of extensions and induces a surjection between the derived subgroups (see for instance \cite[Lemma 5]{YoungAverage} for more details).  Hence $a$ is indeed characterized by one of the equivalent statements of the proposition.
\end{proof}

\subsection{Carnot gradings}
\label{subsec:Carnot-gradings}

We recall from the Introduction that \emph{(i)} a nilpotent Lie group $G$ is said to be \emph{Carnot gradable} if its Lie algebra $\mathfrak g$ admits a Lie algebra grading $\mathfrak{g} =\bigoplus_{i=1}^s \mathfrak{g}_i$ such that $\operatorname{Liespan}(\mathfrak{g}_1)= \mathfrak{g}$, and that \emph{(ii)} to any simply connected nilpotent Lie group $G$ we can associate a Carnot-graded Lie group $\mathsf{gr}(G)$ with Carnot graded Lie algebra 
\begin{equation*}
\label{eq:carnot-associated}
\mathsf{gr}(\mathfrak{g}) = \bigoplus_{i \geqslant 1} \gamma_{i} \mathfrak{g} / \gamma_{i+1} \mathfrak{g}
\end{equation*}
with brackets induced by those on $\mathfrak{g}$. In particular, $H^1(\mathfrak{g}, \mathbf R)$ and $H^1(\mathsf{gr}(\mathfrak{g}), \mathbf R) = (\mathfrak{g} / [\mathfrak{g},\mathfrak{g}])^\star$ are naturally isomorphic.

$G$ is isomorphic to $\mathsf{gr}(G)$ if and only if $G$ is Carnot gradable, in which case the isomorphism is given by the graded linear isomorphism $\Phi_{(G,\mathfrak{g}_1)}: \bigoplus_i \mathfrak{g}_i \to \bigoplus \gamma_{i} \mathfrak{g} / \gamma_{i+1} \mathfrak{g}$ for any Carnot grading $(\mathfrak g_i)$ on the Lie algebra. 

\begin{remark}
Any pair of Carnot gradings $\lbrace (G,\mathfrak{g}_1), (G, \mathfrak{g}'_1) \rbrace$ on a given group $G$ differs by the automorphism $\Phi_{(G,\mathfrak{g'}_1)}^{-1}\circ \Phi_{(G,\mathfrak{g}_1)}$. It induces the identity on $H^1(\mathfrak{g}, \mathbf R)$.
\end{remark}

We refer to \cite[3.2]{CornulierSystolic1} for more on Carnot gradings.

\begin{example}
Let $G_{p,q}$ be the group defined in the introduction, with $p \geqslant q$. 
Denote by $\mathfrak{g}_{p,q}$ its Lie algebra. Then $\mathfrak{g}_{p,q}$ has a basis $\left\{x_1, \ldots, x_{p-1}, z, y_1, \ldots, y_{q-1}\right\}$ with the following nonzero brackets 
\[ 
[x_1, x_i] = x_{i+1} \text{ for } 2 \leqslant i \leqslant p-2, \; [y_1, y_j] = y_{j+1} \text{ for } 2 \leqslant j \leqslant q-2 \text{ and } [x_1, x_{p-1}] = [y_1, y_{q-1}] = z. 
\]
To simplify notation we use the same letters for the elements of the Lie algebra and the Lie group, even though they don't correspond under the exponential map. We emphasize that in this section we will deviate from the remainder of the paper where we denote the generators of the second factor by $y_{p-q+2},\dots, y_{p-1},z$. This difference in notation is because it proves computationally convenient in the respective parts of the paper.

We observe that with respect to our generators 
\begin{equation*}
 \gamma_i \mathfrak{g}_{p,q} =
  \begin{cases}
   \operatorname{span}_{\RR}\left\{x_{i+1} , \ldots, x_{p-1}, y_{i+1}\ldots, y_{q-1}, z\right\}& \mbox{ for $2\leqslant i \leqslant q-2$}\\
   \operatorname{span}_{\RR}\left\{x_{i+1} , \ldots, x_{p-1}, z\right\}& \mbox{ for $q-1 \leqslant i \leqslant p-2$}\\
   \mathbf R z &\mbox{ for $i = p-2$.}\\
 \end{cases}
\end{equation*}
Identifying $\gamma_{i} \mathfrak{g}_{p,q} / \gamma_{i+1} \mathfrak{g}_{p,q}$ with $\mathbf R x_{1}\oplus \RR x_2 \oplus \mathbf R y_{1}\oplus \RR y_2$, for $i=1$, $\mathbf R x_{i+1} \oplus \mathbf R y_{i+1}$, for $2\leqslant i \leqslant q-2$, $\mathbf R x_{i+1}$, for $q-1 \leqslant i \leqslant p-2$, and $\mathbf R z$, for $i = p-1$, we can define the brackets of $\mathfrak{g}_{p,q}$ and of $\mathsf{gr}(\mathfrak{g}_{p,q})$ on the same vector space. If $p=q$ then $\mathfrak{g}_{p,q}$ is Carnot-graded, otherwise all the brackets are the same in $\mathfrak{g}_{p,q}$ and $\mathsf{gr}(\mathfrak{g}_{p,q})$ except that $[y_1, y_{q-1}] = z$ in $\mathfrak{g}_{p,q}$ while $[y_1, y_{q-1}] = 0$ in $\mathsf{gr}(\mathfrak{g}_{p,q})$.
We deduce that
\begin{equation}
\label{eq:Carnot-ofG-pq}
\mathsf{gr}(G_{p,q}) =
\begin{cases}
G_{p,q} & p = q \\
L_{p} \times L_{q-1} & p \neq q. 
\end{cases}
\end{equation}
\label{exm:Carnot-of-l_p-central-l-q}
\end{example}

\subsection{Tools for computing $H^2(\mathfrak{g}, \mathbf R)^{\geqslant r}$}
\label{subsec:tools-for-computingH2r}
Rephrasing the construction of central extensions from cohomology classes, we state a criterion to decide membership in $H^2(\mathfrak{g}, \mathbf R)^{\geqslant r}$:

\begin{proposition}
\label{lem:first-criterion-cohomology-to-central-extension}
Let $r \geqslant 2$.
The cocycle $\omega \in Z^2(\mathfrak{g}, \mathbf R)$ defines a cohomology class $[\omega] \in H^2(\mathfrak g, \mathbf R)^{\geqslant r}$ if and only if there exist $s \geqslant 1$ and a sequence of pairs $(X_i,Y_i) \in  \mathfrak{g} \times \mathfrak{g}$, $1 \leqslant i \leqslant s$, such that 
\begin{align}
\label{eq:Xi-are-in-gamma-ri} X_i \in \gamma_{r_{1,i}} \mathfrak{g} \text { and } Y_i \in \gamma_{r_{2,i}} \mathfrak{g} & \text{ with } r_{1,i} \leqslant r_{2,i}\text { and } r_{1,i}+r_{2,i}= r, \tag{$\Delta_1$} \\
\label{eq:bracket-is-zero} \sum_{i=1}^s \left[X_i,Y_i\right] & = 0 \tag{$\Delta_2$}, \\
\label{eq:omega-is-nonzero} \sum_{i=1}^s \omega(X_i,Y_i) & = 1 \tag{$\Delta_3$}.
\end{align}
\end{proposition}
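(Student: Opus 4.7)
The plan is to translate the condition $[\omega] \in H^2(\mathfrak{g},\mathbf{R})^{\geqslant r}$ into the statement that $(0,1)$ lies in $\gamma_r \widetilde{\mathfrak{g}}$, where $\widetilde{\mathfrak{g}}$ is the central extension built from $\omega$ as in \eqref{eq:definition-of-central-extension-from-cohomology}, and then unpack what this means in terms of brackets and projections. Indeed, $\widetilde{\mathfrak{g}} = \mathfrak{g}\oplus\mathbf{R}$ with projection $\pi:\widetilde{\mathfrak{g}}\to\mathfrak{g}$, $(X,s)\mapsto X$, whose kernel $\{0\}\oplus\mathbf{R}$ is central by the bracket formula; by definition $[\omega]\in H^2(\mathfrak{g},\mathbf{R})^{\geqslant r}$ is equivalent to $(0,1)\in\gamma_r\widetilde{\mathfrak{g}}$. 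The tool needed in both directions is the elementary fact that for every $k\geqslant 1$ one has $\pi(\gamma_k\widetilde{\mathfrak{g}})=\gamma_k\mathfrak{g}$; this follows by a one-line induction on $k$ from the surjectivity of $\pi$ and the fact that it is a Lie algebra morphism.

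For the forward direction, assume $(0,1)\in\gamma_r\widetilde{\mathfrak{g}}$. Since $\gamma_r\widetilde{\mathfrak{g}} = [\widetilde{\mathfrak{g}},\gamma_{r-1}\widetilde{\mathfrak{g}}]$ and $r\geqslant 2$, we can write
\[
(0,1) = \sum_{i=1}^s [\widetilde{A}_i,\widetilde{B}_i]
\]
with $\widetilde{A}_i\in \widetilde{\mathfrak{g}}=\gamma_1\widetilde{\mathfrak{g}}$ and $\widetilde{B}_i\in\gamma_{r-1}\widetilde{\mathfrak{g}}$, which satisfies \eqref{eq:Xi-are-in-gamma-ri} with $r_{1,i}=1$, $r_{2,i}=r-1$. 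Setting $X_i := \pi(\widetilde{A}_i)\in\mathfrak{g}$ and $Y_i := \pi(\widetilde{B}_i)\in\gamma_{r-1}\mathfrak{g}$, the bracket on $\widetilde{\mathfrak{g}}$ evaluates to $[\widetilde{A}_i,\widetilde{B}_i] = ([X_i,Y_i],\omega(X_i,Y_i))$, so identifying the $\mathfrak{g}$-component and the $\mathbf{R}$-component of the equation above produces exactly \eqref{eq:bracket-is-zero} and \eqref{eq:omega-is-nonzero}.

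Conversely, assume pairs $(X_i,Y_i)_{1\leqslant i\leqslant s}$ satisfying \eqref{eq:Xi-are-in-gamma-ri}, \eqref{eq:bracket-is-zero} and \eqref{eq:omega-is-nonzero} are given. Using $\pi(\gamma_k\widetilde{\mathfrak{g}})=\gamma_k\mathfrak{g}$ we lift $X_i$ to some $\widetilde{X}_i\in\gamma_{r_{1,i}}\widetilde{\mathfrak{g}}$ and $Y_i$ to some $\widetilde{Y}_i\in\gamma_{r_{2,i}}\widetilde{\mathfrak{g}}$. Because $[\gamma_{r_{1,i}}\widetilde{\mathfrak{g}},\gamma_{r_{2,i}}\widetilde{\mathfrak{g}}]\subseteq \gamma_{r_{1,i}+r_{2,i}}\widetilde{\mathfrak{g}}=\gamma_r\widetilde{\mathfrak{g}}$, the sum
\[
\sum_{i=1}^s [\widetilde{X}_i,\widetilde{Y}_i] = \Bigl(\sum_{i=1}^s [X_i,Y_i],\ \sum_{i=1}^s \omega(X_i,Y_i)\Bigr) = (0,1)
\]
lies in $\gamma_r\widetilde{\mathfrak{g}}$, as required.

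The proof is essentially a direct unwinding of definitions, and I do not expect any serious obstacle: the only ingredient that is not entirely formal is the surjectivity $\pi(\gamma_k\widetilde{\mathfrak{g}})=\gamma_k\mathfrak{g}$, which is standard. One small point to keep in mind is that the forward direction naturally produces pairs only with $(r_{1,i},r_{2,i})=(1,r-1)$; allowing more general splittings with $r_{1,i}+r_{2,i}=r$ and $r_{1,i}\leqslant r_{2,i}$ in the statement gives more flexibility when applying the criterion but costs nothing in the proof.
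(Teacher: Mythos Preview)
Your proof is correct and follows essentially the same approach as the paper: both directions reduce to showing $(0,1)\in\gamma_r\widetilde{\mathfrak{g}}$ and unpack this via the explicit bracket formula on $\widetilde{\mathfrak{g}}=\mathfrak{g}\oplus\mathbf{R}$. Your forward direction is slightly more streamlined than the paper's (you use $\gamma_r\widetilde{\mathfrak{g}}=[\widetilde{\mathfrak{g}},\gamma_{r-1}\widetilde{\mathfrak{g}}]$ directly, whereas the paper expands $(0,1)$ as a sum of $r$-fold iterated commutators before regrouping), and you make explicit the surjectivity $\pi(\gamma_k\widetilde{\mathfrak{g}})=\gamma_k\mathfrak{g}$ that the paper only invokes implicitly, but these are cosmetic differences rather than a different route.
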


\begin{proof}
Assume \eqref{eq:Xi-are-in-gamma-ri}, \eqref{eq:bracket-is-zero} and \eqref{eq:omega-is-nonzero} and let $\pi : \widetilde{\mathfrak{g}} \to \mathfrak{g}$ be the central extension associated to $\omega$; decompose $\widetilde{\mathfrak{g}}$ as a product $\mathfrak{g} \times \mathbf{R}$. In accordance with the definition of $r$-central extension we must prove that $(0,1) \in \gamma_r \widetilde{\mathfrak{g}}$. By  \eqref{eq:bracket-is-zero} and \eqref{eq:omega-is-nonzero} we may represent this element as $\sum_{i=1}^s [\widetilde{X}_i, \widetilde{Y}_i]$ where $\pi(\widetilde{X}_i) = X_i$, resp.\  $\pi(\widetilde{Y}_i) = Y_i$. Note that by \eqref{eq:Xi-are-in-gamma-ri} we may assume that $\widetilde{X}_i \in \gamma_{r_{1,i}} \widetilde{\mathfrak{g}}$ and $\widetilde{Y}_i \in \gamma_{r_{2,i}} \widetilde{\mathfrak{g}}$, and, since $r_{1,i} + r_{2,i} = r$ for all $i$, we deduce that $(0,1)=\sum_{i=1}^s [\widetilde{X}_i, \widetilde{Y}_i] \in \gamma_r \widetilde{\mathfrak{g}}$.

Conversely, assuming that $\widetilde{\mathfrak{g}} \to \mathfrak{g}$ is $r$-central, one can write $(0,1) = \sum_{i=1}^s [(U_{i,1},s_{i,1}),\ldots, (U_{i,r},s_{i,r})]$ with  $s_{i,j} \in \mathbf R$. It is then sufficient to set $X_i = U_{i,1}$ and $Y_i = \pi([(U_{i,2}, s_{i,2}), \ldots , (U_{i,r},s_{i,r})])$.
\end{proof} 

\begin{remark}
Combined with the results of the previous section Proposition \ref{lem:first-criterion-cohomology-to-central-extension} implies Pittet's lower bound on the Dehn function in {\cite[Theorem 3.1]{PittetIsopNil}}. Indeed, Pittet's criterion is equivalent to checking conditions \eqref{eq:Xi-are-in-gamma-ri}, \eqref{eq:bracket-is-zero}, \eqref{eq:omega-is-nonzero} with $s=1$, that is, with only one pair $(X,Y)=(X_1,Y_1)$. To see this note that the elements $X$ and $Y$ then generate an abelian Lie subalgebra $\mathfrak{a}$ of $\mathfrak{g}$, and \cite[Th 3.1]{PittetIsopNil} requires that the map $\iota^\ast: H^2(\mathfrak{g}, \mathbf R) \to H^2(\mathfrak{a}, \mathbf R)$ associated to $\iota: \mathfrak{a} \to \mathfrak{g}$ be nonzero, which amounts to asking for the existence of a cocycle $\omega$ satisfying \eqref{eq:omega-is-nonzero}.
We note that Pittet's exponent $d(\Gamma)$ nevertheless coincides with the growth exponent of the centralized Dehn function up to dimension $6$ included (See \S \ref{sec:low-dimension}).
\end{remark}

\begin{remark}
\label{rem:maximal-degree-central-extensions}
In the special case when $r$ is greater than the nilpotency class $c$ of $\mathfrak{g}$ (i.e. when $r = c+1$) the condition \eqref{eq:bracket-is-zero} is automatic given the assumptions on $X$ and $Y$ (as $r_1 + r_2 >c$) and $(c+1)$-central extensions are the central extensions of step $c+1$. 
For this reason ruling out the existence of $r$-central extensions is a simpler task when $r=c+1$.
\end{remark}

When $\mathfrak{g}$ is Carnot gradable we can go further into the description of cohomology classes yielding $r$-central extensions.
Let $(\mathfrak{g}_i)$ be the Carnot grading on $\mathfrak{g}$ with $\mathfrak{g}_i$ representing $\gamma_i \mathfrak{g}/\gamma_{i+1} \mathfrak{g}$.
Correspondingly, $\bigwedge^1 \mathfrak g^\star = \mathfrak g^\star$ can be graded in the following way: for $i \geqslant 1$ we set $(\bigwedge^1 \mathfrak{g}^\star)_i = \pi_i^\ast \operatorname{Hom}(\mathfrak{g}_i, \mathbf R)$ where $\pi_i$ is the projection to $\mathfrak{g}_i$. 

The exterior square $\bigwedge^2 \mathfrak g^\star$ is then graded by
\[ 
(\bigwedge\nolimits^{\!2} \mathfrak{g}^\star)_{k} = \bigoplus_{i+j = k} (\bigwedge\nolimits^{\!1} \mathfrak{g}^{\star})_i \wedge (\bigwedge\nolimits^{\!1} \mathfrak{g}^\star)_j. 
\]
Since $(\mathfrak{g}_i)$ is a Lie algebra grading on $\mathfrak{g}$, the differential $d: \bigwedge\nolimits^{\!n} \mathfrak g^\star \to \bigwedge\nolimits^{\!n+1} \mathfrak{g}^\star$ has degree $0$ with respect to these gradings. In particular, the cohomology group $H^2(\mathfrak{g}, \mathbf R)$ is also graded and the cohomology classes of weight $r$ under this grading produce $r$-central extensions.

\begin{example}
\label{exm:filiform-extension-of-filiform}
The Dehn function of the model filiform group $L_p$ is at least of order $n^p$.
Indeed, denote by $\mathfrak{l}_p$ the Lie algebra with basis $\left\{x_1, \ldots, x_{p-1}, z\right\}$, where $[x_1, x_i] = x_{i+1}$ for $2\leqslant i\leqslant p-2$ and $[x_1,x_{p-1}] = z$.
For $\xi_1, \ldots , \xi_{p-1}, \zeta$ its dual basis, the cohomology class $[\xi_1 \wedge \zeta]$ corresponding to the tautological extension $\mathfrak{l}_{p+1} \to \mathfrak l_p$ has degree $p$ under the associated grading on $H^2(\mathfrak{l}_p, \mathbf R)$.
\end{example}

We will compute the grading on $H^2(\mathfrak{l}_p, \mathbf R)$ below (see Remark \ref{Rmk:grading-of-lp}).
However, the groups $\mathfrak{g}_{p,q}$ that we are considering are not Carnot gradable for $p\neq q$, $p,q\geqslant 3$. Thus, our main tool in this section will be the criterion provided by Proposition \ref{lem:first-criterion-cohomology-to-central-extension}.

\subsection{Central extensions of central products}
\label{subsec:central-extensions-central-products}
We refer to the introduction for the definition of a central product $\mathfrak{k}\times_{\theta} \mathfrak{l}$ of Lie algebras $\mathfrak{k}$ and $\mathfrak{l}$ (resp. $K\times_{\theta} L$ of groups $K$ and $L$). Here we will be interested in understanding the existence of central extensions of central products in general and, more specifically, in the context of the central products $\mathfrak{g}_{p,q}$. We start with two general results.

\begin{lemma}
\label{lem:direct-product-central-extension}
Let $k, \ell$ be positive integers such that $2 \leqslant k, \ell$.
Let $\mathfrak k$ and $\mathfrak l$ be nilpotent real Lie algebras of step $k$ and $\ell$ respectively, and with one-dimensional center. Then for any isomorphism $\theta: Z(\mathfrak{k}) \to Z(\mathfrak{l})$, the extension $\mathfrak{k} \times \mathfrak{l} \to \mathfrak{k} \times_\theta \mathfrak{l}$ is $\min(k,\ell)$-central.
\end{lemma}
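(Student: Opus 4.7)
The plan is a direct unpacking of the definitions. Without loss of generality I assume $k \leqslant \ell$, so that $\min(k,\ell) = k$. The kernel of $\pi \colon \mathfrak k \times \mathfrak l \twoheadrightarrow \mathfrak k \times_\theta \mathfrak l$ is, by construction,
\[ N = \{(z,-\theta(z)) : z \in Z(\mathfrak k)\} \subset \mathfrak k \times \mathfrak l, \]
which is one-dimensional and manifestly central in $\mathfrak k \times \mathfrak l$.

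The first step I would take is to identify $Z(\mathfrak k)$ inside the lower central series. Because $\mathfrak k$ has step $k$ and $1$-dimensional center, the nonzero subspace $\gamma_k \mathfrak k$ is contained in $Z(\mathfrak k)$, and by dimension count the two coincide; similarly $Z(\mathfrak l) = \gamma_\ell \mathfrak l$. The key observation is then that the lower central series of a direct product decomposes as $\gamma_r(\mathfrak k \times \mathfrak l) = \gamma_r \mathfrak k \oplus \gamma_r \mathfrak l$ for every $r\geqslant 1$.

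From this I would conclude $N \subset \gamma_k \mathfrak k \oplus \gamma_\ell \mathfrak l \subset \gamma_k \mathfrak k \oplus \gamma_k \mathfrak l = \gamma_k(\mathfrak k \times \mathfrak l)$, where the inclusion $\gamma_\ell \mathfrak l \subset \gamma_k \mathfrak l$ uses $k\leqslant \ell$. Combined with the fact that $N$ is central, this gives $N \subset Z(\mathfrak k \times \mathfrak l) \cap \gamma_k(\mathfrak k \times \mathfrak l)$, which is the first half of Definition \ref{dfn:i-central-extension}.

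For the second half I need $N \not\subset \gamma_{k+1}(\mathfrak k \times \mathfrak l) = \gamma_{k+1}\mathfrak k \oplus \gamma_{k+1}\mathfrak l$. Since $\mathfrak k$ has step $k$ we have $\gamma_{k+1}\mathfrak k = 0$, so any element $(z,-\theta(z))$ of $\gamma_{k+1}(\mathfrak k \times \mathfrak l)$ must satisfy $z = 0$; hence only the zero element of $N$ lies there, and $N \not\subset \gamma_{k+1}(\mathfrak k \times \mathfrak l)$. There is no substantive obstacle here: the whole argument is a one-line verification once the decomposition of the lower central series of a direct product and the identification $Z(\mathfrak k) = \gamma_k \mathfrak k$ are noted.
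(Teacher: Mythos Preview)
Your proof is correct and follows essentially the same approach as the paper's own proof: both reduce via symmetry to one ordering of $k,\ell$, identify $Z(\mathfrak k)=\gamma_k\mathfrak k$ and $Z(\mathfrak l)=\gamma_\ell\mathfrak l$ from the one-dimensionality of the centers, and then use the decomposition $\gamma_r(\mathfrak k\times\mathfrak l)=\gamma_r\mathfrak k\oplus\gamma_r\mathfrak l$ to locate the kernel in the lower central series. Your write-up is in fact a bit more explicit than the paper's, which compresses the verification into a single sentence.
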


\begin{proof}
Without loss of generality assume that $k \geqslant \ell$.
Since the centers of both factors are one-dimensional, they are contained in the last nonzero term of the central series. Let $z$ generate $Z(\mathfrak{k})$. Then the generator $(z,  \theta(z))$ of $\ker (\mathfrak{k} \times \mathfrak{l} \to \mathfrak{k} \times_\theta \mathfrak{l})$ lies in $\gamma_\ell (\mathfrak{k} \times \mathfrak{l})$, but not in $\gamma_{\ell+1} (\mathfrak{k} \times \mathfrak{l})$. 
\end{proof}

\begin{lemma}
\label{prop:no-central-extension-max-degree}
Let $k, \ell$ be positive integers such that $2 \leqslant k, \ell$.
Let $\mathfrak k$ and $\mathfrak l$ be nilpotent real Lie algebras of step $k$ and $\ell$ respectively, and with one-dimensional center.
Let $\mathfrak g$ be the central product of $\mathfrak{k}$ and $\mathfrak{l}$. Then 
$\mathfrak{g}$ has no $r$-central extension for $r \geqslant \max(k,\ell)+1$. 
\end{lemma}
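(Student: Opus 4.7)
Without loss of generality I assume $k \geqslant \ell$, so $\mathfrak{g}$ has step $k$. The case $r \geqslant k+2$ would be disposed of first: any central extension $0 \to \mathbf R \to \widetilde{\mathfrak{g}} \overset{\pi}{\to} \mathfrak{g} \to 0$ has step at most $k+1$ because $\widetilde{\mathfrak{g}}/\mathbf R = \mathfrak{g}$ has step $k$ and $\mathbf R$ is central, so $\gamma_r \widetilde{\mathfrak{g}} = 0$ cannot contain the non-zero kernel. Hence the substantive task is to rule out $(k+1)$-central extensions.

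For this, the plan is to use the cocycle criterion of Proposition~\ref{lem:first-criterion-cohomology-to-central-extension}. I would work with the short exact sequence $0 \to I \to \mathfrak{k} \oplus \mathfrak{l} \xrightarrow{p} \mathfrak{g} \to 0$, where $I = \mathbf R \cdot (z_\mathfrak{k}, -\theta(z_\mathfrak{k}))$ and $z_\mathfrak{k}$ spans $Z(\mathfrak{k})$. A cocycle $\omega \in Z^2(\mathfrak{g}, \mathbf R)$ pulls back to a cocycle $p^\ast \omega$ on $\mathfrak{k} \oplus \mathfrak{l}$ that vanishes on $I \wedge (\mathfrak{k} \oplus \mathfrak{l})$ and splits as $\omega_\mathfrak{k} + \omega_\mathfrak{l} + \omega_{\mathrm{cross}}$. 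Two identities would fall out immediately. First, evaluating the cocycle condition on triples $(K, K', L) \in \mathfrak{k} \times \mathfrak{k} \times \mathfrak{l}$ and using $[\mathfrak{k},\mathfrak{l}] = 0$ in $\mathfrak{k}\oplus \mathfrak{l}$ gives that $\omega_{\mathrm{cross}}$ vanishes on $\gamma_2 \mathfrak{k} \otimes \mathfrak{l}$, and symmetrically on $\mathfrak{k} \otimes \gamma_2 \mathfrak{l}$. Second, the vanishing on $I$, combined with $z_\mathfrak{k} \in \gamma_k \mathfrak{k} \subseteq \gamma_2 \mathfrak{k}$ (using $k \geqslant 2$) and the symmetric statement for $\theta(z_\mathfrak{k})$, would collapse to $\omega_\mathfrak{k}(z_\mathfrak{k}, \cdot) \equiv 0$ on $\mathfrak{k}$ and $\omega_\mathfrak{l}(\theta(z_\mathfrak{k}), \cdot) \equiv 0$ on $\mathfrak{l}$.

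Now suppose pairs $(X_i, Y_i)$ witness a $(k+1)$-central extension of $\mathfrak{g}$. Since $\gamma_j \mathfrak{g} = p(\gamma_j \mathfrak{k} \oplus \gamma_j \mathfrak{l})$, I would lift each $X_i$ (resp.\ $Y_i$) to $X_{i,\mathfrak{k}} + X_{i,\mathfrak{l}}$ with $X_{i,\mathfrak{k}} \in \gamma_{r_{1,i}} \mathfrak{k}$, etc. Expanding $\omega(X_i, Y_i)$ produces a $\mathfrak{k}$-part, an $\mathfrak{l}$-part and two cross terms; because $r_{1,i}+r_{2,i} = k+1 \geqslant 3$ forces one of the indices to be $\geqslant 2$, all cross terms vanish by the constraints on $\omega_{\mathrm{cross}}$. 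Setting $A = \sum_i \omega_\mathfrak{k}(X_{i,\mathfrak{k}}, Y_{i,\mathfrak{k}})$ and $B = \sum_i \omega_\mathfrak{l}(X_{i,\mathfrak{l}}, Y_{i,\mathfrak{l}})$, the criterion would reduce to $A + B = 1$ (both $A$ and $B$ are unambiguous despite non-uniqueness of lifts, since lift ambiguities lie in $\mathbf R z_\mathfrak{k}$ and $\mathbf R \theta(z_\mathfrak{k})$, on which $\omega_\mathfrak{k}$ and $\omega_\mathfrak{l}$ are killed).

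The main obstacle, and the heart of the argument, is to show $A = 0$ (and symmetrically $B = 0$). If $A \neq 0$, then after rescaling the pairs $(X_{i,\mathfrak{k}}, Y_{i,\mathfrak{k}})$ would witness a $(k+1)$-central extension $\widetilde{\mathfrak{k}}$ of $\mathfrak{k}$ via the same criterion — note that $(\Delta_2)$ is automatic since $\gamma_{k+1} \mathfrak{k} = 0$. I would then rule this out: the identity $\omega_\mathfrak{k}(z_\mathfrak{k}, \cdot) = 0$ implies that any lift $\widetilde{z} \in \widetilde{\mathfrak{k}}$ of $z_\mathfrak{k}$ is central in $\widetilde{\mathfrak{k}}$, because $[\widetilde{z}, \widetilde{K}] = \omega_\mathfrak{k}(z_\mathfrak{k}, K)\mathbf 1 = 0$. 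Since $z_\mathfrak{k}$ spans $\gamma_k \mathfrak{k}$, the quotient $\mathfrak{k}/\mathbf R z_\mathfrak{k}$ has step $\leqslant k-1$, so $\widetilde{\mathfrak{k}}/\mathbf R \widetilde{z}$, a central extension by $\mathbf R$ of this quotient, has step $\leqslant k$, giving $\gamma_{k+1} \widetilde{\mathfrak{k}} \subseteq \mathbf R \widetilde{z}$. But the kernel generator $\mathbf 1$ projects to $0$ in $\mathfrak{k}$ while $\widetilde{z}$ projects to $z_\mathfrak{k} \neq 0$, so $\mathbf 1 \notin \mathbf R \widetilde{z}$, contradicting $(k+1)$-centrality of $\widetilde{\mathfrak{k}}$. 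The symmetric argument gives $B = 0$, contradicting $A + B = 1$.
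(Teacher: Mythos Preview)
Your proof is correct and takes a genuinely different route from the paper's. The paper argues directly inside an arbitrary central extension $\widetilde{\mathfrak g}\to\mathfrak g$: choosing Lie generators $\widetilde x_i,\widetilde y_j$ lifting generators of $\mathfrak k$ and $\mathfrak l$, it observes that mixed iterated commutators of length $\geqslant 3$ vanish, so a putative element of $\gamma_{k+1}\widetilde{\mathfrak g}$ can be taken as a pure $(k{+}1)$-fold commutator in the $\widetilde x_i$; the inner $k$-fold part then lands in $\pi^{-1}(\gamma_k\mathfrak k)=\langle z,z'\rangle=\pi^{-1}(\gamma_\ell\mathfrak l)$, allowing a switch to the $\widetilde y_j$-side and forcing the whole expression to vanish. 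Your approach instead works through the cocycle criterion of Proposition~\ref{lem:first-criterion-cohomology-to-central-extension}: pulling $\omega$ back to $\mathfrak k\oplus\mathfrak l$ and splitting off $\omega_{\mathfrak k},\omega_{\mathfrak l},\omega_{\mathrm{cross}}$, you reduce to showing that $\omega_{\mathfrak k}$ cannot define a $(k{+}1)$-central extension of $\mathfrak k$, and you rule this out via the constraint $\omega_{\mathfrak k}(z_{\mathfrak k},\cdot)=0$ (forced by the vanishing on the diagonal ideal $I$), which makes $\widetilde z$ central in $\widetilde{\mathfrak k}$ and bounds the step of $\widetilde{\mathfrak k}$.

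The paper's argument is shorter and more hands-on; your argument is more structural, cleanly separating the cocycle contributions of the two factors, and makes transparent exactly where each hypothesis (one-dimensional centre, $k,\ell\geqslant 2$) enters. Both ultimately pivot on the same phenomenon---the central identification forcing the generator $z_{\mathfrak k}$ to remain central in the extension---but express it at different levels (iterated brackets versus cocycle vanishing conditions).
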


\begin{proof}
Assume $k \geqslant \ell$. Then $\mathfrak{g}$ is $k$-nilpotent, meaning that $\gamma_{k+1}\mathfrak{g} = 0$. Identify $\mathfrak k$ and $\mathfrak l$ with their images in $\mathfrak g$. 
Let $x_1, \ldots, x_s \in \mathfrak k$ be such that $x_i \notin [\mathfrak k, \mathfrak k]$ and $\operatorname{Liespan} \lbrace x_1,\dots, x_s \rbrace = \mathfrak k$. Let $y_1, \ldots, y_t \in \mathfrak l$ be such that $y_i \notin [\mathfrak l, \mathfrak l]$ and $\operatorname{Liespan} \lbrace y_1,\dots,y_t \rbrace = \mathfrak l$. Let $\widetilde{\mathfrak g}$ sit in the central extension 
\begin{equation}
0 \to \langle z' \rangle \longrightarrow \widetilde{\mathfrak g} \overset{\pi}{\longrightarrow} \mathfrak g \to 0,
\label{eq:central-extension-for-contradiction}
\end{equation}
and let $z \in \widetilde{\mathfrak g}$ be such that $\langle \pi(z) \rangle = Z(\mathfrak g)$.  For $i = 1, \ldots ,s$ and $j = 1, \ldots, t$ let $\widetilde{x}_i$ and $\widetilde{y}_j$ be such that $\pi(\widetilde{x}_i) = x_i$ and $\pi (\widetilde{y}_j) = y_j$. 

Note that $[\widetilde x_i, \widetilde y] \in \langle z' \rangle$ for all $i$ if $\pi(\widetilde{y}) \in \mathfrak l$, and that $[\widetilde y_j, \widetilde x] \in \langle z' \rangle$ for all $j$ if $\pi(\widetilde{x}) \in \mathfrak k$. Since $z'$ is central it follows that, for $m \geqslant 3$, 
$m$-fold commutators of $\widetilde{x}_j$ and $\widetilde{y}_j$ vanish, unless they only contain $\widetilde{x}_j$'s (resp. $\widetilde{y}_j$'s). 
Indeed, the only commutators where this is not trivially true are  the 
$\left[ \widetilde{y}_{i_1},\widetilde{x}_{i_2},\dots , \widetilde{x}_{i_m}\right]$ (resp.
$\left[\widetilde{x}_{i_1},\widetilde{y}_{i_2},\dots , \widetilde{y}_{i_m}\right]$) and they vanish by the Jacobi identity.

If $\widetilde{\mathfrak{g}}$ has step $k+1$ we may thus assume that there are $i_1, \ldots, i_{k+1} \in \lbrace 1, \ldots, s \rbrace$ such that $z' = [\widetilde{x}_{i_1}, \ldots ,\widetilde{x}_{i_{k+1}}]$.
Since 
$[\widetilde{x}_{i_2}, \ldots, \widetilde{x}_{i_{k+1}}] \in \pi^{-1}(\gamma_k \mathfrak{k}) =  \langle z, z' \rangle = \pi^{-1}(\gamma_\ell \mathfrak{l})$ we may rewrite $[\widetilde{x}_{i_2}, \ldots, \widetilde{x}_{i_{k+1}}]$ as $\alpha_1 [\widetilde{y}_{i_1}, \ldots ,\widetilde{y}_{i_\ell}] + \alpha_2 z'$ for $\alpha_1,\alpha_2 \in \RR$. Thus, 
$z' = \alpha_1 [\widetilde{x}_{i_1}, \widetilde{y}_{i_1}, \ldots, \widetilde{y}_{i_\ell}] + \alpha_2 [\widetilde{x}_{i_1}, z'] = 0$,
a contradiction.
\end{proof}

We now turn to the specific case of $\mathfrak{g}_{p,q}$ for $p>q\geqslant 3$.
\begin{proposition}
\label{lem:no-central-extension}
Assume $p > q \geqslant 3$. Then, 
$\mathfrak{g}_{p,q}$ admits a $(p-1)$-central extension if and only if $p$ is even.
\end{proposition}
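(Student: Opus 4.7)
The plan is to apply the criterion of Proposition~\ref{lem:first-criterion-cohomology-to-central-extension}: we must decide when there exists a cocycle $\omega \in Z^2(\mathfrak g_{p,q}, \mathbf R)$ together with pairs $(X_i, Y_i) \in \mathfrak g_{p,q}^2$ satisfying \eqref{eq:Xi-are-in-gamma-ri}, \eqref{eq:bracket-is-zero}, \eqref{eq:omega-is-nonzero} with $r_{1,i}+r_{2,i} = p-1$. Note that by Lemma~\ref{prop:no-central-extension-max-degree}, $r\geqslant p$ is impossible since $\mathfrak g_{p,q}$ has step $p-1$, so $r=p-1$ is the only open degree. Let $p' = \lceil p/2 \rceil$ and introduce the $2$-cochain
\[
\nu_{2p'} := \sum_{i=2}^{p'} (-1)^i\, \xi_i \wedge \xi_{2p'+1-i} \in \bigwedge\nolimits^{\!2}\mathfrak g_{p,q}^{\ast},
\]
where $\{\xi_j\}$ and $\zeta$ denote the basis dual to $\{x_j\}$ and $z$, and $\xi_p := \zeta$.

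For $p = 2p'$ even, every index $2p'+1-i$ for $2 \leqslant i \leqslant p'$ lies in $\{p'+1,\ldots,p-1\}$, so $\nu_{2p'}$ does not involve $\zeta$ (or any $\eta_\ell$). Since the differentials $d\xi_j = -\xi_1 \wedge \xi_{j-1}$ for $3\leqslant j\leqslant p-1$ are the same in $\mathfrak g_{p,q}$ as in $\mathfrak l_p$, a direct telescoping computation shows that the coefficient of each monomial $\xi_1 \wedge \xi_a \wedge \xi_b$ with $a+b=p$ in $d\nu_{2p'}$ vanishes, giving $d\nu_{2p'} = 0$. I would then apply Proposition~\ref{lem:first-criterion-cohomology-to-central-extension} with the single pair $(X,Y) = (x_{p'}, x_{p'+1})$: both vectors lie in the abelian subspace $\operatorname{span}(x_2,\ldots,x_{p-1})$, so $[X,Y] = 0$; their filtration indices satisfy $(p'-1)+p' = p-1$; and $\nu_{2p'}(x_{p'}, x_{p'+1}) = \pm 1$.

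For $p = 2p'-1$ odd, the $i=2$ summand of $\nu_{2p'}$ is $\xi_2 \wedge \xi_{2p'-1} = \xi_2 \wedge \zeta$, so $\nu_{2p'}$ now does involve $\zeta$. In $\mathfrak g_{p,q}$ we have $d\zeta = -\xi_1 \wedge \xi_{p-1} - \eta_1 \wedge \eta_{q-1}$, whereas the analogous computation performed in $\mathfrak l_p$ alone would see only $-\xi_1 \wedge \xi_{p-1}$. The same telescoping as above absorbs every $\mathfrak l_p$-contribution; what remains is the single extra term $\pm \xi_2 \wedge \eta_1 \wedge \eta_{q-1}$, which has no cancellation partner. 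Hence $\nu_{2p'}$ fails to be a cocycle. To rule out every other class $[\omega] \in H^2(\mathfrak g_{p,q}, \mathbf R)^{\geqslant p-1}$, I would parametrize a general $2$-cochain supported in the appropriate filtration (with respect to the lower central series of $\mathfrak g_{p,q}$, or equivalently the Carnot grading on $\mathsf{gr}(\mathfrak g_{p,q}) = \mathfrak l_p \times \mathfrak l_{q-1}$), normalize modulo coboundaries, impose the closedness equation, and check that every resulting cocycle either belongs to a lower filtration weight or pairs trivially with all admissible pairs $(X_i, Y_i)$ satisfying $r_{1,i}+r_{2,i}=p-1$ and $\sum_i [X_i,Y_i]=0$.

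The main obstacle is the negative part for $p$ odd: the even case reduces to telescoping bookkeeping once $\nu_{2p'}$ is written down, but excluding every potential class demands a finite-dimensional linear-algebra analysis of $\bigwedge^2 \mathfrak g_{p,q}^\ast$ in weight $\geqslant p-1$ modulo the image of $d$. The parity dichotomy is driven by the central-product identity $d\zeta = -\xi_1\wedge\xi_{p-1} - \eta_1\wedge\eta_{q-1}$, which forces the cocycle $\xi_1 \wedge \xi_{p-1}$ of $\mathfrak l_p$ to be cohomologous in $\mathfrak g_{p,q}$ to $-\eta_1\wedge\eta_{q-1}$; for $p$ even the natural candidate $\nu_{2p'}$ is supported entirely within the $\mathfrak l_p$-factor and so is unaffected, while for $p$ odd it unavoidably involves $\zeta$ and the coupling to the $\mathfrak l_q$-factor destroys closedness.
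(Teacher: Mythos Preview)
Your treatment of the even case is correct and matches the paper: you exhibit the cocycle $\nu_{2p'}$ (which coincides with the paper's $\nu_p$ when $p=2p'$) and a witnessing pair; the paper uses $(x_2,x_{p-1})$ rather than your $(x_{p'},x_{p'+1})$, but either works.

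For $p$ odd there is a genuine gap. You show that the natural candidate $\nu_{2p'}$ fails to be closed in $\mathfrak g_{p,q}$ because of the extra summand $-\eta_1\wedge\eta_{q-1}$ in $d\zeta$. That is a useful observation, but it only eliminates one potential cocycle; it does not prove that $H^2(\mathfrak g_{p,q},\mathbf R)^{\geqslant p-1}=0$. You yourself flag that the remaining step is to ``parametrize a general $2$-cochain\ldots and check'', and this is precisely where the real work lies; as stated, the odd case of your proposal is a plan, not a proof.

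The paper carries this out as follows. It first computes $Z^2(\mathfrak g_{p,q},\mathbf R)$ explicitly by writing a general $2$-form in the dual basis $\{\xi_i,\eta_j,\zeta\}$ and solving $d\omega=0$; the outcome is the explicit spanning set
\[
\bigl\{\xi_{1,i},\ \eta_{1,j},\ \nu_{2k}\ (2\leqslant k\leqslant\lfloor p/2\rfloor),\ \widetilde\nu_{2\ell}\ (2\leqslant\ell\leqslant\lfloor q/2\rfloor),\ \xi_m\wedge\eta_n\ (1\leqslant m,n\leqslant2)\bigr\}.
\]
The crucial point, invisible without doing this computation, is that for $p$ odd the top $\nu$ available is $\nu_{p-1}$, not $\nu_{p+1}$: the cocycle equations force the coefficient of $\xi_2\wedge\zeta$ (indeed of every $\xi_k\wedge\zeta$) to vanish, via constraints produced exactly by the mixed terms coming from $d\zeta=-\xi_{1,p-1}-\eta_{1,q-1}$. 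This is what kills your candidate, but it simultaneously eliminates every other ``high-weight'' possibility.

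The paper then finishes with a pairing argument you do not sketch. Given hypothetical pairs $(X_i,Y_i)$ with $r_{1,i}+r_{2,i}=p-1$, one observes that if $r_{1,i}\geqslant2$ then every basis cocycle above vanishes on $(X_i,Y_i)$: the $\nu_{2k}$ and $\widetilde\nu_{2\ell}$ because $2k-1\leqslant p-2$ and $2\ell-1\leqslant q-1<p-1$, the $\xi_m\wedge\eta_n$ because $m,n\leqslant2$, and $\eta_{1,q-1}$ trivially. Hence one may assume $r_{1,i}=1$ and $r_{2,i}=p-2$ for all $i$, so $Y_i=\lambda_i x_{p-1}+\mu_i z$. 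Now \eqref{eq:bracket-is-zero} reads $\sum_i\tau_{1,i}\lambda_i=0$ (where $\tau_{1,i}$ is the $x_1$-coefficient of $X_i$), while the only basis cocycle not annihilating all such $(X_i,Y_i)$ is a multiple of $\xi_{1,p-1}$, so \eqref{eq:omega-is-nonzero} forces $\sum_i\tau_{1,i}\lambda_i\neq0$. This contradiction is the missing ingredient.

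In summary: your proposal correctly identifies the mechanism (the modified $d\zeta$) and handles the easy direction, but for the hard direction you need both the explicit basis of $Z^2(\mathfrak g_{p,q},\mathbf R)$ and the case split on $r_{1,i}$ that turns \eqref{eq:bracket-is-zero} versus \eqref{eq:omega-is-nonzero} into a contradiction.
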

Since it relies on a cohomology computation for $\mathfrak{g}_{p,q}$, the proof will simultaneously provide the following formulae for the Betti numbers of the lattices $\Gamma_{p,q}\leqslant G_{p,q}$ and $\Lambda_p\times \Lambda_{q-1}\leqslant L_p\times L_{q-1} = \mathsf{gr}(G_{p,q})$. 

\begin{lemma}[Betti numbers]
\label{lem:H^2g_p,q}
Let $p > q \geqslant 3$. Then
\begin{equation}
\label{eq:Betti-numbers-of-gpq}
b_2(\Gamma_{p,q}) = \left\lfloor \frac{p}{2} \right\rfloor + \left\lfloor \frac{q}{2} \right\rfloor  + 3,
\end{equation}
and 
\begin{equation}
\label{eq:Betti-numbers_of-graded}
b_2(\Lambda_p \times \Lambda_{q-1}) = \left\lceil \frac{p}{2} \right\rceil + \left\lfloor \frac{q}{2} \right\rfloor  + 4.
\end{equation}
In particular, the Betti number discrepancy $b_2(\Lambda_p \times \Lambda_{q-1}) - b_2(\Gamma_{p,q})$ is $1$ if $p$ is even and $2$ if $p$ is odd.
\end{lemma}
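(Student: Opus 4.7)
The plan is to reduce everything to Lie algebra cohomology via Nomizu's theorem, which identifies $b_k$ of a torsion-free lattice in a simply connected nilpotent Lie group with $\dim_{\RR} H^k$ of the ambient Lie algebra (Chevalley--Eilenberg cohomology). Both $\Lambda_p \times \Lambda_{q-1}$ and $\Gamma_{p,q}$ are such lattices (in $L_p \times L_{q-1}$ and $G_{p,q}$ respectively), so the task becomes computing $\dim H^2$ of $\mathfrak l_p \times \mathfrak l_{q-1}$ and of $\mathfrak g_{p,q}$.

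I would start with the classical identity $\dim H^2(\mathfrak{l}_n, \RR) = \lceil n/2 \rceil$, which follows by direct inspection of the CE complex: using the dual basis satisfying $d\xi_1 = d\xi_2 = 0$ and $d\xi_i = -\xi_1 \wedge \xi_{i-1}$ for $3 \leqslant i \leqslant n$, one checks that $\operatorname{im} d_1$ is $(n-2)$-dimensional (spanned by $\xi_1 \wedge \xi_j$ for $2 \leqslant j \leqslant n-1$), that every $\xi_1 \wedge \xi_j$ is closed (contributing $n-1$ classes to $\ker d_2$), and that the intersection $\ker d_2 \cap \bigwedge^2 \operatorname{span}(\xi_2, \ldots, \xi_n)$ is $(\lceil n/2 \rceil - 1)$-dimensional. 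The Künneth formula together with $H^1(\mathfrak{l}_n) \cong \RR^2$ then yields $\dim H^2(\mathfrak{l}_p \times \mathfrak{l}_{q-1}) = \lceil p/2 \rceil + \lceil (q-1)/2 \rceil + 4 = \lceil p/2 \rceil + \lfloor q/2 \rfloor + 4$, establishing \eqref{eq:Betti-numbers_of-graded}.

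For $b_2(\Gamma_{p,q})$ the cleanest route is the Hochschild--Serre spectral sequence associated to the central extension
\[
0 \longrightarrow \RR z \longrightarrow \mathfrak{g}_{p,q} \longrightarrow \mathfrak{l}_{p-1} \times \mathfrak{l}_{q-1} \longrightarrow 0,
\]
where killing $z$ simultaneously collapses $[x_1, x_{p-1}]$ and $[y_1, y_{q-1}]$, dropping the step of each factor by one. Since the action is trivial, $E_2^{s,t} = H^s(\mathfrak{l}_{p-1} \times \mathfrak{l}_{q-1}) \otimes H^t(\RR z)$, and only $E_\infty^{2,0}$ and $E_\infty^{1,1}$ contribute to $H^2$, with only $d_2$ relevant for degree reasons. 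The transgression $d_2 \colon E_2^{0,1} \to E_2^{2,0}$ sends the generator of $H^1(\RR z)$ to $[d\zeta] = -[\xi_1 \wedge \xi_{p-1} + \eta_1 \wedge \eta_{q-1}]$, which is nonzero in $H^2(\mathfrak{l}_{p-1} \times \mathfrak{l}_{q-1})$ because by Künneth its projection to $H^2(\mathfrak{l}_{p-1})$ is $-[\xi_1 \wedge \xi_{p-1}]$, a nonzero class (its representative lies outside $\operatorname{im} d_1$ of $\mathfrak{l}_{p-1}$). Hence $\dim E_\infty^{2,0} = \dim H^2(\mathfrak{l}_{p-1} \times \mathfrak{l}_{q-1}) - 1 = \lfloor p/2 \rfloor + \lfloor q/2 \rfloor + 3$.

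It remains to check $E_\infty^{1,1} = 0$, i.e.\ that cup product with $[d\zeta]$ is injective on $H^1(\mathfrak{l}_{p-1} \times \mathfrak{l}_{q-1}) = \operatorname{span}(\xi_1, \xi_2, \eta_1, \eta_2)$. Using the Künneth decomposition of $H^3$ of the product, the images $\xi_i \smile [d\zeta]$ have nonvanishing components in $H^1(\mathfrak{l}_{p-1}) \otimes H^2(\mathfrak{l}_{q-1})$ coming from the $\eta_1 \wedge \eta_{q-1}$ summand of $d\zeta$, whereas the images $\eta_j \smile [d\zeta]$ have nonvanishing components in $H^2(\mathfrak{l}_{p-1}) \otimes H^1(\mathfrak{l}_{q-1})$ coming from the $\xi_1 \wedge \xi_{p-1}$ summand. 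These two Künneth pieces are disjoint, and within each, linear independence reduces to that of $\{\xi_1, \xi_2\}$ in $H^1(\mathfrak{l}_{p-1})$ and of $\{\eta_1, \eta_2\}$ in $H^1(\mathfrak{l}_{q-1})$. Assembling gives \eqref{eq:Betti-numbers-of-gpq}, and the discrepancy formula $\lceil p/2 \rceil - \lfloor p/2 \rfloor + 1$ then follows by subtraction. The main obstacle is the bookkeeping needed to verify this injectivity, since one has to track the nonvanishing Künneth components in the target $H^3$ carefully.
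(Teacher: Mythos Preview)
Your argument is correct and takes a genuinely different route from the paper. Both reduce to Lie algebra cohomology via Nomizu's theorem, and both compute $\dim H^2(\mathfrak l_n)=\lceil n/2\rceil$ and then invoke K\"unneth for the product. The divergence is in the computation of $H^2(\mathfrak g_{p,q})$: the paper carries out a direct linear-algebra calculation, writing an arbitrary $2$-form $\omega = \omega_a+\omega_b+\omega_c+\omega_e+\omega_f$, computing $d\omega$ explicitly, and solving the resulting linear system to obtain explicit bases for $Z^2$ and $B^2$. You instead run the Hochschild--Serre spectral sequence for the central extension $0\to\mathbf R z\to\mathfrak g_{p,q}\to\mathfrak l_{p-1}\times\mathfrak l_{q-1}\to 0$, reducing the problem to the known cohomology of the quotient plus checking that the transgression is nonzero and that cup product with $[d\zeta]$ is injective on $H^1$.

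What each approach buys: the paper's direct computation produces explicit cocycle representatives (the forms $\nu_{2k}$, $\widetilde\nu_{2\ell}$, $\xi_m\wedge\eta_n$, $\eta_{1,q-1}$), which are immediately needed in the surrounding proof of Proposition~\ref{lem:no-central-extension} to analyse whether a $(p-1)$-central extension exists; in the paper the two statements are proved simultaneously precisely for this reason. Your spectral-sequence argument is cleaner and more conceptual for the Betti number alone --- it replaces a page of coefficient-chasing by two short transgression/cup-product checks --- but it does not by itself exhibit the explicit cocycles, so one would still need a separate construction to carry out the analysis of $H^2(\mathfrak g_{p,q})^{\geqslant p-1}$. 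Your injectivity check via the K\"unneth components of $H^3(\mathfrak l_{p-1}\times\mathfrak l_{q-1})$ is correct; note that it uses $q\geqslant 3$ so that $[\eta_1\wedge\eta_{q-1}]\neq 0$ in $H^2(\mathfrak l_{q-1})$ (when $q=3$ this is the volume class of $\mathbf R^2$).
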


\begin{remark}
For $(p ,q) = (4, 3)$ and $(5,3)$ the Betti numbers of $\Gamma_{p,q}$ can be extracted from Magnin's comprehensive tables of cohomologies in dimension less or equal 7. Magnin denoted the corresponding Lie algebras $\mathcal G_{6,2}$ and $\mathcal G_{7,3.17}$ respectively \cite{Magnin}. 
For $(p,q)=(4,3)$ these were also computed in \cite[(25)-(26)]{delBarcoSpectralSequence} and \cite[6.19]{cornulier2017sublinear}.
\end{remark}

As before, we will perform our Betti number computations using Lie algebra cohomology. To deduce Lemma \ref{lem:H^2g_p,q} we will thus invoke the following result, that is due to Matsushima for $k=1,2$ and Nomizu for all $k$ \cite[Corollary 7.28]{RagDS}. It shows that the real cohomology of finitely generated torsion-free nilpotent groups only depends on the real Malcev completion, an early manifestation of Shalom's theorem.

\begin{lemma}
\label{lem:lie-algebra-cohomology-and-nilmanifold-cohomology}
Let $\Gamma$ be a lattice in a simply connected nilpotent Lie group $G$ with Lie algebra $\mathfrak{g}$. Then $H^k(G/\Gamma, \mathbf R) = H^k(\mathfrak{g}, \mathbf R)$.
\end{lemma}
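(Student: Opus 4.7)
The plan is to reprove Nomizu's theorem by comparing two natural models for the two cohomologies in the statement. On the topological side, de Rham's theorem gives $H^k(G/\Gamma, \mathbf R) = H^k(\Omega^\bullet(G/\Gamma))$. On the algebraic side, the Chevalley--Eilenberg complex $(\bigwedge^\bullet \mathfrak g^\ast, d_{\mathrm{CE}})$ computes $H^k(\mathfrak g, \mathbf R)$, and is canonically identified with the complex $\Omega^\bullet_{\mathrm{inv}}(G)$ of left-invariant differential forms on $G$ equipped with the de Rham differential. Since left-invariant forms are in particular $\Gamma$-invariant (for the left $\Gamma$-action if we write the nilmanifold as $\Gamma\backslash G$), they descend, yielding a morphism of cochain complexes $\iota: \bigwedge^\bullet \mathfrak g^\ast \hookrightarrow \Omega^\bullet(\Gamma\backslash G)$. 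The statement thus reduces to showing that $\iota$ is a quasi-isomorphism.

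The argument is by induction on $\dim G$. The base case is $G=\mathbf R$, $\Gamma=\mathbf Z$, i.e.\ $G/\Gamma = S^1$, where both cohomologies are visibly $\mathbf R$ in degrees $0$ and $1$. For the inductive step, a consequence of Mal'cev's theory (see e.g. \cite[Ch.\ 2]{RagDS}) is that $\Gamma\cap Z(G)$ is a lattice in $Z(G)$. Choose a $1$-parameter central subgroup $Z_0\subset Z(G)$ such that $Z_0\cap \Gamma$ is cocompact in $Z_0$, and set $G' = G/Z_0$, $\Gamma' = \Gamma Z_0/Z_0$; then $\Gamma'$ is a lattice in $G'$, and we obtain a principal circle bundle
\[ S^1 = Z_0/(Z_0\cap \Gamma) \longrightarrow \Gamma\backslash G \longrightarrow \Gamma'\backslash G'. \]
At the level of Lie algebras this corresponds to a central extension $0\to \mathbf R \to \mathfrak g \to \mathfrak g' \to 0$.

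The plan is then to compare the Leray--Serre spectral sequence of the $S^1$-bundle,
\[ E_2^{p,q}(\mathrm{top}) = H^p(\Gamma'\backslash G';\, H^q(S^1;\mathbf R)) \Longrightarrow H^{p+q}(\Gamma\backslash G;\mathbf R), \]
with the Hochschild--Serre spectral sequence of the Lie algebra extension,
\[ E_2^{p,q}(\mathrm{Lie}) = H^p(\mathfrak g';\, H^q(\mathbf R;\mathbf R)) \Longrightarrow H^{p+q}(\mathfrak g;\mathbf R). \]
Both spectral sequences arise from natural filtrations (by vertical degree on $\Omega^\bullet(\Gamma\backslash G)$, and by the kernel ideal $(\mathbf R)^\ast\wedge \bigwedge^{\bullet-1}\mathfrak g^\ast$ on $\bigwedge^\bullet \mathfrak g^\ast$), and $\iota$ is compatible with these filtrations, so it induces a morphism of spectral sequences. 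The induction hypothesis applied to $(G',\Gamma')$, combined with the triviality of the comparison in the $S^1$-fiber, shows that the morphism is an isomorphism on the $E_2$ pages. Since both spectral sequences converge and $\iota$ is filtered, we conclude that $\iota$ is a quasi-isomorphism on the abutments, which is the desired statement.

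The main obstacle is the verification that the de Rham filtration on $\Omega^\bullet(\Gamma\backslash G)$ by forms vanishing on sufficiently many vertical vectors really corresponds, upon restriction to invariant forms, to the algebraic filtration on $\bigwedge^\bullet \mathfrak g^\ast$ induced by the central extension, and that the identification on $E_2$ pages truly reduces to the induction hypothesis for $G'$ (possibly with twisted coefficients, which are here trivial because $S^1$ is connected and the bundle is principal). This is a standard but delicate book-keeping argument, carried out in detail in \cite[Chapter 7]{RagDS}, and it is the only genuinely non-formal input beyond the existence of the compatible central subgroup $Z_0$.
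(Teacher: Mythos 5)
Your proposal is correct, but note that the paper does not prove this lemma at all: it is invoked as a classical result (Matsushima for $k=1,2$, Nomizu in general) with a citation to \cite[Corollary 7.28]{RagDS}. What you have written is essentially Nomizu's original argument, i.e.\ the proof that lies behind the citation: realize the nilmanifold as an iterated principal circle bundle coming from a rational one-parameter central subgroup $Z_0$ with $Z_0\cap\Gamma$ cocompact, and compare the Leray--Serre spectral sequence of $S^1\to\Gamma\backslash G\to\Gamma'\backslash G'$ with the Hochschild--Serre spectral sequence of $0\to\mathbf R\to\mathfrak g\to\mathfrak g'\to 0$ by induction on dimension. The only substantive point, which you correctly flag, is the identification of the two filtrations and of the comparison map on $E_2$ with the inductive map (with trivial coefficients, since the bundle is principal and the extension is central); for a one-dimensional fiber this can even be streamlined by averaging over the $S^1$-action and splitting off a connection form, so that the comparison reduces to the two Gysin-type long exact sequences and the five lemma, avoiding the full spectral-sequence bookkeeping. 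So your route is sound and matches the standard proof of the cited theorem rather than anything argued in the paper itself.
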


Before proving Proposition \ref{lem:no-central-extension} and Lemma \ref{lem:H^2g_p,q}, we observe that they allow us to complete the proof of Theorem \ref{propIntro:centralDehn}, modulo the lower bound from \S \ref{sec:lower-bounds-forms}.
\begin{proof}[{Proof  of Theorem \ref{propIntro:centralDehn}}]
 The first part is a direct consequence of Lemmas \ref{lem:direct-product-central-extension} and \ref{prop:no-central-extension-max-degree} and Propositions \ref{lem:no-central-extension} and \ref{prop:Centralrcentralextensions}. The second part follows from Theorem \ref{thmIntro:Main}, whose proof will be completed in \S \ref{sec:lower-bounds-forms}.
\end{proof}

\begin{proof}[Proof of Proposition \ref{lem:no-central-extension} and Lemma \ref{lem:H^2g_p,q}]
Note that if $\alpha$ is a one-form on $\mathfrak{g}$ then $d\alpha$ is the two-form such that $d \alpha (u,v) = - \alpha \left( [u,v] \right)$ for every $u,v \in \mathfrak{g}$. We will use this below without further mention when computing differentials.

Let $\left\{\xi_1, \ldots, \xi_{p-1}, \zeta, \eta_1, \ldots, \eta_{q-1}\right\}$ be the dual basis of the basis $\left\{x_1, \ldots, x_{p-1}, z , y_1, \ldots, y_{q-1}\right\}$ of $\mathfrak{g}_{p,q}$. The restriction of the subset $\left\{\xi_1,\ldots, \xi_{p-1},\zeta\right\}$ to $\mathfrak{l}_p$ defines the basis of $\mathfrak{l}_p^{\ast}$ induced by the canonical embedding $\mathfrak{l}_p\hookrightarrow \mathfrak{g}_{p,q}$.

We first prove \eqref{eq:Betti-numbers_of-graded}. For this we need to compute $H^2(\mathfrak{l}_p, \mathbf R)$. Since this computation is well-known (it is originally due to Vergne \cite{VergneCohomologieNilpotente}), we only sketch it here and leave the details as an exercise to the reader. We emphasize that this is an exercise well-worth doing to get acquainted with Lie algebra cohomology computations.

We use abbreviations of the form $\xi_{i,j}:=\xi_i \wedge \xi_j$ (and similar for $3$-fold wedge-products). Further we denote $\xi_p:= \zeta$. Note that $d\xi_1 = d\xi_2 =0$, while $d\xi_i = - \xi_{1,i-1}$ for $3 \leqslant i \leqslant p$. We deduce that 
\[
B^2(\mathfrak{l}_p, \mathbf R) = \operatorname{span} \lbrace \xi_{1,2},  \ldots ,  \xi_{1,p-1} \rbrace. 
\]

Let now $ \omega = \sum_{1 \leqslant i < j \leqslant p} a_{i,j} \xi_{i,j} \in \Lambda^2 (\mathfrak{l}_p, \mathbf R)$. We obtain the identities 
\begin{align*}
d \omega ~ & =~  \sum_{i=1}^{p-1} d \left( \xi_i \wedge \sum_{j = i+1}^{p} a_{i,j} \xi_j \right) \\  
& = \sum_{2 \leqslant i < j \leqslant p-1, j \neq i+1} (-a_{i+1,j} - a_{i,j+1}) \xi_{1,i,j} 
+ \sum_{i=2}^{p-2} (- a_{i,i+2}) \xi_{1,i,i+1} + \sum_{i=2}^{p-2} (-a_{i+1,p}) \xi_{1,i,p}.
\end{align*}

Solving the linear system of equations obtained by imposing $d\omega = 0$ yields
\begin{align*}
Z^2(\mathfrak{l}_p, \mathbf R) = \operatorname{span}
 \left\{ \xi_{1, 2}, \ldots, \xi_{1,p-1}, \xi_{1,p}, \nu_4, \nu_{6}, \nu_{8}, \ldots, \nu_{2p'} \right\},
\end{align*}
where $p'=\lceil \frac{p}{2}\rceil$ and $\nu_{2l}:= \xi_{2,2l-1}-\xi_{3,2l-2}+\dots -(-1)^l \xi_{l,l+1}$ for $2\leqslant l \leqslant p'$.

It follows that the cohomology classes  represented by $\left\{\xi_{1,p}, \nu_{2 \cdot 2}, \ldots, \nu_{2p'}\right\}$ form a basis of $H^2(\mathfrak{l}_p,\RR)$ and thus that
\begin{equation}
\label{eq:b_2-l_p}
\operatorname{rank} H^2(\mathfrak{l}_p, \mathbf R) = p'-1 +1 = p'.
\end{equation}

We can now compute the second Betti number of $\mathfrak{l}_{p} \times \mathfrak{l}_{q-1}$, and thus of all lattices in $\mathsf{gr}(G_{p,q})=L_p\times L_{q-1}$ and in particular of $\Lambda_p\times \Lambda_{q-1}$. 
Indeed, using the K\"unneth formula and \eqref{eq:b_2-l_p}, the class of the Poincar\'e polynomial of $\mathfrak l_p \times \mathfrak l_{q-1}$ in $\mathbf Z[t]/(t^3)$ is
\begin{align*}
(1+2t+b_2(\mathfrak l_p)t^2)(1+2t+b_2(\mathfrak l_{q-1}) t^2) & = 
1 + 4t + \left( 4 + \lceil p/2 \rceil + \lceil (q-1)/2 \rceil \right)t^2 \\
& = 
1 + 4t + \left( 4 + \lceil p/2 \rceil + \lfloor q/2 \rfloor \right)t^2.
\end{align*}
and we deduce that $\operatorname{rank}(H^2(\mathfrak{l}_{p} \times \mathfrak{l}_{q-1}, \mathbf R))= \lceil \frac{p}{2}\rceil+\lfloor\frac{q}{2}\rfloor +4$. This completes the proof of \eqref{eq:Betti-numbers_of-graded}.

While we don't use it at this point we record the following observation; it is well-known to experts.

\begin{remark}
\label{Rmk:grading-of-lp}
The degree $2$-cohomology of $\mathfrak{l}_p$ is graded as follows:
$H^2(\mathfrak{l}_p, \mathbf R)^{2k-1} = \operatorname{span}[\nu_{2k}]$ for $2 \leqslant k  < p'$,  
\[ 
H^2(\mathfrak{l}_p, \mathbf R)^p = 
\begin{cases}
\operatorname{span}\lbrace [\nu_{2p'}], [\xi_1 \wedge \xi_p] \rbrace & p \text{ odd} \\
\operatorname{span} \lbrace [\xi_1 \wedge \xi_{p}] \rbrace & p \text{ even}, \\
\end{cases}\]
and all other degrees vanish. In particular, $\nu_{2k}$ represents a $(2k-1)$-central extension.
\end{remark}
 This observation is interesting in itself and also in view of \S \ref{sec:low-dimension}. However, most importantly comparing it to \eqref{eqn:Z2-and-B2-forgpq} provides some intuition for why $\mathfrak{g}_{p,q}$ admits no $(p-1)$-central extension when $p$ is odd. Indeed, we will see that for $p$ odd the analogous cohomology class $\left[\nu_{2p'}\right]$ vanishes in $H^2(\mathfrak{g}_{p,q},\RR)$, while it survives when $p$ is even. In fact, it is precisely the form that induces the $(p-1)$-central extension of $\mathfrak{g}_{p,q}$ when $p$ is even. This is also mirrored by the distinct Betti number discrepancies in Lemma \ref{lem:H^2g_p,q}. Computationally, this difference is reflected in the fact that in $\mathfrak{l}_p$ we have $d\zeta = -\xi_1\wedge\xi_{p-1}$, while in $\mathfrak{g}_{p,q}$ we have $d\zeta =-\xi_1\wedge \xi_{p-1}-\eta_1\wedge\eta_{q-1}$. This ultimately implies that the coefficient $a_{2,p}$ of $\xi_2\wedge \xi_p$ must be zero for every cocycle $\omega$ in $\mathfrak{g}_{p,q}$ with $p$ odd, while it can be non-zero for cocycles in $\mathfrak{l}_p$ or in $\mathfrak{g}_{p,q}$ when $p$ is even. \vspace{.3cm}

We now move on to the computation of $H^2(\mathfrak{g}_{p,q}, \mathbf R)$. We will again use abbreviations of the form $\xi_{i,j}=\xi_i \wedge \xi_j$, $\eta_{i,j}=\eta_i\wedge \eta_j$ etc.

Note that $d\xi_1 = d\xi_2 = d\eta_1 = d\eta_2 = 0$, $d \xi_i = - \xi_{1,i-1}$ and $d\eta_j = - \eta_{1,j-1}$ for $3 \leqslant i \leqslant p-1$ and $3 \leqslant j \leqslant q-1$, and that $d\zeta = - \xi_{1,p-1} - \eta_{1,q-1}$.
We can decompose $\omega \in \Lambda^2(\mathfrak{g}_{p,q}, \mathbf R)$ as
\begin{align}
\omega & = \sum_{i=1}^{p-2} \sum_{j = i+1}^{p-1} a_{i,j} \xi_{i,j} +  
\sum_{i=1}^{q-2} \sum_{j = i+1}^{q-1} b_{i,j} \eta_{i,j}  + \sum_{k=1}^{p-1} c_k \xi_k \wedge \zeta + \sum_{\ell = 1}^{q-1} e_\ell \eta_\ell \wedge \zeta + \sum_{m=1}^{p-1} \sum_{n=1}^{q-1} f_{m,n} \xi_m \wedge \eta_n \notag \\
& = \omega_a + \omega_b + \omega_c + \omega_e + \omega_f. 
\label{eq:decomposition-omega}
\end{align}
We deduce that
\begin{align*}
d \omega &
=
\sum_{2 \leqslant i < j \leqslant p-2, j \neq i+1} (-a_{i+1,j} - a_{i,j+1}) \xi_{1,i,j} 
+ \sum_{i=2}^{p-3} (- a_{i,i+2}) \xi_{1,i,i+1} + \sum_{i=2}^{p-3} (-a_{i+1,p-1} - c_i) \xi_{1,i,p-1} \\
& + 
\sum_{2 \leqslant i < j \leqslant q-2, j \neq i+1} (-b_{i+1,j} - b_{i,j+1}) \eta_{1,i,j} 
+ \sum_{i=2}^{q-3} (- b_{i,i+2}) \eta_{1,i,i+1} + \sum_{i=2}^{q-3} (-b_{i+1,q-1} - e_i) \eta_{1,i,q-1} \\
& + (-c_{p-2}) \xi_{1,p-2,p-1} + \sum_{k=1}^{p-1} c_k \xi_k \wedge \eta_{1,q-1} 
+ \sum_{k=2}^{p-2} c_{k+1} \xi_{1,k} \wedge \zeta 
+ \sum_{m=1}^{p-1} \sum_{n=2}^{q-2} f_{m,n+1} \xi_{m} \wedge \eta_{1,n}
\\
& 
+ (-e_{q-2}) \eta_{1,q-2,q-1} 
+ \sum_{\ell = 1}^{q-1} e_\ell \eta_{\ell} \wedge \xi_{1,p-1}  
+ \sum_{\ell=2}^{q-2} e_{\ell+1} \eta_{1,\ell} \wedge \zeta 
+ \sum_{m=2}^{p-2} \sum_{n=1}^{q-1} (-f_{m+1,n}) \xi_{1,m} \wedge \eta_n. 
\end{align*}
Hence $d \omega  = 0$ if and only if 
\begin{align}
a_{i+1,j} + a_{i,j+1} = 0 & & 2 \leqslant i < j \leqslant p-2, j \neq i+1 \label{eq:homogeneous-cocycle-a} \\
a_{i,i+2} = 0 & & 2 \leqslant i \leqslant p-3 \label{eq:bdry-condition-diag-a} \\
b_{i+1,j} + b_{i,j+1} = 0 & & 2 \leqslant i < j \leqslant q-2, j \neq i+1 
\label{eq:homogeneous-cocycle-b}\\
b_{i,i+2} = 0 & & 2 \leqslant i \leqslant q-3 \label{eq:bdry-condition-diag-b} \\
a_{i+1, p-1} + c_i = 0 & & 2 \leqslant i \leqslant p-3 \label{eq:a_ic_i} \\ 
c_k = 0 & & 1 \leqslant k \leqslant p-1 \label{eq:ck_zero} \\
b_{i+1, q-1} + e_i = 0 & & 2 \leqslant i \leqslant q-3 \label{eq:b_ie_i}  \\ 
e_\ell = 0 & & 1 \leqslant \ell \leqslant q-1 \label{eq:el_zero} \\
f_{m,n} = 0 & & \max(m,n) \geqslant 3. \label{eq:omega_f}
\end{align}
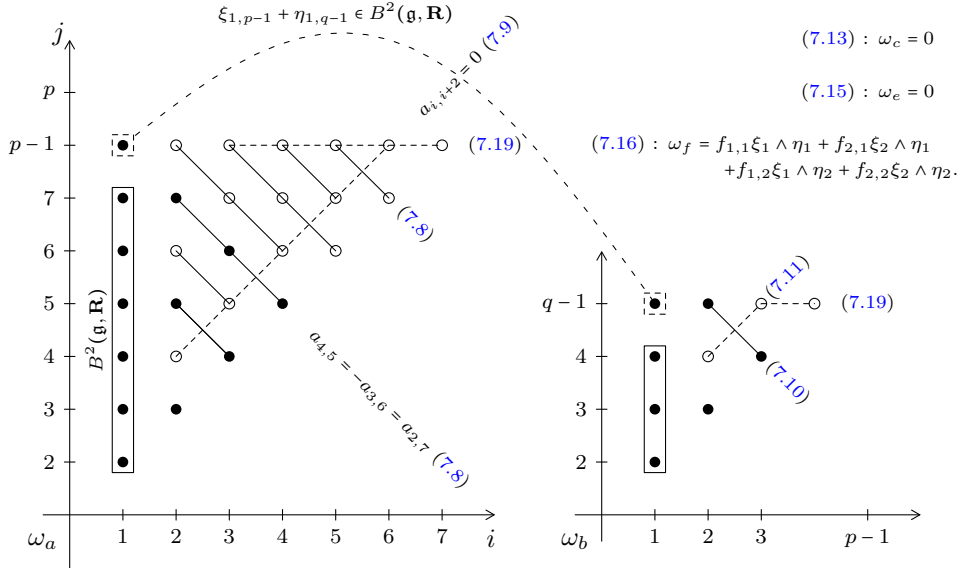
\begin{figure}
\begin{center}
\begin{tikzpicture}[line cap=round,line join=round,>=angle 60,x=0.7cm,y=0.7cm]
\clip(-1.5,0) rectangle (17,11);
\draw[->,color=black] (-0.5,1) -- (8,1);
\draw (-0.5,0.5) node {$\omega_a$};
\draw (9.5,0.5) node {$\omega_b$};
\foreach \x in {1,2,3,4,5,6,7}
\draw[shift={(\x,1)},color=black] (0pt,2pt) -- (0pt,-2pt) node[below] {\footnotesize $\x$};
\draw[->,color=black] (0,-0.5) -- (0,10);
\foreach \y in {2,3,4,5,6,7}
\draw[shift={(0,\y)},color=black] (2pt,0pt) -- (-2pt,0pt) node[left] {\footnotesize $\y$};
\draw[shift={(0,8)},color=black] (2pt,0pt) -- (-2pt,0pt) node[left] {\footnotesize $p-1$};
\draw[shift={(0,9)},color=black] (2pt,0pt) -- (-2pt,0pt) node[left] {\footnotesize $p$};
\draw [color=black] (2,5)-- (3,4);
\draw [color=black] (2,7)-- (4,5);
\draw [color=black] (2,6)-- (3,5);
\draw [color=black] (2,4) circle (2pt) ;
\draw [color=black] (3,5) circle (2pt) ;
\draw [color=black] (4,6) circle (2pt) ;
\fill [color=black] (2,5) circle (2pt) ;
\fill [color=black] (3,6) circle (2pt) ;
\fill [color=black] (4,5) circle (2pt) ;
\fill [color=black] (3,4) circle (2pt) ;
\fill [color=black] (2,7) circle (2pt) ;
\fill [color=black] (2,3) circle (2pt) ;
\draw (-0.5,10.5) node[anchor=north west] {$ j $};
\draw (7.66,0.9) node[anchor=north west] {$ i $};
\draw [dash pattern=on 2pt off 2pt] 
(2,4) -- (6,8);
\draw [dash pattern=on 2pt off 2pt] 
(3,8) -- (7,8);
\begin{scriptsize}
\draw (7.5,9.5) node [rotate=45] {$a_{i,i+2} = 0$ \eqref{eq:bdry-condition-diag-a}};
\draw (6.5,6.5) node [rotate=-45] {\eqref{eq:homogeneous-cocycle-a}};
\draw (6,3) node [rotate=-45] {$a_{4,5} = -a_{3,6} = a_{2,7}$ \eqref{eq:homogeneous-cocycle-a}};
\draw (8,8) node{ \eqref{eq:bdry-max-a} };
\draw (0.5,4.5) node [rotate=90] {$B^2(\mathfrak{g}, \mathbf R)$};
\draw (5,10.5) node {$\xi_{1,p-1} + \eta_{1,q-1} \in B^2(\mathfrak{g}, \mathbf R)$};
\end{scriptsize}
\draw [color=black] (3,8) circle (2pt) ;
\draw [color=black] (4,8) circle (2pt) ;
\draw [color=black] (5,8) circle (2pt) ;
\draw [color=black] (6,8) circle (2pt) ;
\draw [color=black] (7,8) circle (2pt) ;
\draw [color=black] (5,7) circle (2pt) ;
\draw [color=black] (2,8) circle (2pt) ;
\draw [color=black] (3,7) circle (2pt) ;
\draw [color=black] (4,7) circle (2pt) ;
\draw [color=black] (5,6) circle (2pt) ;
\draw [color=black] (2,6) circle (2pt) ;
\draw [color=black] (6,7) circle (2pt) ;
\draw [color=black] (2,5)-- (3,4);
\draw [color=black] (2,8)-- (4,6);
\draw [color=black] (3,8)-- (5,6);
\draw [color=black] (4,8)-- (5,7);
\draw [color=black] (5,8)-- (6,7);
\foreach \y in {2,3,4,5,6,7,8}
\fill [color=black] (1,\y) circle (2pt) ;
\draw (0.8,1.8) rectangle (1.2,7.2);
\draw (0.8,7.8) [dash pattern = on 2pt off 2pt] rectangle (1.2,8.2);

\draw [domain=1:11, dash pattern=on 2pt off 4pt] plot(\x,{8.3-3*\x/10+3.5*sin((\x-1)*0.314 r)});

\draw[->,color=black] (9.5,1) -- (16,1);
\foreach \x in {1,2,3}
\draw[shift={(10+\x,1)},color=black] (0pt,2pt) -- (0pt,-2pt) node[below] {\footnotesize $\x$};
\draw[->,color=black] (10,0.5) -- (10,6);
\foreach \y in {2,3,4}
\draw[shift={(10,\y)},color=black] (2pt,0pt) -- (-2pt,0pt) node[left] {\footnotesize $\y$};
\draw[shift={(10,5)},color=black] (2pt,0pt) -- (-2pt,0pt) node[left] {\footnotesize $q-1$};
\draw[shift={(15,1)},color=black] (0pt,2pt) -- (0pt,-2pt) node[below] {\footnotesize $p-1$};
\draw [color=black] (13,5) circle (2pt) ;
\draw [color=black] (14,5) circle (2pt) ;
\fill [color=black] (12,5) circle (2pt) ;
\fill [color=black] (13,4) circle (2pt) ;
\draw [color=black] (12,5)-- (13,4);
\draw [color=black, dash pattern = on 2pt off 2pt] (12,4) -- (13,5) -- (14,5);
\draw [color=black] (12,4) circle (2pt) ;
\fill [color=black] (11,3) circle (2pt) ;
\fill [color=black] (11,2) circle (2pt) ;
\fill [color=black] (11,4) circle (2pt) ;
\fill [color=black] (11,5) circle (2pt) ;
\draw (10.8,4.8) [dash pattern = on 2pt off 2pt] rectangle (11.2,5.2);
\fill [color=black] (12,3) circle (2pt) ;
\draw (10.8,1.8) rectangle (11.2,4.2);
\begin{scriptsize}
\draw (13.5,5.5) node [rotate=45] {\eqref{eq:bdry-condition-diag-b}};
\draw (13.5,3.5) node [rotate=-45] {\eqref{eq:homogeneous-cocycle-b}};
\draw (15,5) node{ \eqref{eq:bdry-max-a} };

\draw (15,10) node{ \eqref{eq:ck_zero} : $\omega_c = 0$};
\draw (15,9) node { \eqref{eq:el_zero} : $\omega_e = 0$};
\draw (13,8) node { \eqref{eq:omega_f} : $\omega_f = f_{1,1} \xi_1 \wedge \eta_1 + f_{2,1} \xi_2 \wedge \eta_1 $};
\draw (14.5,7.5) node {$ + f_{1,2} \xi_1 \wedge \eta_2 + f_{2,2} \xi_2 \wedge \eta_2$.};

\end{scriptsize}

\end{tikzpicture}
\end{center}
\caption{Determination of $Z^2(\mathfrak{g}_{p,q}, \mathbf R)$ and $H^2(\mathfrak{g}_{p,q}, \mathbf R)$ with $(p,q)=(9,6)$. 
The cocycle $\omega$ is decomposed as in \eqref{eq:decomposition-omega}.
On the left, resp.\ on the right, a $\circ$ at $(i,j)$ denotes $a_{i,j}=0$ resp.\ $b_{i,j}=0$; plain edges denote linear dependences and vanishing.}
\label{fig:cocycle-gpq}
\end{figure}

The equations \eqref{eq:homogeneous-cocycle-a} are equivalent to
\begin{equation}
\label{eq:conditions-cocycle-a-reformulated}
a_{i,j}= (-1)^{i-i'}a_{i',j'}, \mbox{ for } i+j= i'+j',~ 2\leqslant i <j\leqslant p-1,~ 2\leqslant i'<j'\leqslant p-1,
\end{equation} 
and the equations \eqref{eq:homogeneous-cocycle-b} are equivalent to
\begin{equation}
\label{eq:conditions-cocycle-b-reformulated}
b_{i,j}= (-1)^{i-i'}b_{i',j'}, \mbox{ for } i+j= i'+j',~ 2\leqslant i <j\leqslant q-1,~ 2\leqslant i'<j'\leqslant q-1.
\end{equation}

Combining \eqref{eq:a_ic_i} and \eqref{eq:ck_zero} (resp. \eqref{eq:b_ie_i} and \eqref{eq:el_zero}) yields
\begin{align}
a_{i,p-1} = 0 & & 3 \leqslant i \leqslant p-2, \label{eq:bdry-max-a} \\
b_{i,p-1} = 0 & & 3 \leqslant i \leqslant q-2. \label{eq:bdry-max-b} 
\end{align}

The $a_{i,j}$ (resp. $b_{i,j}$) with $2\leqslant i<j\leqslant p-1$ (resp. $2\leqslant i<j\leqslant q-1$) are now completely determined by \eqref{eq:bdry-condition-diag-a}, \eqref{eq:conditions-cocycle-a-reformulated} and \eqref{eq:bdry-max-a} (resp. \eqref{eq:bdry-condition-diag-b}, \eqref{eq:conditions-cocycle-b-reformulated} and \eqref{eq:bdry-max-b}).
Indeed, for $2\leqslant i<j \leqslant p-1$ conditions \eqref{eq:conditions-cocycle-a-reformulated} and \eqref{eq:bdry-condition-diag-a} imply that the $a_{i,j}$ with $i+j\geqslant 6$ vanish whenever $i+j$ is even and conditions \eqref{eq:conditions-cocycle-a-reformulated} and  \eqref{eq:bdry-max-a} imply that $a_{i,j}=0$ for $i+j\geqslant p+2$. 
The only constraint on the remaining $a_{i,j}$ with $2\leqslant i<j \leqslant p-1$ is that they satisfy condition \eqref{eq:conditions-cocycle-a-reformulated}. Similar considerations apply for the $b_{i,j}$ with $2\leqslant i<j\leqslant q-1$. 
Since the $a_{1,i}$ for $2\leqslant i \leqslant p-1$ and the $b_{1,i}$ for $2\leqslant i<q-1$ are unconstrained, we conclude from the constraints \eqref{eq:ck_zero}, \eqref{eq:el_zero} and \eqref{eq:omega_f} on the $c_i$, $e_i$ and
$f_{m,n}$ resp., that
\begin{equation}
Z^2(\mathfrak{g}_{p,q}, \mathbf R) = \operatorname{span}
\begin{cases}
\xi_{1,i} & 2 \leqslant i \leqslant p-1, \\
\eta_{1,i} & 2 \leqslant i \leqslant q-1, \\
\nu_{2k} & 2 \leqslant k \leqslant p'', \\
\widetilde{\nu}_{2\ell} & 2 \leqslant \ell \leqslant q'', \\
\xi_{m} \wedge \eta_n & 1 \leqslant m,n \leqslant 2,
\end{cases}\quad
B^2(\mathfrak{g}_{p,q}, \mathbf R) = \operatorname{span}
\begin{cases}
\xi_{1,i} & 2 \leqslant i \leqslant p-2, \\
\eta_{1,i} & 2 \leqslant i \leqslant q-2, \\
\xi_{1,p-1} + \eta_{1,q-1},
\end{cases}
\label{eqn:Z2-and-B2-forgpq}
\end{equation}
where $p''=\left\lfloor \frac{p}{2}\right\rfloor$, $q''=\left\lfloor \frac{q}{2}\right\rfloor$, $\nu_{2\ell}:=\xi_{2, 2 \ell -1} - \xi_{3,2 \ell - 2} + \cdots - (-1)^\ell \xi_{\ell, \ell+1}$ and $\widetilde{\nu}_{2\ell}:=\eta_{2, 2 \ell -1} - \eta_{3,2 \ell - 2} + \cdots - (-1)^\ell \eta_{\ell, \ell+1}$. We refer to Figure \ref{fig:cocycle-gpq} for a visual illustration of our computation for $(p,q)=(9,6)$. 

A basis of $H^2(\mathfrak{g}_ {p,q},\RR)$ is thus given by
\[
 \left\{ \left[\nu_{2\cdot 2}\right],\dots, \left[\nu_{2\cdot p''}\right], \left[\widetilde{\nu}_{2\cdot 2}\right],\dots, \left[\widetilde{\nu}_{2\cdot q''}\right],\left[\eta_{1,q-1}\right]=-\left[\xi_{1,p-1}\right], \left[\xi_i\wedge \eta_j\right]~ 1\leqslant i,j\leqslant 2 ~\right\}.
\]
We deduce that
\begin{align*}
\operatorname{rank} H^2(\mathfrak{g}_{p,q}, \mathbf R)  
& = (p''-1)+(q''-1) + 1 + 4 \\
& = p'' + q'' +3.
\end{align*}
This concludes the proof of \eqref{eq:Betti-numbers-of-gpq}.\vspace{.3cm}

\emph{We can now complete the proof of Proposition \ref{lem:no-central-extension}.} 

\emph{If $p$ is even} then $2p''=p$ and $\nu_{p}(x_2, x_{p-1}) =1$, while $[x_2, x_{p-1}] = 0$. By Proposition \ref{lem:first-criterion-cohomology-to-central-extension}, the cohomology class represented by $\nu_p$ defines the desired $(p-1)$-central extension.

\emph{Thus, assume that $p$ is odd}, and assume for a contradiction that there is a $(p-1)$-central extension defined by a cocycle $\omega$.
By Proposition \ref{lem:first-criterion-cohomology-to-central-extension} there are elements $X_i,Y_i\in \mathfrak{g}_{p,q}$, $1\leqslant i \leqslant s$ which satisfy \eqref{eq:Xi-are-in-gamma-ri}, \eqref{eq:bracket-is-zero}, \eqref{eq:omega-is-nonzero}.
Up to reordering the pairs we can assume that  $r_{1,i}=1$ for $i=1, \dots, t\leqslant s$ and $r_{1,i}>1$ for $i>t$.
Decompose $X_i$ and $Y_i$ into
\[
X_i = \tau_{1,i} x_1 + \cdots + \tau_{p-1,i} x_{p-1} + \sigma_i z + \tau'_{1,i} y_1 + \cdots + \tau'_{q-1,i} y_{q-1} 
\]
\[ 
Y_i = \lambda_{1,i} x_1 + \cdots + \lambda_{p-1,i} x_{p-1} + \mu_i z + \lambda'_{1,i} y_1 + \cdots + \lambda'_{q-1,i} y_{q-1} 
\]
with $\tau_{j,i}, \tau'_{j,i}, \sigma_i, \lambda_{j,i}, \lambda'_{j,i}, \mu_i \in \mathbf R$.

Assume $t <s$. Then, for $i>t$ we have $r_{1,i}>1$ and $r_{2,i}= p-r_{1,i}-1$. Since $\mathfrak{g}_{p,q}$ is metabelian $[X_i,Y_i]=0$.
Using that $X_i,Y_i \in \gamma_2 \mathfrak{g}_{p,q}$ we deduce that $(\xi_{m} \wedge \eta_n)(X_i,Y_i) = 0$ for $1\leqslant m,n \leqslant 2$ and $(\eta_1\wedge \eta_{q-1})(X_i,Y_i)=0$. Moreover, since $p$ is odd we deduce that $2q''\leqslant 2p''\leqslant p-1$.

Observing that $\nu_{2k} \mid_{ \gamma_{r_1} \mathfrak g_{p,q} \times \gamma_{r_2} \mathfrak{g}_{p,q}} = 0$ if $r_1 + r_2 > 2k-1$ (resp. $\widetilde \nu_{2\ell} \mid_{ \gamma_{r_1} \mathfrak g_{p,q} \times \gamma_{r_2} \mathfrak{g}_{p,q}} = 0$ if $r_1 + r_2 > 2\ell-1$), we deduce that for $2\leqslant k \leqslant p''$ (resp. for $2\leqslant \ell \leqslant q''$) we have $\nu_{2k}(X_i,Y_i)=\widetilde{\nu}_{2\ell}(X_i,Y_i)=0$. We conclude that $\omega(X_i,Y_i)=0$.

Hence, we may assume that $t=s$ and therefore $r_{i,1}=1$ and $r_{i,2}=p-2$ for all $i$. In particular $Y_i=\lambda_{p-1,i}x_{p-1}+\mu_i z$ and \eqref{eq:bracket-is-zero} implies that
\[
 0=\sum_{i=1}^s \left[X_i,Y_i\right] = \sum_{i=1}^s \tau_{1,i} \lambda_{p-1,i} z.
\]
On the other hand evaluating the sum of the $\omega(X_i,Y_i)$ yields
\[
1= \sum_{i=1}^s \omega(X_i,Y_i) = \sum_{i=1}^s \omega(X_i,\lambda_{p-1,i} x_{p-1}+\mu_i z) = \sum_{i=1}^s \tau_{1,i} \lambda_{p-1,i}\omega(x_1,x_{p-1}),
\]
where for the last identity we observe that the only pair of basis vectors of the form $(\ast, z)$ and $(\ast, x_{p-1})$ on which our basis of representatives of cohomology classes does not vanish is $(x_1,x_{p-1})$. Comparing the two equalities gives a contradiction. This completes the proof of Proposition \ref{lem:no-central-extension}.

\end{proof}

\begin{remark}
The cocyle $\nu_{2k}$ of the preceding proof was introduced by Vergne in her computation of $H^2(\mathfrak{l}_p,\RR)$ \cite{VergneCohomologieNilpotente}. When $p \geqslant 5$ is odd the central extension associated to the cocycle $\nu_{p+1}$ on $\mathfrak{l}_p$ produces a filiform, but not model filiform, Carnot graded Lie algebra of dimension $p$. Vergne proved its existence and uniqueness (see also Figure \ref{fig:genealogy-filiform-algebras}).
\end{remark}

\begin{figure}
\[
\xymatrix@C=0.8cm{
& & & & \mathfrak{l}'_6 & & \mathfrak{l}'_8 & \\
\mathbf R^2 \ar@{-}[r]^{\xi_1 \wedge \xi_2} & \mathfrak{l}_3 \ar@{-}[r]^{\xi_1 \wedge \xi_3} & \mathfrak{l}_4 \ar@{-}[r]^{\xi_1 \wedge \xi_4} & \mathfrak{l}_5 \ar@{-}[r]_{\xi_1 \wedge \xi_5} \ar@{-}[ru]^{\nu_6}
& \mathfrak{l}_6 \ar@{-}[r]^{\xi_1 \wedge \xi_6} & \mathfrak{l}_7 \ar@{-}[r]_{\xi_1 \wedge \xi_7} \ar@{-}[ru]^{\nu_8} & \mathfrak{l}_8 \ar@{--}[r] & } 
\]
\caption{Carnot graded filiform Lie algebras ($\mathfrak{l}_3$ is the Lie algebra of the Heisenberg group, $\mathfrak{l}_6$ and $\mathfrak{l}'_6$ are $\mathscr{L}_{6,18}$ and $\mathscr{L}_{6,16}$ in de Graaf's list \cite{deGraafclass}). We use the same notation for the cocycles as in the proof of Proposition \ref{lem:no-central-extension}.
}\label{fig:genealogy-filiform-algebras}
\end{figure}

The lower bound of $n^{p-1}$ on the Dehn functions of $G_{p,p-1}$ for even $p\geqslant 4$ (resp. of $G_{p,p}$ for all $p\geqslant 4$) that we obtain from central extensions is sharp by Theorem \ref{thm:Upperbound}. In contrast, and maybe at first rather unexpectedly, for odd $p$ the lower bound of $n^{p-2}$ on the Dehn function of $G_{p,p-1}$ obtained from central extensions is not sharp. In fact  not even its exponent is sharp, providing the first example of a group with this property. We will prove this in the next section. There is a moral reason for this discrepancy, which we will exploit in the next section; for an explanation of this we refer to \S \ref{sec:Intro-moral-proof}.

\section{Lower bounds on the Dehn function from integration of forms}
\label{sec:lower-bounds-forms}
In this section we will explain how to obtain lower bounds on the Dehn functions of the $G_{p,q}$ by integrating bounded forms on Lie groups. In \S \ref{sec:lbf-mainresult} we state the main result of this section and explain how it can be reduced to finding a suitable 1-form that satisfies a certain boundedness condition; this boundedness condition can be thought of as a discretized version of being a primitive of a bounded 2-form. In \S \ref{sec:lbf-filiform-Lie-groups} we will provide a linear representation of the filiform Lie group in all dimensions and construct an exact invariant 2-form from it. In \S \ref{sec:lbf-proof-main-result} we will show how to modify this 2-form to obtain a suitable exact bounded 2-form. Finally, in \S \ref{subsec:unif-bdd-length-form} we will show that this bounded 2-form is the differential of a 1-form that satisfies the boundedness condition from \S \ref{sec:lbf-mainresult} and deduce the desired lower bounds on the Dehn function of $G_{p,q}$.

\subsection{Lower bounds from bounded forms}
\label{sec:lbf-mainresult}
\begin{theorem}
\label{thm:lower-bounds-form}
For $p\geqslant q\geqslant 1$ the Dehn function of $G_{p+1,q+1}$ is $\succcurlyeq n^{p}$.
\end{theorem}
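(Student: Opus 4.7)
My plan is to split into cases according to the parity of $p$, and within each case reduce to the group $G_{p,p-1}$ (since $G_{p,q}$ for $q<p-1$ admits $G_{p,p-1}$ as a retract via killing the extra $y_i$-generators of the second factor, and when $q\geqslant p$ only the cases $q=p-1$ and $q=p$ arise, with $G_{p,p}$ mapping onto $G_{p,p-1}$).

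\emph{Case 1: $p$ even.} Here Theorem \ref{propIntro:centralDehn}, which was essentially proved in \S\ref{sec:lower-bound-dehn}, already delivers $\delta^{\mathrm{cent}}_{\Gamma_{p,p-1}}(n)\asymp n^{p-1}$ via the class of the cocycle $\nu_{2p'}$ with $p'=p/2$, which by Proposition \ref{lem:no-central-extension} yields a $(p-1)$-central extension. Since $\delta^{\mathrm{cent}}\leqslant \delta$, we conclude $\delta_{G_{p,p-1}}(n)\succcurlyeq n^{p-1}$.

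\emph{Case 2: $p$ odd.} This is the hard case, because Proposition \ref{lem:no-central-extension} guarantees that there is \emph{no} $(p-1)$-central extension of $\mathfrak{g}_{p,p-1}$, so the algebraic route used above only yields the bound $n^{p-2}$. I will recover the missing factor $n$ by a Thurston-style integration argument with a non-invariant but \emph{bounded} 2-form. Concretely, the plan is: \emph{(a)} Develop a discrete Stokes framework. Call a continuous $1$-form $\alpha$ on a simply connected nilpotent Lie group $G$ \emph{uniformly bounded on loops} if $\sup\{|\int_\gamma \alpha| : \gamma \text{ a loop of length}\leqslant L\}<\infty$ for each $L$; such a form obeys a combinatorial analogue of Stokes' theorem in the sense that, along a compact presentation, the integral of $\alpha$ along the boundary of a van Kampen diagram is bounded by a constant times its area. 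This is the variant of Gersten's $\ell^\infty$-cohomology / Cornulier--Tessera (\cite[\S 12.A]{CoTesDehn}) bounds well-suited to compact presentations. \emph{(b)} Produce candidate forms. Using the quotient $G_{p,p-1}\twoheadrightarrow L_{p-1}$ and the central extension $L_p\to L_{p-1}$, let $\beta$ be a left-invariant $2$-form on $L_{p-1}$ representing the corresponding cohomology class (explicitly, the pullback of $\xi_1\wedge\xi_{p-1}$ from Vergne's basis). Pull $\beta$ back to $G_{p,p-1}$, and choose a smooth primitive $\alpha$ of $\beta$ on $L_{p-1}$ obtained by integrating in Malcev coordinates; along a suitable relation $r$ in $L_{p-1}$ of length $n$ one checks that $\int_r\alpha\asymp n^{p-1}$ by an iterated commutator / binomial computation analogous to Proposition \ref{prop:binom-Lie}.

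\emph{(c) Perturbation.} The word $r$ is null-homotopic in $L_{p-1}$ but not in $G_{p,p-1}$; however, for a well-chosen short word $y$ (for instance $y=y_1$), the commutator $[y,r]$ \emph{is} null-homotopic in $G_{p,p-1}$. Naively the integral of the pull-back $\pi^*\alpha$ along $[y,r]$ vanishes (as a total derivative against a loop which closes up in $L_{p-1}$). The key innovation will be to replace $\alpha$ by a modified $1$-form $\alpha'$ obtained by adding a compactly supported perturbation $\psi$ concentrated in a neighbourhood of one of the four ``sides'' $y$ or $y^{-1}$ of the $[y,r]$-loop; $\psi$ is chosen so that (i) $\int_{[y,r]}\alpha'\asymp n^{p-1}$, (ii) $d\alpha'=\beta+d\psi$ remains bounded (not invariant!) in the sense above, because $d\psi$ is compactly supported and hence has bounded sup-norm on each $g^{-1}$-translate.

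\emph{(d) Conclusion.} Applying the discrete Stokes theorem from step (a) to the van Kampen diagram filling $[y,r]$ in any compact presentation of $G_{p,p-1}$, we obtain
\[
n^{p-1}\;\lesssim\;\Bigl|\int_{[y,r]}\alpha'\Bigr|\;\lesssim\;\mathrm{Area}([y,r])\cdot \|d\alpha'\|_\infty,
\]
and since $\ell([y,r])\lesssim n$ this gives $\delta_{G_{p,p-1}}(n)\succcurlyeq n^{p-1}$, finishing the odd case.

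The main obstacle will clearly be step \emph{(c)}: choosing the perturbation $\psi$ so that simultaneously the integral $\int_{[y,r]}(\alpha+\psi)$ grows like $n^{p-1}$ and the $2$-form $d\psi$ stays bounded under all left-translations. A subsidiary technical difficulty is that the natural choices for $\psi$ are only continuous (not smooth), so one has to carry out the Stokes-type argument in the combinatorial framework of compact presentations rather than in de Rham cohomology; this is exactly the motivation for setting up (a) as a discrete theorem in the first place. Once steps (a)--(c) are carried out, step (d) is a direct calculation.
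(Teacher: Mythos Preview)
Your outline follows the paper's sketch in \S\ref{sec:Intro-sketch-MainThm} closely, but there are two concrete gaps.

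First, the reduction to $G_{p,p-1}$ does not work: for $q<p-1$ you assert that $G_{p,p-1}$ is a retract of $G_{p,q}$, but $\dim G_{p,p-1}=2p-2>p+q-1=\dim G_{p,q}$, so there is not even an embedding, let alone a retraction. The paper instead uses the surjection $G_{p,q}\twoheadrightarrow G_{p,q}/Z(G_{p,q})\cong L_{p-1}\times L_{q-1}\twoheadrightarrow L_{p-1}$, which exists for every $q\geqslant 2$, and carries out the entire form argument inside $L_{p-1}$; this also makes the parity split unnecessary.

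Second, and more seriously, the candidate loop $[y_1,r]$ has area $O(n)$ in $G_{p,p-1}$: since each $[x_i,y_1]=1$ is a relator, sliding $y_1$ across $r^{-1}$ fills $y_1^{-1}r^{-1}y_1r$ with $\ell(r)\lesssim n$ relations. By your own discrete Stokes inequality this caps $\bigl|\int_{[y_1,r]}\alpha'\bigr|$ at $O(n)$ for \emph{every} $\alpha'$ with bounded differential, so no perturbation can rescue this choice. In the paper the ``suitable word $y$'' is $x_1^n$, not a second-factor generator, giving $[x_1^n,\Omega_{p-1}(n)]=\Omega_p(n)$; this is already null-homotopic in every $G_{p,q}$ and projects to a genuine loop in $L_{p-1}$. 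The paper's perturbation (in your notation) is then $n$-dependent and global rather than compactly supported: it replaces $\alpha$ by an $\alpha'$ equal to $\pm\alpha$ plus a constant on each side of the hyperplane $\{u_1=-n\}$, so that $d\alpha'=\pm\beta$ is still bounded, while a binomial computation yields $\int_{\Omega_p(n)}\alpha'=2n^{p-1}/(p-2)!$ in $L_{p-1}$.
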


Before going into the proof of Theorem \ref{thm:lower-bounds-form}, we summarise our approach for obtaining the desired lower bound on $\delta_{G_{p+1,q+1}}$. It suffices to find a family of null-homotopic words $w_{p+1,n}$ of length $\simeq n$ and area $\simeq n^p$. A natural candidate for $w_{p+1,n}$ is the word $\Omega_{p+1}(n):=\Omega_{p+1}(n,\dots,n)$ defined via the embedding of $L_{p+1}$ in the first factor of $G_{p+1,q+1}$. The reason for this is that its image with respect to the projection $G_{p+1,q+1}\to L_{p}$ is a product of the words $x_1^{-n} \Omega_p(n)^{-1} x_1^n$ and $\Omega_p(n)$, which both have area $n^p$ in $L_p$. One way of showing that these two words have the asserted area is by integrating them along a primitive of the $2$-form $\xi_1\wedge \xi_p$ from Section \ref{sec:lower-bound-dehn}, defining the $p$-central extension $L_{p+1}\to L_p$. However, a naive attempt to use the same argument to show that $\Omega_{p+1}(n)$ has area $\simeq n^p$ fails, because one can show that $\int_{\Omega_{p+1}(n)} \xi_1\wedge \xi_p = 0$. We overcome this obstacle by replacing $\xi_1\wedge \xi_p$ by a suitable ``bounded'' deformation of itself and then showing that integration over a primitive of this deformation now yields a non-trivial lower area bound which is $\simeq n^{p-1}$. This allows us to  prove Theorem \ref{thm:lower-bounds-form} and confirms our intuition regarding the area of the $\Omega_{p+1}(n)$.\vspace{.2cm}

We now provide the details of our argument. Let us start by introducing some useful notation.
Let $G$ be a connected Lie group equipped with a left-invariant Riemannian metric, and let $S$ be a compact generating subset of $G$.  For a smooth path $\gamma: \left[a,b\right]\to G$ we denote by $L(\gamma)$ its length with respect to the chosen metric on $G$.
We assign to every $s\in S$ a smooth choice of path $\gamma_s$ from $1_G$ to $s$ such that the set $\left\{L(\gamma_s)\mid s\in S\right\}$ is bounded. This allows us to associate to every word $w$ in $S$, a path $\overline{w}$. In what follows, such a path will be called a {\it word-path}. 

 We denote $g\ast \gamma$ the action of $G$ by left translation on the set of paths in $G$. Let us denote $w\cdot w'$ the concatenation of the words $w$ and $w'$.
\begin{proposition}\label{prop:discreteStokes} 
We let $\langle S\mid R\rangle$ be a compact presentation of a connected Lie group $G$ that we also equip with a left-invariant Riemannian metric. 
Assume that there exists a continuous $1$-form $\alpha$, and $C<\infty$ such that for every word-loop $\overline{r}$ associated to a relator $r\in R$ and every $g\in G$, 
\begin{equation}
\label{eqn:bddrelsprop-Stokes}
\left|\int_{g\ast \overline{r}}\alpha\right|\leqslant C.
\end{equation}
Let $w$ be null-homotopic, then
\[\Area(w)\geqslant \frac{1}{C}\left|\int_{\overline{w}}\alpha\right|.\]
\end{proposition}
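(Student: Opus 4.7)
The plan is to establish a discrete version of Stokes' theorem: any minimal filling $w =_{F_S} \prod_{i=1}^k u_i^{-1} r_i u_i$ of a null-homotopic word will express $\int_{\bar{w}} \alpha$ as a sum of $k$ integrals over translates of relator loops, and each such integral is bounded by $C$ by hypothesis.

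First I would fix conventions for word-paths. Following the paragraph preceding the proposition, one associates to each $s \in S$ a smooth path $\gamma_s$ from $1_G$ to $s$; for concatenation we take $\overline{w_1 \cdot w_2} = \overline{w_1} \cdot (w_1 \ast \overline{w_2})$, and for an inverse letter we set $\overline{s^{-1}} = s^{-1} \ast \gamma_s^{-1}$ (the point-by-point reverse translated). A short check gives the compatibility $\overline{w^{-1}} = w^{-1} \ast \overline{w}^{-1}$ for every word $w$. The key consequence is that \emph{free reduction preserves the integral of $\alpha$}: removing an adjacent pair $s \cdot s^{-1}$ in some position corresponds to removing a subpath of the form $\beta \cdot \beta^{-1}$, on which $\int \alpha = 0$.

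Now take a filling of minimal area $k = \operatorname{Area}(w)$, namely an equality $w = V$ in $F_S$ with $V := \prod_{i=1}^k u_i^{-1} r_i u_i$. By invariance under free reduction, $\int_{\bar w} \alpha = \int_{\bar V} \alpha$. Each conjugate relator $v_i := u_i^{-1} r_i u_i$ represents $1_G$, so the cumulative prefix $v_1 \cdots v_{i-1}$ also represents $1_G$; the concatenation rule therefore gives $\bar V = \bar{v_1} \cdot \bar{v_2} \cdots \bar{v_k}$ with no nontrivial left-translates between the pieces. Expanding,
\[
\bar{v_i} \;=\; \overline{u_i^{-1}} \;\cdot\; (u_i^{-1} \ast \bar{r_i}) \;\cdot\; (u_i^{-1} \ast \bar{u_i}),
\]
where $u_i^{-1} \ast \bar{r_i}$ is a loop at $u_i^{-1}$ (the translate of the relator loop $\bar{r_i}$, which itself is a loop since $r_i \equiv 1$ in $G$).

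Thanks to the convention $\overline{u_i^{-1}} = u_i^{-1} \ast \overline{u_i}^{-1}$, the "conjugating prefix and suffix" $\overline{u_i^{-1}}$ and $u_i^{-1} \ast \bar{u_i}$ are the reverse of each other as parametrized paths in $G$; by additivity of the line integral, their contributions to $\int_{\bar{v_i}} \alpha$ cancel. Hence $\int_{\bar{v_i}} \alpha = \int_{u_i^{-1} \ast \bar{r_i}} \alpha$, and the hypothesis \eqref{eqn:bddrelsprop-Stokes} gives $\bigl|\int_{u_i^{-1} \ast \bar{r_i}} \alpha\bigr| \leqslant C$. Summing over $i$ and minimizing over fillings,
\[
\Bigl|\int_{\bar w} \alpha\Bigr| \;\leqslant\; kC \;=\; \operatorname{Area}(w) \cdot C,
\]
which is the desired inequality.

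The main subtlety is that $\alpha$ is not assumed left-invariant, so one cannot directly identify the integral over a translated path with the integral over the original. The whole argument is designed to sidestep this: we never compare integrals over different translates of the same path — instead, on the one hand we use that line integrals over a path traced forward and then backward cancel (which is true for any continuous 1-form), and on the other hand we invoke the uniform bound \eqref{eqn:bddrelsprop-Stokes} that has been supplied precisely to absorb the non-invariance.
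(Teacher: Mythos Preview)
Your proof is correct and follows essentially the same approach as the paper's: both decompose $\int_{\overline{w}}\alpha$ via the concatenation rule, use that freely equivalent words give the same integral, observe that null-homotopic factors contribute no translation between successive pieces, and that for a conjugate $u^{-1}ru$ the conjugating tails cancel so that only $\int_{[u]^{-1}\ast\overline{r}}\alpha$ remains. The paper packages these as four short identities \eqref{eq:concatenation}--\eqref{eq:conj} and applies them in sequence, while you unwind the same steps a bit more explicitly; your closing remark on why non-invariance of $\alpha$ is harmless is a nice addition.
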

\begin{proof}
We make the following trivial but crucial observation: given two words $w$ and $w'$ in the alphabet $S$, we have
\begin{equation}\label{eq:concatenation}
\int_{\overline{w\cdot w'}} \alpha= \int_{[w]\ast \overline{w'}}\alpha +\int_{\overline{w}}\alpha.
\end{equation}
In particular, if $w$ and $w'$ are null-homotopic, i.e.\ $[w]=[w']=1_G$, then 
\begin{equation}\label{eq:concatenationLoops}
\int_{\overline{w\cdot w'}} \alpha= \int_{\overline{w'}}\alpha +\int_{\overline{w}}\alpha.
\end{equation}
We also easily deduce from \eqref{eq:concatenation} that if $w$ and $w'$ represent the same element of the free group, then 
\begin{equation}\label{eq:freeequiv}
\int_{\overline{w}}\alpha=\int_{\overline{w}'}\alpha.
\end{equation}
Finally, if $w$ is null-homotopic, i.e.\ $[w]=1_G$, and $u$ is any word, then we get
\begin{equation}\label{eq:conj}
\int_{\overline{w'}}\alpha=\int_{[u]\ast \overline{w}}\alpha,
\end{equation}
where $w'=u\cdot w\cdot u^{-1}$.
Now let $w$ be a word of size $\leqslant n$ in $S$ that freely equals a product of $N$ conjugates of relators.
Then combining \eqref{eq:freeequiv}, \eqref{eq:concatenationLoops}, \eqref{eq:conj} and \eqref{eqn:bddrelsprop-Stokes} in this order, we conclude that $$\left|\int_{\overline{w}}\alpha\right|\leqslant N\cdot C,$$
so we are done.   
 \end{proof}

\subsection{Linear representations of filiform Lie groups}
\label{sec:lbf-filiform-Lie-groups}
It is well-known and easy to check that a linear representation of the Lie algebra of $L_p$ is given by
\[
\mathfrak{l}_p:=\mathrm{Lie}(L_p)=\left\{ \left.
\left(
\begin{array}{cccccc} 
0 		& t_1 	& 0 	 &\cdots 	& 0 	 &t_p\\
\vdots 	&\ddots & \ddots & \ddots 	& \vdots &t_{p-1}\\
		&		& \ddots &			& 0		 & \vdots\\
		&		&		 & 0		& t_1	 & t_3\\
\vdots  &		&		 &			& 0		 &t_2\\
0		&\cdots &		 &			& \cdots &0
\end{array}
\right)\right|
t_1,\dots,t_p\in \RR
\right\}
\]
with the commutator bracket $\left[A,B\right]:= AB-BA$ on matrices. Thus, we can obtain a linear representation of $L_p$ by computing the image $\mathrm{exp}(\mathfrak{l}_p)$. We will now make this explicit. For this we introduce the notation
\[
B_{t_1}:=\left(
\begin{array}{ccccc} 
 0 		& t_1 	& 0 	 &\cdots 	& 0 	 \\
\vdots 	&\ddots & \ddots & \ddots 	& \vdots \\
		&		& \ddots &			& 0		 \\
		&		&		 & 0		& t_1	 \\
0	    &		&		 &\cdots	& 0		 \\
\end{array}
\right)\in \RR^{(p-1)\times (p-1)}
\]
and observe that for $t=(t_1,\dots,t_p)$ and $A_t\in \mathfrak{l}_p$ we obtain
\[
 e^{A_t} = \left(\begin{array}{cc} e^{B_{t_1}} & v_t \\ 0 & 1 \end{array}\right)
\]
for a suitable $v_t\in \RR^{p-1}$. Moreover, it is easy to derive by induction that 
\[
e^{B_{t_1}} =
\left(
\begin{array}{ccccc} 
1 		& t_1 	& \frac{t_1^2}{2!} 	 &\cdots 	& \frac{t_1^{p-2}}{(p-2)!} 	 \\
0	 	&\ddots & \ddots 			 &\ddots 	& \vdots \\
\vdots	&\ddots	& \ddots 			 &			& \frac{t_1^2}{2!}		 \\
		&		&\ddots	 			 & 1		& t_1	 \\
0	    &\cdots	&		 			 &0			& 1		 
\end{array}
\right). 
\]
From this we deduce that
\[
 v_t = \left(
 \begin{array}{c} 
 t_p+ \sum_{n=1}^{p-2} \frac{t_1^n}{n!} t_{p-n}\\
  \vdots \\ 
 t_k + \sum_{n=1}^{k-2} \frac{t_1^n}{n!} t_{k-n}\\
 \vdots\\
 t_3 + \sum_{n=1}^{1} \frac{t_1^n}{n!}t_{3-n}\\
 t_2
 \end{array}
 \right).
\]
Finally the change of coordinates $u(t):=(u_1(t),\dots, u_p(t))$ with $u_1(t)=t_1$ and $u_i(t)= t_i + \sum_{n=1}^{i-2} \frac{t_1^n}{n!} t_{i-n}$ for $2\leqslant i \leqslant p$ provides a diffeomorphism from $\RR^p$ to $L_p$ represented as the linear subgroup
\[
\left\{ 
S_{u}:=
\left( \left.
\begin{array}{ccc|c} &&& u_p\\ 
&e^{B_{u_1}}& &\vdots\\ 
&&& u_2\\ \hline 
0& \cdots &0 &1\\
\end{array} 
\right) 
\right| 
u=(u_1,\cdots,u_p)\in \RR^p \right\} < \mathrm{Gl}_p(\RR).
\]
For $u\in\RR^p$ we will denote by $\del_{u_1,u},\dots,\del_{u_p,u}$ the standard coordinate basis of $T_u\RR^p$. Note that the model filiform group with presentation $\mathcal{P}(\Lambda_p)$ as in \S \ref{subsec:Intro-central-products} embeds as a lattice via the identifications $x_1=\exp(\del_{u_1,0})$, $x_2=\exp(\del_{u_2,0})$ and $x_{i+1}=\left[x_1,x_i\right]$ for $2\leqslant i\leqslant p-1$. 

We will now use the linear representation to compute the left invariant vector fields corresponding to the standard basis $\del_{u_1,0}, \dots, \del_{u_p,0}$ of $T_0\RR^p$ at the identity. We denote by $S_{u,\ast}: TL_p\to TL_p$ the differential of the automorphism of $L_p$ defined by left-multiplication by $S_u$.
\begin{lemma}\label{lem:inv-vec-fields}
 With respect to the coordinates $u$ on $L_p$ a basis of left invariant vector fields is given by
 \[
  S_{u,\ast} \del_{u_1,0} = \del_{u_1,u}
 \]
 and
 \[
 S_{u,\ast} \del_{u_i,0} = \sum_{n=0}^{p-i} \frac{u_1^n}{n!} \del_{u_{i+n},u}.
 \]
\end{lemma}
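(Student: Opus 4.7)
The strategy is to compute left translation in the coordinates $u$ directly from the explicit matrix representation, and then differentiate at the identity along each basis direction. Concretely, for fixed $j\in\{1,\ldots,p\}$ the left-invariant vector field at $S_u$ determined by $\partial_{u_j,0}$ is, by definition, $\frac{d}{dt}|_{t=0}\bigl(S_u\cdot \gamma_j(t)\bigr)$ for any smooth curve $\gamma_j$ with $\gamma_j(0)=\mathrm{Id}$ and $\gamma_j'(0)=\partial_{u_j,0}$. The natural choice is $\gamma_j(t)=S_{te_j}$, where $e_j$ is the $j$-th standard basis vector of $\RR^p$; this works because under the coordinates $u$ the identity matrix corresponds to $u=0$ and $\partial_t|_{t=0}S_{te_j}=\partial_{u_j,0}$.

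The main computation is the block matrix product. Writing
\[
S_u\cdot S_{te_j}=\begin{pmatrix} e^{B_{u_1}}e^{B_{(te_j)_1}} & e^{B_{u_1}}\cdot v(te_j)+v(u) \\ 0 & 1\end{pmatrix},
\]
one observes two facts. First, the matrices $B_a$ all commute (they are scalar multiples of a fixed nilpotent shift on the $(p-1)$-dimensional complement), so $e^{B_{u_1}}e^{B_{(te_j)_1}}=e^{B_{u_1+t\delta_{1j}}}$. Second, $v(te_j)=te_j'$ is the column vector with $t$ in the row corresponding to the $j$-th coordinate (and zero elsewhere) when $j\geqslant 2$, and $v(te_1)=0$. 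Reading off the coordinates $v$ of the product $S_u\cdot S_{te_j}=S_v$ from the right-hand side using the explicit form of $e^{B_{u_1}}$ (whose $(k,\ell)$-entry is $u_1^{\ell-k}/(\ell-k)!$) yields, for $j\geqslant 2$,
\[
v_i = u_i + \frac{u_1^{i-j}}{(i-j)!}\, t\quad(i\geqslant j),\qquad v_i=u_i\quad(i<j,\ i\neq 1),\qquad v_1=u_1,
\]
and for $j=1$, $v_1=u_1+t$ with $v_i=u_i$ for $i\geqslant 2$ (the last point follows since $w_1$ does not appear in the formula expressing $v_i$ in terms of the coordinates $w$ of the second factor for $i\geqslant 2$, only $w_2,\ldots,w_i$ do).

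Differentiating at $t=0$ then gives the two claimed formulas: the case $j=1$ yields $S_{u,\ast}\partial_{u_1,0}=\partial_{u_1,u}$, and for $j\geqslant 2$ the sum $\sum_{n=0}^{p-j}\frac{u_1^n}{n!}\partial_{u_{j+n},u}$ arises from reindexing $i=j+n$. There is no real obstacle here: the only mildly delicate step is to verify that the off-diagonal contribution $e^{B_{u_1}}v(te_j)$ combines cleanly with $v(u)$ under the chosen coordinates, i.e.\ that the change of coordinates $u\leftrightarrow t$ introduced in \S\ref{sec:lbf-filiform-Lie-groups} precisely absorbs the nonlinear terms coming from $e^{B_{u_1}}$. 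Once that is checked by a direct entry-wise comparison (using that the $k$-th entry from the top of the last column of $S_u$ is $u_{p-k+1}$), the formulas follow by straightforward differentiation.
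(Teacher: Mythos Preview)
Your proof is correct and follows essentially the same approach as the paper: both use the explicit matrix representation to compute left translation and then differentiate. The only cosmetic difference is the order of operations—you compute the product $S_u\cdot S_{te_j}$ in coordinates and then differentiate in $t$, whereas the paper differentiates $S_u$ at $0$ first to obtain a tangent matrix and then left-multiplies by $S_u$; by the chain rule these are the same computation.
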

\begin{proof}
 The first identity is an immediate consequence of the following identities
 \[
  S_{u,\ast}\cdot \frac{d}{du_1}|_{u=0} S_u = 
\left(
\begin{array}{ccc|c} &&& 0\\ 
&e^{B_{u_1}}\cdot \left(
\begin{array}{ccccc}
 0 		& 1 	& 0 	 &\cdots 	& 0 	 \\
\vdots 	&\ddots & \ddots & \ddots 	& \vdots \\
		&		& \ddots &			& 0		 \\
		&		&		 & 0		& 1		 \\
0		&		&		 & \cdots	& 0		 \\
\end{array}\right)
& &\vdots\\ 
&&& 0\\ \hline 
0& \cdots &0 &0\\
\end{array} 
\right)
=
\left(
\begin{array}{c|c}
\frac{d}{du_1} e^{B_{u_1}}&0\\ \hline 0& 0
\end{array}
\right).
 \]
To derive the identities for $2\leqslant i\leqslant p$ denote by $e_i\in \RR^{p-1}$ the $i$-th unit vector and observe that
\[
S_{u,\ast}\cdot \frac{d}{du_i}|_{u=0} 
=
\left( 
\begin{array}{ccc|c} &&& u_p\\ 
&e^{B_{u_1}}& &\vdots\\ 
&&& u_2\\ \hline 
0& \cdots &0 &1\\
\end{array} 
\right) 
\cdot
\left(
\begin{array}{c|c} 0 & e_{p-i+1}\\ \hline 0&0
\end{array} 
\right) 
=
\left(
\begin{array}{c|c} 0 & e^{B_{u_1}}\cdot e_{p-i+1} \\ \hline 0&0
\end{array} 
\right).  
\]
We deduce that in local coordinates we have $S_{u,\ast} \del_{u_i,0}=\sum_{n=0}^{p-i} \frac{u_1^n}{n!} \del_{u_{i+n},u}$. This completes the proof.
\end{proof}

It is now easy to check that the forms $du_1$ and $du_2$ corresponding to the first two coordinates are left $L_p$-invariant. Moreover, we obtain:
\begin{lemma}
The 1-form $\alpha$ defined by 
\[
\alpha_u= \sum_{k=0}^{p-2} (-1)^k \frac{u_1^k}{k!} du_{p-k}
\]
is the unique left $L_p$-invariant form with $\alpha_0=du_p$.
\end{lemma}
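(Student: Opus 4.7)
The plan is to verify the two claims separately, starting with uniqueness (which is essentially free) and then settling invariance by direct computation against the basis of left-invariant vector fields already obtained in Lemma \ref{lem:inv-vec-fields}.

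\textbf{Uniqueness.} A left-invariant $1$-form on a connected Lie group is completely determined by its value at the identity: if $\alpha, \alpha'$ are both left-invariant and agree at $0$, then for every $u \in L_p$ and every $v \in T_u L_p$ one has $\alpha_u(v) = \alpha_0(S_{u,*}^{-1} v) = \alpha'_u(v)$. Hence once existence is established, uniqueness is automatic. Note also that the value at $0$ of the $\alpha$ in the statement is indeed $du_p$, since all summands with $k \geqslant 1$ vanish at $u_1 = 0$.

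\textbf{Invariance.} By Lemma \ref{lem:inv-vec-fields} a basis of left-invariant vector fields is $\{S_{u,*}\partial_{u_i,0}\}_{1 \leqslant i \leqslant p}$, and invariance of $\alpha$ reduces to checking that $u \mapsto \alpha_u(S_{u,*}\partial_{u_i,0})$ is constant and equals $\alpha_0(\partial_{u_i,0})$ for each $i$. For $i=1$ this is immediate, since $S_{u,*}\partial_{u_1,0} = \partial_{u_1,u}$ and the sum defining $\alpha_u$ only involves $du_{p-k}$ for $0 \leqslant k \leqslant p-2$, hence never $du_1$; thus both sides are $0$. For $2 \leqslant i \leqslant p$, plugging in the explicit formula from Lemma \ref{lem:inv-vec-fields} gives
\[
\alpha_u(S_{u,*}\partial_{u_i,0}) = \sum_{n=0}^{p-i} \frac{u_1^n}{n!}\,(-1)^{p-i-n}\frac{u_1^{p-i-n}}{(p-i-n)!} = (-1)^{p-i}\,\frac{u_1^{p-i}}{(p-i)!}\sum_{n=0}^{p-i}\binom{p-i}{n}(-1)^n.
\]
The inner sum is $(1-1)^{p-i}$, which is $0$ when $i < p$ and $1$ when $i=p$. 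In both cases this matches $\alpha_0(\partial_{u_i,0})$, which is $1$ exactly when $i = p$.

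\textbf{Main obstacle.} There is essentially no obstacle beyond keeping the indices straight: the only non-trivial input is the binomial identity $(1-1)^m = 0$ for $m \geqslant 1$, and the whole argument is a bookkeeping exercise combining the representation-theoretic description of $L_p$ from \S\ref{sec:lbf-filiform-Lie-groups} with the formula for $\alpha$. I would write the computation once for $2 \leqslant i \leqslant p-1$ (where the cancellation happens) and treat $i = p$ separately (where only the $n=0$ term survives), to make the accounting transparent.
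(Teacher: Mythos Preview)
Your proof is correct and follows exactly the approach indicated in the paper: evaluate $\alpha_u$ on the left-invariant frame $S_{u,*}\partial_{u_i,0}$ from Lemma~\ref{lem:inv-vec-fields} and verify that the result is the constant $\delta_{p,i}$. The paper's proof merely asserts that this check is ``easy'', whereas you carry it out explicitly via the binomial identity $(1-1)^{p-i}=0$; there is no substantive difference in strategy.
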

\begin{proof}
By definition $\alpha_0=du_p$ and using Lemma \ref{lem:inv-vec-fields} it is easy to check that $\alpha_u(S_{u,\ast}\del_{u_i,0}) =\delta_{p,i}$. This completes the proof.
\end{proof}

Finally we observe that the form $\beta$ defined by
\begin{equation}\label{def:1-form-beta}
 \beta_u=\sum_{k=0}^{p-2} (-1)^k \frac{u_1^{k+1}}{(k+1)!}du_{p-k}
\end{equation}
has left $L_p$-invariant differential
\[
d\beta = du_1 \wedge \alpha.
\]

In fact $d \beta$ is an explicit realisation in the coordinates $u_i$ of the 2-form $\xi_1\wedge \xi_p$ from the proof of Proposition \ref{lem:no-central-extension}. The reason we consider it is that it defines a p-central extension of $L_p$. However, as we have seen we face the problem that this form does not survive in $H^2(\mathfrak{g}_{p,q}, \RR)$ for $q<p$. Thus we can not use it directly to obtain a lower bound on $\delta_{G_{p,q}}(n)$ by defining a p-central extension and, as we have shown, there is actually not even a $(p-1)$-central extension of $G_{p,q}$ for $p$ odd and $q<p$. To overcome this problem and confirm our intuition that $\delta_{G_{p,q}}(n)\succcurlyeq n^{p-1}$, we will now pursue the approach sketched in \S \ref{sec:Intro-sketch-MainThm} of constructing a suitable perturbation $\beta_0$ of $\beta$ with bounded differential, which has integral $\simeq n^{p} $ on certain $(p+1)$-fold iterated commutators in $L_p$; they arise as images of null-homotopic words in $G_{p+1,q+1}$ with respect to the canonical projection. In view of \S \ref{sec:lbf-mainresult} this will allow us to deduce the desired lower bounds on the Dehn function.

\subsection{Construction of a suitable exact bounded 2-form}
\label{sec:lbf-proof-main-result}
To simplify notations, we shall denote for $n\in \RR$, $\Omega_k(n)=\Omega_k(\nnn)$ when $\nnn=(n,\ldots,n)\in \RR^k$, where $\Omega_k(\nnn)$ was defined in  \S \ref{sec:Omega}. Note that $\Omega_k(n)$ can be defined inductively by $\Omega_2(n)=\left[x_1^n,x_2^n\right]$ and $\Omega_{k+1}(n)=\left[x_1^n,\Omega_{k}(n)\right]$ for $k\geqslant 2$. We recall that $\Omega_p(n)$ defines a null-homotopic word in $L_p$.

\begin{remark}\label{rmk:exp-Omega}
One checks by induction on $k$ that the exponent sum of $x_1$ in any prefix word of $\Omega_k(n)^{\pm 1}$ lies in the interval $[-(k-1)n,0]$, for all $n\in \mathbb{N}$.
\end{remark}

We will show that the integral of the form $\beta$ along the loop defined by $\Omega_{p}(n)$ in $L_p$ is $n^{p}$. This is one way to prove that the null-homotopic words $\Omega_{p}(n)$ are area maximising in $L_p$. It makes $\beta$ a good candidate for showing that $\Omega_{p+1}(n)=\left[x_1^n,\Omega_{p}(n)\right]$ also has area $\simeq n^{p}$ in $L_p$. However,  
\[
 \int_{x_1^{-n}\Omega_{p}(n)^{-1}x_1^n} \beta = - \int_{\Omega_{p}(n)} \beta 
\]
and thus the integral of $\beta$ along $\Omega_{p+1}(n)$ vanishes (this is a direct consequence of \eqref{eq:conj} and the left $L_p$-invariance of $d\beta$). This means that the form $\beta$ won't allow us to obtain the desired lower bounds on the Dehn function. 

We will show that we can avoid this problem by replacing $\beta$ by a continuous perturbation $\beta_0$ with the property that the differential $d\beta_0$ exists for $u_1\neq 0$ and coincides with $d\beta$ for $u_1>0$ and with $-d\beta$ for $u_1<0$. Moreover, to simplify our calculations, we will consider the null-homopic word $w_{p+1,n}=x_1^{(p-1)n}\Omega_p(n)x_1^{-(p-1)n}\Omega_p(n)^{-1}$ instead of $\Omega_{p+1}(n)$. Its projection to $L_p$ consists of two disjoint loops $\gamma^+$ and $\gamma^-$, and a line connecting their basepoints. By Remark \ref{rmk:exp-Omega}, the exponent sum of $x_1$ in any prefix word of $\gamma^+$ (resp.\ $\gamma^-$) is positive (resp.\ negative). In particular, the image of $\gamma_+$ is contained in the set, where $d\beta_0=d\beta$, while the image of $\gamma_-$ is contained in the set, where $d\beta_0=-d\beta$. Since $d\beta$ is the 2-form defining the central extension $L_{p+1}\to L_p$, one can deduce from this that $\int_{w_{p+1,n}} \beta_0 \simeq n^p$. Below we provide the details of this argument and calculate the precise value of $\int_{w_{p+1,n}} \beta_0$.

We start by defining $\beta_0$:
\[
\beta_{0,u}:=\sgn(u_1) \sum_{k=0}^{p-2}(-1)^k \frac{u_1^{k+1}}{(k+1)!} du_{p-k}.
\]
A direct calculation shows
\[
d(\beta_{0,u}) = \left\{ 
\begin{array}{ll} 
-d\beta & \mbox{\hspace{.5cm} if } u_1<0\\
 d\beta & \mbox{\hspace{.5cm} if } u_1>0
\end{array}
\right.
\]

To evaluate the integral of $\beta_{0}$ along $w_{p+1,n}$ we need to evaluate it along each part of the loop. For this we will use the following result:
\begin{lemma}
\label{lem:int-x1n-x2n}
For $i=1,2$ and $\epsilon=\pm 1$, let $\gamma_i(t)=S_u\cdot \exp(\epsilon t \del_{u_i,u})=S_u\cdot x_i^{\epsilon\cdot t}$, $t\in \left[0,n\right]$ be a curve in $L_p$ with $\gamma_i(0)=S_u$ and $\gamma_i(n)= S_u \cdot x_i^{\epsilon n}$. Assume further that $u_1=L\cdot n$ for some $L\in \RR$. Then
\begin{enumerate}
 \item $\int_{\gamma_1} \beta_{0} = 0$ and $u'_1=u_1+\epsilon n$ for $u'\in \RR^p$ with $S_{u'}= \gamma_1(n)$;
 \item $\int_{\gamma_2} \beta_{0} = \epsilon n^p \sgn(L) \frac{L^{p-1}}{(p-1)!}$ and $u'_1=u_1$ for $u'\in \RR^p$ with $S_{u'}=\gamma_2(n)$.
\end{enumerate}
\end{lemma}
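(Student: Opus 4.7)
\textbf{Proof plan for Lemma \ref{lem:int-x1n-x2n}.} My plan is to reduce the integral computations to a direct evaluation in the global coordinates $(u_1,\dots,u_p)$ by first writing down the curves $\gamma_i$ explicitly. Using Lemma \ref{lem:inv-vec-fields}, the left-invariant vector field generating $\gamma_1$ is $\del_{u_1}$, so $\gamma_1(t)=S_{u^{(t)}}$ with $u^{(t)}_1=u_1+\epsilon t$ and $u^{(t)}_j=u_j$ for $j\geqslant 2$. The vector field generating $\gamma_2$ is $\sum_{k=0}^{p-2}\frac{u_1^k}{k!}\del_{u_{2+k}}$, which has no $\del_{u_1}$-component, and the $\del_{u_{2+k}}$-coefficient depends only on $u_1$. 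Since $u_1$ is preserved along the flow, the coefficient stays constant, and $\gamma_2(t)=S_{u^{(t)}}$ with $u^{(t)}_1=u_1=Ln$ and $u^{(t)}_{2+k}=u_{2+k}+\epsilon t\,\frac{(Ln)^k}{k!}$. The endpoint statements on $u'_1$ in (1) and (2) are then immediate.

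For part (1), I just observe that $\beta_C$ is a linear combination of $du_2,\dots,du_p$ (there is no $du_1$ term in its definition), while $\dot\gamma_1(t)$ is a multiple of $\del_{u_1}$. Hence $\beta_C(\dot\gamma_1(t))\equiv 0$ and the integral vanishes.

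For part (2), along $\gamma_2$ the integrand $\beta_C(\dot\gamma_2(t))$ is independent of $t$ because $u^{(t)}_1\equiv Ln$; so the integral equals $n$ times this constant value. Using $\dot u^{(t)}_{p-k}=\epsilon\frac{(Ln)^{p-k-2}}{(p-k-2)!}$ and $\sgn(u^{(t)}_1-Cn)=\sgn(L-C)$ (for $n>0$; the other sign case is analogous), plugging into the defining formula for $\beta_C$ yields
\[
\int_{\gamma_2}\beta_C
=\frac{\epsilon\, n^p\sgn(L-C)}{(p-1)!}\sum_{k=0}^{p-2}(-1)^k\binom{p-1}{k+1}\bigl(L^{k+1}-C^{k+1}\bigr)L^{p-k-2}.
\]
It then remains to identify the sum on the right with $(L-C)^{p-1}$. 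This is the only genuinely computational step, but it is routine: splitting the sum, the $L^{k+1}L^{p-k-2}$ piece collapses to $L^{p-1}\sum_{j=1}^{p-1}(-1)^{j-1}\binom{p-1}{j}=L^{p-1}$, while the $C^{k+1}L^{p-k-2}$ piece equals $L^{p-1}-(L-C)^{p-1}$ by the binomial theorem. Subtracting yields $(L-C)^{p-1}$, giving the claimed formula.

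There is no real obstacle here; the whole point is to reduce the geometric statement to a one-line binomial identity by leveraging that $u_1$ is conserved along the $x_2$-flow and that $\beta_C$ has no $du_1$-term. The lemma will be used in the next step together with the sign flip in $d\beta_C$ across the hyperplane $\{u_1=Cn\}$ to compute the integral of $\beta_C$ along $\Omega_{p+1}(n)$ and obtain the lower bound announced in Theorem \ref{thm:lower-bounds-form}.
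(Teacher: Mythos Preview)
Your proof is correct and follows essentially the same route as the paper's. The only cosmetic difference is that you derive the explicit form of the curves $\gamma_1,\gamma_2$ in the $u$-coordinates by invoking the left-invariant vector field formulas of Lemma \ref{lem:inv-vec-fields}, whereas the paper redoes a short matrix multiplication; the resulting integrand and the binomial identification with $(L-C)^{p-1}$ are the same in both arguments.
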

\begin{proof}
 Assertion (1) follows from Lemma \ref{lem:inv-vec-fields}, the vanishing of $\beta_{0,u}$ on $\del_{u_1,u}$ and
 \[
  \gamma_1(t)= 
\left(\begin{array}{ccc|c} &&& u_p\\ 
&e^{B_{u_1}}& &\vdots\\ 
&&& u_2\\ \hline 
0& \cdots &0 &1\\
\end{array} 
\right)
\cdot
\left(
\begin{array}{c|c} e^{B_{\epsilon t}} & 0\\ \hline 0&1
\end{array} 
\right)
= 
\left(\begin{array}{ccc|c} &&& u_p\\ 
&e^{B_{u_1+\epsilon t}}& &\vdots\\ 
&&& u_2\\ \hline 
0& \cdots &0 &1\\
\end{array} 
\right).
 \]
For Assertion (2) we first observe that
\[
\gamma_2(t)= 
\left(\begin{array}{ccc|c} &&& u_p\\ 
&e^{B_{u_1}}& &\vdots\\ 
&&& u_2\\ \hline 
0& \cdots &0 &1\\
\end{array} 
\right)
\cdot
\left(\begin{array}{ccc|c} &&& 0\\ 
&\mathrm{I}& &\vdots\\ 
&&& 0\\
&&& \epsilon t\\ \hline 
0& \cdots &0 &1
\end{array} 
\right)
=
\left(\begin{array}{ccc|c} &&& u_p+\epsilon \cdot t \frac{u_1^{p-2}}{(p-2)!}\\ 
&e^{B_{u_1}}& &\vdots\\ 
&&& u_3+\epsilon \cdot t u_1\\
&&& u_2+\epsilon \cdot  t\\ \hline 
0& \cdots &0 &1\\
\end{array} 
\right).
\]
We deduce that
\[
\dot{\gamma_2}(t) =  \epsilon \sum_{k=0}^{p-2} \frac{u_1^{p-2-k}}{(p-2-k)!} \del _{u_{p-k},u}.
\]
and that the $u_1$-coordinate is constant along $\gamma_2(t)$. Thus,
\begin{align*}
 \int_{\gamma_2} \beta_{0} &= \int_{0}^n \beta_{0,\gamma_2(t)}(\dot{\gamma_2}(t)) dt\\
 & = \int_0^n \epsilon \cdot \sgn(u_1) \sum_{k=0}^{p-2}(-1)^{k} \frac{u_1^{k+1}}{(k+1)!} \cdot \frac{u_1^{p-2-k}}{(p-2-k)!}\\
 & \stackrel{u_1=L\cdot n}{=} \int_0^n \epsilon\cdot  \sgn(L\cdot n) \sum_{k=0}^{p-2} (-1)^k n^{p-1} \frac{L^{p-1}}{(k+1)!(p-2-k)!}\\
 & = \frac{n^p}{(p-1)!}\epsilon\cdot \sgn(L)\cdot L^{p-1} \sum_{k=1}^{p-1} (-1)^k \binom{p-1}{k}\\
 &\stackrel{(1)}{=} \epsilon \frac{n^p}{(p-1)!} \sgn(L) L^{p-1},
\end{align*}
where in (1) we use the binomial formula $0=(1 + (-1))^{p-1}= \sum_{k=0}^{p-1}(-1)^k \binom{p-1}{k}$. This completes the proof.

\end{proof}

For a word $w(x_1,x_2)$, we introduce the notation $E_{x_1}(w)$ for its $x_1$-exponent sum. Lemma \ref{lem:int-x1n-x2n} shows that 
\begin{enumerate}
 \item if a word $w(x_1,x_2)$ represents the element $S_u$ in $L_p$ for $u\in \RR$ then $u_1$ coincides with $E_{x_1}(w)$;
 \item we can compute $\int_{w_{p+1,n}}\beta_0$ by reading $w_{p+1,n}$ from left to right and adding a contribution for every power of $x_2$ that we encounter. The contribution of such an $x_2$-power will depend solely on the $x_1$-exponent sum of its prefix word and the numerical value of the exponent of this $x_2$-power. In particular, this essentially reduces the computation of $\int_{w_{p+1,n}}\beta_0$ to a combinatorial problem.
\end{enumerate}

\begin{lemma}
\label{lem:count-exp-sums}
For $p\geqslant 2$ the word $\Omega_{p}(n)$ satisfies the following properties:
\begin{enumerate}
 \item $\Omega_{p}(n) = \prod_{j=1}^{N_p} x_1^{\epsilon_{j,1} n} x_2^{- n}x_1^{\epsilon_{j,2} n}x_2^{n}$ in freely reduced form for an integer $N_p$. In particular, the sign of the $x_2$-exponents alternates and the word starts with $x_1^{-n}x_2^{-n}$ and ends with $x_1^nx_2^n$;
  \item  for any decomposition of $\Omega_{p}(n)$ in freely reduced form as $w_1(x_1,x_2) x_2^{\epsilon\cdot n} w_2(x_1,x_2)$ there is $0\leqslant k \leqslant p-1$ with $E_{x_1}(w_1)=-k\cdot n$;
 \item for $0\leqslant k\leqslant p-1$ there are precisely $\binom{p-1}{k}$ ways of decomposing $\Omega_{p}(n)$ in freely reduced form as $w_1(x_1,x_2) x_2^{\epsilon\cdot n} w_2(x_1,x_2)$ with exponent sum $E_{x_1}(w_1)=-k\cdot n$ and $\epsilon =\pm 1$, and, moreover, for all of them $\epsilon = (-1)^k$.
  \item for $0\leqslant k\leqslant p-1$ there are precisely $\binom{p-1}{k}$ ways of decomposing $\Omega_{p}(n)^{-1}$ in freely reduced form as $w_1(x_1,x_2) x_2^{\epsilon\cdot n} w_2(x_1,x_2)$ with exponent sum $E_{x_1}(w_1)=-k\cdot n$ and $\epsilon =\pm 1$, and, moreover, for all of them $\epsilon = (-1)^{k+1}$.
\end{enumerate}
\end{lemma}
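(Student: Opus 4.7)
The plan is to prove the three assertions simultaneously by induction on $p\geqslant 2$, using the defining identity
\[
\Omega_{p+1}(n) \;=\; x_1^{-n}\,\Omega_p(n)^{-1}\,x_1^n\,\Omega_p(n).
\]
The base case $p=2$ is a direct check on $\Omega_2(n)=x_1^{-n}x_2^{-n}x_1^n x_2^n$: one has $N_2=1$ with $(\epsilon_{1,1},\epsilon_{1,2})=(-1,+1)$, and the two $x_2$-events occur at prefix $x_1$-sums $-n$ (sign $-1$, $k=1$) and $0$ (sign $+1$, $k=0$), in agreement with $\binom{1}{k}$ and the sign rule $(-1)^k$.

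For the inductive step, I would first show that the only nontrivial cancellation on the right-hand side above occurs at the middle seam. By the inductive form~(1), $\Omega_p(n)$ starts with $x_1^{-n}x_2^{-n}$ and ends with $x_1^n x_2^n$, so $\Omega_p(n)^{-1}$ starts with $x_2^{-n}x_1^{-n}$ and ends with $x_2^n x_1^n$. The first join leaves $x_1^{-n}x_2^{-n}\ldots$ intact, while the middle region reduces as
\[
\ldots x_2^n\cdot x_1^n\cdot x_1^n\cdot x_1^{-n}\cdot x_2^{-n}\ldots \;=\; \ldots x_2^n\cdot x_1^n\cdot x_2^{-n}\ldots,
\]
so that no $x_2$-event is destroyed, the bridge between the last event of $\Omega_p(n)^{-1}$ and the first event of the outer $\Omega_p(n)$ is exactly $x_1^n$, and all other internal $x_1$-blocks carry over unchanged from $\Omega_p(n)$. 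Since inverting $\Omega_p(n)$ both reverses the order of its $x_2$-signs and negates them, the inductive alternating pattern $-,+,\ldots,-,+$ is preserved by inversion; the concatenated pattern in $\Omega_{p+1}(n)$ is therefore again alternating, starting with $-$ and ending with $+$. Combined with $N_{p+1}=2N_p=2^{p-1}$, this delivers~(1).

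The last step is to track prefix sums and signs under the natural bijection between the $x_2$-events of $\Omega_{p+1}(n)$ and two disjoint copies of the events of $\Omega_p(n)$. Writing $\Omega_p(n)=u_1\cdot x_2^{\epsilon n}\cdot u_2$ with $E_{x_1}(u_1)=L$ and using $E_{x_1}(\Omega_p(n))=0$ (since $\Omega_p(n)$ is a commutator), a direct computation shows that the copy of this event coming from the outer $\Omega_p(n)$ sits at prefix $x_1$-sum $L$ with sign $\epsilon$, while the copy inside $\Omega_p(n)^{-1}$ sits at prefix $x_1$-sum $L-n$ with sign $-\epsilon$. Assuming inductively that the events of $\Omega_p(n)$ occur at prefix sums $-kn$ for $k\in\{0,\ldots,p-1\}$ with sign $(-1)^k$ and multiplicity $\binom{p-1}{k}$, one obtains~(2) and the Pascal recursion $\binom{p-1}{k}+\binom{p-1}{k-1}=\binom{p}{k}$ for~(3), with the signs matching since $-(-1)^{k-1}=(-1)^k$. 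The one delicate point throughout the argument is propagating the boundary conditions (the leading $x_1^{-n}x_2^{-n}$ and trailing $x_1^n x_2^n$ syllables), since these are precisely what guarantee that the central cancellation is local and no $x_2$-events disappear; once they are carried through the induction, the rest reduces to the combinatorics of Pascal's triangle.
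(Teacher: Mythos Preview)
Your proof is correct and follows essentially the same route as the paper: induction on $p$ via $\Omega_{p+1}(n)=x_1^{-n}\Omega_p(n)^{-1}x_1^n\Omega_p(n)$, identifying the single local cancellation at the seam, and then setting up the bijection between the $x_2$-events of $\Omega_{p+1}(n)$ and two shifted copies of those of $\Omega_p(n)$ to obtain Pascal's recursion. Your bookkeeping of the prefix-sum shift $L\mapsto L-n$ and sign flip $\epsilon\mapsto -\epsilon$ for the inverted copy, together with the explicit count $N_{p+1}=2N_p$, is exactly the content of the paper's argument, presented slightly more explicitly.
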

\begin{proof}
The proof is by induction on $p$. For $p=2$ we have $\Omega_{2}(n)=\left[x_1^n,x_2^n\right] =x_1^{-n}x_2^{-n}x_1^nx_2^n$ and one checks readily that all assertions hold. Hence, assume that the result holds for some $p\geqslant 2$ and consider $\Omega_{p+1}(n)=\left[x_1^n,\Omega_{p}(n)\right]=x_1^{-n}(\Omega_{p}(n))^{-1} x_1^n \Omega_{p}(n)$. The only new free reduction takes place in the middle of the word, where we reduce $x_2^nx_1^nx_1^nx_1^{-n}x_2^{-n}$ to $x_2^nx_1^nx_2^{-n}$. In particular, it is immediate from the fact that the exponent signs of the $x_2^{\pm n}$ are alternating in $\Omega_{p}(n)$ that the same holds for $\Omega_{p+1}(n)$ and it follows readily that (1) holds for $\Omega_{p+1}(n)$.

Since we have $E_{x_1}\left(x_1^{-n} (\Omega_{p}(n))^{-1}x_1^n\right)=0$ it suffices to count the exponent sums and signs for the $x_1^{-n} (\Omega_{p}(n))^{-1}x_1^n$-part of $\Omega_{p+1}(n)$ with those for the $\Omega_{p}(n)$-part following from the induction hypothesis for $p$.  

To determine the result for the $x_1^{-n} (\Omega_{p}(n))^{-1}x_1^n$-part, let 
\begin{equation}\label{eqn:Decomp-Omega}
\Omega_{p}(n)^{-1}= w_1(x_1,x_2) x_2^{\epsilon n} w_2(x_1,x_2)
\end{equation}
be a decomposition of the freely reduced word represented by $\Omega_p(n)^{-1}$. Its inverse writes 
$\Omega_{p}(n) = w_2^{-1}x_2^{-\epsilon n} w_1^{-1}.$

Observe that $E_{x_1}(w_1)=E_{x_1}(w_2^{-1})$, since $E_{x_1}(\Omega_{p}(n))=0$ and for any word $v(x_1,x_2)$ we have $E_{x_1}(v^{-1})=-E_{x_1}(v)$. It follows that the number of decompositions of $(\Omega_{p}(n))^{-1}$ as in \eqref{eqn:Decomp-Omega} with $E_{x_1}(w_1)=-k\cdot n$ is identical with the number of such decompositions of $\Omega_{p}(n)$. However, the exponent sign of the subsequent $x_2^{\pm n}$ is $(-1)^{k+1}$. This implies (4) for $p$. Moreover, for  $1\leqslant k \leqslant p$, we deduce that the freely reduced form of $x_1^{-n}\Omega_{p}(n)x_1^n$ admits precisely $\binom{p-1}{k-1}$ distinct decompositions as in (2),(3) with $x_1$-exponent sum $-k\cdot n$ and $\epsilon=(-1)^{k+2}=(-1)^k$. Thus, for $0\leqslant k \leqslant p$ the total number of $x_2^{\epsilon n}$ with preceeding $x_1$-exponent sum $-k\cdot n$ is
\[
 \binom{p-1}{k-1} + \binom{p-1}{k}= \binom{p}{k}
\]
and the corresponding $\epsilon$ is always $(-1)^k$. Moreover, there are no decompositions with other $k$-values. This completes the proof of (2) and (3) for $p+1$.
\end{proof}

Using Lemmas \ref{lem:int-x1n-x2n} and \ref{lem:count-exp-sums} we can now compute $\int_{w_{p+1,n}} \beta_{0}$. To do so we first prove the following auxiliary lemma.
\begin{lemma}\label{lem:fC-is-constant}
 For $p\geqslant 2$ we have
 \[
 \sum_{k=0}^{p-1} (-1)^k (-k)^{p-1} \binom{p-1}{k} = (p-1)!
 \]
\end{lemma}
\begin{proof}
Denote $S= \sum_{k=0}^{p-1} (-1)^k (-k)^{p-1} \binom{p-1}{k}$.
We consider the function $h(x)=  \sum_{k=0}^{p-1} e^{ikx} \binom{p-1}{k}$.
Note that $h(x)=\alpha(x)^{p-1}$, where $\alpha(x)=1+e^{ix}$. We observe that 
\begin{equation}\label{eq:sum=h derived p-1 times}
h^{(p-1)}(\pi)= \sum_{k=0}^{p-1} (-1)^k (ik)^{p-1} \binom{p-1}{k} = (-i)^{p-1}S.
\end{equation}
We check by induction on $0\leq m\leq p-1$ that 
\[h^{(m)}(x)-(p-1)(p-2)\cdots (p-m)i^me^{imx}\alpha(x)^{p-m-1}\]
is a multiple of $\alpha(x)^{p-m}$. Since $\alpha(\pi)=0$, we deduce that 
\[h^{(p-1)}(\pi)=(p-1)!i^{p-1}(-1)^{p-1}=(p-1)!(-i)^{p-1},\]
which, combined with (\ref{eq:sum=h derived p-1 times}) implies the lemma.
\end{proof}

\begin{proposition}\label{prop:integral-betaC--1-new}
 The identity
 \[
  \int_{w_{p+1,n}} \beta_{0} =  2n^p
 \]
 holds.
\end{proposition}
\begin{proof}
  As a direct consequence of Lemmas \ref{lem:int-x1n-x2n}, \ref{lem:count-exp-sums}, and the definition of $w_{p+1,n}$, we obtain
  \begin{align*}
  	\int_{w_{p+1,n}}\beta_0 =& \frac{n^p}{(p-1)!} \sum_{k=0}^{p-1} \left((-1)^k (p-1-k)^{p-1} \binom{p-1}{k} - (-1)^{k+1} (-k)^{p-1}\binom{p-1}{k} \right) \\
  	=&\frac{n^p}{(p-1)!} \sum_{k=0}^{p-1} \left((-1)^k (p-1-k)^{p-1} \binom{p-1}{p-1-k} + (-1)^{p-1-k} k^{p-1}\binom{p-1}{k} \right)\\
  	=&2\frac{n^p}{(p-1)!} \sum_{k=0}^{p-1} (-1)^{p-1-k} k^{p-1}\binom{p-1}{k}\\
  	=&2n^p,
  \end{align*}
  where the last equality follows from Lemma \ref{lem:fC-is-constant}.
\end{proof}

\begin{remark}
Note that we can use similar methods to prove that the word $\Omega_{p+1}(n)$ has area bounded below by a function $\geq \frac{n^{p}}{(p-1)!}$. To do so we use that the area of a word is invariant under conjugation and apply the above methods to the conjugate $x_1^n\Omega_{p+1}(n)x_1^{-n}$ of $\Omega_{p+1}(n)$. The reason this works it that the loop described by $x_1^n\Omega_{p+1}(n)x_1^{-n}$  attains values in both of the subsets $\left\{u_1<0\right\}$ and $\left\{u_1>0\right\}$ of $\mathbb{R}^n$.
\end{remark}

\subsection{Integrating along loops of uniformly bounded length}
\label{subsec:unif-bdd-length-form}

We now fix a left-invariant Riemannian metric $g$ on $L_p$, which we choose such that $\del_{u_1,0},\dots,\del_{u_p,0}$ is an orthonormal basis of $T_0 L_p$ under the homeomorphic identification $\RR^p\cong L_p$ with coordinates $u=(u_1,\dots,u_p)$ on $\RR^p$ as before. In this section we will prove the following result, which will allow us to apply Proposition \ref{prop:discreteStokes}. Throughout this section we will assume that all paths are piece-wise smooth.

\begin{proposition}\label{prop:uniformboundonsmallloops}
 For $M>0$ there exists a constant $K=K(M)>0$ such that for every loop $\gamma: \left[0,1\right] \to L_p$ of length $L(\gamma)\leqslant M$ we have
 \[
  \left|\int_{\gamma} \beta_{0}\right| \leqslant K.
 \]
\end{proposition}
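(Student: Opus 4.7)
The plan is to use that $\beta_C$ is Lipschitz with piecewise-smooth coefficients and matches continuously across the hyperplane $\{u_1 = Cn\}$, so that its distributional differential $d\beta_C$ is a bounded measurable $2$-form, and then to apply Stokes' theorem to a small-area filling disc of $\gamma$.

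First, I would verify that the continuity of $\beta_C$ across $\{u_1=Cn\}$ (the coefficients $(u_1^{k+1}-(Cn)^{k+1})/(k+1)!$ all vanish there) implies that the derivative of the $\sgn$ factor is multiplied pointwise by zero, so contributes no Dirac-type singularity. This gives the distributional identity
\[
d\beta_C = \sgn(u_1-Cn)\,d\beta
\]
on all of $L_p$. Since $d\beta = du_1 \wedge \alpha$ is left-invariant, its pointwise norm with respect to the left-invariant metric $g$ is constant, hence $\|d\beta_C\|_{\infty}$ is bounded by a constant $K_0$ depending only on $p$ and $g$, independent of $C$ and $n$.

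Second, I would fill $\gamma$ by a Lipschitz disc $D : [0,1]^2 \to L_p$ of area at most $c(M)$. Using left-invariance of $g$, I translate so that $\gamma(0) = 1_G$; then $\gamma([0,1]) \subset \overline{B(1_G, M)}$. Since the Lie exponential $\exp \colon \mathfrak{l}_p \to L_p$ is a diffeomorphism ($L_p$ being a simply connected nilpotent Lie group), I set $D(s,t) := \exp(s \log \gamma(t))$. Compactness of $\log(\overline{B(1_G, M)})$ and boundedness of the derivatives of $\exp$ on this compact set yield a uniform area bound $\mathrm{Area}(D) \leqslant c(M)$.

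Third, Stokes' theorem gives
\[
\Bigl| \int_\gamma \beta_C \Bigr| = \Bigl| \int_D d\beta_C \Bigr| \leqslant K_0 \cdot c(M) =: K(M),
\]
which is in fact independent of $C$. The main technical obstacle is justifying Stokes' theorem for the non-smooth form $\beta_C$; I would handle this either by approximating $\sgn(u_1-Cn)$ by a smooth Lipschitz function $\rho_\varepsilon$, applying Stokes to the smooth form $\rho_\varepsilon\,\widetilde{\beta}$ where $\widetilde{\beta} = \sum_{k=0}^{p-2} (-1)^k \frac{u_1^{k+1} - (Cn)^{k+1}}{(k+1)!} du_{p-k}$ is the smooth factor of $\beta_C$, and passing to the limit by dominated convergence using that $\widetilde{\beta}$ vanishes on the hyperplane, or by invoking a general Stokes-type theorem for Lipschitz forms on Lipschitz chains.
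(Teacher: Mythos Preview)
Your approach is correct and takes a genuinely different route from the paper's. The paper argues by a case split: if the loop misses the hyperplane $\{u_1 = Cn\}$ it applies the smooth Stokes theorem to $\pm\widetilde\beta$, whose differential is the invariant form $\pm d\beta$ (this is their Lemma~\ref{lem:uniformboundonloops}); if the loop meets the hyperplane it abandons Stokes altogether and bounds $\beta_C(\dot\gamma)$ pointwise, using that $\gamma$ is then trapped in a slab $[Cn-K_0,Cn+K_0]\times\mathbf R^{p-1}$. You instead run a single Stokes argument on the Lipschitz form $\beta_C$, whose weak differential $\sgn(u_1-Cn)\,d\beta$ has $g$-norm equal to the constant $\|d\beta\|_g$. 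This is cleaner and yields the sharper conclusion $K=K(M)$ independent of $C$ (and of the implicit scale $n$), whereas the paper's direct estimate in the second case goes through $K_2=\max\{1,|Cn\pm K_0|\}$. The trade-off is that the paper's argument uses only smooth Stokes and elementary estimates, while yours needs either a Stokes theorem for Lipschitz forms or the smoothing step.

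One point in your smoothing route deserves care. You invoke dominated convergence ``using that $\widetilde\beta$ vanishes on the hyperplane'', but the \emph{coordinate} coefficients of $\widetilde\beta$ only vanish like $|u_1-Cn|\cdot (Cn)^k$, which is not uniformly small. What makes the extra term $\rho_\varepsilon'\,du_1\wedge\widetilde\beta$ bounded in the invariant norm is the identity
\[
\widetilde\beta(V_j)=\frac{(u_1-Cn)^{p-j+1}}{(p-j+1)!}\qquad(2\leqslant j\leqslant p)
\]
for the left-invariant frame $V_j$ (a computation parallel to Lemma~\ref{lem:int-x1n-x2n}), giving $\|\widetilde\beta\|_g\lesssim_p |u_1-Cn|$ uniformly near the hyperplane. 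Your alternative via Lipschitz Stokes sidesteps this entirely, since then one works directly with the a.e.\ differential.
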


We will deduce Proposition \ref{prop:uniformboundonsmallloops} from the fact that we can decompose $\RR^p$ into two sets on which $d\beta_{0}$ is equal to the invariant forms $d\beta$ (resp. $-d\beta$) and the subsequent lemma. 

\begin{lemma}\label{lem:uniformboundonloops}
 Let $M>0$ and let $\alpha$ be a 1-form on $L_p$ with invariant differential $d\alpha$. Then there is a constant $K=K(M,\alpha)$ such that
 \[
  \left|\int_{\gamma} \alpha\right| \leqslant K,
 \]
 for all loops $\gamma: \left[0,1\right]\to L_p$ with $L(\gamma)\leqslant M$.
\end{lemma}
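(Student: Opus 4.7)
The key observation is that the left-invariance of both $d\alpha$ and of $g$ forces the pointwise norm $|d\alpha|_g$ to be the constant $c := |d\alpha|_e$ on $L_p$. By Stokes' theorem, for any piecewise smooth disc $\Sigma$ with $\partial\Sigma = \gamma$ one has
\[
\int_\gamma \alpha = \int_\Sigma d\alpha, \qquad \text{hence} \qquad \left| \int_\gamma \alpha \right| \leqslant c\cdot \Area(\Sigma).
\]
The plan is therefore to produce, for each loop $\gamma$ of length $\leqslant M$, a filling disc whose area is controlled only by $M$.

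Since left translation by any $h\in L_p$ is a $g$-isometry, it sends a filling of $\gamma$ to a filling of $h\cdot\gamma$ with the same area; the minimal filling area is thus left-invariant, and I may replace $\gamma$ by $\gamma(0)^{-1}\cdot\gamma$ to assume $\gamma(0)=e$ and $\gamma\subset \overline{B}(e,M)$. I will then exploit the fact that $L_p$ is simply connected nilpotent: the exponential $\exp:\mathfrak{l}_p\to L_p$ is a global diffeomorphism, so $\sigma(t,p):=\exp(t\log p)$ defines a smooth map $[0,1]\times L_p\to L_p$ with $\sigma(0,p)=e$ and $\sigma(1,p)=p$. Setting $\Sigma(t,s):=\sigma(t,\gamma(s))$ then yields a piecewise smooth singular disc filling $\gamma$ (three sides of the parametrising square collapse to $e$, the fourth traces out $\gamma$).

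To bound $\Area(\Sigma)$ solely in terms of $M$, I use that $\gamma$ lies in the compact ball $\overline{B}(e,M)$, so the continuous operator norm $|D\sigma|$ attains a finite maximum $C(M)$ on $[0,1]\times\overline{B}(e,M)$. Consequently $|\partial_t\Sigma|\leqslant C(M)$ and $|\partial_s\Sigma|\leqslant C(M)\,|\gamma'(s)|$, whence
\[
\Area(\Sigma) \;\leqslant\; \int_0^1\!\!\int_0^1 |\partial_t\Sigma|\,|\partial_s\Sigma|\,dt\,ds \;\leqslant\; C(M)^2\, L(\gamma) \;\leqslant\; C(M)^2\, M.
\]
Combined with the first displayed inequality this gives the uniform bound $\left|\int_\gamma\alpha\right|\leqslant c\cdot C(M)^2\,M =: K(M,\alpha)$. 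The only step requiring real care is the uniformity of the area estimate; this reduces to the compactness of $\overline{B}(e,M)$ once the cone surface is built via the global $\log/\exp$ of the nilpotent group, so no serious analytic difficulty arises.
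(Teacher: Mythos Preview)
Your proof is correct and follows the same overall strategy as the paper: apply Stokes' theorem, use left-invariance of $d\alpha$ and $g$ to get $\left|\int_\gamma\alpha\right|\leqslant K_0\cdot\operatorname{Area}(\Sigma)$ for any filling disc $\Sigma$, then bound the filling area uniformly in terms of $M$.

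The only genuine difference lies in how the filling area bound is obtained. The paper simply invokes the fact that the Riemannian filling area of loops of length $\leqslant M$ in a simply connected homogeneous manifold is uniformly bounded (equivalently, that the Riemannian Dehn function of $L_p$ takes finite values), whereas you give an explicit construction: translate the loop to be based at $e$, then cone it off via the one-parameter subgroups $\sigma(t,p)=\exp(t\log p)$, and bound the area of this cone by compactness of $[0,1]\times\overline{B}(e,M)$. Your route is more self-contained and uses the nilpotent structure concretely (global diffeomorphism of $\exp$), at the cost of being slightly longer; the paper's route is shorter but treats the filling-area bound as a black box. Both are perfectly valid here.
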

\begin{proof}
Let $f: D\to L_p$  be a filling disc for $\gamma$. By Stokes' Theorem we have
\[
 \left|\int_{\gamma}\alpha\right| = \left|\int_{D} f^{\ast}d\alpha\right| \leqslant K_0\cdot \mathrm{Area}_{f^{\ast}g}(D),
\]
where ${\mathrm{Area}}_{f^{\ast}g}(D)$ denotes the area of $D$ with respect to the pull-back metric $f^{\ast}g$. The last inequality follows by comparing the invariant form $d\alpha$ to the volume form on $D$ induced by the invariant Riemannian metric $g$ on $L_p$. Here $K_0=K_0(\alpha)>0$ is a constant that only depends on $\alpha$.

However, by choosing $D$ to be (arbitrarily close to) a filling disc of minimal area for $\gamma$, we deduce that
\[
\left|\int_{\gamma} \alpha \right| = \left| \int_{D} d\alpha\right| \leqslant K_0 \cdot \mathrm{Area}_{L_p}(\gamma).
\]
Since the area of  loops of length $L(\gamma)\leqslant M$ in $L_p$ is uniformly bounded by a constant, it follows that there is $K=K(M,\alpha)>0$ such that
\[
\left| \int_{\gamma} \alpha\right| \leqslant K_0 \mathrm{Area}_{L_p}(\gamma)\leqslant K
\]
for all such loops.
\end{proof}

\begin{proof}[Proof of Proposition \ref{prop:uniformboundonsmallloops}]
 Observe that with respect to the coordinates $(u_1,\dots,u_p)$ we have
 \[
 \mathrm{dist}_{L_p}\left(\left\{u_1\right\}\times \RR^{p-1},\left\{u'_1\right\}\times \RR^{p-1}\right) >0 \mbox{\hspace{.3cm} for $u_1\neq u_1'$}
 \]
 and 
 \[
 \mathrm{dist}_{L_p}\left(\left\{u_1\right\}\times \RR^{p-1},\left\{u'_1\right\}\times \RR^{p-1}\right) \to \infty \mbox{\hspace{.3cm} for $u_1'\to \pm \infty$}.
 \]
In particular, there is a constant $K_0=K_0(M)>0$ such that the image of any loop $\gamma$ with $L(\gamma)\leqslant M$ which intersects the hypersurface $\left\{ 0\right\} \times \RR^{p-1}$ non-trivially is contained in $\left[-K_0,K_0\right]\times \RR^{p-1}$.

We distinguish the cases $\gamma(\left[0,1\right])\cap \left(\left\{0\right\} \times \RR^{p-1}\right) =\emptyset$ and $\gamma(\left[0,1\right])\cap \left(\left\{0\right\} \times \RR^{p-1}\right) \neq \emptyset$, starting with the former. In this case we observe that $\beta_0$ equals either the form $\sum_{k=0}^{p-2} (-1)^k\frac{u_1^{k+1}}{(k+1)!} du_{p-k}$ in all points of $\gamma(\left[0,1\right])$ or its negative. Both forms extend to global forms on $L_p$ with invariant differential $d\beta$ (respectively $-d\beta$). Thus, Lemma \ref{lem:uniformboundonloops} implies that there is a constant $K_1=K_1(M)>0$ such that $\left| \int_{\gamma} \beta_0\right| \leqslant K_1$ for all loops $\gamma$ satisfying the hypotheses.

Now assume that $\gamma(\left[0,1\right])\cap \left(\left\{0\right\} \times \RR^{p-1}\right) \neq \emptyset$. Then $\gamma(\left[0,1\right])\subset \left[-K_0,K_0\right]\times \RR^{p-1}$ In particular, for $\gamma=\left(\gamma_1,\dots,\gamma_p\right) : \left[0,1\right]\to \RR^p$ we have that $|\gamma_1(t)|$ is uniformly bounded by $K_2:= \mathrm{max}\left\{1, |K_0|\right\}$.

Assume now that $\gamma(t)$ is reparametrized by length, i.e. $\gamma:\left[0,L(\gamma)\right] \to L_p$ with $|| \dot{\gamma} ||_g\equiv 1$. In view of our choice of metric $g$ and Lemma \ref{lem:inv-vec-fields}, this is equivalent to saying that we have functions $\lambda_1,\dots,\lambda_p:\left[0,L(\gamma)\right]\to \RR$ such that $\sum_{i=1}^p \lambda_i^2 \equiv 1$ and 
\[
\dot\gamma(t) = \sum_{i=1}^p \lambda_i(t) \cdot S_{\gamma(t),\ast} \del_{u_i,0} = \lambda_1(t)\del_{u_1,\gamma(t)} + \sum_{i=2}^p\lambda_i(t) \sum_{j=0}^{p-i} \frac{(\gamma_1(t))^j}{j!} \del_{u_{i+j},\gamma(t)}.
\]
In particular, we deduce that
\[
\beta_0(\dot\gamma(t)) = \mathrm{\sgn}\left(\gamma_1(t)\right)\cdot \sum_{k=0}^{p-2}\sum_{i=2}^p\sum_{j=0}^{p-i} (-1)^k\frac{(\gamma_1(t))^{j+k+1}}{j!(k+1)!} \cdot \lambda_i(t) \cdot \delta_{p-k,i+j},
\]
where $\delta_{p-k,i+j}$ denotes the Kronecker function.

Since $|\gamma_1(t)|\leqslant K_2$ and $|\lambda_i(t)|\leqslant 1$, it follows that $|\beta_0(\dot\gamma(t))|\leqslant p^3 \cdot K_2^{2p}$. Hence, we obtain
\begin{align*}
 \left|\int_{\gamma} \beta_0\right| &= \left| \int_0^{L(\gamma)} \beta_0(\dot\gamma(t)) dt\right| \\
 &\leqslant \int_0^{L(\gamma)} |\beta_0(\dot\gamma(t))| dt \leqslant L(\gamma) \cdot p^3 K_2^{2p} \leqslant M \cdot p^3 \cdot K_2^{2p}.
\end{align*}
Choosing $K(M):=\mathrm{max}\left\{K_1,M \cdot p^3 \cdot K_2^{2p}\right\}$ thus completes the proof.

\end{proof}

\begin{proof}[{Proof of Theorem \ref{thm:lower-bounds-form}}]
Consider the null-homotopic word $w_{p+1,n}$ from Section \ref{sec:lbf-proof-main-result} in the first factor $L_{p+1}\leqslant G_{p+1,p}$. Its image in $L_{p}$ under the projection $G_{p+1,p}\to L_{p}\times L_{p-1}\to L_{p}$ is the null-homotopic word $w_{p+1,n}$ in $L_{p}$. Proposition \ref{prop:integral-betaC--1-new}, Proposition \ref{prop:uniformboundonsmallloops} and Proposition \ref{prop:discreteStokes} imply that 
$${\mathrm{Area}}_{G_{p+1,q+1}}(w_n)\geqslant {\mathrm{Area}}_{L_{p}}(w_n) \gtrsim_{p,M} n^{p},$$
 where we choose $M>0$ big enough such that $L(\overline{r})<M$ for all word-loops $\overline{r}$ associated to relations $r\in R$ for the compact presentation $\left\langle S\mid R\right\rangle:=\mathcal{P}(L_{p})$ of $L_{p}$. This completes the proof.
\end{proof}

\begin{remark}
\label{rmk:increasing-Dehn-for-descending-q}
 Theorem \ref{thm:lower-bounds-form} shows that for $2\leqslant q \leqslant p$ we have $n^{p-1}\preccurlyeq \delta_{G_{p,q}}(n) \preccurlyeq n^{p}$. Moreover, following the same arguments as in the first part of the proof of Theorem \ref{thm:Upperbound} in \S \ref{subsec:PfMainThm}, we can actually show that $\delta_{G_{p,q}}(n)\lesssim \delta_{G_{p,q'}}(n)$ for $q'<q$, by reducing to null-homotopic words in $x_1$ and $x_2$.  On the other hand we currently only know the precise Dehn function for $q\in \left\{2,p-1, p\right\}$. Curiously for $q=2$ the Dehn function is $n^p$, since $G_{p,2}=L_p \times \RR$, while for $q=p-1,p$ it is $n^{p-1}$ by our results. This naturally raises the question if the Dehn functions for increasing $q$ interpolate between $n^p$ and $n^{p-1}$ or if the case $q=2$ is just a ``borderline'' phenomenon.
\end{remark}

\section{Application to the large-scale geometry of nilpotent groups}
\label{sec:SBE}

In this section we will study sublinear bilipschitz equivalences (SBE)
in the context of our examples. In particular, we will prove Theorem
\ref{thmIntro:SublinearBIlip} by combining Main Theorem
\ref{thm:Upperbound} from \S \ref{sec:upper-bound-Dehn} with results on
SBEs.

\subsection{Sublinear bilipschitz equivalence between nilpotent groups}

Sublinear bilipschitz equivalences were defined in the introduction.
We refer the reader to Cornulier's paper dedicated to the notion
{\cite{cornulier2017sublinear}} for a more extensive treatment of the
subject.
For our purposes it will be sufficient to consider $O(r^e)$-sublinear
bilipschitz equivalences, that is SBEs for which the function $v$ in
Definition \ref{def:SBE} takes the form $v(t) = t^e$ with $e \in [0,1)$.

We will need the following result from \cite{cornulier2017sublinear},
which generalizes a classical exercise on quasiisometries corresponding
to the special case $e=0$.

\begin{lemma}[Cornulier, {\cite[Proposition
2.4]{cornulier2017sublinear}}]
\label{lem:Cor-SBE-inverses}
Let $Y$ and $Y'$ be pointed metric spaces (e.g. groups with a left-invariant distance, based at the neutral element); denote $\vert \cdot \vert$
the distance to the basepoint in both spaces.
Let $f : Y \to Y'$ be a $O(r^e)$-sublinear bilipschitz equivalence.
Then there exists $g : Y' \to Y$ such that for $y \in Y$ and $y' \in
Y'$,
$d(f \circ g (y'),y') = O(\vert y' \vert^e)$ and $d(g \circ f (y),y) =
O(\vert y \vert^e)$.
\end{lemma}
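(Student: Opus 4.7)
The plan is to mimic the classical construction of a quasi-inverse for a quasiisometry, with the bounded errors replaced by errors of order $O(r^e)$. The main technical point will be controlling the norm of auxiliary points so that the sublinear error term remains of the right order when we invoke the SBE inequalities.

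First I would define $g : Y' \to Y$ pointwise, by using the quasi-surjectivity part of Definition \ref{def:SBE}. For $y' \in Y'$ with $|y'| \leqslant r$, this provides some $y \in Y$ with $d_{Y'}(f(y), y') \leqslant v(r) = O(r^e)$; taking $r = |y'| + |y_0'|$ and setting $g(y') := y$ (with an arbitrary but fixed choice) yields immediately $d_{Y'}(f \circ g(y'), y') = O(|y'|^e)$, which is the first bound.

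The harder direction is $d_Y(g \circ f(y), y) = O(|y|^e)$. Let $y \in Y$, put $y' := f(y)$, and $z := g(y')$, so that by construction $d_{Y'}(f(z), f(y)) = O(|y'|^e)$. The key step is to prove $|z| = O(|y|)$ in $Y$, which is needed in order that the sublinear error $v(\max(|z|,|y|))$ be of size $O(|y|^e)$ when we apply the lower Lipschitz inequality of $f$ to the pair $(z,y)$. To prove this, apply first the upper Lipschitz inequality of $f$ to $(y_0, y)$ (in a ball of radius $\sim |y|$) to obtain $|y'| \leqslant M|y| + v(|y|) = O(|y|)$; consequently $|f(z)| \leqslant |y'| + O(|y|^e) = O(|y|)$. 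Then the lower Lipschitz inequality of $f$ applied to $(y_0, z)$ in a ball of radius $\sim |z|$ gives $M^{-1}|z| - v(|z|) \leqslant |f(z)| = O(|y|)$; since $v(t)/t \to 0$, this forces $|z| = O(|y|)$.

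Finally, armed with $\max(|z|,|y|) = O(|y|)$, we apply the lower Lipschitz inequality of $f$ to the pair $(z,y)$:
\[
M^{-1} d_Y(z,y) - v(O(|y|)) \leqslant d_{Y'}(f(z), f(y)) = O(|y|^e),
\]
which yields $d_Y(z,y) = O(|y|^e)$, that is, $d_Y(g \circ f(y), y) = O(|y|^e)$, as required. The main obstacle, as indicated, is the circularity in bounding $|z|$: we want to compare $z$ and $y$ using the SBE estimates, but those require an a priori upper bound on $\max(|z|,|y|)$ to keep the sublinear correction of the desired order. This is resolved cleanly by going through $|f(z)|$ and using the lower Lipschitz inequality on $(y_0, z)$ to transfer the bound back to $|z|$.
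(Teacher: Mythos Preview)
The paper does not actually prove this lemma: it is stated with a citation to Cornulier \cite[Proposition 2.4]{cornulier2017sublinear} and the surrounding text merely remarks that it ``generalizes a classical exercise on quasiisometries corresponding to the special case $e=0$''. So there is no proof in the paper to compare against.

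Your argument is correct and is exactly the expected one: construct $g$ from quasi-surjectivity, then for the composition $g\circ f$ bootstrap the bound $|z|=O(|y|)$ by pushing through $f$ and back via the lower Lipschitz inequality, and finally apply the lower Lipschitz inequality to the pair $(z,y)$. One small cosmetic point: when you apply the lower Lipschitz inequality to $(y_0,z)$ you actually bound $d_{Y'}(f(y_0),f(z))$, not $|f(z)|$; these differ by the fixed constant $|f(y_0)|$, so it is harmless, but worth saying. Also note that the statement in the paper has an evident typo: the first bound should read $d(f\circ g(y'),y')=O(|y'|^e)$, which is what you prove.
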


Lemma \ref{lem:Cor-SBE-inverses} is actually an explicit version of
Cornulier's original statement that $O(r^e)$-SBEs are isomorphisms in
the $O(r^e)$-category, which he defines in the obvious way
\cite{cornulier2017sublinear}. The asymptotic cone functors with fixed
basepoints are well-defined on this category (\cite{CornulierCones11},
\cite{cornulier2017sublinear}) and, in analogy to the case of
quasi-isometries, SBEs induce bilipschitz homeomorphisms between
asymptotic cones.

\begin{proposition}[Cornulier]
\label{prop:asymptotic-cones}

Let $Y$ and $Y'$ be homogeneous metric spaces. If there exists a
$O(r^e)$-SBE $Y \to Y'$, then for any nonprincipal ultrafilter $\omega$
and sequence of scaling factors $(\sigma_j)$ the metric spaces
$\operatorname{Cone}_\omega(Y,\sigma_j)$ and
$\operatorname{Cone}_\omega(Y',\sigma_j)$ are bilipschitz homeomorphic.
\end{proposition}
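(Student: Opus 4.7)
The plan is to construct the bilipschitz homeomorphism explicitly as an ``ultralimit'' of the SBE $f$. Define $\tilde f : \operatorname{Cone}_\omega(Y,\sigma_j) \to \operatorname{Cone}_\omega(Y',\sigma_j)$ by $\tilde f([(y_j)]) := [(f(y_j))]$, where the basepoints are $y_0$ and $f(y_0)$. The one computational fact driving every step is that, since $v(r)=O(r^e)$ with $e<1$, we have $v(R\sigma_j)/\sigma_j \to 0$ as $j\to\infty$ for each fixed $R>0$. Intuitively, the additive error $v(r)$ in the definition of an SBE is invisible at the scale $\sigma_j \to \infty$, and only the multiplicative constant $M$ survives in the cone.

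First I would verify that $\tilde f$ is well-defined. If $[(y_j)] \in \operatorname{Cone}_\omega(Y,\sigma_j)$, pick $R$ so that $d(y_j,y_0) \leqslant R\sigma_j$ $\omega$-almost surely. Applying the SBE inequality in the ball $B(y_0,R\sigma_j)$ gives $d(f(y_j),f(y_0)) \leqslant MR\sigma_j + v(R\sigma_j)$, so $(f(y_j))$ is bounded in the $\sigma_j$-scaled sense and represents a point of the target cone. If $(y_j)\sim(y'_j)$ with both sequences bounded by $R$ (we may replace $R$ by $\max$ of the two bounds), then
\[
\frac{d(f(y_j),f(y'_j))}{\sigma_j} \;\leqslant\; M\,\frac{d(y_j,y'_j)}{\sigma_j} \;+\; \frac{v(R\sigma_j)}{\sigma_j},
\]
and the ultralimit is zero, so $\tilde f$ descends to equivalence classes.

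Next I would establish the bilipschitz bounds. For classes represented by sequences bounded by some common $R$, the two-sided SBE estimate in $B(y_0,R\sigma_j)$ yields, after dividing by $\sigma_j$ and passing to $\lim_\omega$,
\[
M^{-1} d_\omega([(y_j)],[(y'_j)]) \;\leqslant\; d_\omega(\tilde f[(y_j)],\tilde f[(y'_j)]) \;\leqslant\; M\, d_\omega([(y_j)],[(y'_j)]),
\]
using once more that $v(R\sigma_j)/\sigma_j \to 0$. In particular $\tilde f$ is an $M$-bilipschitz embedding, hence injective.

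Finally, surjectivity: Lemma \ref{lem:Cor-SBE-inverses} provides a map $g : Y'\to Y$ with $d(f\circ g(y'),y') = O(|y'|^e)$ and $d(g\circ f(y),y) = O(|y|^e)$. The same verification as above (using that $g$ is automatically an $O(r^e)$-SBE as well) shows that $g$ induces a map $\tilde g : \operatorname{Cone}_\omega(Y',\sigma_j) \to \operatorname{Cone}_\omega(Y,\sigma_j)$, and for a class $[(y'_j)]$ bounded by $R$,
\[
\frac{d(f\circ g(y'_j),y'_j)}{\sigma_j} \;=\; O\!\left(\frac{(R\sigma_j)^e}{\sigma_j}\right) \;\xrightarrow[j\to\infty]{}\; 0,
\]
so $\tilde f\circ \tilde g = \operatorname{id}$, and likewise $\tilde g\circ \tilde f = \operatorname{id}$. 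There is no serious obstacle in this proof; it is a bookkeeping exercise whose only subtlety is keeping track of how the ``admissible radius'' $R$ in the SBE definition depends on the class being considered, and ensuring that the additive terms $v(R\sigma_j)$ are swept up by sublinearity after rescaling by $\sigma_j$.
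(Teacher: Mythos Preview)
The paper does not supply its own proof of this proposition; it is attributed to Cornulier and cited from \cite{CornulierCones11} and \cite{cornulier2017sublinear}, so there is nothing in the paper to compare against. Your argument is the standard one and is essentially correct: the single mechanism is that $v(R\sigma_j)/\sigma_j = R^e\sigma_j^{e-1}\to 0$ (using $\lim_\omega \sigma_j = \infty$, which is implicit in the notion of asymptotic cone), so the additive error disappears in the limit and only the multiplicative constant $M$ remains.

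Two minor points worth tightening. First, you assert that $g$ is ``automatically an $O(r^e)$-SBE'' to define $\tilde g$; this is true but not quite what Lemma~\ref{lem:Cor-SBE-inverses} says. You can avoid invoking it by arguing surjectivity directly: given $[(y'_j)]$ with $|y'_j|\leqslant R\sigma_j$, set $y_j=g(y'_j)$ and check that $(y_j)$ is admissible. From $d(f(y_j),y'_j)=O((R\sigma_j)^e)$ one gets $|f(y_j)|\leqslant 2R\sigma_j$ for $\omega$-almost all $j$, and then the lower SBE bound $M^{-1}|y_j|-|y_j|^e\leqslant |f(y_j)|$ forces $|y_j|\lesssim \sigma_j$. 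Second, the homogeneity hypothesis plays no role in your computation; its purpose in the statement is only to make the cone independent of basepoint, so that the conclusion is basepoint-free.
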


In particular, if a homogeneous space $Y$ is $O(r^e)$-SBE to a
self-similar homogeneous space $Y'$, then the latter is the asymptotic
cone of $Y$ up to bilipschitz homeomorphism. Not all simply connected
nilpotent Lie groups admit left-invariant self-similar proper geodesic
metrics, only the Carnot gradable ones do.

\begin{theorem}[Cornulier]
\label{th:cornuliers-pansu-theorem}
Let $G$ be a nilpotent simply connected Lie group. Let $\mathfrak{g} =
\operatorname{Lie}(G)$. Let $\mathsf{gr}(G)$ be the associated
Carnot graded Lie group. Equip $G$ and $\mathsf{gr}(G)$ with
geodesically adapted distances.
Then there exists a computable $e_\mathfrak{g} \in [0,1)$ only depending
on $\mathfrak{g}$ such that $G$ and $\mathsf{gr}(G)$ are
$O(r^{e_\mathfrak{g}})$-SBE.
\end{theorem}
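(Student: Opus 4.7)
The plan is to produce an explicit candidate map $F : G \to \mathsf{gr}(G)$ by identifying the two groups with a common graded vector space, then to quantify, via the Baker--Campbell--Hausdorff formula, how much their multiplications differ as one moves away from the identity. This difference produces the additive sublinear term $v(r) = O(r^{e_\mathfrak{g}})$ demanded by Definition \ref{def:SBE}.

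First, I would fix a vector space splitting $\mathfrak{g} = \bigoplus_{i=1}^s W_i$ with $W_i \oplus \gamma_{i+1}\mathfrak{g} = \gamma_i \mathfrak{g}$. This simultaneously identifies $\mathfrak{g}$ with $\mathsf{gr}(\mathfrak{g})$ as graded vector spaces and, through the respective exponential maps, identifies $G$ and $\mathsf{gr}(G)$ with the same underlying smooth manifold. The structural property making the construction work is that for $X \in W_i$, $Y \in W_j$ the bracket $[X,Y]_\mathfrak{g}$ agrees with the graded bracket $[X,Y]_{\mathsf{gr}(\mathfrak{g})}$ modulo $\bigoplus_{k > i+j} W_k$. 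In these common coordinates I take $F$ to be the identity, equip $G$ with the given left-invariant Riemannian distance $d_G$ and $\mathsf{gr}(G)$ with a left-invariant sub-Finsler distance $d_c$ adapted to $W_1$. The ball-box theorem then yields $d_c(e,\exp(\sum X_i)) \asymp \max_i \|X_i\|^{1/i}$, and Pansu's Theorem \ref{thm:Pansu} already guarantees that the large-scale geometry of $d_G$ matches that of $d_c$ up to an $o(r)$ additive error. The content of Theorem \ref{th:cornuliers-pansu-theorem} is to make this $o(r)$ into a \emph{polynomial} $O(r^{e_\mathfrak{g}})$.

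Second, I would quantify the error using BCH. The deformation $\log(x \star_G y) - \log(x \star_{\mathsf{gr}} y)$ is a polynomial in $x,y$ all of whose summands are iterated brackets strictly displaced upward in the filtration compared to their graded counterparts. In weighted coordinates, a correction term landing in $W_k$ but produced from factors of weight $i+j < k$ has $d_c$-norm bounded by $r^{(i+j)/k}$ when $\max(\|x\|_c,\|y\|_c) \leqslant r$. Taking the maximum of $(i+j)/k < 1$ over the (finitely many) bracket weights appearing in BCH for $\mathfrak{g}$ yields an explicit exponent $e_\mathfrak{g} \in [0,1)$, depending only on the isomorphism type of $\mathfrak{g}$ and satisfying in particular $e_\mathfrak{g} \leqslant 1 - 1/s$, which recovers the bound recalled in \S \ref{secIntro:SBE}. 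This gives, for all $x,y$ with $d_G(e,x), d_G(e,y) \leqslant r$, an inequality
\[
\bigl| d_c(F(x),F(y)) - d_G(x,y) \bigr| \leqslant C r^{e_\mathfrak{g}},
\]
from which the bilipschitz bound follows (the multiplicative constant $M$ coming from the comparison between $d_G$ and $d_c$ on the $W_1$-directions that generate both metrics), and coarse surjectivity is immediate since $F$ is bijective in our coordinates.

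The main obstacle is the BCH bookkeeping: one must prevent the pointwise error from compounding multiplicatively along a geodesic of $d_G$-length $r$. The standard remedy is a telescoping argument, writing $F(\gamma(r))$ as a product of images of short increments $\gamma(kh)^{-1}\gamma((k+1)h)$ at an intermediate scale $h = r^\alpha$, and tuning $\alpha$ so that the per-step BCH defect is negligible while the cumulative defect over $r/h$ steps remains $O(r^{e_\mathfrak{g}})$. Making this quantitative uniformly over the ball of radius $r$ and independently of the splitting $(W_i)$ is what packages the combinatorics of BCH into a single exponent $e_\mathfrak{g}$ and concludes the proof.
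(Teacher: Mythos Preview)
The paper does not prove this theorem: it is stated with attribution to Cornulier and a citation to \cite{cornulier2017sublinear}. The remark immediately following the statement is the closest the paper comes to an argument, and it says exactly what you outline: identify $G$ and $\mathsf{gr}(G)$ on a common graded vector space, use Guivarc'h's estimate for the distance to the identity in exponential-type coordinates, and use Goodman's observation that the two polynomial group laws differ by terms that are sublinear in the weighted scaling. This already yields $e_\mathfrak{g}=1-1/c$; Cornulier's contribution is to track which ``misplaced'' bracket components actually occur in $\mathfrak{g}$ and thereby compute a sharper exponent. Your description of $e_\mathfrak{g}$ as the maximum of ratios $(i+j)/k$ over the nonzero projections of $[W_i,W_j]_\mathfrak{g}$ onto $W_k$ with $k>i+j$ is precisely in that spirit.

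One comment on your final paragraph. The concern about errors ``compounding multiplicatively along a geodesic'' is not really the obstacle, and the telescoping at an intermediate scale is not how Cornulier proceeds. Since $F$ is applied once to each point, not iterated, the comparison of $d_G(x,y)$ with $d_c(x,y)$ reduces to a single comparison: write $d_G(x,y)=\lvert x^{-1}\star_G y\rvert_G$ and $d_c(x,y)=\lvert x^{-1}\star_c y\rvert_c$, then control (a) the difference between the gauges $\lvert\cdot\rvert_G$ and $\lvert\cdot\rvert_c$ via Guivarc'h, and (b) the difference between the elements $x^{-1}\star_G y$ and $x^{-1}\star_c y$ via the BCH error. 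Both are one-shot estimates on elements at scale $r$, with no accumulation. Your telescoping idea is not wrong, but it introduces a parameter to optimise that the direct argument avoids.
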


\begin{remark}
As explained in \cite[Section 6]{cornulier2017sublinear}, a version of
Theorem \ref{th:cornuliers-pansu-theorem} where $e_\mathfrak{g} = 1 -
1/c$ if $G$ is $c$-step nilpotent can be derived by combining two
results from the 1970s, namely an estimate from Guivarc'h's proof of the
Bass-Guivarc'h dimension formula and Goodman's observation that the laws
of $G$ and $\mathsf{gr}(G)$ differ sublinearly on the large-scale when written as polynomial group laws on
$\mathsf{gr}(\mathfrak{g})$ \cite{Goodman77}.  Cornulier's input in \cite{cornulier2017sublinear} is
in the improvement of $e_\mathfrak{g}$ in terms of finer invariants of
the structure of $\mathfrak{g}$.
We will give low-dimensional examples in Table \ref{table:dim6-step45}.
\end{remark}

\begin{corollary}[Pansu and Cornulier,
\cite{PanCBN,PansuCCqi,CornulierCones11}]
\label{cor:weak-Pansu}
Let $G$ and $G'$ be two simply connected nilpotent Lie groups. The
following are equivalent:
\begin{itemize}
\item[(i)]
There exists a nonprincipal ultrafilter $\omega$ on $\mathbf N$ and a
sequence of normalization factors $(\sigma_j)_{j \in \mathbf N}$ such
that the metric spaces $\operatorname{Cone}_\omega(G,\sigma_j)$ and
$\operatorname{Cone}_\omega(G',\sigma_j)$  are bilipschitz equivalent.
\item[(ii)] The groups $\mathsf{gr}(G)$ and $\mathsf{gr}(G')$ are
isomorphic.
\item[(iii)]There exists $e \in [0,1)$ such that $G$ and $G'$ are
$O(r^e)$-sublinear bilipschitz equivalent.
\end{itemize}
\end{corollary}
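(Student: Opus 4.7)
The plan is to prove the three implications (iii) $\Rightarrow$ (i) $\Rightarrow$ (ii) $\Rightarrow$ (iii), each of which essentially reduces to invoking a result already stated in the paper, together with a small amount of bookkeeping about the composition of sublinear bilipschitz equivalences.

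First I would prove (iii) $\Rightarrow$ (i) as a direct application of Proposition \ref{prop:asymptotic-cones}: the existence of a $O(r^e)$-SBE $f : G \to G'$ for some $e \in [0,1)$ forces $\operatorname{Cone}_\omega(G,\sigma_j)$ and $\operatorname{Cone}_\omega(G',\sigma_j)$ to be bilipschitz homeomorphic for every nonprincipal ultrafilter $\omega$ and every scaling sequence $(\sigma_j)$, which is stronger than what (i) asks for. No further work is needed here.

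Next I would establish (i) $\Rightarrow$ (ii) by appealing to Pansu's Theorem \ref{thm:Pansu}. The content of that theorem is precisely that if two simply connected nilpotent Lie groups $G$ and $G'$ admit bilipschitz homeomorphic asymptotic cones (with respect to some nonprincipal ultrafilter and some sequence of scaling factors), then $\mathsf{gr}(G)$ and $\mathsf{gr}(G')$ are isomorphic as Lie groups. So (i) $\Rightarrow$ (ii) follows verbatim. This is the conceptually hard step, but the work has already been done in Theorem \ref{thm:Pansu}, so there is nothing extra to prove.

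Finally, for (ii) $\Rightarrow$ (iii), I would combine Theorem \ref{th:cornuliers-pansu-theorem} with the fact that the class of $O(r^e)$-SBEs is closed under composition and inversion (up to increasing $e$ below $1$). Concretely, by Theorem \ref{th:cornuliers-pansu-theorem} applied to $G$ and to $G'$, there exist $e_1, e_2 \in [0,1)$ and SBEs $f : G \to \mathsf{gr}(G)$ with $v(t) = O(t^{e_1})$ and $f' : G' \to \mathsf{gr}(G')$ with $v(t) = O(t^{e_2})$. An isomorphism $\varphi : \mathsf{gr}(G) \to \mathsf{gr}(G')$ between the Carnot graded groups, equipped with left-invariant geodesically adapted distances that are homogeneous under the associated dilations, is bilipschitz (hence a $0$-SBE). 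Using Lemma \ref{lem:Cor-SBE-inverses} to produce an $O(r^{e_2})$-SBE inverse $g'$ of $f'$, the composition $g' \circ \varphi \circ f : G \to G'$ is an $O(r^e)$-SBE for $e = \max(e_1,e_2) < 1$. The only thing to check carefully is that composing two $O(r^{e_i})$-SBEs yields an $O(r^{\max(e_1,e_2)})$-SBE; this is immediate from Definition \ref{def:SBE} by a routine chase of the additive error terms through the triangle inequality, using that each SBE is in particular Lipschitz on large scales so that $v(\vert f(x)\vert) = O(v(\vert x\vert))$ holds up to constants. The main (minor) subtlety of the whole argument is this verification that the category of $O(r^e)$-SBEs is functorial with respect to composition; everything else is a citation.
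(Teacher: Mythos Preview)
Your proof is correct and follows essentially the same cycle of implications as the paper's proof, just starting at a different vertex. The only difference is that for (ii) $\Rightarrow$ (iii) you spell out the composition-and-inversion bookkeeping that the paper leaves implicit when it simply cites Theorem~\ref{th:cornuliers-pansu-theorem}; this is a fair and accurate expansion of what is needed.
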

\begin{proof}[Proof of Corollary \ref{cor:weak-Pansu}] Assuming (i), we
deduce (ii) from Theorem \ref{thm:Pansu}. (ii) implies (iii) by Theorem
\ref{th:cornuliers-pansu-theorem}. Finally (iii) implies (i) by
Proposition \ref{prop:asymptotic-cones}.
\end{proof}

\begin{remark}
\label{rem:weak-Pansu-to-weak-Breuillard}
Corollary \ref{cor:weak-Pansu} holds for locally compact groups with
polynomial growth $G$, where the construction of $\mathsf{gr}(G)$
requires additional steps. In particular, one first has to pass to a
nilshadow of the Lie shadow of $G$, see Breuillard \cite{Breuillard}.
\end{remark}

Corollary \ref{cor:weak-Pansu} leaves the problem of evaluating the
range of $e$ such that a given pair of groups with identical asymptotic
cones can be $O(r^e)$-equivalent. The question was raised by Cornulier
\cite[Question 6.20]{cornulier2017sublinear}.
For the pair $(L_p\times L_{p-2},G_{p,p-1})$, our Theorem
\ref{thmIntro:SublinearBIlip} states that one must have $e\geq
1/(2p)$, which for the first case of interest $p=4$ implies $e\geq
1/8$. These are the first examples for which a positive lower bound is
known. We will prove Theorem \ref{thmIntro:SublinearBIlip} at the end of
this section.

\subsection{Large-scale fillings and sublinear bilipschitz equivalence}

Our main tool for proving Theorem \ref{thmIntro:SublinearBIlip} is the
following technical lemma.

\begin{lemma}
\label{lem:filling-transfer-SBE}
Let $G$ and $G'$ be two locally compact compactly presented groups
admitting filling pairs $(n^d,n^s)$ and
$(n^{d'},n^{s'})$ respectively.
Let $e\in[0,1)$. If there exists an  $O(r^e)$-SBE between $G$ and $G'$,
then
\[
\left(n^{(1+e)d'+e(1+e)s'd}+n^{(1+e)^2+e(d-1)},n^{(1+e)s'}+n^{e(1+e)s's}\right)
\] 
is a filling pair for $G$.
\end{lemma}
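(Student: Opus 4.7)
The plan is to adapt the classical quasi-isometry transfer of Dehn function estimates (Alonso \cite{Alo}) to the sublinear setting by using Cornulier's sublinear inverse from Lemma \ref{lem:Cor-SBE-inverses}. Concretely, I would fix compact presentations $\langle S \mid R\rangle$ of $G$ and $\langle S' \mid R'\rangle$ of $G'$ and pick $g\colon G' \to G$ satisfying $d(g\circ f(x),x) = O(|x|^e)$. Given a null-homotopic word $w = s_1 \cdots s_n$ over $S$, representing a discrete loop with partial products $\gamma_i = s_1 \cdots s_i \in B(1_G,n)$, the strategy is to push $\gamma$ forward to a loop in $G'$ via $f$, fill it there, then pull the filling back via $g$ and refill the remaining small loops in $G$ using its own filling pair.

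First I would transfer the loop. The points $y_i = f(\gamma_i)$ in $G'$ satisfy $d_{G'}(y_i, y_{i+1}) \leqslant M + v(n) = O(n^e)$, so connecting consecutive $y_i$'s by geodesic words over $S'$ produces a null-homotopic word $w'$ of length $O(n^{1+e})$. Applying the filling pair for $G'$ to $w'$ yields (via Lemma \ref{lem:equivalences-diameter}) a combinatorial van Kampen diagram $\Delta'$ with $\lesssim n^{d'(1+e)}$ two-cells, each labelled by a relation in $R'$ of bounded length, and with controlled diameter in $G'$.

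Next I would pull $\Delta'$ back via $g$. Each edge of $\Delta'$ connects two $G'$-points lying within the filling ball, so their $g$-images lie at distance $O(1 + v(\text{radius of } \Delta')) = O(n^{s'e})$ in $G$; in particular each two-cell pulls back to a null-homotopic loop in $G$ of length $O(n^{s'e})$. I would refill each such loop using the filling pair $(n^d, n^s)$ of $G$, incurring area $O(n^{ds'e})$ and diameter $O(n^{se})$ per face. Finally, the boundary of the pulled-back diagram differs from $w$ itself by the sublinear error $v$ at the $O(n)$ sample points, giving $O(n)$ correction loops of length $O(n^e)$ that are filled in $G$ at strictly lower-order cost. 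Summing, the total area is $\lesssim n^{d'(1+e)} \cdot n^{ds'e} = n^{d'(1+e)+ds'e}$ and the total diameter is $\lesssim n^{s'\vee se}$.

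The main technical obstacle will be the diameter bound: a naive estimate of the $G'$-filling diameter yields $n^{s'(1+e)}$ rather than $n^{s'}$, which would propagate through the pull-back and spoil the $n^{s'\vee se}$ estimate. Resolving this requires a careful choice of the sampling scale in the push-forward (or a two-scale decomposition of the filling) so as to trade off the expansion of the loop length in $G'$ against the diameter contribution on the $G$-side. Checking that the boundary correction contributes only lower-order terms to both area and diameter is a related but more routine task.
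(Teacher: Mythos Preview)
Your approach coincides with the paper's: push the loop forward via the SBE $F$, fill in $G'$, pull the combinatorial filling back via Cornulier's inverse, refill the resulting short loops using the filling pair of $G$, and patch up the boundary with the $O(r^e)$-closeness of $\widehat F\circ F$ to the identity. These are exactly the steps the paper carries out, and no more.

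About the ``main technical obstacle'' you flag: the paper does \emph{not} resolve it by any two-scale decomposition or special sampling. It simply asserts that the filling $\Delta'$ of $\gamma'$ in $G'$ is contained in a ball of radius $\lesssim n^{s'}$ around the origin, and then deduces that the pulled-back faces have length $\lesssim n^{es'}$. As you correctly point out, applying the filling pair $(n^{d'},n^{s'})$ of $G'$ to a loop of length $n^{1+e}$ a priori only yields diameter $\lesssim n^{s'(1+e)}$, so this step appears to carry an unacknowledged factor of $(1+e)$ in the exponent. This looks like a minor imprecision in the paper rather than a hidden idea. In the sole application (the proof of Theorem~\ref{thmIntro:SublinearBIlip}) one has $s=s'=1$; carrying the honest exponent $s'(1+e)$ through still gives a positive lower bound on $e$ and only perturbs the constant $1/(2p-1)$ at lower order. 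So do not search for a further trick: run your argument with the exponent $s'(1+e)$ and specialise to $s'=1$ when you need the explicit bound.
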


Before starting the proof we fix some conventions and notations.
We will fix Cayley graphs of $G$ and $G'$, and a {\em loop in $G$} will be a loop in the Cayley graph of $G$ (not necessarily based at $1$). When we speak of maps to $G$ (resp. $G'$) we will from now on mean maps to their respective Cayley graphs. 

A combinatorial disk $\Delta:=(X,\phi)$ filling a loop $\gamma$ is defined by the following data: a CW-complex structure $X$ on the closed 2-dimensional unit ball with $N$ 2-cells $\Delta_1,\ldots, \Delta_N$ and injective attaching maps in all dimensions, and a continuous map $\phi:X^{(1)}\to G$ from the $1$-skeleton of $X$ to the Cayley graph of $G$, such that $\gamma$ parametrizes $\phi|_{\partial \Delta}$ and $\phi$ maps vertices to vertices. We will denote $\gamma_i:=\phi|_{\partial \Delta_i}$ the boundary loops of the 2-cells and say that $\Delta$ is a filling of $\gamma$ by loops $\gamma_1,\dots, \gamma_N$. 

Retaining the above notation, one can check that $G$ admits $(n^d,n^s)$ as
a filling pair if and only if there is a constant $M_0>0$ such that
every loop  of length $\leqslant n$ based at the identity in $G$ admits a filling by a combinatorial
disk such that $N\lesssim n^d$, $\phi(X^{(1)})$ is contained in a
ball of diameter $\lesssim n^s$ around the origin and $\gamma_i$ parametrizes a loop of length $\leqslant M_0$. This is straight-forward and well-known for Dehn functions and generalises readily to filling pairs.

\begin{proof}
By Lemma \ref{lem:Cor-SBE-inverses} there is a continuous map $\widehat{F}:G' \to G$ such that $\widehat{F}\circ F$ is $O(r^e)$-close to the identity.
Let $\gamma:S^1\to G$ be any loop of length $n$ in $G$ based at the identity. Then $\gamma':=F \circ \gamma$ defines a loop $\gamma'$ of length $\lesssim n^{1+e}$ in $G'$.
Fill $\gamma'$ with a combinatorial disk $\Delta'=(X,\phi)$ composed of $\lesssim n^{(1+e)d'}$ loops of bounded length and area. Note that $\phi(X^{(1)})$ is contained in a ball of diameter $\lesssim n^{(1+e)s'}$ around the origin.

Composing $\Delta'$ with $\widehat{F}$ yields a combinatorial disk $\Delta'':=(X,\widehat{F}\circ \phi)$ which is composed of $\lesssim n^{(1+e)d'}$ loops of length $\lesssim n^{e(1+e) s'}$. Note also that $\widehat{F}(\phi(X^{(1)}))$ is still contained in a ball of diameter $\lesssim n^{(1+e)s'}$.
The boundary loop $\gamma''$ of $\Delta''$ has length $\lesssim n^{(1+e)^2}$. 
We can thus choose a set of $r \lesssim n^{1+e+e^2}$ points $0=t_1 < t_2< \dots < t_r=1$ on $S^1$ such that $L(\gamma''|_{\left[t_i,t_{i+1}\right]}) \lesssim n^e$. Note that we may further assume that $L(\gamma|_{\left[t_i,t_{i+1}\right]})\leqslant 1$ (after possibly adding $n$ more points).

We define loops $\gamma_i$ of length $\lesssim n^e$ by concatenating $\gamma|_{\left[t_i,t_{i+1}\right]}$, a geodesic segment $[\gamma(t_{i+1}),\gamma''(t_i)]$,  $\gamma''|_{[t_i,t_{i+1}]}$  and a geodesic segment $[\gamma''(t_i),\gamma(t_i)]$; for the bound on the length we use that $\widehat{F}\circ F$ is $O(r^e)$-close to the identity.

Attaching the loops $\gamma_i$ to the combinatorial disk $\Delta''$ defines a combinatorial disk $\Delta'''$ with boundary loop $\gamma$. By construction, $\Delta'''$ is composed of $n^{(1+e)d'}$ loops of length $\lesssim n^{e(1+e)s'}$ at distance $\lesssim n^{(1+e)s'}$ from the origin, as well as $n^{1+e+e^2}$ loops of length $\lesssim n^e$ at distance $\lesssim n^{1+e}$ from the origin.
Using that $(n^{d},n^{s})$ is a filling pair for $G$ to fill these loops
yields the filling pair $$\left(n^{(1+e)d'}\cdot n^{e(1+e)s'd}+n^{1+e+e^2}\cdot n^{ed},n^{1+e}+n^{es}+n^{(1+e)s'}+n^{e(1+e)s's}\right)$$ for $G$. Since $s,s'\geq 1$, we obtain the filling pair
\[
\left(n^{(1+e)d'+e(1+e)s'd}+n^{(1+e)^2+e(d-1)},n^{(1+e)s'}+n^{e(1+e)s's}\right)
\]
for $G$.
\end{proof}

\begin{proof}[Proof of Theorem \ref{thmIntro:SublinearBIlip}]
We apply Lemma \ref{lem:filling-transfer-SBE} to the pair $G=L_p\times
L_{p-2}$ which admits a $(n^{p},n)$ filling pair by \cite[Theorem
2.3]{PittetIsopNilHom}, and $G'=G_{p,p-1}$ which admits a  $(n^{p-1},n)$
filling pair by Theorem \ref{thm:Upperbound}.
We deduce that the Dehn function of $G$ has to satisfy $n^p\lesssim
n^{(1+e)(p-1)+e(1+e)p}+n^{(1+e)^2+e(p-1)}$. This yields the inequality
\[
 p\leq \max\left\{(1+e)(p(1+e)-1), (1+e)^2+e(p-1)\right\}.
\]
A straight-forward calculation shows that for $e= \frac{1}{2p}$ this inequality is not satisfied. Since both of the terms on the right are increasing functions in $e\in \left[0,1\right)$ the inequality cannot be satisfied for any $e\in\left[0, \frac{1}{2p}\right]$, yielding the desired lower bound.
\end{proof}

\section{Overview in low dimensions}
\label{sec:low-dimension}

In this section we provide a complete overview of the real nilpotent Lie algebras of dimension less or equal to $6$ together with the best estimates that we can find on their Dehn functions. By the Dehn function (resp.\ the centralized Dehn function) of a Lie algebra $\mathfrak{g}$, denoted $\delta_{\mathfrak{g}}(n)$ resp.\ $\delta_{\mathfrak{g}}^{\mathrm{cent}}(n)$, we mean the Dehn function (resp.\ the centralized Dehn function) of its associated simply connected nilpotent Lie group $G$ (i.e. $\operatorname{Lie}(G) = \mathfrak{g}$).
A complete classification of real nilpotent Lie algebras of dimension $\leqslant 6$ was given by de Graaf \cite{deGraafclass}.  
We will use his notation\footnote{Note that de Graaf's precise notation is $L_{d,j}$ rather than $\mathscr{L}_{d,j}$.} $\mathscr{L}_{d,j}$, where $d$ is the dimension and $j$ is an integer. Note that in dimension $\leqslant 5$ all Dehn functions were computed by Pittet \cite{PittetIsopNil}. We still list them for the sake of completeness.

We list the nilpotent Lie algebras together with their structure, their homogeneous dimension and the best known estimates on their Dehn functions in Tables \ref{table:dim-less-5}--\ref{table:dim6-step45}. Table \ref{table:dim-less-5} contains all nilpotent Lie algebras of dimension at most $5$ and Tables \ref{table:dim6-step2}--\ref{table:dim6-step45} those of dimension $6$ ordered by their nilpotency classes and homogeneous dimension $\operatorname{hdim}(\mathfrak{g}):= \sum_{s \geqslant 1} s \, \dim \gamma_s \mathfrak{g} / \gamma_{s+1} \mathfrak{g}$. The latter is a quasi-isometry invariant, as it coincides with the exponent of growth of the corresponding group \cite[Thm II.1]{Guivarch73}.

{\em We will now give some explanations regarding the contents of our
tables.} In dimension $6$ we do not list decomposable Lie algebras $\mathfrak g$ (i.e. Lie algebras that split as a direct product of lower-dimensional ones) except if their class of Lie algebras with the same Carnot graded algebra consists of more than one element; this is to keep our tables as compact as possible. 
More generally, we group Lie algebras by their associated Carnot graded algebras, starting with the unique one that is Carnot. 
The nonzero brackets defining the structure of the respective Lie algebras are provided in an abbreviated form: for instance the notation $12=34=5$ means that $[x_1, x_2] = [x_3,x_4]=x_5$ and defines the five-dimensional Heisenberg algebra.

In most cases our estimates on $\delta_{\mathfrak{g}}(n)$ are derived as follows:
\begin{enumerate}
\item
\label{item:upper-bound-overview}
The upper bound is given by the universal upper bound of $n^{c+1}$ on the Dehn function of a nilpotent group of nilpotency class $c$ \cite{GerstenRileyHolt}.
\item
\label{item:lower-bound-overview}
The lower bound is given by the centralised Dehn function $\delta_\mathfrak{g}^{\mathrm{cent}}(n)$. It is obtained by providing a suitable central extension of maximal distortion. 
\end{enumerate}

For \eqref{item:lower-bound-overview} we provide a maximally distorted central extension in abbreviated form in the table. Let us illustrate this via the example of $\mathscr{L}_{5,5}$. In this case we claim that a central extension of maximal distortion is given by $z = 14=35$. This is short-hand for the extension $\mathbf R z\to\widetilde{\mathfrak{g}}\to \mathfrak{g} $, where $z$ satisfies $z =[\sigma(x_1), \sigma(x_4)]=[\sigma(x_3),\sigma(x_5)]$ for any section $\sigma:\mathfrak{g}\to\widetilde{\mathfrak{g}}$. 
Verifying the existence of this extension is easy via the well-known identification of central extensions with second cohomology classes given by Proposition \ref{prop:Centralrcentralextensions}. 
Indeed, in the case of $\mathscr{L}_{5,5}$ the extension $z=14=35$ corresponds to the $2$-form $\omega :=\xi_1\wedge \xi_4 + \xi_3\wedge \xi_5$, where $\xi_1,\dots,\xi_5$ is a dual basis of the basis $x_1,\dots,x_5$. 
We readily deduce from the structure of $\mathscr{L}_{5,5}$ that $d \xi_1=d\xi_2=d\xi_5 =0$, $d\xi_3=-\xi_1\wedge\xi_2$ and $d\xi_4=-\xi_1\wedge\xi_3-\xi_2\wedge\xi_5$. Thus, we obtain that $d\omega = 0$ and that $\omega$ defines a non-trivial cohomology class.

For the cases where there are either better estimates on the Dehn function than one can obtain from the above method or where estimates are well-known we provide a reference to the literature or previous sections. Finally, note that the Dehn functions of the decomposable algebras that we did not list can easily be deduced from Lemma \ref{lem:Dehn-functions-of-direct-products} and the Dehn functions of their factors.

\begin{remark}
We indicate all relations via central extensions between nilpotent Lie algebras $\mathfrak{g}$ of dimension $\leqslant 5$ in Figure \ref{fig:genealogy-nilpotent-algebras}; if $\mathfrak{g}$ is 5-dimensional we also provide at least one $6$-dimensional central extension.
\end{remark}

Note that there are a total of $5$ cases for which we were not able to determine the precise Dehn functions via any method. In particular in these cases the bounds from \eqref{item:upper-bound-overview} and \eqref{item:lower-bound-overview} do not match. We summarize the state of the art for these cases.
\begin{lemma}
\label{lem:six-dimensional-algebras-where-dcent-is-n^c}
Let $ \mathfrak{g} \in \mathcal{L}= \lbrace \mathscr{L}_{6,14}, \mathscr{L}_{6,16}, \mathscr{L}_{6,19}(\pm 1), \mathscr{L}_{6,20} \rbrace$ and let $c$ be its nilpotency class. Then $\mathfrak{g}$ admits a $c$-central extension, but no $(c+1)$-central extension. In particular, the central and regular Dehn functions of $\mathfrak{g}$ satisfy the asymptotic inequalities
\[
 \delta^{\mathrm{cent}}_{\mathfrak g}(n) \asymp n^c \preccurlyeq \delta_{\mathfrak g}(n)\preccurlyeq n^{c+1}.
\]
\end{lemma}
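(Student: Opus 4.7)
The second assertion is immediate from the first: by Proposition~\ref{prop:Centralrcentralextensions}, the existence of a $c$-central but no $(c+1)$-central extension means that the largest $r$ with $H^2(\mathfrak{g},\mathbf{R})^{\geqslant r}\neq 0$ is exactly $c$, whence $\delta^{\mathrm{cent}}_{\mathfrak{g}}(n)\asymp n^c$. Combined with $\delta^{\mathrm{cent}}_{\mathfrak{g}}\leqslant \delta_{\mathfrak{g}}$ and the Gersten--Riley--Holt bound $\delta_{\mathfrak{g}}(n)\preccurlyeq n^{c+1}$, this yields the displayed inequalities. So the real content of the lemma is the pair of cohomological assertions for each $\mathfrak{g}\in\mathcal{L}$.

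The plan is to feed each Lie algebra into the machinery of \S\ref{sec:lower-bound-dehn}. First I would fix a basis $\{x_1,\ldots,x_6\}$ realising the nonzero brackets listed in Tables~\ref{table:dim6-step2}--\ref{table:dim6-step45} (in de Graaf's conventions), let $\{\xi_1,\ldots,\xi_6\}$ be the dual basis, and compute the differentials $d\xi_i=-\sum_{j<k}c^i_{jk}\,\xi_j\wedge\xi_k$ from the structure constants. This determines $B^2(\mathfrak{g},\mathbf{R})$ directly, and solving the linear system $d\omega=0$ (in at most $15$ variables, one per $\xi_i\wedge\xi_j$) determines $Z^2(\mathfrak{g},\mathbf{R})$ and hence $H^2(\mathfrak{g},\mathbf{R})$, exactly as in the proof of Lemma~\ref{lem:H^2g_p,q}.

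To exhibit a $c$-central extension it suffices, by Proposition~\ref{lem:first-criterion-cohomology-to-central-extension} applied with $s=1$, to produce a cocycle $\omega$ and a pair $(X,Y)\in\gamma_{r_1}\mathfrak{g}\times\gamma_{r_2}\mathfrak{g}$ with $r_1+r_2=c$, $[X,Y]=0$ and $\omega(X,Y)=1$ (the case $s=1$ being Pittet's criterion). A natural candidate is of the form $\omega=\xi_i\wedge\xi_j+\xi_k\wedge\xi_\ell$, mimicking the $z=ij=k\ell$ shorthand already used throughout \S\ref{sec:low-dimension}: for the four algebras in $\mathcal{L}$ I would read off such a $\omega$ and $(X,Y)$ from the bracket table, then verify $d\omega=0$ and $\omega\notin B^2(\mathfrak{g},\mathbf{R})$ using the computation of the preceding paragraph.

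For the nonexistence of a $(c+1)$-central extension I would invoke Remark~\ref{rem:maximal-degree-central-extensions}, which says that condition~\eqref{eq:bracket-is-zero} is then automatic, so that by Proposition~\ref{lem:first-criterion-cohomology-to-central-extension} the non-vanishing of $H^2(\mathfrak{g},\mathbf{R})^{\geqslant c+1}$ amounts to the existence of some $\omega\in Z^2(\mathfrak{g},\mathbf{R})$ and some family $(X_i,Y_i)$ with $X_i\in\gamma_{r_{1,i}}\mathfrak{g}$, $Y_i\in\gamma_{r_{2,i}}\mathfrak{g}$, $r_{1,i}+r_{2,i}=c+1$, and $\sum_i\omega(X_i,Y_i)=1$. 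Equivalently, I must show that for every class $[\omega]\in H^2(\mathfrak{g},\mathbf{R})$ the restriction of $\omega$ to $\bigcup_{r_1+r_2=c+1}\gamma_{r_1}\mathfrak{g}\wedge\gamma_{r_2}\mathfrak{g}$ is a coboundary there. Concretely, I would take the basis of $H^2(\mathfrak{g},\mathbf{R})$ obtained above, list the (few) bidegrees $(r_1,r_2)$ with $r_1+r_2=c+1$ and $\gamma_{r_i}\mathfrak{g}\neq 0$, and check for each basis element that its evaluation on any pair from $\gamma_{r_1}\mathfrak{g}\times\gamma_{r_2}\mathfrak{g}$ is determined by $d\xi$-terms and so lies in $B^2$ when restricted; this is a bounded linear computation once the cocycle basis is known.

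The main obstacle is not conceptual but bookkeeping: for each of $\mathscr{L}_{6,14}$, $\mathscr{L}_{6,16}$, $\mathscr{L}_{6,19}(\pm 1)$, $\mathscr{L}_{6,20}$ one must carry out the $H^2$ computation cleanly enough to read off the filtration by $\gamma_i\mathfrak{g}\wedge\gamma_j\mathfrak{g}$, and the two cases $\mathscr{L}_{6,19}(\epsilon)$ ($\epsilon=\pm 1$) depend on a parameter so must be handled simultaneously (the sign enters only in one structure constant, and one checks it does not alter the rank of the relevant block). Where a slicker argument is available, I would use Lemma~\ref{prop:no-central-extension-max-degree}: each of these algebras is a central product of two lower-step filiform-like factors with one-dimensional centres, and the lemma gives the nonexistence of $(c+1)$-central extensions directly, leaving only the construction of an explicit $c$-central extension via Pittet's criterion.
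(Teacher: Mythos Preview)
Your overall strategy---reduce the asymptotic inequalities to the cohomological claims via Proposition~\ref{prop:Centralrcentralextensions} and the Gersten--Riley--Holt bound, then exhibit a $c$-central cocycle by Pittet's criterion and rule out $(c+1)$-central extensions via Proposition~\ref{lem:first-criterion-cohomology-to-central-extension}---is correct and is also what the paper does.

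You miss one simplification the paper exploits: four of the five algebras in $\mathcal{L}$ (all but $\mathscr{L}_{6,14}$) are Carnot graded, so the differential on $\bigwedge^\bullet\mathfrak{g}^*$ has degree~$0$ and the weight decomposition descends to $H^2(\mathfrak{g},\mathbf{R})$. Ruling out a $(c+1)$-central extension then reduces to showing that the degree-$(c+1)$ piece of $Z^2(\mathfrak{g},\mathbf{R})$ is zero. For $\mathscr{L}_{6,20}$ this piece is $\operatorname{span}\{\xi_{1,6},\xi_{2,6},\xi_{3,6},\xi_{4,5}\}$, and a four-line computation of $d\omega$ shows no nonzero element is closed. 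This is much lighter than computing all of $H^2$; your general approach would work but with more bookkeeping. Only $\mathscr{L}_{6,14}$ requires the ungraded argument.

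Your final paragraph contains a genuine error: the algebras in $\mathcal{L}$ are \emph{not} central products, so Lemma~\ref{prop:no-central-extension-max-degree} does not apply. In $\mathscr{L}_{6,20}$, for instance, $x_1$ brackets nontrivially with $x_2,x_3,x_4,x_5$, so there is no splitting into two commuting subalgebras meeting only in the centre; the same obstruction occurs for $\mathscr{L}_{6,16}$ and $\mathscr{L}_{6,14}$. In $\mathscr{L}_{6,19}(\pm 1)$ the span of $x_2,\ldots,x_6$ is indeed a $5$-dimensional Heisenberg (a central product of two copies of $\mathfrak{heis}_3$), but $x_1$ again prevents the full algebra from being one. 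You must therefore fall back on the direct computation.
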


\begin{proof}
For the existence of a  $c$-central extensions we refer to the concrete $c$-central extensions indicated in the tables with the arguments being the same as the ones given above.

The proof of the non-existence of a $(c+1)$-central extension is by performing computations similar to the ones in \S \ref{subsec:central-extensions-central-products}. Note that for the Carnot case the computations are more elegant than for the non-Carnot case, since the differential preserves the grading. The only non-Carnot Lie algebra in $\mathcal{L}$ is $\mathscr{L}_{6,14}$; the corresponding computation is more cumbersome but no harder.

Rather than giving details for all cases, we will restrict to the concrete example of the Carnot Lie algebra $\mathscr{L}_{6,20}$ and leave the remainder of the computations as an exercise to the reader. By definition $\mathscr{L}_{6,20}$ is $3$-step nilpotent.

To show that there is no $4$-central extension it suffices to prove that $H^2(\mathfrak{g}, \mathbf R)^{4} = 0$. 
Recall that $\mathscr{L}_{6,20}$ is defined by the generating set $\left\{x_1,\dots,x_6\right\}$ and the following nonzero relations
\[
[x_1, x_2] = x_4,~ [x_1, x_3] = x_5,~ [x_1, x_5] = [x_2, x_4] = x_6.
\]
We denote its dual basis $\left\{\xi_1, \ldots , \xi_6\right\}$ and, as before, we use the notation $\xi_{i,j}=\xi_i\wedge\xi_j$ etc.

The first quotient of the lower central series of $\mathscr{L}_{6,20}$ is generated by $\left\{ x_1, x_2, x_3 \right\}$. Thus, we have 
\[ 
(\bigwedge\nolimits^{\!2}  \mathscr L_{6,20}^\star)_4 = \operatorname{span}\left\{\xi_{1,6}, \xi_{2,6}, \xi_{3,6}, \xi_{4,5}\right\}
\]
in the associated grading on $\bigwedge\nolimits^{\!2}  \mathscr L_{6,20}^\star$.

It follows that it suffices to check that any cocycle of the form $\omega = a_{1,6} \xi_{1,6}+a_{2,6} \xi_{2,6} + a_{3,6} \xi_{3,6} + a_{4,5} 
\xi_{4,5}$ is trivial. We compute the differential
\begin{align}
d\omega & = - a_{1,6} \xi_1 \wedge d \xi_6 - a_{2,6} \xi_2 \wedge d \xi_6 - a_{3,6} \xi_3 \wedge d \xi_6 + a_{4,5} d \xi_4 \wedge \xi_5 - a_{4,5} \xi_4 \wedge d\xi_5 \notag \\
& = a_{1,6} \xi_{1,2,4} + a_{2,6} \xi_{2,1,5} + a_{3,6} (\xi_{3,1,5} + \xi_{3,2,4}) + a_{4,5} (\xi_{4,1,3} - \xi_{1,2,5}) \notag \\
& = a_{1,6} \xi_{1,2,4} + (a_{2,6} + a_{4,5}) \xi_{2,1,5} + a_{3,6} (\xi_{3,1,5} + \xi_{3,2,4}) + a_{4,5} \xi_{4,1,3}, \label{eq:differential-computation}
\end{align}
which is indeed nonzero unless $a_{1,6} = a_{2,6} = a_{3,6} = a_{4,5} = 0$. This shows that $\mathscr{L}_{6,20}$ does not admit a $4$-central extension.
\end{proof}

Finally, in the last column of Table \ref{table:dim6-step45} we list the best known exponent $e_{\mathfrak{g}}$ such that $G$ and $\operatorname{gr}(G)$ are $O(r^{e_{\mathfrak{g}}})$-SBE (see \S \ref{sec:SBE} for details). We do not list $e_\mathfrak{g}$ in Tables \ref{table:dim-less-5}, \ref{table:dim6-step2} and \ref{table:dim6-step3}, since it is always $0$ if $G$ is Carnot gradable and $1-c^{-1}$ if not, where $c$ is the nilpotency step of $\mathfrak{g}$. For the computation of $e_{\mathfrak{g}}$ when $\mathfrak{g}= \mathscr{L}_{6,d}$, $d \in \lbrace 12,17 \rbrace$ see \cite[6C6]{cornulier2017sublinear}.

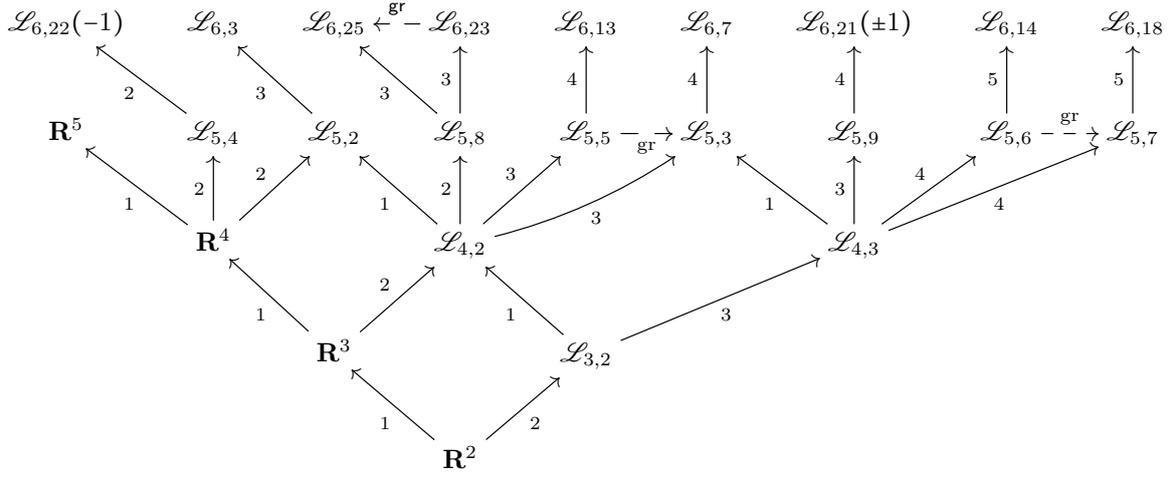
\begin{figure}[t]
\[
\xymatrix@C=0.6cm{
\mathscr{L}_{6,22}(-1)
& \mathscr{L}_{6,3} &  \mathscr{L}_{6,25} & \mathscr{L}_{6,23} \ar@{-->}[l]_{\mathsf{gr}} & \mathscr{L}_{6,13} & \mathscr{L}_{6,7} & \mathscr{L}_{6,21}(\pm 1) & \mathscr{L}_{6,14} & \mathscr{L}_{6,18} \\
\mathbf R^5  & \mathscr{L}_{5,4} \ar[lu]^2 & \mathscr{L}_{5,2} \ar[lu]^3 & \mathscr{L}_{5,8} \ar[lu]^3 \ar[u]^3 & \mathscr{L}_{5,5} \ar@{-->}[r]_{\mathrm{gr}} \ar[u]^4 & \mathscr{L}_{5,3} \ar[u]^4  & \mathscr{L}_{5,9} \ar[u]^4 & \mathscr{L}_{5,6} \ar@{-->}[r]^{\mathrm{gr}} \ar[u]^5 & \mathscr{L}_{5,7} \ar[u]^5  \\
& \mathbf R^4 \ar[ur]^2 \ar[ul]^1 \ar[u]^2 & & \mathscr{L}_{4,2} \ar[ur]^3 \ar@/_/[urr]_3 \ar[ul]^1 \ar[u]^2 & & & \mathscr{L}_{4,3}  \ar[ul]^1 \ar[u]^3 \ar[ur]^4 \ar[rru]_4 & & \\
& & \mathbf R^3 \ar[ul]^1 \ar[ur]^2 & & \mathscr{L}_{3,2} \ar[ul]^1 \ar[urr]_3 & & & \\
& & & \mathbf R^2 \ar[ul]^1 \ar[ur]_2 & & & & }
\]
\caption{
Nilpotent Lie algebras of dimension $\leqslant 5$ and how they are related.\newline The notation $\mathfrak{g} \stackrel{r}{\longrightarrow}\mathfrak{h}$ means that $\mathfrak{h}$ is a $r$-central extension of $\mathfrak{g}$ by $\mathbf R$ (see the tables below for the explicit extensions) and $\mathfrak{g} \stackrel{\mathrm{gr}}{\dashrightarrow} \mathfrak{h}$ means that $\mathfrak{h}=\operatorname{gr}(\mathfrak{g})$.
}
\label{fig:genealogy-nilpotent-algebras}
\end{figure}

\begin{table}[H]
\begin{center}
\begin{tabular}{|c|c|c|c|c|c|c|}
\hline
Algebra & Structure & step & hdim & $\delta(n)$  \\
\hline
$\mathscr{L}_{3,2}= \mathfrak{l}_3 = \mathfrak{heis}_3$  & $12=3$ & 2 & 4 &  $n^3$ \\
\hline
\hline
$\mathscr{L}_{4,2}= \mathscr{L}_{3,2} \times \mathbf R$ & $12=3$ & 2 & 5 &  $n^3 $ \\
\hline
$\mathscr{L}_{4,3} = \mathfrak{l}_4$ & $12=3, 13=4 $ & 3 & 7 &  $n^4$ \\
\hline
\hline
$ \mathscr{L}_{5,2}=\mathscr{L}_{3,2} \times \mathbf R^2$ & $12 = 3$ & 2 & 6 &   $n^3, z = 13$ \\
\hline
$\mathscr{L}_{5,4} = \mathfrak{heis}_5$ & $12=34=5$ & 2 & 6 &   $n^2$ \cite{Allcock,OlsSapCombDehn} \\
\hline
$\mathscr{L}_{5,8}$ & $12 = 3, \, 14 = 5$ & 2 & 7 &  $n^3, z = 15$ \\
\hline
$ \mathscr{L}_{5,3}=\mathscr{L}_{4,3} \times \mathbf R $ & $12 = 3, \, 13 = 4$ & \multirow{2}{*}{3} & \multirow{2}{*}{8}  &   $n^4$, $z=14$ \\
$\mathscr{L}_{5,5}$ & $12 = 3, \, 13 = 25 = 4$ &  &  &   $n^4, z = 14=35$ \\
\hline
$\mathscr{L}_{5,9}$ & $12 = 3, \, 13 = 4, \, 23 = 5$ & 3 & 10 &  $n^4, z = 15=24$  \\
\hline
$\mathscr{L}_{5,7} =\mathfrak{l}_5$ & $12 = 3, \, 13 = 4, \, 14 = 5$ & \multirow{2}{*}{4} & \multirow{2}{*}{11} &  $n^5$, $z=15$  \\
$\mathscr{L}_{5,6}$ & $12 = 3, \, 13 = 4, \, 14 = 23 = 5$ &  & &   $n^5$, $z=25=43$ \\
\hline
\end{tabular}
\end{center}
\caption{Nonabelian nilpotent Lie algebras of dim $\leqslant 5$ and their Dehn functions. }
\label{table:dim-less-5}
\end{table}

\begin{table}[H]
\begin{center}
\begin{tabular}{|c|c|c|c|c|}
\hline
Name & Structure &  hdim & $\delta(n)$ \\
\hline
$\mathscr{L}_{6,22}(-1) = \mathfrak{heis}^{\mathbf C}_{3 \mid \mathbf R}$ & 13 = 24 = 5, 14 = 32 = 6 & 8 &   $n^3, z = 16 = 52$ \\
\hline 
$\mathscr{L}_{6,22}(0) $ & 13 = 24 = 5,\, 14 = 6  & 8 & $n^3, z = 16$ \\
\hline
$\mathscr{L}_{6,26}$ (free rank. $3$)& 12 = 4, \, 23 = 5, \, 31 = 6  & 9  & $n^3$ \cite[Theorem 7]{BaumslagMillerShort}    \\
\hline
\end{tabular}
\end{center}
\vspace{.2cm}
\caption{Indecomposable 2-step nilpotent Lie algebras of dimension $6$ and their Dehn functions.}
\label{table:dim6-step2}
\end{table}

\begin{table}[H]
\begin{center}
\begin{tabular}{|c|c|c|c|c|}
\hline
Name & Structure &  hdim & $\delta(n)$ \\
\hline
$\mathscr{L}_{6,20}$ & $12 = 4, 13 = 5, 15 = 24 = 6$  & 10 & $n^3 \preccurlyeq \delta(n) \preccurlyeq n^4$, $z=14$ \\
\hline
$\mathscr{L}_{6,19}(0)$ & $12 = 4, 13 = 5, 24 = 6$  & 10 & $n^4, z = 26$ \\
\hline
$\mathscr{L}_{6,19}(1)$ & $12 = 4, 13 = 5, 35 = 24 = 6$  & 10 & $n^3 \preccurlyeq \delta(n) \preccurlyeq n^4$, $z=15$ \\
\hline
$\mathscr{L}_{6,19}(-1)$ & $12 = 4, 13 = 5, 53= 24 = 6$  & 10 & $n^3 \preccurlyeq \delta(n) \preccurlyeq n^4$, $z=15$   \\
\hline
$\mathscr{L}_{6,3} = \mathscr{L}_{4,3} \times \mathbf R^2 $ & $12=3, 13=4$ & \multirow{3}{*}{9} & $n^4$ (product) \\
$\mathscr{L}_{6,5}=\mathscr{L}_{5,5} \times \mathbf R$ & $12=3,\, 13=25=4$  & & $n^4$ (product) \\
$\mathscr{L}_{6,10} = \mathfrak{g}_{4,3}$ & $12=3,\, 13=56=4$  & &  $n^3$, Theorem \ref{thmIntro:Main} \\
\hline 
$\mathscr{L}_{6,25}$ & $12 = 3, 13 = 5, 14 = 6$ & \multirow{2}{*}{10} & $n^4, z = 15$  \\
$\mathscr{L}_{6,23}$ & $12=3,\, 13=24=5, \, 14 = 6$  & & $n^4, z = 15 = 34$  \\
\hline
$\mathscr{L}_{6,9}= \mathscr{L}_{5,9} \times \mathbf R$ & $12 = 3, 13 = 4, 23 = 5$  & \multirow{4}{*}{11} & $n^4$ (product)\\

$\mathscr{L}_{6,24}(1)$ & $12=3,\, 13=26=4, \, 16=23=5$ & & $n^4, z = 15 = 24$  \\

$\mathscr{L}_{6,24}(-1)$ & $12=3, \, 13=26=4, \, 61=23=5$  & & $n^4, z = 15 = 24$  \\

$\mathscr{L}_{6,24}(0)$ & $12=3,\, 13=26=4, \, 23=5$ & & $n^4, z = 15 = 24$  \\
\hline
\end{tabular}
\end{center}
\vspace{.2cm}
\caption{3-step nilpotent Lie algebras of dimension $6$ and their Dehn functions.}
\label{table:dim6-step3}
\end{table}

\begin{table}[H]
\begin{center}
\begin{tabular}{|c|c|c|c|c|c|}
\hline
$\mathfrak{g}$ & Structure &  hdim & $\delta_{\mathfrak{g}}(n)$ & $e_{\mathfrak{g}}$ \\
\hline
$\mathscr{L}_{6,7} = \mathscr{L}_{5,7} \times \mathbf R$ & $12 = 3, 13=4, 14=5$ &  \multirow{5}{*}{12} & $n^5$ (product) & $0$ \\
$\mathscr{L}_{6,6} = \mathscr{L}_{5,6} \times \mathbf R$ & $12 = 3, 13=4, 14=23=5$ &   & $n^5$ (product)& $3/4$ \\
$\mathscr{L}_{6,12} $ & $12 = 3, 13=4, 14=26=5$ &  & $n^5, z = 15 = 36$  & $1/2$  \\
$\mathscr{L}_{6,11} $ & $12 = 3, 13=4, 14=23=26=5$ &  & $n^5, z = 15 = 24 = 36$  & $3/4$ \\
$\mathscr{L}_{6,13} $ & $12 = 3, 13=26=4, 14=36=5$ &  & $n^5, z = 15 = 46$  & $3/4$  \\
\hline
$\mathscr{L}_{6,21}(1)$ & $12 = 3, 13 = 4, 23 = 5, 14 = 6, 25 = 6$ & $14$ & $n^5, z = 16=35$  & $0$  \\
\hline
$\mathscr{L}_{6,21}(-1)$ & $12 = 3, 13 = 4, 23 = 5, 14 = 6, 52 = 6$ &  $14$ & $n^5, z = 16 =53$  & $0$ \\
\hline
$\mathscr{L}_{6,21}(0)$ & $12 = 3, 13 = 4, 23 = 5, 14 = 6$ &  $14$ & $n^5, z = 16$ & $0$ \\
\hline 
\hline
$\mathscr{L}_{6,18}$ & $12=3, 13=4, 14=5, 15=6$ & \multirow{3}{*}{16} & $n^6, z = 16$  & $0$ \\
$\mathscr{L}_{6,17} $ & $12 = 3, 13=4, 14=5, 15=23=6$ &  & $n^6, z = 16 = 24$  & $3/5$ \\
$\mathscr{L}_{6,15} $ & $12 = 3, 13=4, 14=23=5, 15=24=6$ &  & $n^6, z = 16 = 25$ & $4/5$ \\
\hline
$\mathscr{L}_{6,16}$ & $12=3, 13=4, 14=5, 25=43=6$ &  \multirow{2}{*}{16} & $n^5 \preccurlyeq \delta(n) \preccurlyeq n^6$, $z=15$ & $0$ \\
$\mathscr{L}_{6,14} $ & $12 = 3, 13=4, 14=23=5, 25=43=6$ &  & $n^5 \preccurlyeq \delta(n) \preccurlyeq n^6$, $z=15=24$ & $4/5$ \\
\hline
\end{tabular}
\caption{Nilpotent Lie algebras of dimension $6$ and step $\geqslant 4$, and their Dehn functions.}
\label{table:dim6-step45}
\end{center}
\end{table}

\section{Questions and speculations}\label{sec:questions}

We start with a question whose answer would complete the computation of the Dehn functions of all simply connected nilpotent Lie groups of dimension less or equal $6$.

\begin{question}
What are the Dehn functions of the $5$ simply connected nilpotent Lie groups associated to the nilpotent Lie algebras in $\mathcal{L}$ from Lemma \ref{lem:six-dimensional-algebras-where-dcent-is-n^c}?
\end{question}

With the exception of $\mathscr{L}_{6,14}$ all groups corresponding to the Lie algebras in $\mathcal{L}$ are possible candidates for a positive answer to the following question.

\begin{question}
Does there exist a {\it Carnot gradable} simply connected nilpotent Lie group such that its Dehn function and its centralized Dehn function both grow like $n^a$, but with different exponents $a$? 
\end{question}

More generally, we might expect a general picture for  Dehn functions of central products. Let $\mathfrak{k}$ and $\mathfrak{l}$ be nilpotent Lie algebras of step $k$, resp. $\ell$, with $k\geqslant \ell\geqslant 2$, and 1-dimensional centers $\mathfrak{z}$ and $\mathfrak{z}'$. Let $\theta: \mathfrak{z}\to \mathfrak{z}'$ be an isomorphism between their centers and let $\mathfrak{k}\times_{\theta} \mathfrak{l}$ be their central product. We denote by $K$, $L$ and $G:=K\times_{\theta} L$ the associated simply connected Lie groups.

\begin{conjecture}
 The Dehn function of $G$ satisfies $n^{k}\preccurlyeq \delta_{G}(n)\prec n^{k+1}$.
 \label{conj:Dehncentral}
\end{conjecture}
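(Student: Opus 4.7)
The plan is to split Conjecture \ref{conj:Dehncentral} into its lower and upper halves and attack each using the techniques developed in this paper for $G_{p,q}$. The lower bound $\delta_G(n) \succcurlyeq n^k$ should come from the $k$-central structure of $\mathfrak{k}$, while the strict upper bound $\delta_G(n) \prec n^{k+1}$ should follow from an inductive scheme that exploits the central product to save at least a factor of $n$ compared to the generic $n^{k+1}$ bound for step-$k$ nilpotent groups.

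For the lower bound, the first step would be to look for a $k$-central extension of $\mathfrak{g}$ by $\mathbf{R}$. Since $\mathfrak{k}$ has step $k$, it admits at least one such extension, represented by a cocycle $\omega \in Z^2(\mathfrak{k}, \mathbf{R})$ of the right filtration degree. When $\omega$ lifts to a $k$-central class in $Z^2(\mathfrak{g}, \mathbf{R})$ in the sense of Proposition \ref{lem:first-criterion-cohomology-to-central-extension}, Proposition \ref{prop:Centralrcentralextensions} concludes directly. When this class is killed by the identification of centres, which is precisely the phenomenon highlighted in Proposition \ref{lem:no-central-extension} for odd $p$, my fallback would be the bounded-form technique of \S \ref{sec:lower-bounds-forms}. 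Concretely I would choose an invariant primitive $\beta$ of a representative of $\omega$ on $K$ and design a piecewise-invariant perturbation $\beta_C$ on $G$ whose differential is $\pm d\beta$ on complementary half-spaces cut out by a hyperplane transverse to the abelianization but whose integral along suitable iterated commutator loops of $\Omega_{k+1}(n)$-type remains of order $n^k$. The discrete Stokes theorem (Proposition \ref{prop:discreteStokes}) then delivers the lower bound, provided one checks that integrals of $\beta_C$ over bounded relation-loops in $G$ stay uniformly bounded, by the same argument as in Proposition \ref{prop:uniformboundonsmallloops}.

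For the upper bound, my plan is to adapt the Main commuting Lemma \ref{lem:MainLemma} and the Cancelling Lemma \ref{lem:Strong-k-Lemma} to this abstract setting, with induction on the nilpotency step of $\mathfrak{k}$. The pivot would be a general Substitution Lemma in the spirit of Lemma \ref{lem:ChangingFactors}: any word in $K$ representing an element of $Z(K) = Z(G)$ is, by construction, equal modulo a null-homotopic $(K\times L)$-word to a word in $L$ of the same length, so it can be rewritten through $L$ at cost controlled by $\delta_L$. When $\ell \lless k$ this immediately saves one factor of $n$ each time one commutes a central word past a generator of $K$, so the bookkeeping mimics that of \S \ref{sec:upper-bound-Dehn} and yields $\delta_G(n) \preccurlyeq n^k$. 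When $\ell = k$, the naive substitution saves nothing, and one must iterate: peel off a codimension-one central quotient of $L$ to reduce to a central product of lower step and recurse, exactly the inductive pattern that supports $G_{p,p}$ in the Main Theorem \ref{thm:Upperbound}.

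The main obstacle will be the combinatorial core of the upper bound. The arguments of \S \ref{sec:upper-bound-Dehn} depend crucially on the explicit filiform compact presentation of Proposition \ref{prop:binom-Lie}, on the monomial-and-binomial identities of Lemmas \ref{lem:sec-efficient-tech-1} and \ref{lem:sec-efficient-tech-2}, and on having a canonical embedded copy of $G_{p-1,p-1}$ with known Dehn function through which to run the induction. In full generality none of these is available, and one must replace the families $\Omega_{k}^{j}$ by an abstract analogue built from a Malcev basis of $\mathfrak{k}$ together with a Fractal form Lemma \ref{lem:FractalForm} controlling the cost of repeatedly halving parameters within the lower central series. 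A natural intermediate reduction would be to first prove the conjecture when $\mathfrak{k}$ and $\mathfrak{l}$ are Carnot-graded, using that $\mathsf{gr}(\mathfrak{g})$ then splits as a direct product $\mathfrak{k} \times (\mathfrak{l}/\mathfrak{z}')$ with manageable presentations, and only then transfer back to the non-Carnot case by a comparison argument in the spirit of Theorem \ref{th:cornuliers-pansu-theorem}. Upgrading the SBE-level control of that theorem to filling-area control is, to my mind, where the genuine new work will lie.
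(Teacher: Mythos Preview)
This statement is a \emph{conjecture}, not a theorem: the paper does not prove it. What follows the conjecture in \S\ref{sec:questions} is explicitly labelled as ``intuition'' and ``speculation'', and the authors themselves conclude that a proof ``will at the very least require the development of a refined version of our methods and potentially even a completely different approach.'' So there is no paper proof to compare your proposal against; both you and the paper are sketching a strategy.

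Your outline for the lower bound (try a $k$-central extension, fall back on the perturbed bounded-form method of \S\ref{sec:lower-bounds-forms} when the cocycle dies in $H^2(\mathfrak g,\mathbf R)$) matches the paper's heuristic exactly. Your upper-bound scheme is close in spirit but organised differently: the paper's suggested induction descends through $\gamma_2(K)\times_\theta L$ (resp.\ $\gamma_2(K)\times_\theta\gamma_2(L)$ when $k=\ell$), ultimately bottoming out in Olshanskii--Sapir's $n^2\log n$ bound for $2$-step central products, whereas you propose peeling a central quotient off $L$. Neither route is worked out, so this is a difference of emphasis rather than of substance.

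There is, however, one genuine misstep in your final paragraph. You propose proving the conjecture first for Carnot-graded factors and then ``transferring back to the non-Carnot case by a comparison argument in the spirit of Theorem~\ref{th:cornuliers-pansu-theorem}''. But the central message of this paper (Corollary~\ref{cor:cone-equivalent-groups-with-different-Dehn} and Theorem~\ref{thmIntro:SublinearBIlip}) is precisely that SBE does \emph{not} control Dehn-function exponents: $G_{p,p-1}$ and $\mathsf{gr}(G_{p,p-1})$ are SBE yet have Dehn functions $n^{p-1}$ and $n^p$. So an SBE transfer cannot carry the strict upper bound $\prec n^{k+1}$ from the Carnot case to the general one; indeed the examples here show the exponent can move by a full unit under such a transfer. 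That reduction step should be dropped.
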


We explain the intuition behind this conjecture. First we observe that the fact that the centers in consideration are 1-dimensional implies that there is still a cocycle $\omega$, defining the $k$-central extension $\mathfrak{k}\to \mathfrak{k}/\mathfrak{z}$. As for our examples $\mathfrak{g}_{p,q}$, this cocycle represents the ``trivial'' central extension $\mathfrak{k}\times \mathfrak{l} \to \mathfrak{k}\times_{\theta} \mathfrak{l}$. It is thus $\ell$-central and, in particular, it will only be $k$-central if $k=\ell$. Moreover, there is no $r$-central extension for $r\geqslant k+1$ (see Lemmas \ref{lem:direct-product-central-extension} and \ref{prop:no-central-extension-max-degree}). Hence, we can at best hope for a lower bound of $n^k$ on the Dehn function of $G$ by using central extensions. On the other hand we can in general not even hope for this, as for $k>\ell$ the form $\omega$ does not provide such an extension and our examples show that no other $k$-central extension might exist. However, it seems reasonable to believe that perturbation arguments similar to the ones developed in \S \ref{sec:lower-bounds-forms} can be used to show that the Dehn function of $G$ is $\succcurlyeq n^k$. This explains our guess for the lower bound. 

For the upper bound the key intuition is that it should still be possible to commute central words $w(X)$ in the generators $X$ of $K$ with arbitrary words $v(X)$ at a lower cost than $n^{k+1}$ by using what we will now call the ``central word trick'': one replaces $w(X)$ by a suitable word $w'(X')$ in the generators $X'$ of $L$ at cost $\prec n^{k+1}$ and then exploits that $\left[X,X'\right]=1$ to commute it with $v$. For the overall approach one should mimic the boot-strapping trick of using an inductive argument on the nilpotency class $k$ that we applied in \S \ref{sec:upper-bound-Dehn} (also see its sketch in the second half of \S \ref{sec:Intro-sketch-MainThm}). 

The basic idea would be to first reduce the word $w(X)$ to a word $u(Y)$, where the letters $Y$ live in a subgroup $H<K$ of nilpotency class strictly lower than $k$ (in our case, $K=L_p$, while $H=L_{p-1}$). Such a $u$ will presumably have length $n^2$. We then assume that the conjecture holds by induction for $k-1$ and apply it in the central product $H\times_{\theta} L$ to commute $w(X)$ with other words in $X$ at cost $\prec n\cdot n^{k}=n^{k+1}$. As we saw in \S \ref{sec:upper-bound-Dehn} this simple trick, used in the right way, is the fundamental reason why our argument works. 

Once we inductively reduced to a 2-step nilpotent central product, we can invoke Olshanskii and Sapir's result that the Dehn function of such a group is bounded above by $n^2 \log(n)$  \cite{OlsSapCombDehn}. This would allow us to conclude. We remark that while they don't say this explicitly, the reason why Olshanskii and Sapir's argument for 2-step nilpotent groups works ultimately also boils down to the central word trick (and we are convinced that the authors were aware of this). However, as we have seen in \S \ref{sec:upper-bound-Dehn} it is far from obvious how to make such an argument work in higher step. There are various reasons for this, for instance, to mention just one of them, making it work requires the reduction step that turns words of length $n$ in $X$ into words of length $n^2$ in a suitable alphabet $Y$ at sufficiently low cost, a step that was not needed for 2-step nilpotent groups.

The fact that already for the specific class of groups $G_{p,p-1}$, whose structure is as simple as one may hope for, the argument turns out to be highly technical, suggests that actually proving Conjecture \ref{conj:Dehncentral} in general will at the very least require the development of a refined version of our methods and potentially even a completely different approach.

Finally it is worth noting that it would even be interesting to prove Conjecture \ref{conj:Dehncentral} for other specific classes of examples. Indeed, well-chosen classes of examples might well produce new groups that satisfy all the main conclusions of our results in the introduction. A first such class to consider would be the general class of groups $G_{p,q}$ for which so far we were only able to determine the precise Dehn function for $q\in\left\{2,p-1,p\right\}$ (see also Remark \ref{rmk:increasing-Dehn-for-descending-q}). 

\begin{question}
What is the Dehn function of $G_{p,q}$ for $3\leqslant q\leqslant p-2$? 
\end{question}
Considering specific classes of examples seems particularly tempting, because, with some real speculation involved, a well-chosen class of examples could potentially produce nilpotent groups with Dehn functions strictly between $n^q$ and $n^{q+1}$ for all integers $q\geqslant 3$, generalising Wenger's examples \cite{Wenger}, or, on an even more speculative note, even nilpotent groups whose Dehn functions do not have integer exponents.

\renewcommand*{\bibliofont}{\small}

\bibliographystyle{alpha}

\bibliography{biblioNilpDehn}

\end{document}